\newcommand{\p}{\ensuremath{\mathbb{P}}}
\newcommand{\Z}{\ensuremath{\mathbb{Z}}}
\newcommand{\Q}{\ensuremath{\mathbb{Q}}}
\newcommand{\F}{\ensuremath{\mathbb{F}}}
\newcommand{\Ql}{\mathcal{Q}}
\DeclareMathOperator{\PGL}{PGL}
\DeclareMathOperator{\Aut}{Aut}
\DeclareMathOperator{\Bir}{Bir}
\newcommand{\Pic}{{\mathrm{Pic}}}
\newcommand{\NS}{{\mathrm{NS}}}
\newcommand{\rk}{{\mathrm{rk}}}
\DeclareMathOperator{\Gal}{Gal}
\DeclareMathOperator{\Sym}{Sym}
\DeclareMathOperator{\id}{id}
\DeclareMathOperator{\D}{D}
\DeclareMathOperator{\GA}{GA}
\DeclareMathOperator{\Spec}{Spec}
\DeclareMathOperator{\modulo}{mod}
\renewcommand{\k}{\mathbf{k}}
\newcommand{\kb}{\overline{\mathbf{k}}}
\def\dashmapsto{\mapstochar\dashrightarrow}
\newtheorem{definition}{Definition}[section]
\newtheorem{lemma}[definition]{Lemma}
\newtheorem{proposition}[definition]{Proposition}
\newtheorem{theorem}[definition]{Theorem}
\newtheorem{remark}[definition]{Remark}
\newtheorem{example}[definition]{Example}
\newtheorem{examples}[definition]{Examples}
\author{Aurore Boitrel}
\newcommand{\Addresses}{{
  \bigskip
  \footnotesize
  \textsc{Aurore Boitrel, Aix Marseille Univ, CNRS, I2M, SMF, CIRM, Marseille, France}\par\nopagebreak
  \textit{E-mail address:} \texttt{aurore.boitrel@universite-paris-saclay.fr}
}}
\title{Del Pezzo surfaces of degree $5$ over perfect fields}
\date{}
\renewcommand{\thepage}{\arabic{page}}
\begin{document}

\maketitle

\begin{abstract} In this paper we study the classification of del Pezzo surfaces $X$ of degree $5$ over any perfect field $\mathbf{k}$ in explicit geometric terms. More precisely, in each case we use the Petersen graph to illustrate the $\operatorname{Gal}(\overline{\mathbf{k}}/\mathbf{k})$-action on the $(-1)$-curves of $X$ and we describe explicitly its group of automorphisms, $\operatorname{Aut}_{\mathbf{k}}(X)$. For the cases when $X$ is not minimal, we describe how to realize it as the blow-up of $\mathbb{P}^{2}$, or of a (minimal) quadric in $\mathbb{P}^{3}$, and classify them up to $\mathbf{k}$-isomorphism. In all cases, the elements of the group $\operatorname{Aut}_{\mathbf{k}}(X)$ are described geometrically.
\end{abstract}

\tableofcontents

\definecolor{forestgreen}{rgb}{0,0.65,0}
\definecolor{cyan}{rgb}{0.871,0.494,0}

\section{Introduction}

Del Pezzo surfaces, i.e.\ smooth projective surfaces with ample anticanonical divisor class, play an important role in the classification of algebraic projective surfaces up to birational equivalence. The classical theory of del Pezzo surfaces over $\mathbb{C}$ can be found in \cite{bea78}, and over an arbitrary field $\k$ we refer the reader to \cite{man66,dem80,man86,bad01,KSC04}. Del Pezzo surfaces also play a key role in the study of algebraic subgroups of the Cremona group $\Bir_{\k}(\p^{2})$ of birational automorphisms of the projective plane over $\k$, and the group $\Aut_{\k}(X)$ of automorphisms of any rational del Pezzo surface $X$ defined over $\k$ provides examples of such subgroups. For a field $\k=\kb$ of characteristic zero, the complete description of automorphism groups of del Pezzo surfaces can be found in \cite{di09}, where the classification up to conjugacy of finite subgroups of $\Bir_{\kb}(\p^{2})$ is obtained. See also \cite{bla09} for the exposition of these results. Recent partial results about automorphism groups of del Pezzo surfaces over various perfect fields can be found in \cite{rz18,yas22,sz21}.
 
Over an algebraically closed field $\k=\kb$, a del Pezzo surface $X$ is either isomorphic to $\p^{2}_{\kb}$ or to $\p^{1}_{\kb} \times \p^{1}_{\kb}$ or to the blow-up of $\p^{2}_{\kb}$ in $1 \leq r \leq 8$ points in general position, where $K_{X}^{2}=9-r$ is called the degree of $X$ (\hspace{1sp}\cite{man86}). In particular, one can see that there is only one isomorphism class of del Pezzo surfaces of degree $5$ over $\kb$, obtained by blowing up $\p^{2}_{\kb}$ in four points in general position. In this paper, we focus on del Pezzo surfaces of degree $5$ defined over a perfect field. Over a perfect field $\k$, the base-change of a del Pezzo surface $X$ to the algebraic closure $X_{\kb}$ is still a del Pezzo surface of the same degree endowed with an action of the Galois group $\Gal(\kb/\k)$ of $\kb$ over $\k$. The classification of del Pezzo surfaces of degree $5$ over $\k$ can then be obtained by classifying Galois actions on del Pezzo surfaces of degree $5$ over $\kb$, and in particular the elements of the first Galois cohomology set $H^{1}(\Gal(\kb/\k),\Aut(X_{\kb}))$ are in bijection with the $\k$-isomorphism classes of del Pezzo surfaces of degree $5$ (see \cite[§2.6]{BS64} and \cite[Theorem 3.1.3]{sko01}).\\

Assume again that $\k$ is a perfect field and let $X$ be a smooth projective surface over $\k$. By a point of degree $d$ we mean a $\Gal(\kb/\k)$-orbit $p=\lbrace p_{1},\dots,p_{d} \rbrace \subset X(\kb)$ of cardinality $d\geq1$. A splitting field of $p=\lbrace p_{1},\dots,p_{d} \rbrace$ is a finite normal extension $F/\k$ of smallest degree such that $p_{1},\dots,p_{d} \in X(F)$.
Suppose that $\k$ has a quadratic extension $L/\k$ and let $g$ be the generator of $\Gal(L/\k) \simeq \Z/2\Z$. By $\mathcal{Q}^{L}$ we denote the $\k$-structure on $\p^{1}_{L} \times \p^{1}_{L}$ given by $(x,y)^{g}=(y^{g},x^{g})$. Our main result is the following classification result:
 
\begin{theorem}\label{thm:Theorem_Main}
Let $\k$ be a perfect field and $X$ a del Pezzo surface of degree $5$ over $\k$. Then one of the following cases holds:
\begin{enumerate}
\item $\rk \, \NS(X) = 1$ and $X$ is obtained by blowing up $\p^{2}$ in a point $p=\lbrace p_{1},\dots,p_{5} \rbrace$ of degree $5$ with splitting field $F$ to a del Pezzo surface of degree $4$, and then blowing down the strict transform of the conic passing through $p$. Moreover we are in one of the following cases:
\label{cas:Case1_Main_Theorem}
\begin{enumerate}
\item $\Gal(F/\k) \simeq \Z/5\Z$ and $\Aut_{\k}(X)=\langle \widehat{\phi} \rangle \simeq \Z/5\Z$, where $\widehat{\phi}$ is the lift of a quadratic birational map of $\p^{2}_{\kb}$ of order five,
\item $\Gal(F/\k) \simeq \D_{5}$ and $\Aut_{\k}(X)=\lbrace \id \rbrace$,
\item $\Gal(F/\k) \simeq \GA(1,5)$ and $\Aut_{\k}(X)=\lbrace \id \rbrace$,
\item $\Gal(F/\k) \simeq \mathcal{A}_{5}$ and $\Aut_{\k}(X)=\lbrace \id \rbrace$,
\item $\Gal(F/\k) \simeq \Sym_{5}$ and $\Aut_{\k}(X)=\lbrace \id \rbrace$.
\end{enumerate}
\item $\rk \, \NS(X) = 5$, $\rk \, \NS(X)^{\Aut_{\k}(X)} = 1$ and $X$ is isomorphic to the blow-up of $\p^{2}$ in the four coordinate points with $\Aut_{\k}(X) \simeq \Sym_{5}$.
\label{cas:Case2_Main_Theorem}
\item $\rk \, \NS(X) \geq 2$, $\rk \, \NS(X)^{\Aut_{\k}(X)} = 2$ and $X$ is one of the following:
\label{cas:Case3_Main_Theorem}
\begin{enumerate}
\item $X$ is isomorphic to the blow-up of $\Ql^{L}$ in the three $\k$-rational points $p_{1}=([1:0],[1:0]), p_{2}=([0:1],[0:1]), p_{3}=([1:1],[1:1]) $, for some quadratic extension $L/\k$. Its isomorphism class depends only on the $\k$-isomorphism class of $L$. Moreover, $\Aut_{\k}(X) = \langle \widehat{\alpha},\widehat{\gamma} \rangle \times \langle \widehat{\beta} \rangle \simeq \Sym_{3}\times\Z/2\Z$, where $\widehat{\alpha}, \widehat{\beta}, \widehat{\gamma}$ are the lifts of involutions $\alpha, \beta, \gamma$ of $\Ql^{L}$, 
\label{s-cas:Case(a)_Case3_Main_Theorem}
\item $X$ is isomorphic to the blow-up of $\p^{2}$ in two points $r=\lbrace r_{1},r_{2} \rbrace$, $s=\lbrace s_{1},s_{2} \rbrace$ of degree $2$ with splitting field a quadratic extension $L/\k$. Its isomorphism class depends only on the $\k$-isomorphism class of $L$. Moreover, $\Aut_{\k}(X)=\langle \widehat{\alpha},\widehat{\beta} \rangle \simeq \D_{4} $, where $\widehat{\alpha}$ is the lift of an involution $\alpha$ of $\p^{2}$ and $\widehat{\beta}$ the lift of an automorphism $\beta$ of $\p^{2}$ of order four,
\label{s-cas:Case(b)_Case3_Main_Theorem}
\item $X$ is isomorphic to the blow-up of $\p^{2}$ in a point $q=\lbrace q_{1},q_{2},q_{3} \rbrace$ of degree $3$ with splitting field $F$ such that $\Gal(F/\k) \simeq \Z/3\Z$ and in a $\k$-rational point $r \in \p^{2}(\k)$. Its isomorphism class depends only on the $\k$-isomorphism class of $F$. Moreover, $\Aut_{\k}(X)=\langle \widehat{\alpha} \rangle \times \langle \widehat{\beta} \rangle \simeq \Z/3\Z \times \Z/2\Z $, where $\widehat{\alpha}$ is the lift of an automorphism of $\p^{2}$ of order three and $\widehat{\beta}$ is the lift of a birational quadratic involution of $\p^{2}$ with base-point $q$,
\label{s-cas:Case(c)_Case3_Main_Theorem}
\item $X$ is isomorphic to the blow-up of $\p^{2}$ in a point $q=\lbrace q_{1},q_{2},q_{3} \rbrace$ of degree $3$ with splitting field $F$ such that $\Gal(F/\k) \simeq \Sym_{3}$ and in a $\k$-rational point $r \in \p^{2}(\k)$. Its isomorphism class depends only on the $\k$-isomorphism class of $F$. Moreover, $\Aut_{\k}(X)=\langle \widehat{\alpha} \rangle \simeq \Z/2\Z $, where $\widehat{\alpha}$ is the lift of a birational quadratic involution of $\p^{2}$ with base-point $q$,
\label{s-cas:Case(d)_Case3_Main_Theorem}
\item $X$ is isomorphic to the blow-up of $\p^{2}$ in a point $q=\lbrace q_{1},q_{2},q_{3},q_{4} \rbrace$ of degree $4$ with splitting field $F$ such that $\Gal(F/\k) \simeq \Z/2\Z \times \Z/2\Z$. Its isomorphism class depends only on the $\k$-isomorphism class of $F$. Moreover, $\Aut_{\k}(X)=\langle \widehat{\alpha} \rangle \times \langle \widehat{\beta} \rangle \simeq \Z/2\Z \times \Z/2\Z$, where $\widehat{\alpha}, \widehat{\beta}$ are the lifts of involutions of $\p^{2}$,  
\label{s-cas:Case(e)_Case3_Main_Theorem}
\item $X$ is isomorphic to the blow-up of $\p^{2}$ in a point $q=\lbrace q_{1},q_{2},q_{3},q_{4} \rbrace$ of degree $4$ with splitting field $F$ such that $\Gal(F/\k) \simeq \Z/4\Z$. Its isomorphism class depends only on the $\k$-isomorphism class of $F$. Moreover, $\Aut_{\k}(X)=\langle \widehat{\alpha} \rangle \simeq \Z/4\Z $, where $\widehat{\alpha}$ is the lift of an automorphism of $\p^{2}$ of order four,
\label{s-cas:Case(f)_Case3_Main_Theorem}
\item $X$ is isomorphic to the blow-up of $\p^{2}$ in a point $q=\lbrace q_{1},q_{2},q_{3},q_{4} \rbrace$ of degree $4$ with splitting field $F$ such that $\Gal(F/\k) \simeq \D_{4}$. Its isomorphism class depends only on the $\k$-isomorphism class of the residue field of $q$. Moreover, $\Aut_{\k}(X)=\langle \widehat{\alpha} \rangle \simeq \Z/2\Z $, where $\widehat{\alpha}$ is the lift of an involution of $\p^{2}$,
\label{s-cas:Case(g)_Case3_Main_Theorem}
\item $X$ is isomorphic to the blow-up of $\p^{2}$ in a point $q=\lbrace q_{1},q_{2},q_{3},q_{4} \rbrace$ of degree $4$ with splitting field $F$ such that $\Gal(F/\k) \simeq \mathcal{A}_{4}$. Its isomorphism class depends only on the $\k$-isomorphism class of $F$. Moreover, $\Aut_{\k}(X)=\lbrace \id \rbrace$,
\label{s-cas:Case(h)_Case3_Main_Theorem}
\item $X$ is isomorphic to the blow-up of $\p^{2}$ in a point $q=\lbrace q_{1},q_{2},q_{3},q_{4} \rbrace$ of degree $4$ with splitting field $F$ such that $\Gal(F/\k) \simeq \Sym_{4}$. Its isomorphism class depends only on the $\k$-isomorphism class of $F$. Moreover, $\Aut_{\k}(X)=\lbrace \id \rbrace$,
\label{s-cas:Case(i)_Case3_Main_Theorem}
\item $X$ is isomorphic to the blow-up of $\mathcal{Q}^{L}$ in a point $p=\lbrace p_{1},p_{2},p_{3} \rbrace$ of degree $3$ with splitting field $F$ such that $\Gal(F/\k) \simeq \Gal(FL/L) \simeq \Z/3\Z$, for some quadratic extension $L/\k$. Its isomorphism class depends on the $\k$-isomorphism classes of $L$ and $F$. Moreover, $\Aut_{\k}(X)=\langle \widehat{\alpha} \rangle \times \langle \widehat{\Phi_{q}} \rangle \simeq \Z/3\Z \times \Z/2\Z$, where $\widehat{\alpha}$ is the lift of an automorphism of $\p^{2}_{L}$ of order three and $\widehat{\Phi_{q}}$ is the lift of a birational quadratic involution of $\p^{2}_{L}$ with a base-point $q$ of degree $3$,
\label{s-cas:Case(j)_Case3_Main_Theorem}
\item $X$ is isomorphic to the blow-up of $\mathcal{Q}^{L}$ in a point $p=\lbrace p_{1},p_{2},p_{3} \rbrace$ of degree $3$ with splitting field $F$ such that $\Gal(F/\k) \simeq \Gal(FL/L) \simeq \Sym_{3}$, for some quadratic extension $L/\k$. Its isomorphism class depends on the $\k$-isomorphism classes of $L$ and $F$. Moreover, $\Aut_{\k}(X)=\langle \widehat{\Phi_{q}} \rangle \simeq \Z/2\Z$, where $\widehat{\Phi_{q}}$ is the lift of a birational quadratic involution of $\p^{2}_{L}$ with base-point $q$ of degree $3$,
\label{s-cas:Case(k)_Case3_Main_Theorem}
\item $X$ is isomorphic to the blow-up of $\mathcal{Q}^{L}$ in a point $p=\lbrace p_{1},p_{2},p_{3} \rbrace$ of degree $3$ with splitting field $F$ such that $\Gal(F/\k) \simeq \Sym_{3}$ and $\Gal(FL/L) \simeq \Z/3\Z$, for some quadratic extension $L/\k$. Its isomorphism class depends on the $\k$-isomorphism classes of $L$ and $F$. Moreover, $\Aut_{\k}(X)=\langle \widehat{\Phi_{q}} \rangle \simeq \Z/2\Z$, where $\widehat{\Phi_{q}}$ is the lift of a birational quadratic involution of $\p^{2}_{L}$ with base-point $q$ of degree $3$.
\label{s-cas:Case(l)_Case3_Main_Theorem}
\end{enumerate}
\item $\rk \, \NS(X) \geq 2$, $\rk \, \NS(X)^{\Aut_{\k}(X)} = 3$ and $X$ is isomorphic to the blow-up of $\p^{2}$ in two points $r=\lbrace r_{1},r_{2} \rbrace$, $s=\lbrace s_{1},s_{2} \rbrace$ of degree $2$ whose splitting fields are respectively non $\k$-isomorphic quadratic extensions $L/\k$, $L'/\k$. Its isomorphism class depends only on the $\k$-isomorphism classes of $L$ and $L'$. Moreover, $\Aut_{\k}(X)=\langle \widehat{\alpha} \rangle \times \langle \widehat{\beta} \rangle \simeq \Z/2\Z \times \Z/2\Z $, where $\widehat{\alpha}, \widehat{\beta}$ are the lifts of two involutions $\alpha, \beta$ of $\p^{2}$.
\label{cas:Case4_Main_Theorem}
\end{enumerate}
\end{theorem}

This paper is organized as follows. In Section \ref{sec:section_2} we recall some basic facts about del Pezzo surfaces as well as some of their arithmetic properties; it also gathers some technical lemmas we will need and refer to in Section~\ref{sec:section_3}. Section \ref{sec:section_3} is devoted to the proof of Theorem \ref{thm:Theorem_Main}, with which we extend the analysis of \cite{sz21} for del Pezzo surfaces of degree $ d \geq 6 $. We classify del Pezzo surfaces of degree $5$ over a perfect field $\k$ according to the possible actions of the Galois group $\Gal(\kb/\k)$ on the graph of $(-1)$-curves (see Figure \ref{Fig:Figure_options_for_rho(Gal(kbarre/k))_actions_on_Pikbarre}). We go through all the cases in subsections \ref{subsec:subsection_1}, \ref{subsec:subsection_2}, \ref{subsec:subsection_3} and \ref{subsec:subsection_4}, we give a geometric realization of del Pezzo surfaces of degree $5$, provide some examples of explicit constructions of such surfaces, and in addition we describe their automorphism groups.\\

During the redaction of this paper, \cite{zai23} appeared, where the results about automorphism groups of del Pezzo surfaces of degree $5$ over a field $\k$ have been proven independently. In this paper we also give a precise geometric description of generators of the automorphism groups of del Pezzo surfaces of degree $5$, and of the surfaces themselves.\\

\noindent \textbf{Acknowledgements}\\
For all interesting discussions about the subject, for their tremendous help and for being available and supportive throughout, I am very grateful to my PhD advisors Andrea Fanelli and Susanna Zimmermann, without whom this work would not have been possible.
I thank Paolo Cascini for hosting me at Imperial College where part of this work was made, and for interesting comments. I also thank Jihun Park for hosting me at IBS Center for Geometry and Physics.
I would like to thank Fabio Bernasconi, Jérémy Blanc and Julia Schneider for interesting comments and very helpful discussions.
I am very grateful to the two anonymous referees for their very careful and helpful reading of this paper. This work was supported by the French National Centre for Scientific Research (CNRS), the University of Angers, the ANR project FIBALGA [ANR-18-CE40-0003] and the ERC Saphidir.

\section{Conventions and preliminary results}
\label{sec:section_2}

For us, a variety is a geometrically integral projective scheme defined over a field $\k$, and a surface is a variety of dimension $2$. Throughout the article, $\k$ denotes a perfect field and $\kb$ an algebraic closure. Let $X$ be a smooth projective surface over $\k$. 
We denote by $X(\k)$ the set of $\k$-rational points of $X$. The Galois group $\Gal(\kb/\k)$ acts on $X_{\kb}:=X \times_{\Spec(\k)} \Spec(\kb)$ through the second factor. A point of degree $d$ of $X$ is a $\Gal(\kb/\k)$-orbit $p=\lbrace p_{1},\dots,p_{d} \rbrace \subset X(\kb)$ of cardinality $d\geq1$. Let $L/\k$ be an algebraic extension of $\k$ such that all the components $p_{i}$ are $L$-rational points. By the blow-up of $p$ we mean the blow-up of these $d$ points, which is a birational morphism $\pi : X' \rightarrow X$ defined over $\k$ with exceptional divisor $E=E_{1}+\cdots+E_{d}$, where the $E_{i}$ are disjoint $(-1)$-curves defined over $L$, meaning $E_{i} \simeq \p^{1}_{L}$ and $E_{i}^{2}=-1$ for $i=1,\dots,d$, and $E^{2}=-d$. We call $E$ the exceptional divisor of $p$. More generally, a birational map $f : X \dashrightarrow X'$ is defined over $\k$ if and only if the birational map $f \times \id : X_{\kb} \dashrightarrow X'_{\kb}$ is $\Gal(\kb/\k)$-equivariant. In  particular, $X \simeq X'$ if and only if there is a $\Gal(\kb/\k)$-equivariant isomorphism $X_{\kb} \rightarrow X'_{\kb}$ (see also \cite[§2.4]{BS64}). The surface $X$ is said to be rational (or $\k$-rational) if it is $\k$-birational to $\p^{2}$. If $X(\k) \neq \varnothing$, we have $\Pic(X)=\Pic(X_{\kb})^{\Gal(\kb/\k)}$ (\hspace{1sp}\cite[Lemma 6.3(iii)]{sans81}), and this holds in particular if $X$ is rational because then it has a $\k$-rational point by the Lang-Nishimura theorem. Since numerical equivalence is stable under $\Gal(\kb/\k)$-action, also algebraic equivalence is, and hence $\NS(X)=\NS(X_{\kb})^{\Gal(\kb/\k)}$. If not otherwise mentioned, any surface, curve, point, morphism or rational map will be defined over the perfect field $\k$. By a geometric component of a curve $C$ (resp. a point $p=\lbrace p_{1},\dots,p_{d} \rbrace$), we mean an irreducible component of $C_{\kb}$ (resp. one of $p_{1},\dots,p_{d}$). For $n\geq1$, any smooth projective variety $X$ over $\k$ with $X(\k) \neq \varnothing$ such that $X_{\kb} \simeq \p^{n}_{\kb}$ is in fact already isomorphic to $\p^{n}$ over $\k$, by Châtelet's theorem. This means in particular that $\p^{2}$ is the only rational del Pezzo surface of degree $9$ and that a smooth curve of genus $0$ with rational points is isomorphic to $\p^{1}$. For a surface $X$, we denote by $\Aut_{\k}(X)$ its group of $\k$-automorphisms, which is the subgroup of automorphisms of $X_{\kb}$ that commute with the action of $\Gal(\kb/\k)$ (\hspace{1sp}\cite[§2.3]{BS64}). For a $\Gal(\kb/\k)$-invariant collection $p_{1},\dots,p_{r} \in X(\kb)$ of points, we denote by $\Aut_{\k}(X,p_{1},\dots,p_{r})$, resp. $\Aut_{\k}(X,\lbrace p_{1},\dots,p_{r} \rbrace)$, the subgroups of $\Aut_{\k}(X)$ fixing each $p_{i}$, resp. preserving the set $\lbrace p_{1},\dots,p_{r} \rbrace$.

\subsection{A quick review on del Pezzo surfaces}

A del Pezzo surface is a smooth projective surface $X$ with ample anticanonical divisor class $-K_{X}$. Then $X_{\kb}$ is isomorphic to $ \p^{1}_{\kb} \times \p^{1}_{\kb} $ or to the blow-up of $\p^{2}_{\kb} $ in at most $8$ points in general position. We recall that $ K_{X}^{2} $ is called the degree of $X$ and that $ 1 \leq K_{X}^{2} \leq 9 $.\\

For the sake of completeness, we recall the following well-known result about birational maps between del Pezzo surfaces, which will be used several times throughout this article.

\begin{lemma}[{\hspace{1sp}\cite[CH. IV, Corollary 24.5.2]{man86}}]
Let $v : X \rightarrow Z $ be any birational morphism between smooth surfaces. If $X$ is a del Pezzo surface, then $Z$ is also a del Pezzo surface.
\end{lemma}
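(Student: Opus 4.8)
The plan is to reduce the statement to the case of a single blow-down and then verify the Nakai--Moishezon criterion for $-K_Z$. First, since ampleness of a divisor is unaffected by extension of the base field, $X_{\kb}$ is again a del Pezzo surface, and it suffices to show that $Z_{\kb}$ is one (if $(-K_Z)_{\kb} = -K_{Z_{\kb}}$ is ample, so is $-K_Z$); hence I may assume $\k = \kb$. Any birational morphism of smooth projective surfaces then factors as a finite composition of contractions of $(-1)$-curves (Castelnuovo's contractibility criterion), so by induction on the length of such a factorization it is enough to treat the case where $v : X \to Z$ is the blow-up of the smooth projective surface $Z$ at a point $p$, with exceptional $(-1)$-curve $E$.

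In that situation $K_X = v^{*}K_Z + E$, with $E^2 = -1$ and $K_X \cdot E = -1$ by adjunction. Squaring $v^{*}K_Z = K_X - E$ gives $K_Z^2 = K_X^2 + 1 \geq 2 > 0$, using $K_X^2 \geq 1$ since $X$ is del Pezzo. For an arbitrary irreducible curve $C \subset Z$, write $v^{*}C = \widetilde{C} + mE$, where $\widetilde{C}$ is the strict transform and $m = \mathrm{mult}_p C \geq 0$. By the projection formula and $v^{*}(-K_Z) = -K_X + E$,
\[
(-K_Z)\cdot C = v^{*}(-K_Z)\cdot v^{*}C = (-K_X+E)\cdot(\widetilde{C}+mE) = (-K_X)\cdot\widetilde{C} + E\cdot\widetilde{C},
\]
since the contributions $-m(K_X\cdot E) = m$ and $m E^2 = -m$ cancel. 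Now $(-K_X)\cdot\widetilde{C} > 0$ because $-K_X$ is ample and $\widetilde{C}$ is an irreducible curve, while $E\cdot\widetilde{C} \geq 0$ because $\widetilde{C} \neq E$; hence $(-K_Z)\cdot C > 0$. By Nakai--Moishezon, $-K_Z$ is ample, so $Z$ is a del Pezzo surface, completing the induction.

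There is no serious obstacle here: the content is packaged into the classical factorization of birational morphisms of smooth surfaces into blow-downs, and the rest is a one-line intersection computation combined with the numerical characterization of ampleness. If one prefers to avoid the reduction to $\kb$ and argue directly over $\k$ in the framework of this paper, the only change is that $E = E_1 + \cdots + E_d$ is the exceptional divisor of a point of degree $d$ (a Galois orbit of pairwise disjoint $(-1)$-curves); since $K_X \cdot E_i = E_i^2 = -1$ for each $i$, the same cancellation occurs and $K_Z^2 = K_X^2 + d$, so the argument goes through verbatim.
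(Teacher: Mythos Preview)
Your argument is correct: the reduction to $\kb$, the factorization into single blow-downs, and the Nakai--Moishezon computation all go through as written, and the cancellation $m(-K_X\cdot E)+mE^2=0$ is the key point that makes the estimate clean. Note, however, that the paper does not actually prove this lemma at all; it is simply quoted from Manin \cite[CH.~IV, Corollary~24.5.2]{man86} without argument. Your proof is the standard one (and is essentially what one finds in Manin or in any textbook treatment), so there is no meaningful difference in approach to discuss---you have supplied what the paper chose to outsource.
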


In this paper, we will be interested in del Pezzo surfaces that are minimal, in the following sense.
 
\begin{definition}
Let $X$ be a smooth surface, $B$ a point or a smooth curve and $\pi : X \rightarrow B$ a surjective morphism with connected fibres such that $-K_{X}$ is $\pi$-ample. We call $\pi : X \rightarrow B $ a Mori fibre space if $\rk \, \NS(X/B)=1$, where $\NS(X/B) := \NS(X)/\pi^{\ast}\NS(B) $ is the relative Néron-Severi group.
\label{def:definition_Mori_fibre_space}
A del Pezzo surface $X$ is said to be an $\Aut_{\k}(X)$-Mori fibre space if $\rk \, \NS(X)^{\Aut_{\k}(X)}=1$.
\end{definition}

Notice that if $B = \lbrace \ast \rbrace$ is a point, a Mori fibre space $X$ is a del Pezzo surface with $\rk \, \NS(X)=1$.
Mori fibre spaces play an important role in the birational classification of algebraic varieties. They arise as extremal contractions in the Minimal Model Program (MMP) starting for instance from rational surfaces.\\

We have the following criterion of rationality for del Pezzo surfaces of large degrees. The second part of the statement is a classical result of Enriques.

\begin{proposition}[{[\!\cite{enr97}, \cite[Theorem 3.15]{man66}, \cite{sd72}, \cite[§4]{isk96}, see also \cite[Theorem 2.1]{valv13}]}]
Let $ \k $ be a field. Let $X$ be a smooth del Pezzo surface of degree $ d \geq 5 $. If $ X(\k) \neq \varnothing $, then $ X $ is rational. This hypothesis is automatically satisfied if $ d=5 $. 
\label{Prop:Proposition_rationality_of_dP_5_over_a_field}
\end{proposition}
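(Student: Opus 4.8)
The plan is to dispatch the two assertions separately and reduce each to the classical rationality results for surfaces over $\k$ that are quoted in the statement.

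\textbf{Rationality when $X(\k)\neq\varnothing$.} I would run a $\k$-MMP starting from $X$. Every step contracts a $\Gal(\kb/\k)$-stable union of pairwise disjoint $(-1)$-curves, so the canonical degree does not decrease, and by the cited lemma of Manin (\!\cite{man86}) every intermediate surface that is not a conic bundle is again a del Pezzo surface. The MMP terminates at a $\k$-minimal geometrically rational surface $X'$ — either a del Pezzo surface with $\rk\,\NS(X')^{\Gal(\kb/\k)}=1$, or a conic bundle $X'\to C$ with $\rk\,\NS(X'/C)=1$ — and the birational morphism $X\to X'$ carries the given $\k$-point of $X$ to a $\k$-point of $X'$; moreover $K_{X'}^{2}\geq K_{X}^{2}=d\geq5$. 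If $X'$ is a conic bundle then $C$ has a $\k$-point, so $C\simeq\p^{1}$, and the number $8-K_{X'}^{2}\leq 3$ of degenerate fibres is small enough. In every case $X'$ is a $\k$-minimal geometrically rational surface with a $\k$-point and $K_{X'}^{2}\geq 5$, so Iskovskikh's rationality criterion (\!\cite[§4]{isk96}, see also \cite[Theorem 2.1]{valv13}) applies and shows $X'$, hence $X$, is $\k$-rational. (When $X'_{\kb}\simeq\p^{2}_{\kb}$ or $\p^{1}_{\kb}\times\p^{1}_{\kb}$ this is more elementary: by Châtelet's theorem $X'\simeq\p^{2}$ in the first case, and in the second $X'$ is a smooth quadric surface which, having a $\k$-point, is $\k$-rational by projection from that point.)

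\textbf{The case $d=7$.} Here I would argue directly. Over $\kb$ the surface $X_{\kb}$ is the blow-up of $\p^{2}_{\kb}$ at two points and carries exactly three $(-1)$-curves, which form a chain; the middle one, $C$, is the unique $(-1)$-curve meeting two others, hence is fixed by the Galois action and defined over $\k$. Contracting $C$ gives a $\k$-morphism $X\to Q$ onto a del Pezzo surface $Q$ of degree $8$ with $Q_{\kb}\simeq\p^{1}_{\kb}\times\p^{1}_{\kb}$, i.e. a smooth quadric surface, and the image of $C$ is a $\k$-rational point of $Q$. Projecting $Q$ from that point shows $Q$ is $\k$-rational with Zariski-dense $\k$-points; pulling a general one back to $X$ gives $X(\k)\neq\varnothing$, and $X$ is $\k$-rational since it is $\k$-birational to $Q$.

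\textbf{The case $d=5$.} I would invoke the classical theorem (Enriques, Swinnerton-Dyer; see \cite[§4]{isk96} and \cite[Theorem 2.1]{valv13}) that every del Pezzo surface of degree $5$ over any field is $\k$-rational — note this needs no hypothesis on rational points — and then conclude $X(\k)\neq\varnothing$ from the Lang–Nishimura theorem, since $\p^{2}$ has $\k$-points. The main obstacle is exactly this unconditional rationality of quintic del Pezzo surfaces: unlike the degrees $7,8,9$ it is not reduced to $\p^{2}$, a quadric, or a short conic bundle by elementary equivariant contractions, and its proof genuinely uses the $\Gal(\kb/\k)$-action on the ten $(-1)$-curves to produce a $\k$-rational description of $X$ (for instance, the blow-up realisation at a point of degree $5$ appearing in case (1) of Theorem \ref{thm:Theorem_Main}). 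Everything else in the proof is bookkeeping with $\Gal(\kb/\k)$-stable contractions together with the already-quoted results of Manin, Châtelet, and Lang–Nishimura.
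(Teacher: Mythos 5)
Your write-up is essentially sound, but you should be aware that the paper gives no proof of this proposition at all: it is quoted from Iskovskikh \cite[\S4]{isk96} and V\'arilly-Alvarado \cite[Theorem 2.1]{valv13}, with the $d=5$ point-existence attributed to Enriques. Measured against that, your proposal defers the two genuinely hard inputs to exactly the same sources, so in substance it is the same treatment. Two remarks on the details. First, the ``Iskovskikh rationality criterion'' you invoke at the end of your MMP reduction (a geometrically rational surface with a $\k$-point and $K^{2}\geq 5$ is $\k$-rational) is precisely the cited theorem, and it applies to $X$ directly; the whole $\k$-equivariant MMP scaffolding, the transfer of the $\k$-point to the minimal model, and the count of degenerate fibres of the conic bundle therefore add nothing and make the argument look more self-contained than it is — as written, part one is circular modulo the citation, which is acceptable only because the paper itself proceeds by citation. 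Second, your $d=7$ argument is a correct and genuinely self-contained supplement: the middle curve of the chain of three $(-1)$-curves is Galois-invariant, its contraction is defined over $\k$ and produces a degree $8$ surface $Q$ with a $\k$-rational point (the image of the contracted curve), and with a rational point one has $\Pic(Q)=\Pic(Q_{\kb})^{\Gal(\kb/\k)}$, so the invariant class of type $(1,1)$ is represented by a line bundle over $\k$ and embeds $Q$ as a quadric in $\p^{3}$, making the stereographic projection legitimate; this small descent step is worth saying explicitly. For $d=5$ you candidly reduce to the classical theorem of Enriques/Swinnerton-Dyer that a quintic del Pezzo surface always has a $\k$-point (equivalently is $\k$-rational), which is the real content of the second assertion and is exactly what the paper also leaves to the literature; so there is no gap relative to the paper, but neither your text nor the paper proves that statement.
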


\subsection{Automorphisms of del Pezzo surfaces of degree 9 and 8}

Let us recall some transitivity results for the action of automorphism groups of $\p^{2}$ and of quadric surfaces on some special points. Lemma \ref{lem:Lemme_6.11_&_6.15_article_Julia} refers to the action of $\PGL_{3}(\k)$ onto points of degree $1 < d \leq 4$ in general position in $\p^{2}$, Lemma \ref{lem:Lemme1_action_trans_Aut(Q^L)_triplets_k_points} to the action of $\Aut_{\k}(\Ql^{L})$ onto triplets of $\k$-rational points in general position in $\Ql^{L}$ and Lemma \ref{lem:Lemma_transitive_action_Aut_k(Q^L)_points_of_degree_3} to the action of $\Aut_{\k}(\Ql^{L})$ onto points of degree $3$ in general position in $\Ql^{L}$.

\begin{lemma}[{\hspace{1sp}\cite[Lemmas 6.11 and 6.15]{sch19}}] Let $L/\k$ be a finite Galois extension. Let $p_{1},\dots,p_{4} \in \p^{2}(L)$, no three collinear, and $q_{1},\dots,q_{4} \in \p^{2}(L)$, no three collinear, be points such that the sets $\lbrace p_{1},\dots,p_{4} \rbrace$ and $\lbrace q_{1},\dots,q_{4} \rbrace$ are invariant under the Galois action of $\Gal(L/\k)$. Assume that for all $g \in \Gal(L/\k)$ there exists $\sigma \in \Sym_{4}$ such that $g(p_{i})=p_{\sigma(i)}$ and $g(q_{i})=q_{\sigma(i)}$ for $i=1,\dots,4$. Then, there exists $A \in \PGL_{3}(\k)$ such that $Ap_{i}=q_{i}$ for $i=1,\dots,4$.\\ Moreover, we have the following specific result for Galois-orbits of size $4$ and $2$ respectively:
\begin{enumerate}
\item If $\lbrace p_{1},\dots,p_{4} \rbrace$ is an orbit of size $4$ of the action of $\Gal(\kb/\k)$, then there exists $ A \in \PGL_{3}(\k) $ such that $Ap_{i}=[1:a_{i}:a_{i}^{2}]$ with $a_{i} \in \mathbb{A}^{1}(\kb)$ forming an orbit $\lbrace a_{1},\dots,a_{4} \rbrace \subset \mathbb{A}^{1}(\kb) $ of size $4$ under the Galois action.
\label{it:item_(a)_Lemme_6.15_Julia}
\item If $\lbrace p_{1},p_{2} \rbrace$ and $\lbrace p_{3},p_{4} \rbrace$ form two orbits of size $2$, then there exists $A \in \PGL_{3}(\k)$ such that $Ap_{i}=[1:a_{i}:0]$ for $i=1,2$ and $Ap_{i}=[1:0:a_{i}]$ for $i=3,4$ with $a_{i} \in \mathbb{A}^{1}(\kb)$ and $\lbrace a_{1},a_{2} \rbrace$ as well as $\lbrace a_{3},a_{4} \rbrace $ form an orbit in $\mathbb{A}^{1}(\kb)$.
\label{it:item_(b)_Lemme_6.15_Julia}\\
In both cases, the field of definition of $\lbrace p_{1},\dots,p_{4} \rbrace$ is $\k(a_{1},\dots,a_{4})$.
\end{enumerate}
\label{lem:Lemme_6.11_&_6.15_article_Julia}
\end{lemma}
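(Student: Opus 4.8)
The plan is to prove the general statement first, by a projective-frame normalization together with a Galois-descent argument, and then to deduce the two normal forms (a) and (b) from it by exhibiting explicit model configurations.

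\textbf{General statement.} Since $\PGL_3$ acts simply transitively on ordered frames of $\p^2$ (quadruples of points, no three collinear), there is a unique $B\in\PGL_3(L)$ with $B(p_i)=e_i$ for $i=1,\dots,4$, where $e_1=[1:0:0]$, $e_2=[0:1:0]$, $e_3=[0:0:1]$, $e_4=[1:1:1]$, and likewise a unique $C\in\PGL_3(L)$ with $C(q_i)=e_i$. I would set $A:=C^{-1}B$, so that $A(p_i)=q_i$ over $L$, and then check that $A$ is defined over $\k$, i.e. $A^g=A$ for every $g\in\Gal(L/\k)$. For such a $g$ let $\sigma=\sigma_g\in\Sym_4$ be the permutation with $g(p_i)=p_{\sigma(i)}$ (unique since the $p_i$ are distinct; by hypothesis it also satisfies $g(q_i)=q_{\sigma(i)}$), and let $M_\tau\in\PGL_3(\k)$ denote the projectivity normalizing the frame by $M_\tau(e_i)=e_{\tau(i)}$ (these have entries in the prime field, so are $\Gal$-fixed). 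Applying $g$ to the defining relations of $B$ and using that the $e_i$ are $\Gal$-fixed gives $B^g(p_j)=e_{\sigma^{-1}(j)}=M_{\sigma^{-1}}(B(p_j))$ for all $j$, hence $B^g=M_{\sigma^{-1}}B$ by uniqueness, and similarly $C^g=M_{\sigma^{-1}}C$ with the \emph{same} $M_{\sigma^{-1}}$; therefore $A^g=(C^g)^{-1}B^g=C^{-1}M_{\sigma^{-1}}^{-1}M_{\sigma^{-1}}B=A$. Since $\PGL_3(L)^{\Gal(L/\k)}=\PGL_3(\k)$ (a $\Gal$-invariant class is represented by a matrix which, after removing a Hilbert~90 coboundary in $L^\times$, has entries in $\k$), we conclude $A\in\PGL_3(\k)$.

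\textbf{Statement (a).} As the orbit has size $4$, $[\k(p_1):\k]=4$; I would pick a primitive element $\theta$ of $\k(p_1)/\k$ (possible since $\k$ is perfect), set $a_1:=\theta$, and for $g\in\Gal(\kb/\k)$ with $g(p_1)=p_j$ set $a_j:=g(\theta)$. This is well defined because $\k(\theta)=\k(p_1)$, and it produces a size-$4$ Galois orbit $\{a_1,\dots,a_4\}\subset\mathbb{A}^1(\kb)$ with $g(a_i)=a_{\sigma_g(i)}$ whenever $g(p_i)=p_{\sigma_g(i)}$. Put $q_i:=[1:a_i:a_i^2]$: these are distinct points on the smooth conic $\{x_0x_2=x_1^2\}$, so no three are collinear, and $\{q_1,\dots,q_4\}$ is $\Gal$-stable realizing exactly the same permutations. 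The general statement, applied over $L=\k(a_1,\dots,a_4)$ (the Galois closure of $\k(p_1)$), yields $A\in\PGL_3(\k)$ with $A(p_i)=q_i$. Finally $\k(a_j)=g(\k(p_1))=\k(p_j)$, so $\k(a_1,\dots,a_4)$ is the compositum $\k(p_1)\cdots\k(p_4)$, i.e. the splitting field of $\{p_1,\dots,p_4\}$.

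\textbf{Statement (b).} Here $g$ acts separately on $\{p_1,p_2\}$ and $\{p_3,p_4\}$, so $\sigma_g\in\langle(12),(34)\rangle$, and $[\k(p_1):\k]=[\k(p_3):\k]=2$. I would choose primitive elements $\alpha$ of $\k(p_1)/\k$ and $\beta$ of $\k(p_3)/\k$ (which, together with their conjugates, are all nonzero, since a zero conjugate would force $\alpha$ or $\beta$ to be $0$), label the conjugates as in (a) so that the size-$2$ orbits $\{a_1,a_2\}$ and $\{a_3,a_4\}$ satisfy $g(a_i)=a_{\sigma_g(i)}$, and set $q_i:=[1:a_i:0]$ for $i=1,2$ and $q_i:=[1:0:a_i]$ for $i=3,4$. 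Using $a_1\neq a_2$, $a_3\neq a_4$ and $a_i\neq 0$ one checks directly that no three of the $q_i$ are collinear (every three-element subset contains either $\{q_1,q_2\}$, which spans $\{x_2=0\}$, or $\{q_3,q_4\}$, which spans $\{x_1=0\}$, and neither line contains the remaining point), while $\{q_1,\dots,q_4\}$ is $\Gal$-stable with the same permutations; applying the general statement gives the required $A\in\PGL_3(\k)$, and $\k(a_1,\dots,a_4)=\k(p_1)\k(p_3)=\k(p_1,\dots,p_4)$.

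\textbf{Main obstacle.} The only genuinely delicate point is the descent step in the general statement: one has to pin down the direction of the $\Gal$-action and the bookkeeping of the frame-normalizing projectivities $M_{\sigma_g}$ precisely, so that the twisting cochains of $B$ and of $C$ coincide and therefore cancel in $A=C^{-1}B$. Once that is set up correctly, everything else — general position of the model points, existence of primitive elements over a perfect field, and the identification of $\k(a_1,\dots,a_4)$ with the splitting field — is routine.
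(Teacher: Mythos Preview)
The paper does not give a proof of this lemma; it is quoted verbatim from \cite[Lemmas 6.11 and 6.15]{sch19} and used as a black box. Your proof is correct and is exactly the natural argument one would expect in the cited reference: use simple transitivity of $\PGL_3$ on projective frames to produce $A$ over $L$, then observe that the twisting cochains of $B$ and $C$ coincide (because the hypothesis forces the \emph{same} permutation $\sigma_g$ on both quadruples), so $A=C^{-1}B$ is $\Gal(L/\k)$-invariant, and descend via Hilbert~90 for $\mathbb{G}_m$. The derivations of the normal forms in (a) and (b) by choosing primitive elements and checking general position of the model points are likewise standard and correctly carried out.

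Two small remarks that do not affect correctness. First, the claim that the frame-normalizing projectivities $M_\tau$ have entries in the prime field is true but deserves one line of justification (the $e_i$ all have coordinates $0$ or $1$, and solving for $M_\tau$ gives entries in $\{0,\pm1\}$); what you actually need is only $M_\tau\in\PGL_3(\k)$, which is immediate from simple transitivity over $\k$. Second, in (b) your argument that the $a_i$ are nonzero is fine, but you could alternatively sidestep it by taking $q_i=[1:a_i:0]$ and $q_i=[0:a_i:1]$ (or any affine-linear shift of the $a_i$); this is a matter of taste.
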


The argument of Lemma \ref{lem:Lemme_6.11_&_6.15_article_Julia} can be applied to show the following analogue on $\p^{1}$.

\begin{lemma}[{\hspace{1sp}\cite[Remark 2.7]{sz21}}]
Let $F/\k$ be a finite extension. Let $p_{1},p_{2},p_{3},q_{1},q_{2},q_{3} \in \p^{1}(F)$ such that the sets $\lbrace p_{1},p_{2},p_{3} \rbrace$ and $\lbrace q_{1},q_{2},q_{3} \rbrace$ are $\Gal(\kb/\k)$-invariant. Suppose that for any $g \in \Gal(F/\k)$ there exists $\sigma \in \Sym_{3}$ such that $p_{i}^{g}=p_{\sigma(i)}$ and $q_{i}^{g}=q_{\sigma(i)}$ for $i=1,2,3$. Then there exists $\alpha \in \PGL_{2}(\k)$ such that $\alpha(p_{i}) = q_{i}$ for $i=1,2,3$.
\label{rem:Remarque_2.7_suivant_Lemme_2.6}
\end{lemma} 

Let $L=\k(a_{1})$ be a quadratic extension of $\k$ and let $t^{2}+\overline{a}t+\tilde{a}=(t-a_{1})(t-a_{2}) \in \k[t]$ be the minimal polynomial of $a_{1}$. Recall that the surface $\Ql^{L}$ is the $\k$-structure on $\p^{1}_{L}\times\p^{1}_{L}$ given by $ \left([u_{0}:u_{1}],[v_{0}:v_{1}]\right) \mapsto \left([v_{0}^{g}:v_{1}^{g}],[u_{0}^{g}:u_{1}^{g}]\right) $, where $g$ is the generator of the Galois group $\Gal(L/\k)$, and that $\Ql^{L}$ is isomorphic to the quadric surface given by $WZ = X^{2}+\overline{a}XY+\tilde{a}Y^{2}$ in $\p^{3}_{WXYZ}$ (\hspace{1sp}\cite[Lemma 3.3]{sz21}). The automorphism group of $\mathcal{Q}^{L}$ over $\k$ is given by $ \Aut_{\k}(\mathcal{Q}^{L}) \simeq \lbrace \left(A,A^{g}\right) \, \vert \, A \in \PGL_{2}(L) \rbrace \rtimes \langle (u,v) \overset{\tau}{\mapsto} (v,u) \rangle $ (\hspace{1sp}\cite[Lemma 3.5]{sz21}). The following lemmas give ways to move points on the surface $\mathcal{Q}^{L}$.
 
\begin{lemma}
Let $L/\k$ be a quadratic extension and let $ p_{1}, p_{2}, p_{3} \in \mathcal{Q}^{L}(\k) $ be three $\k$-rational points contained in pairwise distinct rulings of $ \mathcal{Q}^{L}_{L} $. Then there exists $ \alpha \in \Aut_{\k}(\mathcal{Q}^{L}) $ such that $ \alpha(p_{1}) = \left([1:0],[1:0]\right) $, $ \alpha(p_{2}) = \left([0:1],[0:1]\right) $ and $ \alpha(p_{3}) = \left([1:1],[1:1]\right) $.
\label{lem:Lemme1_action_trans_Aut(Q^L)_triplets_k_points}
\end{lemma}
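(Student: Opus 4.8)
The plan is to reduce the statement to the (sharp) $3$-transitivity of $\PGL_{2}(L)$ on $\p^{1}(L)$, after first identifying the $\k$-rational points of the twisted quadric $\mathcal{Q}^{L}$.

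First I would describe $\mathcal{Q}^{L}(\k)$. Since the $\k$-structure on $\p^{1}_{L}\times\p^{1}_{L}$ defining $\mathcal{Q}^{L}$ is $\left([u_{0}:u_{1}],[v_{0}:v_{1}]\right) \mapsto \left([v_{0}^{g}:v_{1}^{g}],[u_{0}^{g}:u_{1}^{g}]\right)$, a point is $\k$-rational if and only if it is fixed by this involution, i.e.\ if and only if it has the form $(u,u^{g})$ for some $u\in\p^{1}(L)$. Hence we may write $p_{i}=(u_{i},u_{i}^{g})$ with $u_{i}\in\p^{1}(L)$ for $i=1,2,3$. Moreover, two such points $(u_{i},u_{i}^{g})$ and $(u_{j},u_{j}^{g})$ lie on distinct rulings of $\mathcal{Q}^{L}_{L}=\p^{1}_{L}\times\p^{1}_{L}$ — that is, on distinct fibres of both projections — precisely when $u_{i}\neq u_{j}$ and $u_{i}^{g}\neq u_{j}^{g}$, and the second condition follows from the first by applying $g$. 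So the hypothesis of the lemma amounts to saying that $u_{1},u_{2},u_{3}$ are pairwise distinct points of $\p^{1}(L)$.

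Then, since $\PGL_{2}(L)$ acts sharply $3$-transitively on $\p^{1}(L)$, there is $A\in\PGL_{2}(L)$ with $Au_{1}=[1:0]$, $Au_{2}=[0:1]$ and $Au_{3}=[1:1]$. By the description of $\Aut_{\k}(\mathcal{Q}^{L})$ recalled above, $\alpha:=(A,A^{g})$ lies in $\Aut_{\k}(\mathcal{Q}^{L})$, and $\alpha(p_{i})=\left(Au_{i},A^{g}u_{i}^{g}\right)=\left(Au_{i},(Au_{i})^{g}\right)$. As $[1:0]$, $[0:1]$ and $[1:1]$ are $\k$-rational points of $\p^{1}$, they are fixed by $g$, so $\alpha(p_{1})=\left([1:0],[1:0]\right)$, $\alpha(p_{2})=\left([0:1],[0:1]\right)$ and $\alpha(p_{3})=\left([1:1],[1:1]\right)$, as desired.

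There is no real obstacle here: the only points requiring care are the identification of $\mathcal{Q}^{L}(\k)$ with $\{(u,u^{g}) : u\in\p^{1}(L)\}$ and the translation of the ``pairwise distinct rulings'' hypothesis into the pairwise distinctness of the $u_{i}$; once these are in place, the statement is an immediate consequence of the $3$-transitivity of $\PGL_{2}$, and in fact the extra generator $\tau$ of $\Aut_{\k}(\mathcal{Q}^{L})$ is not needed.
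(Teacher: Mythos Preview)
Your proof is correct and follows essentially the same approach as the paper's: both identify the $\k$-rational points of $\mathcal{Q}^{L}$ as pairs $(u,u^{g})$ with $u\in\p^{1}(L)$, and both produce the desired automorphism as $(A,A^{g})$ for a suitable $A\in\PGL_{2}(L)$. The only cosmetic difference is that the paper writes down $A$ explicitly (solving for the coefficients sending $[a:b],[c:d],[e:f]$ to $[1:0],[0:1],[1:1]$), whereas you invoke the sharp $3$-transitivity of $\PGL_{2}(L)$ on $\p^{1}(L)$ to obtain $A$.
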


\begin{proof}
We have $ p_{1}=\left([a:b],[a^{g}:b^{g}]\right) $, $ p_{2}=\left([c:d],[c^{g}:d^{g}]\right) $, $ p_{3}=\left([e:f],[e^{g}:f^{g}]\right) $ for some $ a, b, c, d, e, f \in L $, and $ ad-bc \neq 0 $ because $ p_{1} $ and $p_{2}$ are not on the same ruling of $ \mathcal{Q}^{L}_{L} \simeq \mathbb{P}^{1}_{L} \times \mathbb{P}^{1}_{L} $.
It follows that the map $ A : \left[u:v\right] \mapsto \left[\frac{d}{\alpha}u-\frac{c}{\alpha}v:\frac{-b}{\beta}u+\frac{a}{\beta}v\right] $, where $ \alpha, \beta \in \k^{*} $ are such that $ \left(e,f\right) = \alpha \left(a,b\right) +\beta \left(c,d\right) $ for some representative vectors in $ L^{2} $, is contained in $ \PGL_{2}(L) $.
Then$$ \left(A,A^{g}\right) = \left( \begin{pmatrix} \frac{d}{\alpha} & \frac{-c}{\alpha} \\[1mm] \frac{-b}{\beta} & \frac{a}{\beta}\end{pmatrix} , \begin{pmatrix} \frac{d^{g}}{\alpha} & \frac{-c^{g}}{\alpha} \\[1mm] \frac{-b^{g}}{\beta} & \frac{a^{g}}{\beta}\end{pmatrix} \right) \in \Aut_{\k}(\mathcal{Q}^{L}) $$and it sends $ p_{1}, p_{2} $ and $ p_{3} $ onto $ \left([1:0],[1:0]\right) $, $\left([0:1],[0:1]\right)$ and $ \left([1:1],[1:1]\right) $, respectively.
\end{proof}

\begin{lemma}[{\hspace{1sp}\cite[Lemma 3.7]{sz21}}]
Let $p=\lbrace p_{1},p_{2},p_{3} \rbrace$ and $q=\lbrace q_{1},q_{2},q_{3} \rbrace$ be points of degree $3$ in $\mathcal{Q}^{L}$, such that for any $h \in \Gal(\kb/\k)$ there exists $\sigma \in \Sym_{3}$ such that $p_{i}^{h}=p_{\sigma(i)}$ and $q_{i}^{h}=q_{\sigma(i)}$ for $i=1,2,3$. Suppose that the geometric components of $p$ (resp. of $q$) are in general position on $\mathcal{Q}^{L}_{\kb}$. Then there exists $\alpha \in \Aut_{\k}(\mathcal{Q}^{L})$ such that $\alpha(p_{i})=q_{i}$ for $i=1,2,3$.\label{lem:Lemma_transitive_action_Aut_k(Q^L)_points_of_degree_3}
\end{lemma}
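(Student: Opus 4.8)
The statement to prove is Lemma~\ref{lem:Lemma_transitive_action_Aut_k(Q^L)_points_of_degree_3}, a transitivity result for the action of $\Aut_{\k}(\mathcal{Q}^{L})$ on points of degree $3$ whose geometric components are in general position. Since this lemma is quoted from \cite[Lemma 3.7]{sz21}, I will only sketch the natural argument. The plan is to pass to $\kb$, where $\mathcal{Q}^{L}_{\kb}\simeq\p^{1}_{\kb}\times\p^{1}_{\kb}$, and to build the desired automorphism by prescribing its effect on the two rulings simultaneously, then check that the construction descends to $\k$ by verifying $\Gal(\kb/\k)$-equivariance. Concretely, recall from the excerpt that $\Aut_{\k}(\mathcal{Q}^{L})\simeq\{(A,A^{g})\mid A\in\PGL_{2}(L)\}\rtimes\langle\tau\rangle$ with $\tau(u,v)=(v,u)$; an element $(A,A^{g})$ acts on $\mathcal{Q}^{L}_{\kb}=\p^{1}\times\p^{1}$ as $(x,y)\mapsto(Ax,A^{g}y)$ (after identifying the two factors via $g$ appropriately). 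So I need to find $A\in\PGL_{2}(L)$ — possibly composed with $\tau$ — realizing the prescribed motion.

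First I would set up coordinates. Write $p_{i}=(x_{i},y_{i})$ and $q_{i}=(x_{i}',y_{i}')$ in $\p^{1}_{\kb}\times\p^{1}_{\kb}$ for $i=1,2,3$. The "general position" hypothesis means, in each ruling, the three first coordinates $x_{1},x_{2},x_{3}$ are pairwise distinct and likewise $y_{1},y_{2},y_{3}$ pairwise distinct, and the same for the $q_{i}$. Since $\PGL_{2}$ is sharply $3$-transitive on $\p^{1}$, there is a unique $B\in\PGL_{2}(\kb)$ with $Bx_{i}=x_{i}'$ for $i=1,2,3$, and a unique $C\in\PGL_{2}(\kb)$ with $Cy_{i}=y_{i}'$. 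The candidate automorphism is $(x,y)\mapsto(Bx,Cy)$. The key step is to show this lies in $\Aut_{\k}(\mathcal{Q}^{L})$, i.e.\ that after accounting for the twisted $\k$-structure, $B$ and $C$ are of the form $A$, $A^{g}$ for a single $A\in\PGL_{2}(L)$, possibly after swapping the two factors by $\tau$. This is where the combinatorial hypothesis "$p_{i}^{h}=p_{\sigma(i)}$ and $q_{i}^{h}=q_{\sigma(i)}$ with the \emph{same} $\sigma$" does the work: it forces the map $(x,y)\mapsto(Bx,Cy)$ to commute with the Galois action, hence to be defined over $\k$; uniqueness of $B$ and $C$ (from $3$-transitivity) is what lets one conclude the equivariance from the orbit-compatibility, by the same bookkeeping as in Lemma~\ref{lem:Lemme_6.11_&_6.15_article_Julia}.

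In more detail for the descent: for $h\in\Gal(\kb/\k)$, the $\k$-structure on $\mathcal{Q}^{L}$ twists the naive action on $\p^{1}\times\p^{1}$ by swapping factors exactly when $h$ acts nontrivially on $L$. One checks that $h\cdot(Bx,Cy)$ equals $(B^{h}(h\cdot x),C^{h}(h\cdot y))$ or the factor-swapped version; applying this to the points $p_{i}$ and using $p_{i}^{h}=p_{\sigma(i)}$, $q_{i}^{h}=q_{\sigma(i)}$ shows $B^{h}$ (resp.\ its swap-conjugate) sends $x_{\sigma(i)}$ to $x_{\sigma(i)}'$, hence by uniqueness $B^{h}$ equals $B$ (resp.\ $C$), and symmetrically for $C$. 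This gives precisely $B=A$, $C=A^{g}$ for some $A\in\PGL_{2}(L)$ when $h$ fixes $L$, and the $\tau$-twisted relation when $h$ moves $L$ — i.e.\ the resulting automorphism is genuinely in $\Aut_{\k}(\mathcal{Q}^{L})$.

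The main obstacle is purely bookkeeping: correctly tracking the factor-swap built into the $\k$-structure of $\mathcal{Q}^{L}$ and matching it against the common permutation $\sigma$, so that one lands in $\{(A,A^{g})\}\rtimes\langle\tau\rangle$ rather than in some larger subgroup of $\PGL_{2}(\kb)\times\PGL_{2}(\kb)$. No hard geometry is needed — the crucial inputs are sharp $3$-transitivity of $\PGL_{2}$ on $\p^{1}$ and the explicit description of $\Aut_{\k}(\mathcal{Q}^{L})$ recalled above, exactly in the spirit of Lemma~\ref{lem:Lemme_6.11_&_6.15_article_Julia} and Lemma~\ref{lem:Lemme1_action_trans_Aut(Q^L)_triplets_k_points}.
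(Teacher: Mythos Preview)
The paper does not prove this lemma; it is quoted verbatim from \cite[Lemma 3.7]{sz21} without proof. Your sketch is the natural argument and is essentially correct: use sharp $3$-transitivity of $\PGL_{2}$ on each factor to produce the unique pair $(B,C)\in\PGL_{2}(\kb)\times\PGL_{2}(\kb)$ sending $p_{i}$ to $q_{i}$, then use uniqueness together with the hypothesis that the \emph{same} $\sigma$ governs both orbits to show $(B,C)$ is $\Gal(\kb/\k)$-equivariant for the twisted structure on $\mathcal{Q}^{L}$, hence lies in $\Aut_{\k}(\mathcal{Q}^{L})$. One small remark: the mention of ``possibly composed with $\tau$'' is superfluous here, since your construction $(x,y)\mapsto(Bx,Cy)$ already does the job and the descent argument shows $C=B^{g}$ directly, landing you in the identity component $\{(A,A^{g})\}$ without needing the factor-swap.
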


\section{Del Pezzo surfaces of degree $5$: geometric realization and automorphism group}
\label{sec:section_3}

In this section, we classify smooth del Pezzo surfaces of degree $5$ over a perfect field $ \textbf{k} $ according to the Galois-action on the graph of $(-1)$-curves, and describe their automorphism groups and their generators. By Proposition \ref{Prop:Proposition_rationality_of_dP_5_over_a_field}, such del Pezzo surfaces are always $\mathbf{k}$-rational.

\subsection{Actions on the Petersen diagram}
\label{subsec:subsection_0}

Let $X$ be a del Pezzo surface of degree $5$. Then $ X_{\kb} $ is the blow-up of $ \mathbb{P}^{2}_{\kb} $ in four points $p_{1},p_{2},p_{3},p_{4}$ in general position. Denote by $E_{1},E_{2},E_{3},E_{4}$ the four exceptional divisors above the blown up points and by $D_{12},D_{13},D_{14},D_{23},D_{24},D_{34}$ the six strict transforms of the lines passing through two of the four points (see Figure \ref{fig:fig(a)_4_points_in_general_position}), that are the ten $(-1)$-curves of $X_{\kb}$. We can describe their intersection relations with the so called Petersen diagram denoted $ \Pi_{\kb} $, as represented in Figure \ref{fig:fig(b)_intersection_relations_blow-up_four_points_general}.
The Galois group $ \Gal(\kb/\k) $ acts on $\Pi_{\kb}$ by symmetries via a homomorphism of groups $$ \Gal(\kb/\k) \overset{\rho}{\longrightarrow} \Aut(\Pi_{\kb}) \simeq \Sym_{5} \subseteq \Aut(\Pic(X_{\kb})) .$$ By Petersen diagram of $X$ we mean the Petersen diagram of $X_{\kb}$ endowed with the $ \Gal(\kb/\k) $-action induced by the $\Gal(\kb/\k)$-action on $X_{\kb}$. Denote it by $ \Pi_{\kb,\rho(\Gal(\kb/\k))} $ or $\Pi_{\kb,\rho}$. The options for the action of $ \rho(\Gal(\kb/\k)) $ on the Petersen diagram of $ X_{\kb} $ are given by the nineteen conjugacy classes of subgroups in $\Sym_{5}$ and are listed in Figure \ref{Fig:Figure_options_for_rho(Gal(kbarre/k))_actions_on_Pikbarre}. Replacing each $E_{i}$ with the set $\{i,5\}$ for $i=1,\dots,4$, and each $D_{ij}$ with the set $\{k,l\}$ where $\{i,j,k,l\}=\{1,2,3,4\}$, then this other combinatorial way of picturing the Petersen graph (see Figure \ref{fig:fig(c)_combinatorial_picture_of_Petersen_graph}) makes clear the action of $\Sym_{5}$ on the ten $(-1)$-curves.

We denote by $ \Aut_{\k}(X) $ the $\k$-automorphism group of $X$, which is the subgroup of automorphisms of $X_{\kb}$ that commute with the action of $ \Gal(\kb/\k) $. The groups $ \Aut(X_{\kb}) $ and $ \Aut_{\k}(X) $ act respectively on $ \Pi_{\kb} $ and $ \Pi_{\kb,\rho} $ preserving the intersection form, which induces homomorphisms $$ \Aut(X_{\kb}) \overset{\Theta_{\kb}}{\longrightarrow} \Aut(\Pi_{\kb}) \simeq \Sym_{5} \;\;\; , \;\;\; \Aut_{\k}(X) \overset{\Theta}{\longrightarrow} \Aut(\Pi_{\kb,\rho}) \subseteq \Sym_{5}. $$ These homomorphisms are always injective. Indeed, if an element $\alpha \in \Aut(X_{\kb}) $ fixes the ten $(-1)$-curves on $\Pi_{\kb}$, then any $\kb$-birational contraction $X_{\kb} \rightarrow \p^{2}_{\kb}$ conjugates $\alpha$ to an element of $\PGL_{3}(\kb)$ that fixes four points in general position, so it must be trivial. It is well known that $\Theta_{\kb}$ is actually surjective, i.e. $\Aut(X_{\kb}) \simeq \Sym_{5} $ (see for instance \cite[§8.5.4]{dol12}, \cite[Proposition 6.3.7]{bla06} or \cite[Lemma 3.1.7]{sko01}). It follows that $\Theta$ is also surjective and the argument above also motivates the following lemma, to which we will refer extensively. 

\begin{lemma}
Let $X$ be a del Pezzo surface of degree $5$ and let $L/\k$ be a normal extension of smallest degree such that all the $(-1)$-curves of $X_{\kb}$ are defined over $L$. Then the action of $\Aut_{\k}(X)$ on the set of $(-1)$-curves of $X$ induces an isomorphism $\Psi : \Aut_{\k}(X) \overset{\sim}{\longrightarrow} \Aut(\Pi_{L,\rho}) $, where $\Pi_{L,\rho}$ denotes the Petersen diagram of $X_{L}$ equipped with the induced action of $\Gal(L/\k)$, and $ \Aut(\Pi_{L,\rho}) = \lbrace \alpha \in \Aut(\Pi_{L}) \, \vert \, \rho(\Gal(L/\k)) \circ \alpha = \alpha \circ \rho(\Gal(L/\k)) \rbrace $.
\label{lem:Lemma_faithful_action_Aut_k(X)_on_Pi_L_rho}
\end{lemma}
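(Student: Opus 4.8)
The plan is to mimic the injectivity argument already sketched in the paragraph preceding the Lemma, but to carry it out over the field $L$ rather than over $\kb$, and to check that the stated map $\Psi$ is well-defined. First I would observe that since all ten $(-1)$-curves of $X_{\kb}$ are defined over $L$ (by definition of $L$), they are precisely the $(-1)$-curves of $X_L$, and $\Gal(L/\k)$ permutes them; thus the Petersen diagram $\Pi_{L,\rho}$ makes sense and carries a $\Gal(L/\k)$-action factoring the $\Gal(\kb/\k)$-action through the finite quotient $\Gal(L/\k)$ (here one uses that $L/\k$ is normal, so $\Gal(\kb/L)$ is normal in $\Gal(\kb/\k)$ and acts trivially on the curves). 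An element $\alpha \in \Aut_{\k}(X)$ permutes the $(-1)$-curves of $X$, hence of $X_L$, and since it commutes with the $\Gal(\kb/\k)$-action — equivalently with the $\Gal(L/\k)$-action on $\Pi_{L}$ — it induces an automorphism of the decorated diagram $\Pi_{L,\rho}$. This gives the homomorphism $\Psi : \Aut_{\k}(X) \to \Aut(\Pi_{L,\rho})$.

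Next I would prove injectivity. Suppose $\alpha \in \ker \Psi$, i.e.\ $\alpha$ fixes each of the ten $(-1)$-curves of $X_L$ (and a fortiori of $X_{\kb}$). Then $\alpha_{\kb} \in \Aut(X_{\kb})$ lies in the kernel of $\Theta_{\kb}$, which is trivial by the argument recalled in the excerpt: pick any birational contraction $c : X_{\kb} \to \p^2_{\kb}$ of four of the disjoint exceptional curves; then $c \alpha_{\kb} c^{-1} \in \PGL_3(\kb)$ fixes the four points blown up (which are in general position), and a projective transformation of $\p^2$ fixing four points in general position is the identity, so $\alpha_{\kb} = \id$, hence $\alpha = \id$. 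Therefore $\Psi$ is injective.

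The only genuinely delicate point — and the place I would be most careful — is the well-definedness of the target: one must check that $\Aut(\Pi_{L,\rho})$ really is a subgroup of $\Sym_5 \simeq \Aut(\Pi_L)$ and that $\Psi$ lands in it, i.e.\ that the image of $\alpha$ commutes with every $\rho(g)$, $g \in \Gal(L/\k)$, as an element of $\Sym_5$. This is exactly the statement that $\alpha$, being defined over $\k$, is $\Gal(\kb/\k)$-equivariant (recalled in Section~\ref{sec:section_2}), so its action on the set of $(-1)$-curves commutes with the Galois action; since that action factors through $\Gal(L/\k)$, the commutation holds in $\Aut(\Pi_{L,\rho})$. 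Conversely one needs that the association $\alpha \mapsto \Psi(\alpha)$ is a homomorphism, which is immediate because composition of automorphisms corresponds to composition of the induced permutations of the curves. Assembling these observations — existence of the $\Gal(L/\k)$-action on $\Pi_L$, functoriality of the induced permutation, Galois-equivariance of $\k$-automorphisms, and the trivial-kernel argument via a contraction to $\p^2_{\kb}$ — completes the proof.
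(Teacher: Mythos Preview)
Your proposal is correct and follows essentially the same approach as the paper: define $\Psi$ via the action on the $(-1)$-curves and prove injectivity by descending an element of the kernel along a contraction to $\p^{2}$, where it fixes four points in general position and is therefore trivial. The only cosmetic difference is that the paper performs the contraction over $L$ (since $E_{1},\dots,E_{4}$ are already defined over $L$, one has $\pi:X_{L}\to\p^{2}_{L}$) rather than base-changing to $\kb$ as you do; your extra care about well-definedness of the target is welcome but not strictly needed for the paper's purposes.
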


\begin{proof}
The group $\Aut_{\k}(X)$ acts on the set of $(-1)$-curves of $X$ preserving the intersection form, so we get a homomorphism $\Psi : \Aut_{\k}(X) \rightarrow \Aut(\Pi_{L,\rho})$. Let $\pi : X_{L} \rightarrow \p^{2}_{L}$ be the contraction of $E_{1},\dots,E_{4}$. Then an element of the kernel of $\Psi$ descends via $\pi$ to an automorphism of $\p^{2}_{L}$ that fixes four points in general position, so is trivial, and so $\Psi$ is injective. Now let $\phi \in \Aut(\Pi_{L,\rho})$. Then, since $\Theta_{L}$ defines an isomorphism, there exists a unique $f \in  \Aut(X_{L})$ such that $\Theta_{L}(f)=\phi$; and because $\Theta_{L}$ is $\Gal(L/\k)$-equivariant, we get: $\Theta_{L}(g \circ f \circ g^{-1})=\rho(g) \circ \Theta_{L}(f) \circ \rho(g)^{-1}=\rho(g) \circ \phi \circ \rho(g)^{-1}=\phi=\Theta_{L}(f)$ for all $g \in \Gal(L/\k)$. Because $\Theta_{L}$ is injective, this implies $g \circ f \circ g^{-1}=f$ for all $g \in \Gal(L/\k)$, and so $f \in \Aut_{\k}(X)$ is defined over $\k$.
\end{proof}

\begin{remark}
As an abstract group, $\Aut(\Pi_{\kb,\rho}) $ is isomorphic to the centralizer of the subgroup $\rho(\Gal(\kb/\k))$ in $\Sym_{5}$, denoted by $C_{\Sym_{5}}(\rho(\Gal(\kb/\k)))$. We will describe the geometric actions of $\Aut(\Pi_{\kb,\rho})$ in each case in sections \ref{subsec:subsection_1}, \ref{subsec:subsection_2}, \ref{subsec:subsection_3}, \ref{subsec:subsection_4}, and will realize geometrically the generators of the group $\Aut_{\k}(X)$.
\end{remark}

In the following sections we go through all the cases in Figure \ref{Fig:Figure_options_for_rho(Gal(kbarre/k))_actions_on_Pikbarre} and present the group $\Aut_{\k}(X)$. Section \ref{subsec:subsection_1} groups together the del Pezzo surfaces obtained by blowing up the quadric $\mathcal{Q}^{L}$ or equivalently $\mathbb{P}^{2}$, and for which the generators of the groups of automorphisms are given as the lifts of automorphisms of $\mathcal{Q}^{L}$ (resp. birational maps of $\mathbb{P}^{2}$) or of automorphisms of $\mathbb{P}^{2}$ (resp. birational maps of $\mathcal{Q}^{L}$). Section \ref{subsec:subsection_2} groups together the del Pezzo surfaces obtained by blowing up $\mathbb{P}^{2}$, and for which the automorphism groups are either trivial or generated by the lifts of automorphisms or of birational maps of $\mathbb{P}^{2}$. Section \ref{subsec:subsection_3} groups together the del Pezzo surfaces obtained by blowing up the quadric $\mathcal{Q}^{L}$ or equivalently a rational del Pezzo surface of degree $6$, and in this case, the generators of the automorphism groups are given as the lifts over $L$ of birational involutions or of automorphisms of $\mathbb{P}^{2}_{L}$. Finally section \ref{subsec:subsection_4} groups together the del Pezzo surfaces that are minimal, and for which the groups of automorphisms are either trivial or generated by the lift over $\kb$ of some birational quadratic map of $\mathbb{P}^{2}_{\kb}$.

\begin{figure}[]
\centering
\begin{subfigure}[b]{0.40\textwidth}
\begin{tikzpicture}[x=0.6pt,y=0.6pt,yscale=-1,xscale=1, scale=0.8, every node/.style={scale=0.8}]

\draw  [fill={rgb, 255:red, 0; green, 0; blue, 0 }  ,fill opacity=1 ] (146.6,130.2) .. controls (146.6,127.88) and (148.48,126) .. (150.8,126) .. controls (153.12,126) and (155,127.88) .. (155,130.2) .. controls (155,132.52) and (153.12,134.4) .. (150.8,134.4) .. controls (148.48,134.4) and (146.6,132.52) .. (146.6,130.2) -- cycle ;
\draw  [fill={rgb, 255:red, 0; green, 0; blue, 0 }  ,fill opacity=1 ] (265.6,129.2) .. controls (265.6,126.88) and (267.48,125) .. (269.8,125) .. controls (272.12,125) and (274,126.88) .. (274,129.2) .. controls (274,131.52) and (272.12,133.4) .. (269.8,133.4) .. controls (267.48,133.4) and (265.6,131.52) .. (265.6,129.2) -- cycle ;
\draw  [fill={rgb, 255:red, 0; green, 0; blue, 0 }  ,fill opacity=1 ] (146.6,229.2) .. controls (146.6,226.88) and (148.48,225) .. (150.8,225) .. controls (153.12,225) and (155,226.88) .. (155,229.2) .. controls (155,231.52) and (153.12,233.4) .. (150.8,233.4) .. controls (148.48,233.4) and (146.6,231.52) .. (146.6,229.2) -- cycle ;
\draw  [fill={rgb, 255:red, 0; green, 0; blue, 0 }  ,fill opacity=1 ] (265.6,230.2) .. controls (265.6,227.88) and (267.48,226) .. (269.8,226) .. controls (272.12,226) and (274,227.88) .. (274,230.2) .. controls (274,232.52) and (272.12,234.4) .. (269.8,234.4) .. controls (267.48,234.4) and (265.6,232.52) .. (265.6,230.2) -- cycle ;
\draw    (151.6,110.4) -- (150.6,248.4) ;
\draw    (270.6,110.4) -- (270.43,134.39) -- (269.76,226.39) -- (269.6,248.4) ;
\draw    (131.6,129.4) -- (271.61,130.29) -- (290.6,130.4) ;
\draw    (130.6,228.4) -- (272.58,229.29) -- (289.6,229.4) ;
\draw    (138.6,119.4) -- (268.88,230.42) -- (272.02,233.09) -- (280.6,240.4) ;
\draw    (138.1,240.9) -- (266.21,131.6) -- (271.32,127.24) -- (281.1,118.9) ;
\draw (125,236.4) node [anchor=north west][inner sep=0.75pt]    {$p_{1}$};
\draw (280,236.4) node [anchor=north west][inner sep=0.75pt]    {$p_{2}$};
\draw (279,101.4) node [anchor=north west][inner sep=0.75pt]    {$p_{3}$};
\draw (126,102.4) node [anchor=north west][inner sep=0.75pt]    {$p_{4}$};
\draw (197,230.4) node [anchor=north west][inner sep=0.75pt]    {$L_{1}{}_{2}$};
\draw (272,171.4) node [anchor=north west][inner sep=0.75pt]    {$L_{23}{}$};
\draw (198,112.4) node [anchor=north west][inner sep=0.75pt]    {$L_{3}{}_{4}$};
\draw (124,171.4) node [anchor=north west][inner sep=0.75pt]    {$L_{1}{}_{4}$};
\draw (168.52,189.83) node [anchor=north west][inner sep=0.75pt]  [rotate=-319.77]  {$L_{1}{}_{3}$};
\draw (236.2,176.88) node [anchor=north west][inner sep=0.75pt]  [rotate=-42.42]  {$L_{2}{}_{4}$};
\draw (84,170.4) node [anchor=north west][inner sep=0.75pt]    {$\mathbb{P}_{\overline{\textbf{k}}}^{2}$};
\end{tikzpicture}
\caption{The three singular conics through $p_{1},p_{2},p_{3}$ and $p_{4}$}
\label{fig:fig(a)_4_points_in_general_position}
\end{subfigure}
\begin{subfigure}[b]{0.40\textwidth}
        \tikzmath{
        real \a,\b;
        \a=1.6;
        \b=0.7;
    	}
    	\begin{tikzpicture}[every node/.style={inner sep=2ex, scale=0.6}, rotate=18]
        \foreach \i in {0,1,...,4}{
        		\path (\i*72:\a) node[draw, circle] (N-\i) {};
        		\path (\i*72:\b) node[draw, circle] (Q-\i) {};
        		\draw (N-\i) -- (Q-\i);
        }
        \draw (N-0) -- (N-1) -- (N-2) -- (N-3) -- (N-4) -- (N-0);
        \draw (Q-0) -- (Q-2) -- (Q-4) -- (Q-1) -- (Q-3) -- (Q-0);
        \path (N-0.center) node {$E_2$};
        \path (N-1.center) node[] {$D_{12}$};
        \path (N-2.center) node[] {$E_1$};
        \path (N-3.center) node[] {$D_{14}$};
        \path (N-4.center) node[] {$D_{23}$};
        \path (Q-0.center) node[] {$D_{24}$};
        \path (Q-1.center) node[] {$D_{34}$};
        \path (Q-2.center) node[] {$D_{13}$};
        \path (Q-3.center) node[] {$E_4$};
        \path (Q-4.center) node[] {$E_3$};
    	\end{tikzpicture}    	
    \caption{Graph of incidence relations of $(-1)$-curves on $X_{\overline{\textbf{k}}}$ =: $\Pi_{\overline{\textbf{k}}}$}
    \label{fig:fig(b)_intersection_relations_blow-up_four_points_general}
\end{subfigure}
\begin{subfigure}[b]{0.40\textwidth}
\captionsetup{justification=centering}
        \tikzmath{
        real \a,\b;
        \a=1.6;
        \b=0.7;
    	}
    	\begin{tikzpicture}[every node/.style={inner sep=2.2ex, scale=0.6}, rotate=18]
        \foreach \i in {0,1,...,4}{
        		\path (\i*72:\a) node[draw, circle] (N-\i) {};
        		\path (\i*72:\b) node[draw, circle] (Q-\i) {};
        		\draw (N-\i) -- (Q-\i);
        }
        \draw (N-0) -- (N-1) -- (N-2) -- (N-3) -- (N-4) -- (N-0);
        \draw (Q-0) -- (Q-2) -- (Q-4) -- (Q-1) -- (Q-3) -- (Q-0);
        \path (N-0.center) node {$\{2,5\}$};
        \path (N-1.center) node[] {$\{3,4\}$};
        \path (N-2.center) node[] {$\{1,5\}$};
        \path (N-3.center) node[] {$\{2,3\}$};
        \path (N-4.center) node[] {$\{1,4\}$};
        \path (Q-0.center) node[] {$\{1,3\}$};
        \path (Q-1.center) node[] {$\{1,2\}$};
        \path (Q-2.center) node[] {$\{2,4\}$};
        \path (Q-3.center) node[] {$\{4,5\}$};
        \path (Q-4.center) node[] {$\{3,5\}$};
    	\end{tikzpicture}    	
    \caption{Another way of picturing $\Pi_{\overline{\textbf{k}}}$}
    \label{fig:fig(c)_combinatorial_picture_of_Petersen_graph}
\end{subfigure}
\begin{subfigure}[b]{0.40\textwidth}
\begin{tikzpicture}[x=0.38pt,y=0.38pt,yscale=-1,xscale=1]
\draw    (200,41) -- (130.6,151.6) ;
\draw    (311,40) -- (241.6,150.6) ;
\draw    (420,40) -- (350.6,150.6) ;
\draw    (199.6,231.6) -- (129.6,110.6) ;
\draw    (309.6,230.6) -- (239.6,109.6) ;
\draw    (419.6,230.6) -- (349.6,109.6) ;
\draw    (133,204) -- (445.6,202.8) ;
\draw    (129.6,169.8) .. controls (282.6,150.8) and (167.6,22.8) .. (444.6,61.8) ;
\draw    (137.6,74.8) .. controls (154.6,107.8) and (164.6,121.8) .. (185.6,143.8) ;
\draw    (196.6,154.8) .. controls (318.6,219.8) and (310.6,47.8) .. (443.6,100.8) ;
\draw    (141,48) .. controls (196.6,56.8) and (211.6,64.8) .. (240.6,84.8) ;
\draw    (248.6,90.8) .. controls (274.6,104.8) and (282.6,126.8) .. (301.6,138.8) ;
\draw    (310,146) .. controls (348.6,176.8) and (408.6,183.8) .. (445.6,163.8) ;
\draw (185,234.4) node [anchor=north west][inner sep=0.75pt]  [font=\small]  {$D_{12}$};
\draw (184,18.4) node [anchor=north west][inner sep=0.75pt]  [font=\small]  {$D_{34}$};
\draw (296,233.4) node [anchor=north west][inner sep=0.75pt]  [font=\small]  {$D_{13}$};
\draw (295,17.4) node [anchor=north west][inner sep=0.75pt]  [font=\small]  {$D_{24}$};
\draw (407,233.4) node [anchor=north west][inner sep=0.75pt]  [font=\small]  {$D_{14}$};
\draw (407,17.4) node [anchor=north west][inner sep=0.75pt]  [font=\small]  {$D_{23}$};
\draw (451,195.4) node [anchor=north west][inner sep=0.75pt]  [font=\small]  {$E_{1}$};
\draw (450,52.4) node [anchor=north west][inner sep=0.75pt]  [font=\small]  {$E_{2}$};
\draw (449,95.4) node [anchor=north west][inner sep=0.75pt]  [font=\small]  {$E_{3}$};
\draw (450,154.4) node [anchor=north west][inner sep=0.75pt]  [font=\small]  {$E_{4}$};
\draw (84,118.4) node [anchor=north west][inner sep=0.75pt]  [font=\small]  {$X_{\kb}$};
\end{tikzpicture}
\caption{Conic bundle structure point of view}
\label{fig:fig(d)_conic_bundle_point_of_view}
\end{subfigure}
\caption{Representation of the ten $(-1)$-curves on $X_{\kb}$.}
\label{Fig:Figure_blow-up_model_dP5_Xkbarre}
\end{figure}

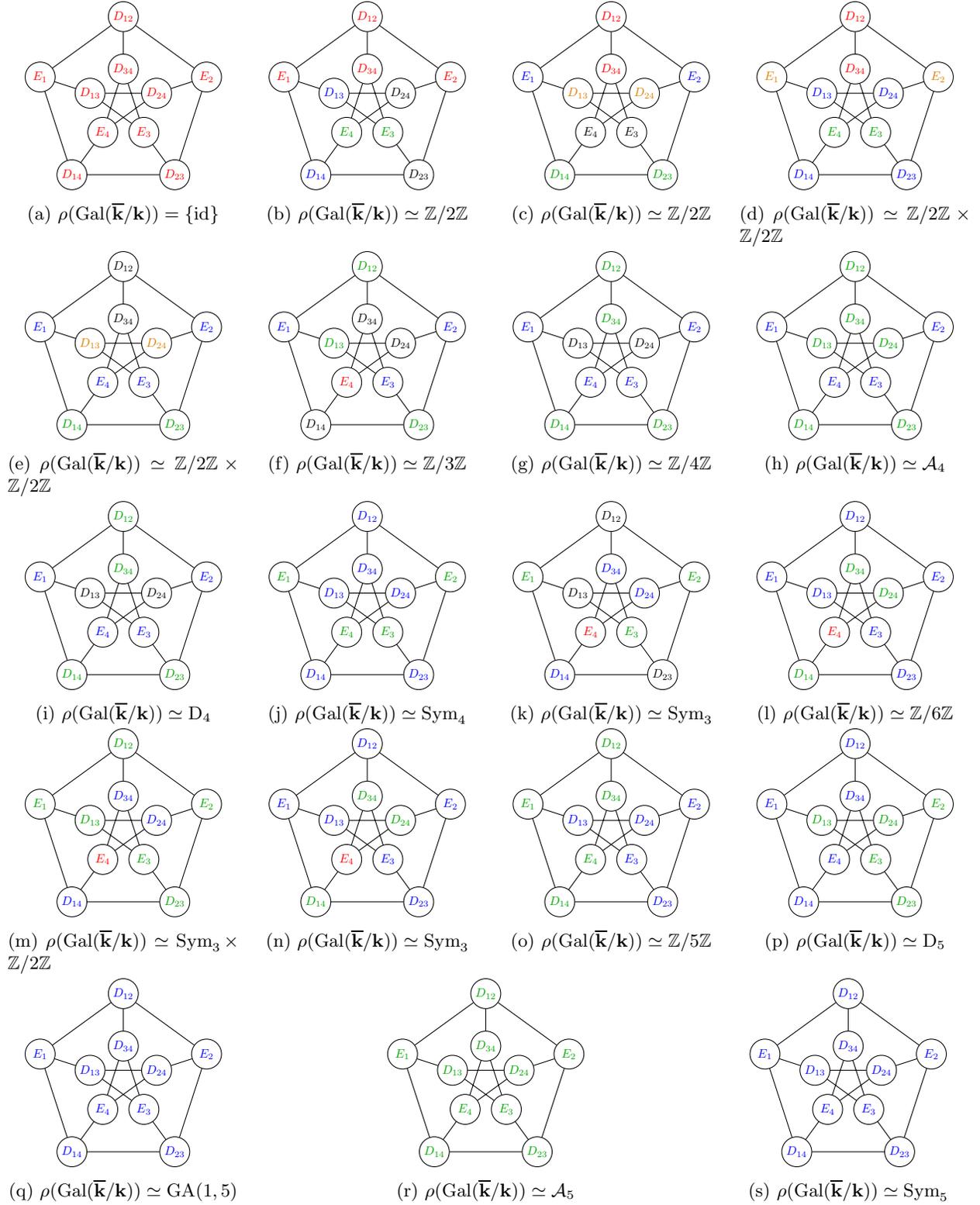
\begin{figure}
  \centering
  \begin{subfigure}[t]{0.24\textwidth}
    \centering
    \tikzmath{
      real \a,\b;
      \a=1.5;
      \b=0.6;
    }
    \begin{tikzpicture}[every node/.style={inner sep=2ex, scale=0.6}, 	rotate=18]
      \foreach \i in {0,1,...,4}{
        \path (\i*72:\a) node[draw, circle] (N-\i) {};
        \path (\i*72:\b) node[draw, circle] (Q-\i) {};
        \draw (N-\i) -- (Q-\i);
      }
      \draw (N-0) -- (N-1) -- (N-2) -- (N-3) -- (N-4) -- (N-0);
      \draw (Q-0) -- (Q-2) -- (Q-4) -- (Q-1) -- (Q-3) -- (Q-0);
      \path (N-0.center) node[red]{$E_2$};
      \path (N-1.center) node[red] {$D_{12}$};
      \path (N-2.center) node[red] {$E_1$};
      \path (N-3.center) node[red] {$D_{14}$};
      \path (N-4.center) node[red] {$D_{23}$};
      \path (Q-0.center) node[red] {$D_{24}$};
      \path (Q-1.center) node[red] {$D_{34}$};
      \path (Q-2.center) node[red] {$D_{13}$};
      \path (Q-3.center) node[red] {$E_4$};
      \path (Q-4.center) node[red] {$E_3$};
    \end{tikzpicture}    	
    \caption{$\rho(\Gal(\kb/\k))=\lbrace \id \rbrace$}
    \label{fig:figure(0)_option_Gal(kbarre/k)-action_on_Pikbarre}
  \end{subfigure}
  \hfill
  \begin{subfigure}[t]{0.24\textwidth}
    \centering
    \tikzmath{
      real \a,\b;
      \a=1.5;
      \b=0.6;
    }
    \begin{tikzpicture}[every node/.style={inner sep=2ex, scale=0.6}, 	rotate=18]
      \foreach \i in {0,1,...,4}{
        \path (\i*72:\a) node[draw, circle] (N-\i) {};
        \path (\i*72:\b) node[draw, circle] (Q-\i) {};
        \draw (N-\i) -- (Q-\i);
      }
      \draw (N-0) -- (N-1) -- (N-2) -- (N-3) -- (N-4) -- (N-0);
      \draw (Q-0) -- (Q-2) -- (Q-4) -- (Q-1) -- (Q-3) -- (Q-0);
      \path (N-0.center) node[red]{$E_2$};
      \path (N-1.center) node[red] {$D_{12}$};
      \path (N-2.center) node[red] {$E_1$};
      \path (N-3.center) node[blue] {$D_{14}$};
      \path (N-4.center) node[] {$D_{23}$};
      \path (Q-0.center) node[] {$D_{24}$};
      \path (Q-1.center) node[red] {$D_{34}$};
      \path (Q-2.center) node[blue] {$D_{13}$};
      \path (Q-3.center) node[forestgreen] {$E_4$};
      \path (Q-4.center) node[forestgreen] {$E_3$};
    \end{tikzpicture}    	
    \caption{$\rho(\Gal(\kb/\k)) \simeq \Z/2\Z$}
    \label{fig:figure(a)_option_Gal(kbarre/k)-action_on_Pikbarre}
  \end{subfigure}
  \hfill
  \begin{subfigure}[t]{0.24\textwidth}
    \centering
    \tikzmath{
      real \a,\b;
      \a=1.5;
      \b=0.6;
    }
    \begin{tikzpicture}[every node/.style={inner sep=2ex, scale=0.6}, 	rotate=18]
      \foreach \i in {0,1,...,4}{
        \path (\i*72:\a) node[draw, circle] (N-\i) {};
        \path (\i*72:\b) node[draw, circle] (Q-\i) {};
        \draw (N-\i) -- (Q-\i);
      }
      \draw (N-0) -- (N-1) -- (N-2) -- (N-3) -- (N-4) -- (N-0);
      \draw (Q-0) -- (Q-2) -- (Q-4) -- (Q-1) -- (Q-3) -- (Q-0);
      \path (N-0.center) node[blue]{$E_2$};
      \path (N-1.center) node[red] {$D_{12}$};
      \path (N-2.center) node[blue] {$E_1$};
      \path (N-3.center) node[forestgreen] {$D_{14}$};
      \path (N-4.center) node[forestgreen] {$D_{23}$};
      \path (Q-0.center) node[cyan] {$D_{24}$};
      \path (Q-1.center) node[red] {$D_{34}$};
      \path (Q-2.center) node[cyan] {$D_{13}$};
      \path (Q-3.center) node[] {$E_4$};
      \path (Q-4.center) node[] {$E_3$};
    \end{tikzpicture}    	
    \caption{$\rho(\Gal(\kb/\k)) \simeq \Z/2\Z$}
    \label{fig:figure(b)_option_Gal(kbarre/k)-action_on_Pikbarre}
  \end{subfigure}  \hfill
  \begin{subfigure}[t]{0.24\textwidth}
    \centering
    \tikzmath{
      real \a,\b;
      \a=1.5;
      \b=0.6;
    }
    \begin{tikzpicture}[every node/.style={inner sep=2ex, scale=0.6}, 	rotate=18]
      \foreach \i in {0,1,...,4}{
        \path (\i*72:\a) node[draw, circle] (N-\i) {};
        \path (\i*72:\b) node[draw, circle] (Q-\i) {};
        \draw (N-\i) -- (Q-\i);
      }
      \draw (N-0) -- (N-1) -- (N-2) -- (N-3) -- (N-4) -- (N-0);
      \draw (Q-0) -- (Q-2) -- (Q-4) -- (Q-1) -- (Q-3) -- (Q-0);
      \path (N-0.center) node[cyan]{$E_2$};
      \path (N-1.center) node[red] {$D_{12}$};
      \path (N-2.center) node[cyan] {$E_1$};
      \path (N-3.center) node[blue] {$D_{14}$};
      \path (N-4.center) node[blue] {$D_{23}$};
      \path (Q-0.center) node[blue] {$D_{24}$};
      \path (Q-1.center) node[red] {$D_{34}$};
      \path (Q-2.center) node[blue] {$D_{13}$};
      \path (Q-3.center) node[forestgreen] {$E_4$};
      \path (Q-4.center) node[forestgreen] {$E_3$};
    \end{tikzpicture}    	
    \caption{$\rho(\Gal(\kb/\k)) \simeq \Z/2\Z \times$ $\Z/2\Z$}
    \label{fig:figure(c)_option_Gal(kbarre/k)-action_on_Pikbarre}
  \end{subfigure}
  \begin{subfigure}[t]{0.24\textwidth}
    \centering
    \tikzmath{
      real \a,\b;
      \a=1.5;
      \b=0.6;
    }
    \begin{tikzpicture}[every node/.style={inner sep=2ex, scale=0.6}, 	rotate=18]
      \foreach \i in {0,1,...,4}{
        \path (\i*72:\a) node[draw, circle] (N-\i) {};
        \path (\i*72:\b) node[draw, circle] (Q-\i) {};
        \draw (N-\i) -- (Q-\i);
      }
      \draw (N-0) -- (N-1) -- (N-2) -- (N-3) -- (N-4) -- (N-0);
      \draw (Q-0) -- (Q-2) -- (Q-4) -- (Q-1) -- (Q-3) -- (Q-0);
      \path (N-0.center) node[blue]{$E_2$};
      \path (N-1.center) node[] {$D_{12}$};
      \path (N-2.center) node[blue] {$E_1$};
      \path (N-3.center) node[forestgreen] {$D_{14}$};
      \path (N-4.center) node[forestgreen] {$D_{23}$};
      \path (Q-0.center) node[cyan] {$D_{24}$};
      \path (Q-1.center) node[] {$D_{34}$};
      \path (Q-2.center) node[cyan] {$D_{13}$};
      \path (Q-3.center) node[blue] {$E_4$};
      \path (Q-4.center) node[blue] {$E_3$};
    \end{tikzpicture}    	
    \caption{$\rho(\Gal(\kb/\k)) \simeq \Z/2\Z \times$ $\Z/2\Z$}
    \label{fig:figure(d)_option_Gal(kbarre/k)-action_on_Pikbarre}
  \end{subfigure}
  \hfill
  \begin{subfigure}[t]{0.24\textwidth}
    \centering
    \tikzmath{
      real \a,\b;
      \a=1.5;
      \b=0.6;
    }
    \begin{tikzpicture}[every node/.style={inner sep=2ex, scale=0.6}, 	rotate=18]
      \foreach \i in {0,1,...,4}{
        \path (\i*72:\a) node[draw, circle] (N-\i) {};
        \path (\i*72:\b) node[draw, circle] (Q-\i) {};
        \draw (N-\i) -- (Q-\i);
      }
      \draw (N-0) -- (N-1) -- (N-2) -- (N-3) -- (N-4) -- (N-0);
      \draw (Q-0) -- (Q-2) -- (Q-4) -- (Q-1) -- (Q-3) -- (Q-0);
      \path (N-0.center) node[blue]{$E_2$};
      \path (N-1.center) node[forestgreen] {$D_{12}$};
      \path (N-2.center) node[blue] {$E_1$};
      \path (N-3.center) node[] {$D_{14}$};
      \path (N-4.center) node[forestgreen] {$D_{23}$};
      \path (Q-0.center) node[] {$D_{24}$};
      \path (Q-1.center) node[] {$D_{34}$};
      \path (Q-2.center) node[forestgreen] {$D_{13}$};
      \path (Q-3.center) node[red] {$E_4$};
      \path (Q-4.center) node[blue] {$E_3$};
    \end{tikzpicture}    	
    \caption{$\rho(\Gal(\kb/\k)) \simeq \Z/3\Z$}
    \label{fig:figure(e)_option_Gal(kbarre/k)-action_on_Pikbarre}
  \end{subfigure}
  \hfill
  \begin{subfigure}[t]{0.24\textwidth}
    \centering
    \tikzmath{
      real \a,\b;
      \a=1.5;
      \b=0.6;
    }
    \begin{tikzpicture}[every node/.style={inner sep=2ex, scale=0.6}, 	rotate=18]
      \foreach \i in {0,1,...,4}{
        \path (\i*72:\a) node[draw, circle] (N-\i) {};
        \path (\i*72:\b) node[draw, circle] (Q-\i) {};
        \draw (N-\i) -- (Q-\i);
      }
      \draw (N-0) -- (N-1) -- (N-2) -- (N-3) -- (N-4) -- (N-0);
      \draw (Q-0) -- (Q-2) -- (Q-4) -- (Q-1) -- (Q-3) -- (Q-0);
      \path (N-0.center) node[blue]{$E_2$};
      \path (N-1.center) node[forestgreen] {$D_{12}$};
      \path (N-2.center) node[blue] {$E_1$};
      \path (N-3.center) node[forestgreen] {$D_{14}$};
      \path (N-4.center) node[forestgreen] {$D_{23}$};
      \path (Q-0.center) node[] {$D_{24}$};
      \path (Q-1.center) node[forestgreen] {$D_{34}$};
      \path (Q-2.center) node[] {$D_{13}$};
      \path (Q-3.center) node[blue] {$E_4$};
      \path (Q-4.center) node[blue] {$E_3$};
    \end{tikzpicture}    	
    \caption{$\rho(\Gal(\kb/\k)) \simeq \Z/4\Z$}
    \label{fig:figure(f)_option_Gal(kbarre/k)-action_on_Pikbarre}
  \end{subfigure}
  \hfill
  \begin{subfigure}[t]{0.24\textwidth}
    \centering
    \tikzmath{
      real \a,\b;
      \a=1.5;
      \b=0.6;
    }
    \begin{tikzpicture}[every node/.style={inner sep=2ex, scale=0.6}, 	rotate=18]
      \foreach \i in {0,1,...,4}{
        \path (\i*72:\a) node[draw, circle] (N-\i) {};
        \path (\i*72:\b) node[draw, circle] (Q-\i) {};
        \draw (N-\i) -- (Q-\i);
      }
      \draw (N-0) -- (N-1) -- (N-2) -- (N-3) -- (N-4) -- (N-0);
      \draw (Q-0) -- (Q-2) -- (Q-4) -- (Q-1) -- (Q-3) -- (Q-0);
      \path (N-0.center) node[blue]{$E_2$};
      \path (N-1.center) node[forestgreen] {$D_{12}$};
      \path (N-2.center) node[blue] {$E_1$};
      \path (N-3.center) node[forestgreen] {$D_{14}$};
      \path (N-4.center) node[forestgreen] {$D_{23}$};
      \path (Q-0.center) node[forestgreen] {$D_{24}$};
      \path (Q-1.center) node[forestgreen] {$D_{34}$};
      \path (Q-2.center) node[forestgreen] {$D_{13}$};
      \path (Q-3.center) node[blue] {$E_4$};
      \path (Q-4.center) node[blue] {$E_3$};
    \end{tikzpicture}    	
    \caption{$\rho(\Gal(\kb/\k)) \simeq \mathcal{A}_{4}$}
    \label{fig:figure(g)_option_Gal(kbarre/k)-action_on_Pikbarre}
  \end{subfigure}
  \begin{subfigure}[t]{0.24\textwidth}
    \centering
    \tikzmath{
      real \a,\b;
      \a=1.5;
      \b=0.6;
    }
    \begin{tikzpicture}[every node/.style={inner sep=2ex, scale=0.6}, 	rotate=18]
      \foreach \i in {0,1,...,4}{
        \path (\i*72:\a) node[draw, circle] (N-\i) {};
        \path (\i*72:\b) node[draw, circle] (Q-\i) {};
        \draw (N-\i) -- (Q-\i);
      }
      \draw (N-0) -- (N-1) -- (N-2) -- (N-3) -- (N-4) -- (N-0);
      \draw (Q-0) -- (Q-2) -- (Q-4) -- (Q-1) -- (Q-3) -- (Q-0);
      \path (N-0.center) node[blue]{$E_2$};
      \path (N-1.center) node[forestgreen] {$D_{12}$};
      \path (N-2.center) node[blue] {$E_1$};
      \path (N-3.center) node[forestgreen] {$D_{14}$};
      \path (N-4.center) node[forestgreen] {$D_{23}$};
      \path (Q-0.center) node[] {$D_{24}$};
      \path (Q-1.center) node[forestgreen] {$D_{34}$};
      \path (Q-2.center) node[] {$D_{13}$};
      \path (Q-3.center) node[blue] {$E_4$};
      \path (Q-4.center) node[blue] {$E_3$};
    \end{tikzpicture}    	
    \caption{$\rho(\Gal(\kb/\k)) \simeq \D_{4}$}
    \label{fig:figure(h)_option_Gal(kbarre/k)-action_on_Pikbarre}
  \end{subfigure}
  \hfill
  \begin{subfigure}[t]{0.24\textwidth}
    \centering
    \tikzmath{
      real \a,\b;
      \a=1.5;
      \b=0.6;
    }
    \begin{tikzpicture}[every node/.style={inner sep=2ex, scale=0.6}, 	rotate=18]
      \foreach \i in {0,1,...,4}{
        \path (\i*72:\a) node[draw, circle] (N-\i) {};
        \path (\i*72:\b) node[draw, circle] (Q-\i) {};
        \draw (N-\i) -- (Q-\i);
      }
      \draw (N-0) -- (N-1) -- (N-2) -- (N-3) -- (N-4) -- (N-0);
      \draw (Q-0) -- (Q-2) -- (Q-4) -- (Q-1) -- (Q-3) -- (Q-0);
      \path (N-0.center) node[forestgreen]{$E_2$};
      \path (N-1.center) node[blue] {$D_{12}$};
      \path (N-2.center) node[forestgreen] {$E_1$};
      \path (N-3.center) node[blue] {$D_{14}$};
      \path (N-4.center) node[blue] {$D_{23}$};
      \path (Q-0.center) node[blue] {$D_{24}$};
      \path (Q-1.center) node[blue] {$D_{34}$};
      \path (Q-2.center) node[blue] {$D_{13}$};
      \path (Q-3.center) node[forestgreen] {$E_4$};
      \path (Q-4.center) node[forestgreen] {$E_3$};
    \end{tikzpicture}    	
    \caption{$\rho(\Gal(\kb/\k)) \simeq \Sym_{4}$}
    \label{fig:figure(i)_option_Gal(kbarre/k)-action_on_Pikbarre} 
  \end{subfigure}
  \hfill
  \begin{subfigure}[t]{0.24\textwidth}
    \centering
    \tikzmath{
      real \a,\b;
      \a=1.5;
      \b=0.6;
    }
    \begin{tikzpicture}[every node/.style={inner sep=2ex, scale=0.6}, 	rotate=18]
      \foreach \i in {0,1,...,4}{
        \path (\i*72:\a) node[draw, circle] (N-\i) {};
        \path (\i*72:\b) node[draw, circle] (Q-\i) {};
        \draw (N-\i) -- (Q-\i);
      }
      \draw (N-0) -- (N-1) -- (N-2) -- (N-3) -- (N-4) -- (N-0);
      \draw (Q-0) -- (Q-2) -- (Q-4) -- (Q-1) -- (Q-3) -- (Q-0);
      \path (N-0.center) node[forestgreen]{$E_2$};
      \path (N-1.center) node[] {$D_{12}$};
      \path (N-2.center) node[forestgreen] {$E_1$};
      \path (N-3.center) node[blue] {$D_{14}$};
      \path (N-4.center) node[] {$D_{23}$};
      \path (Q-0.center) node[blue] {$D_{24}$};
      \path (Q-1.center) node[blue] {$D_{34}$};
      \path (Q-2.center) node[] {$D_{13}$};
      \path (Q-3.center) node[red] {$E_4$};
      \path (Q-4.center) node[forestgreen] {$E_3$};
    \end{tikzpicture}    	
    \caption{$\rho(\Gal(\kb/\k)) \simeq \Sym_{3}$}
    \label{fig:figure(j)_option_Gal(kbarre/k)-action_on_Pikbarre}
  \end{subfigure}
  \hfill
  \begin{subfigure}[t]{0.24\textwidth}
    \centering
    \tikzmath{
      real \a,\b;
      \a=1.5;
      \b=0.6;
    }
    \begin{tikzpicture}[every node/.style={inner sep=2ex, scale=0.6}, 	rotate=18]
      \foreach \i in {0,1,...,4}{
        \path (\i*72:\a) node[draw, circle] (N-\i) {};
        \path (\i*72:\b) node[draw, circle] (Q-\i) {};
        \draw (N-\i) -- (Q-\i);
      }
      \draw (N-0) -- (N-1) -- (N-2) -- (N-3) -- (N-4) -- (N-0);
      \draw (Q-0) -- (Q-2) -- (Q-4) -- (Q-1) -- (Q-3) -- (Q-0);
      \path (N-0.center) node[blue]{$E_2$};
      \path (N-1.center) node[blue] {$D_{12}$};
      \path (N-2.center) node[blue] {$E_1$};
      \path (N-3.center) node[forestgreen] {$D_{14}$};
      \path (N-4.center) node[blue] {$D_{23}$};
      \path (Q-0.center) node[forestgreen] {$D_{24}$};
      \path (Q-1.center) node[forestgreen] {$D_{34}$};
      \path (Q-2.center) node[blue] {$D_{13}$};
      \path (Q-3.center) node[red] {$E_4$};
      \path (Q-4.center) node[blue] {$E_3$};
    \end{tikzpicture}    	
    \caption{$\rho(\Gal(\kb/\k)) \simeq \Z/6\Z$}
    \label{fig:figure(k)_option_Gal(kbarre/k)-action_on_Pikbarre}
  \end{subfigure}
  \begin{subfigure}[t]{0.24\textwidth}
    \centering
    \tikzmath{
      real \a,\b;
      \a=1.5;
      \b=0.6;
    }
    \begin{tikzpicture}[every node/.style={inner sep=2ex, scale=0.6}, 	rotate=18]
      \foreach \i in {0,1,...,4}{
        \path (\i*72:\a) node[draw, circle] (N-\i) {};
        \path (\i*72:\b) node[draw, circle] (Q-\i) {};
        \draw (N-\i) -- (Q-\i);
      }
      \draw (N-0) -- (N-1) -- (N-2) -- (N-3) -- (N-4) -- (N-0);
      \draw (Q-0) -- (Q-2) -- (Q-4) -- (Q-1) -- (Q-3) -- (Q-0);
      \path (N-0.center) node[forestgreen]{$E_2$};
      \path (N-1.center) node[forestgreen] {$D_{12}$};
      \path (N-2.center) node[forestgreen] {$E_1$};
      \path (N-3.center) node[blue] {$D_{14}$};
      \path (N-4.center) node[forestgreen] {$D_{23}$};
      \path (Q-0.center) node[blue] {$D_{24}$};
      \path (Q-1.center) node[blue] {$D_{34}$};
      \path (Q-2.center) node[forestgreen] {$D_{13}$};
      \path (Q-3.center) node[red] {$E_4$};
      \path (Q-4.center) node[forestgreen] {$E_3$};
    \end{tikzpicture}    	
    \caption{$\rho(\Gal(\kb/\k)) \simeq \Sym_{3}\times$ $\Z/2\Z$}
    \label{fig:figure(l)_option_Gal(kbarre/k)-action_on_Pikbarre}
  \end{subfigure}
  \hfill
  \begin{subfigure}[t]{0.24\textwidth}
    \centering
    \tikzmath{
      real \a,\b;
      \a=1.5;
      \b=0.6;
    }
    \begin{tikzpicture}[every node/.style={inner sep=2ex, scale=0.6}, 	rotate=18]
      \foreach \i in {0,1,...,4}{
        \path (\i*72:\a) node[draw, circle] (N-\i) {};
        \path (\i*72:\b) node[draw, circle] (Q-\i) {};
        \draw (N-\i) -- (Q-\i);
      }
      \draw (N-0) -- (N-1) -- (N-2) -- (N-3) -- (N-4) -- (N-0);
      \draw (Q-0) -- (Q-2) -- (Q-4) -- (Q-1) -- (Q-3) -- (Q-0);
      \path (N-0.center) node[blue]{$E_2$};
      \path (N-1.center) node[blue] {$D_{12}$};
      \path (N-2.center) node[blue] {$E_1$};
      \path (N-3.center) node[forestgreen] {$D_{14}$};
      \path (N-4.center) node[blue] {$D_{23}$};
      \path (Q-0.center) node[forestgreen] {$D_{24}$};
      \path (Q-1.center) node[forestgreen] {$D_{34}$};
      \path (Q-2.center) node[blue] {$D_{13}$};
      \path (Q-3.center) node[red] {$E_4$};
      \path (Q-4.center) node[blue] {$E_3$};
    \end{tikzpicture}    	
    \caption{$\rho(\Gal(\kb/\k)) \simeq \Sym_{3}$}
    \label{fig:figure(m)_option_Gal(kbarre/k)-action_on_Pikbarre}
  \end{subfigure}
  \hfill
  \begin{subfigure}[t]{0.24\textwidth}
    \centering
    \tikzmath{
      real \a,\b;
      \a=1.5;
      \b=0.6;
    }
    \begin{tikzpicture}[every node/.style={inner sep=2ex, scale=0.6}, 	rotate=18]
      \foreach \i in {0,1,...,4}{
        \path (\i*72:\a) node[draw, circle] (N-\i) {};
        \path (\i*72:\b) node[draw, circle] (Q-\i) {};
        \draw (N-\i) -- (Q-\i);
      }
      \draw (N-0) -- (N-1) -- (N-2) -- (N-3) -- (N-4) -- (N-0);
      \draw (Q-0) -- (Q-2) -- (Q-4) -- (Q-1) -- (Q-3) -- (Q-0);
      \path (N-0.center) node[blue]{$E_2$};
      \path (N-1.center) node[forestgreen] {$D_{12}$};
      \path (N-2.center) node[forestgreen] {$E_1$};
      \path (N-3.center) node[forestgreen] {$D_{14}$};
      \path (N-4.center) node[blue] {$D_{23}$};
      \path (Q-0.center) node[blue] {$D_{24}$};
      \path (Q-1.center) node[forestgreen] {$D_{34}$};
      \path (Q-2.center) node[blue] {$D_{13}$};
      \path (Q-3.center) node[forestgreen] {$E_4$};
      \path (Q-4.center) node[blue] {$E_3$};
    \end{tikzpicture}    	
    \caption{$\rho(\Gal(\kb/\k)) \simeq \Z/5\Z$}
    \label{fig:figure(n)_option_Gal(kbarre/k)-action_on_Pikbarre}
  \end{subfigure}
  \hfill
  \begin{subfigure}[t]{0.24\textwidth}
    \centering
    \tikzmath{
      real \a,\b;
      \a=1.5;
      \b=0.6;
    }
    \begin{tikzpicture}[every node/.style={inner sep=2ex, scale=0.6}, 	rotate=18]
      \foreach \i in {0,1,...,4}{
        \path (\i*72:\a) node[draw, circle] (N-\i) {};
        \path (\i*72:\b) node[draw, circle] (Q-\i) {};
        \draw (N-\i) -- (Q-\i);
      }
      \draw (N-0) -- (N-1) -- (N-2) -- (N-3) -- (N-4) -- (N-0);
      \draw (Q-0) -- (Q-2) -- (Q-4) -- (Q-1) -- (Q-3) -- (Q-0);
      \path (N-0.center) node[forestgreen]{$E_2$};
      \path (N-1.center) node[blue] {$D_{12}$};
      \path (N-2.center) node[blue] {$E_1$};
      \path (N-3.center) node[blue] {$D_{14}$};
      \path (N-4.center) node[forestgreen] {$D_{23}$};
      \path (Q-0.center) node[forestgreen] {$D_{24}$};
      \path (Q-1.center) node[blue] {$D_{34}$};
      \path (Q-2.center) node[forestgreen] {$D_{13}$};
      \path (Q-3.center) node[blue] {$E_4$};
      \path (Q-4.center) node[forestgreen] {$E_3$};
    \end{tikzpicture}    	
    \caption{$\rho(\Gal(\kb/\k)) \simeq \D_{5}$}
    \label{fig:figure(o)_option_Gal(kbarre/k)-action_on_Pikbarre}
  \end{subfigure}
  \hfill
  \begin{subfigure}[t]{0.24\textwidth}
    \centering
    \tikzmath{
      real \a,\b;
      \a=1.5;
      \b=0.6;
    }
    \begin{tikzpicture}[every node/.style={inner sep=2ex, scale=0.6}, 	rotate=18]
      \foreach \i in {0,1,...,4}{
        \path (\i*72:\a) node[draw, circle] (N-\i) {};
        \path (\i*72:\b) node[draw, circle] (Q-\i) {};
        \draw (N-\i) -- (Q-\i);
      }
      \draw (N-0) -- (N-1) -- (N-2) -- (N-3) -- (N-4) -- (N-0);
      \draw (Q-0) -- (Q-2) -- (Q-4) -- (Q-1) -- (Q-3) -- (Q-0);
      \path (N-0.center) node[blue]{$E_2$};
      \path (N-1.center) node[blue] {$D_{12}$};
      \path (N-2.center) node[blue] {$E_1$};
      \path (N-3.center) node[blue] {$D_{14}$};
      \path (N-4.center) node[blue] {$D_{23}$};
      \path (Q-0.center) node[blue] {$D_{24}$};
      \path (Q-1.center) node[blue] {$D_{34}$};
      \path (Q-2.center) node[blue] {$D_{13}$};
      \path (Q-3.center) node[blue] {$E_4$};
      \path (Q-4.center) node[blue] {$E_3$};
    \end{tikzpicture}    	
    \caption{$\rho(\Gal(\kb/\k)) \simeq \GA(1,5)$}
    \label{fig:figure(p)_option_Gal(kbarre/k)-action_on_Pikbarre}
  \end{subfigure}
  \hfill
  \begin{subfigure}[t]{0.24\textwidth}
    \centering
    \tikzmath{
      real \a,\b;
      \a=1.5;
      \b=0.6;
    }
    \begin{tikzpicture}[every node/.style={inner sep=2ex, scale=0.6}, 	rotate=18]
      \foreach \i in {0,1,...,4}{
        \path (\i*72:\a) node[draw, circle] (N-\i) {};
        \path (\i*72:\b) node[draw, circle] (Q-\i) {};
        \draw (N-\i) -- (Q-\i);
      }
      \draw (N-0) -- (N-1) -- (N-2) -- (N-3) -- (N-4) -- (N-0);
      \draw (Q-0) -- (Q-2) -- (Q-4) -- (Q-1) -- (Q-3) -- (Q-0);
      \path (N-0.center) node[forestgreen]{$E_2$};
      \path (N-1.center) node[forestgreen] {$D_{12}$};
      \path (N-2.center) node[forestgreen] {$E_1$};
      \path (N-3.center) node[forestgreen] {$D_{14}$};
      \path (N-4.center) node[forestgreen] {$D_{23}$};
      \path (Q-0.center) node[forestgreen] {$D_{24}$};
      \path (Q-1.center) node[forestgreen] {$D_{34}$};
      \path (Q-2.center) node[forestgreen] {$D_{13}$};
      \path (Q-3.center) node[forestgreen] {$E_4$};
      \path (Q-4.center) node[forestgreen] {$E_3$};
    \end{tikzpicture}    	
    \caption{$\rho(\Gal(\kb/\k)) \simeq \mathcal{A}_{5}$}
    \label{fig:figure(q)_option_Gal(kbarre/k)-action_on_Pikbarre}
  \end{subfigure}
  \hfill
  \begin{subfigure}[t]{0.24\textwidth}
    \centering
    \tikzmath{
      real \a,\b;
      \a=1.5;
      \b=0.6;
    }
    \begin{tikzpicture}[every node/.style={inner sep=2ex, scale=0.6}, 	rotate=18]
      \foreach \i in {0,1,...,4}{
        \path (\i*72:\a) node[draw, circle] (N-\i) {};
        \path (\i*72:\b) node[draw, circle] (Q-\i) {};
        \draw (N-\i) -- (Q-\i);
      }
      \draw (N-0) -- (N-1) -- (N-2) -- (N-3) -- (N-4) -- (N-0);
      \draw (Q-0) -- (Q-2) -- (Q-4) -- (Q-1) -- (Q-3) -- (Q-0);
      \path (N-0.center) node[blue]{$E_2$};
      \path (N-1.center) node[blue] {$D_{12}$};
      \path (N-2.center) node[blue] {$E_1$};
      \path (N-3.center) node[blue] {$D_{14}$};
      \path (N-4.center) node[blue] {$D_{23}$};
      \path (Q-0.center) node[blue] {$D_{24}$};
      \path (Q-1.center) node[blue] {$D_{34}$};
      \path (Q-2.center) node[blue] {$D_{13}$};
      \path (Q-3.center) node[blue] {$E_4$};
      \path (Q-4.center) node[blue] {$E_3$};
    \end{tikzpicture}    	
    \caption{$\rho(\Gal(\kb/\k)) \simeq \Sym_{5}$}
    \label{fig:figure(r)_option_Gal(kbarre/k)-action_on_Pikbarre_S5}
  \end{subfigure}
  \hfill
  
  \caption[]{The $\Gal(\kb/\k)$-orbits of $(-1)$-curves on the Petersen diagram of a del Pezzo surface of degree $5$; where the curves in one $\Gal(\kb/\k)$-orbit are indicated by the same color and where the ones that are defined over $\k$ are in particular represented in red.}
  \label{Fig:Figure_options_for_rho(Gal(kbarre/k))_actions_on_Pikbarre}
\end{figure}

\subsection{Del Pezzo surfaces in Figures (\ref{fig:figure(a)_option_Gal(kbarre/k)-action_on_Pikbarre}), (\ref{fig:figure(b)_option_Gal(kbarre/k)-action_on_Pikbarre}), (\ref{fig:figure(c)_option_Gal(kbarre/k)-action_on_Pikbarre})}
\label{subsec:subsection_1}
In this section we treat the cases where the Galois action on the Petersen diagram is as in (b), (c), (d) in Figure \ref{Fig:Figure_options_for_rho(Gal(kbarre/k))_actions_on_Pikbarre}. The corresponding del Pezzo surfaces of degree $5$ are obtained by blowing up the quadric $\Ql^{L}$ or the projective plane $\p^{2}$.

\begin{proposition}
Let $X$ be a del Pezzo surface of degree $5$ such that $ \rho(\Gal(\overline{\k}/\k)) = \langle(34)\rangle \simeq \mathbb{Z}/2\mathbb{Z} $ in $\text{Sym}_{5}$ as indicated in Figure \ref{fig:figure(a)_option_Gal(kbarre/k)-action_on_Pikbarre}. Then the following holds.\begin{enumerate}
\item There exist a quadratic extension $ L/\k $ and a birational morphism $ \eta : X \longrightarrow \mathcal{Q}^{L} $ contracting three disjoint $(-1)$-curves defined over $\k$ onto $ p_{1} = \left([1:0],[1:0]\right), p_{2} = \left([0:1],[0:1]\right) $ and $ p_{3} = \left([1:1],[1:1]\right) $.\label{it:item_(1)_Proposition1}
\item Any two such surfaces are isomorphic if and only if the respective quadratic extensions are $\k$-isomorphic.\label{it:item_(2)_Proposition1}
\item $ \Aut_{\k}(X) = \langle \widehat{\alpha},\widehat{\gamma} \rangle \times \langle \widehat{\beta} \rangle \simeq \Sym_{3} \times \mathbb{Z}/2\mathbb{Z} $, where $\eta\widehat{\alpha}\eta^{-1}, \eta\widehat{\beta}\eta^{-1}, \eta\widehat{\gamma}\eta^{-1}$ are involutions of $\mathcal{Q}^{L}$.\label{it:item_(3)_Proposition1}
\item $ \rk \, \NS(X)^{\Aut_{\k}(X)} = 2 $, so in particular $ X \longrightarrow \ast $ is not an $ \Aut_{\k}(X) $-Mori fibre space, and the $\Aut_{\k}(X)$-minimal models of $X$ are the quadric $\mathcal{Q}^{L}$ and the rational del Pezzo surface $Y$ of degree $6$ obtained by blowing up $\F_{0} \simeq \p^{1} \times \p^{1}$ in a point of degree $2$ whose splitting field is $L$.
\label{it:item_(4)_Proposition1}
\end{enumerate}
\label{Prop:Proposition1_Z/2Z_transposition}
\end{proposition}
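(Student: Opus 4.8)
The strategy is to work with the Petersen diagram and the explicit combinatorics of the $(-1)$-curves under the $\Z/2\Z$-action shown in Figure \ref{fig:figure(a)_option_Gal(kbarre/k)-action_on_Pikbarre}, then translate the picture into a blow-up of $\Ql^{L}$, and finally compute the automorphism group both as an abstract group (via the injection $\Psi$ of Lemma \ref{lem:Lemma_faithful_action_Aut_k(X)_on_Pi_L_rho}) and geometrically (via the contraction $\eta$). For part \ref{it:item_(1)_Proposition1}, I would first read off from Figure \ref{fig:figure(a)_option_Gal(kbarre/k)-action_on_Pikbarre} that there are three $(-1)$-curves defined over $\k$ (the red ones, namely $E_1,E_2,D_{12}$ in that labelling, or whichever triple the figure highlights) and that they are pairwise disjoint in $\Pi_{\kb}$. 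Contracting them gives a birational morphism $\eta : X \to Y$ defined over $\k$; by the Lemma of \cite[CH. IV, Corollary 24.5.2]{man86} quoted in the excerpt, $Y$ is a del Pezzo surface, and a degree count gives $K_Y^2 = 5 + 3 = 8$. Since $Y$ is rational (it is dominated by the rational $X$, or carries a $\k$-point coming from $X(\k)$ which exists by Proposition \ref{Prop:Proposition_rationality_of_dP_5_over_a_field}), $Y$ is either $\p^1\times\p^1$, a $\k$-form of it, or $\F_1$; the blown-down surface cannot be $\F_1$ (the $(-1)$-curve would have to be Galois-stable and would obstruct the configuration), nor can it be $\p^1\times\p^1$ itself, because the Galois action in Figure \ref{fig:figure(a)_option_Gal(kbarre/k)-action_on_Pikbarre} is nontrivial and swaps the two rulings — so $Y\simeq \Ql^L$ for a unique-up-to-$\k$-isomorphism quadratic extension $L/\k$. (Concretely: the two pencils of conics on $X_{\kb}$ through the four base points descend to the two rulings of $Y_{\kb}$, and the $\Z/2\Z$ in the figure interchanges them, which is exactly the defining property of $\Ql^L$.) The three $\k$-points $\eta(E_i)$ lie in pairwise distinct rulings of $Y_L \simeq \p^1_L\times\p^1_L$ — again visible from the diagram since the images are the three singular conics' node-type configuration — so Lemma \ref{lem:Lemme1_action_trans_Aut(Q^L)_triplets_k_points} lets us post-compose $\eta$ with an element of $\Aut_{\k}(\Ql^L)$ to land on $p_1,p_2,p_3$ as stated. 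This establishes \ref{it:item_(1)_Proposition1}.

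For part \ref{it:item_(2)_Proposition1}, the "only if" direction is clear since $L$ is recovered intrinsically from $X$ (it is the splitting field of the non-$\k$-rational $(-1)$-curves, equivalently the field over which the two conic pencils are individually defined). For the "if" direction, suppose $X,X'$ give the same quadratic extension $L$; then both are obtained as $\mathrm{Bl}_{p_1,p_2,p_3}\Ql^L$ with the normalized triple of points from \ref{it:item_(1)_Proposition1}, so $\eta'^{-1}\circ\eta$ extends to a $\k$-isomorphism $X\to X'$. This is a direct consequence of \ref{it:item_(1)_Proposition1} plus Lemma \ref{lem:Lemme1_action_trans_Aut(Q^L)_triplets_k_points} and needs essentially no new work.

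The main content — and the main obstacle — is part \ref{it:item_(3)_Proposition1}, the identification $\Aut_{\k}(X)\simeq \Sym_3\times\Z/2\Z$ of order $12$. The upper bound: by Lemma \ref{lem:Lemma_faithful_action_Aut_k(X)_on_Pi_L_rho}, $\Aut_{\k}(X)$ embeds in $\Aut(\Pi_{L,\rho})$, the subgroup of $\Aut(\Pi_{\kb})\simeq\Sym_5$ commuting with $\rho(\Gal)=\langle(34)\rangle$, i.e. the centralizer $C_{\Sym_5}((34))$, which has order $12$ and is isomorphic to $\langle(34)\rangle\times\Sym_{\{1,2,5\}}\simeq\Z/2\Z\times\Sym_3$. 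So it suffices to exhibit a subgroup of $\Aut_{\k}(X)$ of order $12$, equivalently to realize every element of this centralizer. The cleanest route is to construct automorphisms of $\Ql^L$ preserving $\{p_1,p_2,p_3\}$ and permuting/fixing them in all the required ways, then lift to $X$: the $\Sym_3$ factor comes from automorphisms of $\Ql^L$ of the form $(A,A^g)$, resp. $(A,A^g)\tau$, permuting the three normalized points — these exist by the transitivity statement in Lemma \ref{lem:Lemme1_action_trans_Aut(Q^L)_triplets_k_points} applied to suitable permutations of the target triple — and the central $\Z/2\Z$ comes from an involution of $\Ql^L$ fixing all three points (for instance the "swap followed by an involution fixing the diagonal" type map, explicitly writable in coordinates). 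Each such automorphism of $\Ql^L$ fixing $\{p_1,p_2,p_3\}$ lifts through $\eta$ to an automorphism of $X$; together with the faithful action on $\Pi_{L,\rho}$ this forces $|\Aut_{\k}(X)|=12$ and pins down the group structure. I would then verify that the three named involutions $\alpha,\beta,\gamma$ of $\Ql^L$ generate the claimed presentation $\langle\widehat\alpha,\widehat\gamma\rangle\times\langle\widehat\beta\widehat\gamma\rangle$, with $\widehat\alpha,\widehat\gamma$ generating the $\Sym_3$ (two transpositions) and $\widehat\beta\widehat\gamma$ central of order $2$ — this is a short finite-group check once the maps are written down. The delicate point throughout is checking that these candidate automorphisms of $\Ql^L$ are genuinely defined over $\k$ (i.e. commute with the twisted Galois action $(x,y)^g=(y^g,x^g)$) and that they actually preserve the set $\{p_1,p_2,p_3\}$ rather than some conjugate triple; the explicit coordinates $([1:0],[1:0]),([0:1],[0:1]),([1:1],[1:1])$ are chosen precisely to make this transparent.

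Finally, part \ref{it:item_(4)_Proposition1} follows formally: $\NS(X_{\kb})$ has rank $5$ with basis coming from the four exceptional curves and the pullback of a line, and one computes $\NS(X)=\NS(X_{\kb})^{\Gal}$ has rank $3$ (the $\Z/2\Z$-action of Figure \ref{fig:figure(a)_option_Gal(kbarre/k)-action_on_Pikbarre} identifies $E_3$ with $E_4$ and $D_{13}$ with $D_{14}$, cutting the rank by $2$). Then $\Aut_{\k}(X)$ acts on this rank-$3$ lattice through $\Psi$, and using the explicit $(-1)$-curve permutations just found — the $\Sym_3$ permutes $\{E_1,E_2,D_{12}\}$ transitively and the central involution swaps $E_i$'s with $D_{jk}$'s — one sees the only invariant classes are multiples of $K_X$, so $\rk\NS(X)^{\Aut_{\k}(X)}=1$, i.e. $X\to\ast$ is an $\Aut_{\k}(X)$-Mori fibre space. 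This is again a finite linear-algebra verification with no real obstruction once \ref{it:item_(3)_Proposition1} is in hand.
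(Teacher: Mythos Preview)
Your approach is essentially the paper's: contract three disjoint $\k$-rational $(-1)$-curves to land on a degree-$8$ del Pezzo whose rulings are Galois-swapped (hence $\Ql^L$), normalize the three image points via Lemma~\ref{lem:Lemme1_action_trans_Aut(Q^L)_triplets_k_points}, bound $\Aut_{\k}(X)$ through $\Aut(\Pi_{L,\rho})$, and realize the bound by lifting explicit involutions of $\Ql^L$ that preserve $\{p_1,p_2,p_3\}$.

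Two small slips to fix. First, Figure~\ref{fig:figure(a)_option_Gal(kbarre/k)-action_on_Pikbarre} has \emph{four} $\k$-rational $(-1)$-curves ($E_1,E_2,D_{12},D_{34}$), and the pairwise disjoint triple is $E_1,E_2,D_{34}$, not $E_1,E_2,D_{12}$ (the curve $D_{12}$ meets both $E_1$ and $E_2$). Second, $\rk\NS(X)=4$, not $3$: in the basis $L,E_1,E_2,E_3,E_4$ of $\NS(X_{\kb})$ the involution $(34)$ only swaps $E_3\leftrightarrow E_4$, so the invariant lattice has rank $5-1=4$ (the $D_{ij}$ are not independent generators). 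Neither affects the conclusions.

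One genuine difference worth noting: your identification $\Aut(\Pi_{L,\rho})=C_{\Sym_5}((34))=\langle(34)\rangle\times\Sym_{\{1,2,5\}}\simeq\Z/2\Z\times\Sym_3$ is more direct than the paper's route, which instead analyzes the action on the five sets $M_i$ of four pairwise disjoint $(-1)$-curves to reach the same group. For the lower bound the paper writes the three involutions of $\Ql^L$ explicitly---$\alpha\colon(u,v)\mapsto([u_0:u_0-u_1],[v_0:v_0-v_1])$, $\beta\colon(u,v)\mapsto([v_1:v_0],[u_1:u_0])$, $\gamma\colon(u,v)\mapsto([u_1:u_0],[v_1:v_0])$---and you should do the same, since verifying these are $\k$-defined (i.e.\ of shape $(A,A^g)$ or $(A,A^g)\tau$) and permute $\{p_1,p_2,p_3\}$ as claimed is exactly the ``delicate point'' you flag.
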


\begin{proof}
(\ref{it:item_(1)_Proposition1}) The incidence diagram $ \Pi_{\overline{\k},\rho} $ presents exactly four $(-1)$-curves defined over $\k$ of $X$, three of which are disjoint namely $ E_{1}, E_{2} $ and $ D_{34} $. Their contraction yields a birational morphism $ \eta : X \rightarrow Z $ onto a rational del Pezzo surface of degree $8$ with three $\k$-rational points, $ p_{1},p_{2},p_{3} $, and the map $ \eta $ conjugates the $ \Gal(\kb/\k) $-action on $ X_{\kb} $ to an action that exchanges the fibrations of $Z_{\kb}$. Thus, $ Z \simeq \mathcal{Q}^{L} $ for some quadratic extension $ L/\k $ by \cite[Lemma 3.2(1)]{sz21}, and by Lemma \ref{lem:Lemme1_action_trans_Aut(Q^L)_triplets_k_points} we can assume that $ p_{1} = \left([1:0],[1:0]\right), p_{2} = \left([0:1],[0:1]\right) $ and $ p_{3} = \left([1:1],[1:1]\right) $. For any of the $\k$-points $ p_{i} \in \mathcal{Q}^{L}(\k) $, $ i=1,2,3 $, there is a birational map $ \psi : \mathcal{Q}^{L} \dashrightarrow \mathbb{P}^{2} $ that is the composition of the blow-up of $ p_{i} $ with the contraction of a curve onto a point of degree $2$ in $ \mathbb{P}^{2} $ whose splitting field is $L$. The surface $X$ is therefore isomorphic to the blow-up of $ \mathbb{P}^{2} $ in two $\k$-rational points $ r_{1}, r_{2} $ and one point of degree two $ r=\lbrace r_{3},r_{4} \rbrace $ whose splitting field is $L$ (see Figure \ref{Fig:Figure_blow-up_model_Z/2Z_1_case}).
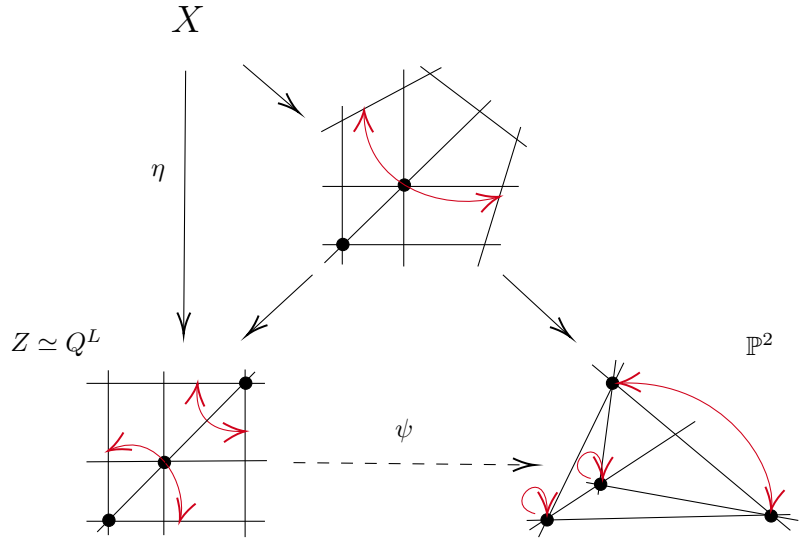
\begin{figure}[h]
\centering
\begin{tikzpicture}[x=0.6pt,y=0.6pt,yscale=-1,xscale=1, scale=0.8, every node/.style={scale=0.8}]
\draw    (61.6,213.4) -- (61.6,296.6) ;
\draw    (89.6,214) -- (89.6,296.6) ;
\draw    (130.6,213) -- (130.6,297.6) ;
\draw    (179.6,80) -- (179.6,160) ;
\draw    (210.6,61.6) -- (210.6,161) ;
\draw    (140.6,289.6) -- (50.6,289.6) ;
\draw    (259.6,150) -- (169.6,150) ;
\draw    (141.6,259) -- (50.6,259.6) ;
\draw    (140.6,220) -- (50.6,220) ;
\draw    (268.6,120.6) -- (169.6,120.6) ;
\draw    (169,93) -- (229.6,60.6) ;
\draw    (248,161) -- (269.6,90.6) ;
\draw    (219,60) -- (272.6,100.6) ;
\draw    (164.6,165.2) -- (133.07,194.24) ;
\draw [shift={(131.6,195.6)}, rotate = 317.35] [color={rgb, 255:red, 0; green, 0; blue, 0 }  ][line width=0.75]    (10.93,-3.29) .. controls (6.95,-1.4) and (3.31,-0.3) .. (0,0) .. controls (3.31,0.3) and (6.95,1.4) .. (10.93,3.29)   ;
\draw    (260.6,165.2) -- (293.13,195.24) ;
\draw [shift={(294.6,196.6)}, rotate = 222.72] [color={rgb, 255:red, 0; green, 0; blue, 0 }  ][line width=0.75]    (10.93,-3.29) .. controls (6.95,-1.4) and (3.31,-0.3) .. (0,0) .. controls (3.31,0.3) and (6.95,1.4) .. (10.93,3.29)   ;
\draw    (129.6,59.2) -- (156.08,81.9) ;
\draw [shift={(157.6,83.2)}, rotate = 220.6] [color={rgb, 255:red, 0; green, 0; blue, 0 }  ][line width=0.75]    (10.93,-3.29) .. controls (6.95,-1.4) and (3.31,-0.3) .. (0,0) .. controls (3.31,0.3) and (6.95,1.4) .. (10.93,3.29)   ;
\draw    (321.6,209.6) -- (278.6,297.6) ;
\draw    (305.6,210.6) -- (405.6,294.2) ;
\draw    (272.6,289.2) -- (405.6,286.2) ;
\draw    (317.6,208.2) -- (308.6,276.2) ;
\draw    (302.6,270) -- (409.6,289.2) ;
\draw    (100.6,62.2) -- (99.62,193.2) ;
\draw [shift={(99.6,195.2)}, rotate = 270.43] [color={rgb, 255:red, 0; green, 0; blue, 0 }  ][line width=0.75]    (10.93,-3.29) .. controls (6.95,-1.4) and (3.31,-0.3) .. (0,0) .. controls (3.31,0.3) and (6.95,1.4) .. (10.93,3.29)   ;
\draw  [fill={rgb, 255:red, 0; green, 0; blue, 0 }  ,fill opacity=1 ] (58.8,289.1) .. controls (58.8,287.39) and (60.19,286) .. (61.9,286) .. controls (63.61,286) and (65,287.39) .. (65,289.1) .. controls (65,290.81) and (63.61,292.2) .. (61.9,292.2) .. controls (60.19,292.2) and (58.8,290.81) .. (58.8,289.1) -- cycle ;
\draw  [fill={rgb, 255:red, 0; green, 0; blue, 0 }  ,fill opacity=1 ] (86.8,259.9) .. controls (86.8,258.19) and (88.19,256.8) .. (89.9,256.8) .. controls (91.61,256.8) and (93,258.19) .. (93,259.9) .. controls (93,261.61) and (91.61,263) .. (89.9,263) .. controls (88.19,263) and (86.8,261.61) .. (86.8,259.9) -- cycle ;
\draw  [fill={rgb, 255:red, 0; green, 0; blue, 0 }  ,fill opacity=1 ] (127.8,219.9) .. controls (127.8,218.19) and (129.19,216.8) .. (130.9,216.8) .. controls (132.61,216.8) and (134,218.19) .. (134,219.9) .. controls (134,221.61) and (132.61,223) .. (130.9,223) .. controls (129.19,223) and (127.8,221.61) .. (127.8,219.9) -- cycle ;
\draw  [fill={rgb, 255:red, 0; green, 0; blue, 0 }  ,fill opacity=1 ] (176.8,149.9) .. controls (176.8,148.19) and (178.19,146.8) .. (179.9,146.8) .. controls (181.61,146.8) and (183,148.19) .. (183,149.9) .. controls (183,151.61) and (181.61,153) .. (179.9,153) .. controls (178.19,153) and (176.8,151.61) .. (176.8,149.9) -- cycle ;
\draw  [fill={rgb, 255:red, 0; green, 0; blue, 0 }  ,fill opacity=1 ] (207.8,119.9) .. controls (207.8,118.19) and (209.19,116.8) .. (210.9,116.8) .. controls (212.61,116.8) and (214,118.19) .. (214,119.9) .. controls (214,121.61) and (212.61,123) .. (210.9,123) .. controls (209.19,123) and (207.8,121.61) .. (207.8,119.9) -- cycle ;
\draw  [fill={rgb, 255:red, 0; green, 0; blue, 0 }  ,fill opacity=1 ] (279.8,288.9) .. controls (279.8,287.19) and (281.19,285.8) .. (282.9,285.8) .. controls (284.61,285.8) and (286,287.19) .. (286,288.9) .. controls (286,290.61) and (284.61,292) .. (282.9,292) .. controls (281.19,292) and (279.8,290.61) .. (279.8,288.9) -- cycle ;
\draw  [fill={rgb, 255:red, 0; green, 0; blue, 0 }  ,fill opacity=1 ] (306.8,270.9) .. controls (306.8,269.19) and (308.19,267.8) .. (309.9,267.8) .. controls (311.61,267.8) and (313,269.19) .. (313,270.9) .. controls (313,272.61) and (311.61,274) .. (309.9,274) .. controls (308.19,274) and (306.8,272.61) .. (306.8,270.9) -- cycle ;
\draw  [fill={rgb, 255:red, 0; green, 0; blue, 0 }  ,fill opacity=1 ] (312.8,219.9) .. controls (312.8,218.19) and (314.19,216.8) .. (315.9,216.8) .. controls (317.61,216.8) and (319,218.19) .. (319,219.9) .. controls (319,221.61) and (317.61,223) .. (315.9,223) .. controls (314.19,223) and (312.8,221.61) .. (312.8,219.9) -- cycle ;
\draw  [fill={rgb, 255:red, 0; green, 0; blue, 0 }  ,fill opacity=1 ] (392.8,286.9) .. controls (392.8,285.19) and (394.19,283.8) .. (395.9,283.8) .. controls (397.61,283.8) and (399,285.19) .. (399,286.9) .. controls (399,288.61) and (397.61,290) .. (395.9,290) .. controls (394.19,290) and (392.8,288.61) .. (392.8,286.9) -- cycle ;
\draw  [dash pattern={on 4.5pt off 4.5pt}]  (155,260) -- (274.6,261.18) ;
\draw [shift={(276.6,261.2)}, rotate = 180.57] [color={rgb, 255:red, 0; green, 0; blue, 0 }  ][line width=0.75]    (10.93,-3.29) .. controls (6.95,-1.4) and (3.31,-0.3) .. (0,0) .. controls (3.31,0.3) and (6.95,1.4) .. (10.93,3.29)   ;
\draw    (274,294) -- (357.6,239.2) ;
\draw    (170.6,159.2) -- (254.6,77.2) ;
\draw [color={rgb, 255:red, 208; green, 2; blue, 27 }  ,draw opacity=1 ]   (63.53,253.12) .. controls (87.35,243.25) and (101.33,272.11) .. (98.09,287.72) ;
\draw [shift={(97.6,289.6)}, rotate = 287.99] [color={rgb, 255:red, 208; green, 2; blue, 27 }  ,draw opacity=1 ][line width=0.75]    (10.93,-4.9) .. controls (6.95,-2.3) and (3.31,-0.67) .. (0,0) .. controls (3.31,0.67) and (6.95,2.3) .. (10.93,4.9)   ;
\draw [shift={(61.6,254)}, rotate = 333.79] [color={rgb, 255:red, 208; green, 2; blue, 27 }  ,draw opacity=1 ][line width=0.75]    (10.93,-4.9) .. controls (6.95,-2.3) and (3.31,-0.67) .. (0,0) .. controls (3.31,0.67) and (6.95,2.3) .. (10.93,4.9)   ;
\draw [color={rgb, 255:red, 208; green, 2; blue, 27 }  ,draw opacity=1 ]   (106.39,222.32) .. controls (105.38,236.48) and (112.86,244.58) .. (128.83,244.27) ;
\draw [shift={(130.6,244.2)}, rotate = 176.82] [color={rgb, 255:red, 208; green, 2; blue, 27 }  ,draw opacity=1 ][line width=0.75]    (10.93,-4.9) .. controls (6.95,-2.3) and (3.31,-0.67) .. (0,0) .. controls (3.31,0.67) and (6.95,2.3) .. (10.93,4.9)   ;
\draw [shift={(106.6,220.2)}, rotate = 97.13] [color={rgb, 255:red, 208; green, 2; blue, 27 }  ,draw opacity=1 ][line width=0.75]    (10.93,-4.9) .. controls (6.95,-2.3) and (3.31,-0.67) .. (0,0) .. controls (3.31,0.67) and (6.95,2.3) .. (10.93,4.9)   ;
\draw [color={rgb, 255:red, 208; green, 2; blue, 27 }  ,draw opacity=1 ]   (190.59,84.45) .. controls (191.22,120.03) and (229.58,133.49) .. (257.13,126.27) ;
\draw [shift={(258.8,125.8)}, rotate = 163.41] [color={rgb, 255:red, 208; green, 2; blue, 27 }  ,draw opacity=1 ][line width=0.75]    (10.93,-4.9) .. controls (6.95,-2.3) and (3.31,-0.67) .. (0,0) .. controls (3.31,0.67) and (6.95,2.3) .. (10.93,4.9)   ;
\draw [shift={(190.6,82.2)}, rotate = 91.51] [color={rgb, 255:red, 208; green, 2; blue, 27 }  ,draw opacity=1 ][line width=0.75]    (10.93,-4.9) .. controls (6.95,-2.3) and (3.31,-0.67) .. (0,0) .. controls (3.31,0.67) and (6.95,2.3) .. (10.93,4.9)   ;
\draw [color={rgb, 255:red, 208; green, 2; blue, 27 }  ,draw opacity=1 ]   (321.4,219.82) .. controls (371.94,218.7) and (397.96,260.77) .. (396.15,281.9) ;
\draw [shift={(395.9,283.8)}, rotate = 280.13] [color={rgb, 255:red, 208; green, 2; blue, 27 }  ,draw opacity=1 ][line width=0.75]    (10.93,-4.9) .. controls (6.95,-2.3) and (3.31,-0.67) .. (0,0) .. controls (3.31,0.67) and (6.95,2.3) .. (10.93,4.9)   ;
\draw [shift={(319,219.9)}, rotate = 357.23] [color={rgb, 255:red, 208; green, 2; blue, 27 }  ,draw opacity=1 ][line width=0.75]    (10.93,-4.9) .. controls (6.95,-2.3) and (3.31,-0.67) .. (0,0) .. controls (3.31,0.67) and (6.95,2.3) .. (10.93,4.9)   ;
\draw [color={rgb, 255:red, 208; green, 2; blue, 27 }  ,draw opacity=1 ]   (276.9,285.8) .. controls (259.06,281.31) and (282.38,259.34) .. (282.91,283.82) ;
\draw [shift={(282.9,285.8)}, rotate = 271.45] [color={rgb, 255:red, 208; green, 2; blue, 27 }  ,draw opacity=1 ][line width=0.75]    (10.93,-4.9) .. controls (6.95,-2.3) and (3.31,-0.67) .. (0,0) .. controls (3.31,0.67) and (6.95,2.3) .. (10.93,4.9)   ;
\draw [color={rgb, 255:red, 208; green, 2; blue, 27 }  ,draw opacity=1 ]   (304.9,268) .. controls (287.06,263.51) and (310.38,241.54) .. (310.91,266.02) ;
\draw [shift={(310.9,268)}, rotate = 271.45] [color={rgb, 255:red, 208; green, 2; blue, 27 }  ,draw opacity=1 ][line width=0.75]    (10.93,-4.9) .. controls (6.95,-2.3) and (3.31,-0.67) .. (0,0) .. controls (3.31,0.67) and (6.95,2.3) .. (10.93,4.9)   ;
\draw    (135.6,214.2) -- (56,295) ;

\draw (92,28.4) node [anchor=north west][inner sep=0.75pt]  [font=\Large]  {$X$};
\draw (11,191.4) node [anchor=north west][inner sep=0.75pt]  [font=\normalsize]  {$Z\simeq Q^{L}$};
\draw (382,193.4) node [anchor=north west][inner sep=0.75pt]    {$\mathbb{P}^{2}$};
\draw (205,236.4) node [anchor=north west][inner sep=0.75pt]    {$\psi $};
\draw (82,108.4) node [anchor=north west][inner sep=0.75pt]    {$\eta $};
\end{tikzpicture}
\caption{Blow-up model for a del Pezzo surface as in Proposition \ref{Prop:Proposition1_Z/2Z_transposition}.}
\label{Fig:Figure_blow-up_model_Z/2Z_1_case}
\end{figure}

(\ref{it:item_(2)_Proposition1}) Any two triplets of $\k$-rational points on $ \mathcal{Q}^{L} $ whose components are contained in pairwise distinct rulings of $ \mathcal{Q}^{L}_{L} $ can be sent onto each other by an element of $ \Aut_{\k}(\mathcal{Q}^{L}) $ by Lemma \ref{lem:Lemme1_action_trans_Aut(Q^L)_triplets_k_points}. It follows that any two del Pezzo surfaces satisfying our hypothesis are isomorphic if and only if there exist birational contractions $\eta, \eta'$ onto isomorphic del Pezzo surfaces $ \mathcal{Q}^{L} $ and $ \mathcal{Q}^{L'} $ of degree $ 8 $. This is the case if and only if $L$ and $L'$ are $\k$-isomorphic by \cite[Lemma 3.2(3)]{sz21}.

(\ref{it:item_(3)_Proposition1}) The group $ \Aut_{\k}(X) $ acts on the set of $(-1)$-curves of $ X $ preserving the intersection form, so we get an isomorphism $ \Psi : \Aut_{\k}(X) \rightarrow \Aut(\Pi_{L,\rho}) $ by Lemma \ref{lem:Lemma_faithful_action_Aut_k(X)_on_Pi_L_rho}.
Let us determine $ \Aut(\Pi_{L,\rho}) $. Since we have $\Aut(\Pi_{L,\rho}) \simeq \lbrace \alpha \in \Sym_{5} \, \vert \, (34) \circ \alpha = \alpha \circ (34) \rbrace = C_{\Sym_{5}}(\langle (34) \rangle) $, we get $ \Aut(\Pi_{L,\rho}) \simeq \langle (12),(25),(34) \rangle \simeq \Sym_{3} \times \Z/2\Z $.
We explicitly construct the corresponding geometric actions by lifting well-chosen involutions on $ \mathcal{Q}^{L} $. We define
\begin{align*}
\alpha \, &: \, ([u_{0}:u_{1}],[v_{0}:v_{1}]) \longmapsto ([u_{0}:u_{0}-u_{1}],[v_{0}:v_{0}-v_{1}]) \, ;\\
\beta \, &: \, ([u_{0}:u_{1}],[v_{0}:v_{1}]) \longmapsto ([v_{0}:v_{1}],[u_{0}:u_{1}]) \, ; \, \text{and} \\
\gamma \, &: \, ([u_{0}:u_{1}],[v_{0}:v_{1}]) \longmapsto ([u_{1}:u_{0}],[v_{1}:v_{0}]).
\end{align*} 
They are involutions of $ \mathcal{Q}^{L} $, and their lifts $ \widehat{\alpha}, \widehat{\beta}, \widehat{\gamma} $ are automorphisms of $X$ defined over $\k$ whose actions on the diagram $\Pi_{L}$ correspond to $(25)$, $(34)$, $(12)$, respectively. To conclude,
$ \Aut_{\k}(X)=\langle \widehat{\alpha},\widehat{\gamma} \rangle \times \langle \widehat{\beta}\rangle \simeq \Sym_{3} \times \mathbb{Z}/2\mathbb{Z} $. 

(\ref{it:item_(4)_Proposition1}) From \ref{it:item_(1)_Proposition1} we have a contraction $\eta : X \rightarrow \mathcal{Q}^{L}$ onto three rational points, so thanks to Lemma \ref{lem:Lemme1_action_trans_Aut(Q^L)_triplets_k_points} we get $\rk \, \NS(X)^{\Aut_{\k}(X)}=2$. Now, note that the curve $D_{12}$ is stabilized by $\Aut_{\k}(X)$ (see point \ref{it:item_(3)_Proposition1}) and can then be contracted equivariantly.
The contraction of $D_{12}$ gives a birational morphism to the rational del Pezzo surface $Y$ of degree $6$ in \cite[Lemma 4.10]{sz21}, which is obtained by blowing up $\F_{0} \simeq \p^{1} \times \p^{1}$ in a point of degree $2$ with splitting field $L$ and whose $\k$-automorphism group acts transitively on the set of $(-1)$-curves, implying that $Y \rightarrow \ast$ is an $\Aut_{\k}(X)$-Mori fibre space.

\end{proof}

\begin{remark}
We note that via the birational map $\psi : \Ql^{L} \dashrightarrow \p^{2}$ constructed in the proof of Proposition \ref{Prop:Proposition1_Z/2Z_transposition},(\ref{it:item_(1)_Proposition1}) we could also have lifted $\alpha,\beta,\gamma$ to $\k$-birational involutions of $\p^{2}$.
\end{remark}

\begin{proposition}
Let $X$ be a del Pezzo surface of degree $5$ such that $ \rho(\Gal(\kb/\k)) = \langle (12)(34) \rangle \simeq \mathbb{Z}/2\mathbb{Z} $ in $\Sym_{5}$ as indicated in Figure \ref{fig:figure(b)_option_Gal(kbarre/k)-action_on_Pikbarre}. Then the following holds:
\begin{enumerate}
\item there exist a quadratic extension $L/\k$ and two points $ r=\lbrace r_{1},r_{2} \rbrace , s=\lbrace s_{1},s_{2} \rbrace $ in $\mathbb{P}^{2}$ of degree $2$ with splitting field $L$ such that $X$ is isomorphic to the blow-up of $ \mathbb{P}^{2} $ in $r$ and $s$.\label{it:item_(1)_Proposition_2}
\item Any two such surfaces are isomorphic if and only if the respective quadratic field extensions are $\k$-isomorphic.\label{it:item_(2)_Proposition_2}
\item $ \Aut_{\k}(X)=\langle \widehat{\alpha},\widehat{\beta} \rangle \simeq \D_{4} $, where $\widehat{\alpha}$ and $\widehat{\beta}$ are respectively the lifts of an involution $\alpha$ and an automorphism $\beta$ of order four of $\p^{2}$.\label{it:item_(3)_Proposition2_Z/2Z_2}
\item $ \rk \, \NS(X)^{\Aut_{\k}(X)} = 2 $, so in particular $ X \rightarrow \ast $ is not an $ \Aut_{\k}(X) $-Mori fibre space, but $X$ carries an $\Aut_{\k}(X)$-Mori fibre space structure $X \rightarrow \p^{1}$ given by the linear system of conics passing through the points $r$ and $s$ on $\p^{2}$.
\label{it:item_(4)_Proposition_2} 
\end{enumerate}
\label{Prop:Proposition2_Z/2Z_double_transposition}
\end{proposition}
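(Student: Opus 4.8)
\emph{Plan (following the scheme of Proposition~\ref{Prop:Proposition1_Z/2Z_transposition}, contracting this time to $\p^{2}$ rather than to a quadric).}

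For part~(\ref{it:item_(1)_Proposition_2}): reading Figure~\ref{fig:figure(b)_option_Gal(kbarre/k)-action_on_Pikbarre}, the only $\k$-rational $(-1)$-curves are $D_{12}$ and $D_{34}$, and these two curves meet, so one cannot directly contract a $\Gal(\kb/\k)$-invariant set of disjoint $\k$-rational curves. Instead I would use that $E_{1}+E_{2}+E_{3}+E_{4}$ is a $\Gal(\kb/\k)$-invariant union of four pairwise disjoint $(-1)$-curves, on which $\Gal(\kb/\k)$ acts through the quotient $\Gal(L/\k)\simeq\Z/2\Z$, where $L=\kb^{\ker\rho}$ is the quadratic extension cut out by $\rho$, the generator exchanging $E_{1}\leftrightarrow E_{2}$ and $E_{3}\leftrightarrow E_{4}$. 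Contracting this divisor gives a birational morphism $\eta\colon X\to Y$ over $\k$ onto a del Pezzo surface of degree $9$; since $X$ is rational by Proposition~\ref{Prop:Proposition_rationality_of_dP_5_over_a_field}, so is $Y$, hence $Y\simeq\p^{2}$ by Châtelet's theorem. The points $r$ and $s$ obtained by contracting $\{E_{1},E_{2}\}$ and $\{E_{3},E_{4}\}$ are then two points of degree $2$ with splitting field $L$, in general position because $X_{\kb}\to\p^{2}_{\kb}$ is the blow-up of four points in general position, and $X$ is the blow-up of $\p^{2}$ in $r$ and $s$.

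For part~(\ref{it:item_(2)_Proposition_2}): the quadratic extension $L$ is the fixed field of $\ker\rho$, hence the smallest normal extension over which all ten $(-1)$-curves of $X_{\kb}$ are defined (as in the setup of Lemma~\ref{lem:Lemma_faithful_action_Aut_k(X)_on_Pi_L_rho}), so it is a $\k$-isomorphism invariant of $X$; this gives the forward implication. Conversely, if $L\simeq L'$ we identify them, and on both surfaces the non-trivial element $g$ of $\Gal(L/\k)$ exchanges the two geometric components of $r$ and those of $s$, so after matching labels the two point configurations satisfy the hypothesis of Lemma~\ref{lem:Lemme_6.11_&_6.15_article_Julia} with $\sigma=(12)(34)$ for $g$; that lemma produces $A\in\PGL_{3}(\k)$ carrying one configuration onto the other, whence a $\k$-isomorphism $X\simeq X'$.

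For part~(\ref{it:item_(3)_Proposition2_Z/2Z_2}): by Lemma~\ref{lem:Lemma_faithful_action_Aut_k(X)_on_Pi_L_rho} the action on $(-1)$-curves embeds $\Aut_{\k}(X)\hookrightarrow\Aut(\Pi_{L,\rho})$, and $\Aut(\Pi_{L,\rho})$ is the centralizer of $\rho(g)=(12)(34)$ in $\Aut(\Pi_{\kb})\simeq\Sym_{5}$. Since $(12)(34)$ fixes a unique index, this centralizer lies inside the subgroup $\Sym_{4}$ permuting $E_{1},\dots,E_{4}$, and there it is the stabilizer of the partition $\{\{E_{1},E_{2}\},\{E_{3},E_{4}\}\}$, which is $\D_{4}$ of order $8$. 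In particular every automorphism of $X$ permutes $\{E_{1},\dots,E_{4}\}$, hence descends through $\eta$ to an element of $\Aut_{\k}(\p^{2},\{r_{1},r_{2},s_{1},s_{2}\})$, and conversely every such element lifts; so $\Aut_{\k}(X)\simeq\Aut_{\k}(\p^{2},\{r_{1},r_{2},s_{1},s_{2}\})$, which embeds (faithfully, as an automorphism of $\p^{2}$ fixing four points in general position is trivial) into the symmetric group on $\{r_{1},r_{2},s_{1},s_{2}\}$ with image inside the partition-stabilizer $\D_{4}$. To get surjectivity I would observe that each $\sigma$ in that partition-stabilizer is exactly one commuting with the Galois permutation $(12)(34)$, so Lemma~\ref{lem:Lemme_6.11_&_6.15_article_Julia} supplies $A_{\sigma}\in\PGL_{3}(\k)$ realizing it (automatically of the same order as $\sigma$, a suitable power of $A_{\sigma}$ fixing the four points). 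Hence $\Aut_{\k}(X)\simeq\D_{4}$; taking $\widehat{\alpha}$ the lift of an involution $\alpha=A_{\tau}$ for a transposition $\tau$ and $\widehat{\beta}$ the lift of an order-four element $\beta=A_{\sigma}$ for a $4$-cycle $\sigma$ gives $\Aut_{\k}(X)=\langle\widehat{\alpha},\widehat{\beta}\rangle$. Explicit matrices for $\alpha,\beta$ can be written down after normalizing $r$ and $s$ via the second part of Lemma~\ref{lem:Lemme_6.11_&_6.15_article_Julia}, using that the relevant traces and norms from $L$ to $\k$ lie in $\k$.

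For part~(\ref{it:item_(4)_Proposition_2}): writing $H$ for the pull-back of a line under $\eta$, one has $\NS(X)=\NS(X_{\kb})^{\Gal(\kb/\k)}=\langle H,\,E_{1}+E_{2},\,E_{3}+E_{4}\rangle$ of rank $3$, and $\Aut_{\k}(X)\simeq\D_{4}$ acts on $\NS(X_{\kb})$ through the permutations of $E_{1},\dots,E_{4}$ fixing $H$, a generating $4$-cycle exchanging $E_{1}+E_{2}$ with $E_{3}+E_{4}$; hence $\NS(X)^{\Aut_{\k}(X)}=\langle H,\,E_{1}+E_{2}+E_{3}+E_{4}\rangle$ has rank $2$, so $X\to\ast$ is not an $\Aut_{\k}(X)$-Mori fibre space. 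The only genuinely delicate step is the surjectivity in part~(\ref{it:item_(3)_Proposition2_Z/2Z_2}): realizing all eight symmetries by honest $\k$-automorphisms. The efficient route is the remark that the permutations of $\{r_{1},r_{2},s_{1},s_{2}\}$ commuting with the Galois action are precisely the partition-preserving ones, to which Lemma~\ref{lem:Lemme_6.11_&_6.15_article_Julia} applies; writing the corresponding matrices explicitly and uniformly in the characteristic is the residual computation.
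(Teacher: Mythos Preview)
Your argument is correct and, in parts (\ref{it:item_(1)_Proposition_2}) and (\ref{it:item_(3)_Proposition2_Z/2Z_2}), cleaner than the paper's. The paper first contracts $D_{34}$ together with $E_{1}\cup E_{2}$ to reach a degree-$8$ surface $\mathcal{Q}^{L}$, and only then passes to $\p^{2}$ via a birational map; you instead observe that $\{E_{1},E_{2}\}\cup\{E_{3},E_{4}\}$ is already a Galois-invariant set of four disjoint $(-1)$-curves and contract directly to $\p^{2}$, which is shorter and identifies $L$ intrinsically as $\kb^{\ker\rho}$. For (\ref{it:item_(3)_Proposition2_Z/2Z_2}), the paper bounds $\Aut(\Pi_{L,\rho})$ by analysing its action on the five sets $M_{1},\dots,M_{5}$ of four disjoint $(-1)$-curves and then exhibits explicit matrices $\alpha,\beta\in\PGL_{3}(\k)$ (after normalising $r,s$ via Lemma~\ref{lem:Lemme_6.11_&_6.15_article_Julia}(\ref{it:item_(b)_Lemme_6.15_Julia})); your centralizer computation $\Aut(\Pi_{L,\rho})=C_{\Sym_{5}}((12)(34))\simeq\D_{4}$ reaches the same bound in one line, and your surjectivity argument---that the partition stabilizer of $\{\{r_{1},r_{2}\},\{s_{1},s_{2}\}\}$ coincides with the set of $\sigma$ commuting with the Galois permutation, so Lemma~\ref{lem:Lemme_6.11_&_6.15_article_Julia} applies to each---is a conceptual substitute for those explicit formulas. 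The paper's route has the minor advantage of producing concrete generators $\alpha,\beta$ with visible defining equations and of making the relation to $\mathcal{Q}^{L}$ explicit (used elsewhere), while your route avoids the detour and gives a uniform reason for surjectivity without any characteristic-dependent computation. Part~(\ref{it:item_(4)_Proposition_2}) in the paper is just ``follows from (\ref{it:item_(3)_Proposition2_Z/2Z_2})''; your explicit basis computation for $\NS(X)^{\Aut_{\k}(X)}$ says the same thing more transparently.
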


\begin{proof}
(\ref{it:item_(1)_Proposition_2}) The Petersen graph $ \Pi_{\overline{\k},\rho} $ of $X$ presents three disjoint $(-1)$-curves: the $\k$-curve $D_{34}$ and the irreducible curve $E$ whose geometric components are $E_{1},E_{2}$. The contraction of $D_{34}$ and $E$ yields a birational morphism $ \eta : X \rightarrow Z $ to a rational del Pezzo surface $Z$ of degree $8$ such that $\rk \, \NS(Z) = 1$ and hence $ Z \simeq \mathcal{Q}^{L} $ for some quadratic extension $ L=\mathbf{k}(a_{1}) $ by \cite[Lemma 3.2(1)]{sz21}. Let $a_{2}=a_{1}^{g}$ be the conjugate of $a_{1}$ under the action of $\Gal(L/\k)=:\langle g \rangle$. The morphism $\eta$ contracts the curve $D_{34}$ onto a $\k$-rational point $q \in \mathcal{Q}^{L}(\k)$ and the curve $E$ onto a point $ p=\lbrace p_{1},p_{2} \rbrace $ of degree $2$ whose geometric components are exchanged by the $\Gal(L/\k)$-action.
Moreover, there is a birational map $ \psi : \mathcal{Q}^{L} \dashrightarrow \mathbb{P}^{2} $ that is the composition of the blow-up of $q$ with the contraction of a curve onto a point of degree $2$ in $ \mathbb{P}^{2} $ whose splitting field is $L$. The surface $X$ is therefore isomorphic to the blow-up of $\mathbb{P}^{2}$ in two points $ r,s $ of degree $2$ whose splitting field is $L$, and we can assume that $ r=\lbrace [a_{1}:0:1],[a_{2}:0:1] \rbrace $, $ s=\lbrace [a_{1}:1:0],[a_{2}:1:0] \rbrace $ 
by Lemma \ref{lem:Lemme_6.11_&_6.15_article_Julia},(\ref{it:item_(b)_Lemme_6.15_Julia}).

(\ref{it:item_(2)_Proposition_2}) Applying Lemma \ref{lem:Lemme_6.11_&_6.15_article_Julia},(\ref{it:item_(b)_Lemme_6.15_Julia}) to $ \lbrace [a_{1}:0:1],[a_{2}:0:1],[a_{1}:1:0],[a_{2}:1:0] \rbrace $ and $ \lbrace q_{1},q_{2},q_{3},q_{4} \rbrace $ for some $ q_{1},q_{2},q_{3},q_{4} \in \mathbb{P}^{2}(L) $, no three collinear, we see that we can send any two points $ t=\lbrace q_{1},q_{2}\rbrace $, $u=\lbrace q_{3},q_{4}\rbrace $ in $\mathbb{P}^{2} $ of degree $2$ whose splitting field is $L$ onto $ r=\lbrace [a_{1}:0:1],[a_{2}:0:1] \rbrace $, $ s=\lbrace [a_{1}:1:0],[a_{2}:1:0] \rbrace $ by an element from $ \text{PGL}_{3}(\k) $. It follows that any such two surfaces satisfying our hypothesis are isomorphic if and only if the respective quadratic extensions are $\k$-isomorphic.

(\ref{it:item_(3)_Proposition2_Z/2Z_2}) By Lemma \ref{lem:Lemma_faithful_action_Aut_k(X)_on_Pi_L_rho}, the action of $ \Aut_{\k}(X) $ on the set of $(-1)$-curves of $X$ yields an isomorphism $ \Psi : \Aut_{\k}(X) \rightarrow \Aut(\Pi_{L,\rho}) $. Let us determine $ \Aut(\Pi_{L,\rho}) $. Since $\Aut(\Pi_{L,\rho}) \simeq \lbrace \alpha \in \Sym_{5} \, \vert \, (12)(34) \circ \alpha = \alpha \circ (12)(34) \rbrace = C_{\Sym_{5}}(\langle (12)(34) \rangle)$, we get $\Aut(\Pi_{L,\rho}) \simeq \langle (34),(1324) \rangle \simeq \text{D}_{4}$.
Finally, we construct the corresponding geometric actions on $X$. We set
\[
\alpha \colon [x:y:z] \longmapsto [x-(a_{1}+a_{2})y:-y:z], \quad
\beta \colon [x:y:z] \longmapsto [x-(a_{1}+a_{2})y:z:-y].
\]
Then $\alpha$ is an involution of $\p^{2}$ and $\beta$ is an automorphism of $\p^{2}$ of order four, both defined over $\k$ and satisfying the relation $ \alpha \beta \alpha=\beta^{-1} $. The lifts of $\alpha$ and $\beta$ on $X$ give automorphisms $\widehat{\alpha}, \widehat{\beta}$ that act on the $(-1)$-curves as respectively $(34)$ and $(1324)$.
In conclusion, $\Aut_{\k}(X)=\langle \widehat{\alpha},\widehat{\beta} \rangle \simeq \text{D}_{4} $.

(\ref{it:item_(4)_Proposition_2}) The set $\{E_{1},E_{2},E_{3},E_{4}\}$ forms one $\Aut_{\k}(X)$-orbit and hence can be equivariantly contracted onto $\mathbb{P}^{2}$, giving $\rk \, \NS(X)^{\Aut_{\k}(X)}=2$. Moreover, the conic bundle induced by $2H-E_{1}-E_{2}-E_{3}-E_{4}$ on $X_{\kb}$ (see Figure \ref{fig:fig(d)_conic_bundle_point_of_view}), where $H$ denotes the pullback of a general line of $\p^{2}_{\kb}$, is defined over $\k$ and preserved by the action of $\Aut_{\k}(X)$. Since $\rk \, \NS(X)^{\Aut_{\k}(X)}=2$, the corresponding conic bundle structure $X \rightarrow \p^{1}$ given by the linear system of conics through $r$ and $s$ on $\p^{2}$ is an $\Aut_{\k}(X)$-Mori fibre space.
     
\end{proof}

\begin{remark}
We note that via the birational map $\psi^{-1} : \p^{2} \dashrightarrow \Ql^{L}$ given in the proof of Proposition \ref{Prop:Proposition2_Z/2Z_double_transposition},(\ref{it:item_(1)_Proposition_2}) we could also have lifted $\alpha, \beta$ to birational transformations of $\Ql^{L}$.
\end{remark}

\begin{proposition}
Let $X$ be a del Pezzo surface of degree $5$ such that $\rho(\Gal(\kb/\k)) = \langle(12)\rangle\times\langle(34)\rangle \simeq \Z/2\Z \times \Z/2\Z $ in $\Sym_{5}$ as indicated in Figure \ref{fig:figure(c)_option_Gal(kbarre/k)-action_on_Pikbarre}. Then there exist quadratic extensions $L=\k(a_{1})$ and $L'=\k(b_{1})$ that are not $\k$-isomorphic, with $ t^2+at+\tilde{a}=(t-a_{1})(t-a_{2}), \, t^2+bt+\tilde{b}=(t-b_{1})(t-b_{2}) \in \k[t] $ the minimal polynomials of $a_{1},b_{1}$, and such that the following holds:
\begin{enumerate}
\item there is a birational morphism $\varepsilon : X \longrightarrow \p^{2} $ contracting two irreducible curves $E, F$ onto two points $ \varepsilon(E)=\lbrace [b_{1}:0:1],[b_{2}:0:1] \rbrace, \, \varepsilon(F)=\lbrace [a_{1}:1:0],[a_{2}:1:0] \rbrace$ of degree $2$ whose splitting fields are respectively $ L'=\k(b_{1}), L=\k(a_{1}) $.\label{it:item_(1)_Proposition5_Z2xZ2_1}
\item Any two such surfaces $X$ and $\tilde{X}$ are isomorphic if and only if $\tilde{L}, \tilde{L'}$ are respectively $\k$-isomorphic to $L,L'$.\label{it:item_(2)_Proposition5_Z2xZ2_1}
\item $\Aut_{\k}(X)=\langle \widehat{\alpha} \rangle \times \langle \widehat{\beta} \rangle \simeq \Z/2\Z \times \Z/2\Z $, where $\widehat{\alpha}, \widehat{\beta}$ are the lifts of two involutions $\alpha, \beta$ of $\p^{2}$.\label{it:item_(3)_Proposition5_Z2xZ2_1}
\item $ \rk \, \NS(X)^{\Aut_{\k}(X)}=3 $ and $ \varepsilon \Aut_{\k}(X) \varepsilon^{-1} = \Aut_{\k}(\p^{2},\lbrace [b_{1}:0:1],[b_{2}:0:1] \rbrace,\lbrace [a_{1}:1:0],[a_{2}:1:0] \rbrace) $. In particular, $ X \longrightarrow \ast $ is not an $\Aut_{\k}(X)$-Mori fibre space and the $\Aut_{\k}(X)$-minimal models of $X$ are $\p^{2}$ and the quadric $\Ql^{L}$.\label{it:item_(4)_Proposition5_Z2xZ2_1}
\end{enumerate}
\label{Prop:Proposition5_Z2xZ2_1}
\end{proposition}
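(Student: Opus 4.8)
The plan is to follow the pattern of Propositions~\ref{Prop:Proposition1_Z/2Z_transposition} and~\ref{Prop:Proposition2_Z/2Z_double_transposition}: isolate the distinguished $\Gal(\kb/\k)$-stable contraction visible on the Petersen diagram, descend to $\p^{2}$, normalise the configuration of blown-up points, and then read off $\Aut_{\k}(X)$ from the combinatorics of $\Pi_{L,\rho}$ together with explicit geometric lifts. For~\ref{it:item_(1)_Proposition5_Z2xZ2_1}, among the five sets of four pairwise disjoint $(-1)$-curves of $\Pi_{\kb}$, Figure~\ref{fig:figure(c)_option_Gal(kbarre/k)-action_on_Pikbarre} shows that $M_{5}=\lbrace E_{1},E_{2},E_{3},E_{4}\rbrace$ is the \emph{only} one that is globally $\Gal(\kb/\k)$-stable, and that it splits into the orbits $\lbrace E_{1},E_{2}\rbrace$ and $\lbrace E_{3},E_{4}\rbrace$. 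Thus $E:=E_{1}+E_{2}$ and $F:=E_{3}+E_{4}$ are disjoint irreducible curves defined over $\k$, and contracting them (equivalently, contracting $E_{1},\dots,E_{4}$ over $\kb$) gives a birational morphism $\varepsilon:X\to Z$ over $\k$; by \cite[Ch.~IV, Cor.~24.5.2]{man86} the surface $Z$ is del Pezzo, of degree $9$, and since $X$ is rational so is $Z$, whence $Z\simeq\p^{2}$. The points $p_{i}:=\varepsilon(E_{i})$ are in general position, $r:=\lbrace p_{1},p_{2}\rbrace$ and $s:=\lbrace p_{3},p_{4}\rbrace$ are points of degree $2$, and their splitting fields are the fixed fields of $\rho^{-1}(\langle(34)\rangle)$ and $\rho^{-1}(\langle(12)\rangle)$ respectively; these two index-two subgroups are distinct precisely because $\rho(\Gal(\kb/\k))$ equals the whole group $\langle(12)\rangle\times\langle(34)\rangle$, so the splitting fields are non-$\k$-isomorphic quadratic extensions. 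Applying Lemma~\ref{lem:Lemme_6.11_&_6.15_article_Julia},(\ref{it:item_(b)_Lemme_6.15_Julia}) to $\lbrace p_{1},\dots,p_{4}\rbrace$ then puts $r$ and $s$ in the asserted coordinates, with $L'=\k(b_{1})$, $L=\k(a_{1})$ and $a,\tilde{a},b,\tilde{b}\in\k$ the coefficients of their minimal polynomials.

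For~\ref{it:item_(2)_Proposition5_Z2xZ2_1}, the uniqueness of the $\Gal(\kb/\k)$-stable blow-down structure is decisive: any $\Gal(\kb/\k)$-equivariant isomorphism $X_{\kb}\to\tilde{X}_{\kb}$ must carry $M_{5}$ to $\tilde{M}_{5}$, hence $\lbrace E,F\rbrace$ to $\lbrace\tilde{E},\tilde{F}\rbrace$, and therefore descends through $\varepsilon,\tilde{\varepsilon}$ to an element of $\PGL_{3}(\k)$ identifying $\lbrace r,s\rbrace$ with $\lbrace\tilde{r},\tilde{s}\rbrace$; comparing splitting fields then shows that the two pairs of quadratic extensions $\lbrace L,L'\rbrace$ and $\lbrace\tilde{L},\tilde{L'}\rbrace$ agree. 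Conversely, given matching extensions, Lemma~\ref{lem:Lemme_6.11_&_6.15_article_Julia},(\ref{it:item_(b)_Lemme_6.15_Julia}) yields an element of $\PGL_{3}(\k)$ sending the quadruple of points defining $\tilde{X}$ to the one defining $X$, which lifts to an isomorphism $\tilde{X}\to X$.

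For~\ref{it:item_(3)_Proposition5_Z2xZ2_1}, Lemma~\ref{lem:Lemma_faithful_action_Aut_k(X)_on_Pi_L_rho} provides an injective homomorphism $\Psi:\Aut_{\k}(X)\hookrightarrow\Aut(\Pi_{L'',\rho})$, where $L''=LL'$ is the biquadratic splitting field of all the $(-1)$-curves; and $\Aut(\Pi_{L'',\rho})$ is the centraliser of $\langle(12),(34)\rangle$ in $\Aut(\Pi_{\kb})\simeq\Sym_{5}$, which a direct computation (running the same bookkeeping with the four-element subsets $M_{i}$ as in Propositions~\ref{Prop:Proposition1_Z/2Z_transposition} and~\ref{Prop:Proposition2_Z/2Z_double_transposition}) identifies with $\langle(12),(34)\rangle$ itself, isomorphic to $\Z/2\Z\times\Z/2\Z$. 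To see that $\Psi$ is onto I would use the two commuting involutions
\[
\alpha:[x:y:z]\longmapsto[x-(b_{1}+b_{2})z:y:-z],\qquad
\beta:[x:y:z]\longmapsto[x-(a_{1}+a_{2})y:-y:z]
\]
of $\p^{2}$ — the homology with axis the line through $s$ exchanging the two components of $r$, and the homology with axis the line through $r$ exchanging the two components of $s$ — and check that their lifts $\widehat{\alpha},\widehat{\beta}$ act as $(12)$ and $(34)$ on $\Pi_{L'',\rho}$. As these generate the target, $\Aut_{\k}(X)=\langle\widehat{\alpha}\rangle\times\langle\widehat{\beta}\rangle\simeq\Z/2\Z\times\Z/2\Z$.

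Finally, for~\ref{it:item_(4)_Proposition5_Z2xZ2_1}: since $X$ is rational, $\NS(X)=\Pic(X_{\kb})^{\Gal(\kb/\k)}$ is freely generated by the pullback of the hyperplane class and by $E_{1}+E_{2}$ and $E_{3}+E_{4}$, so $\rk\NS(X)=3$. Both $\widehat{\alpha}$ and $\widehat{\beta}$ descend through $\varepsilon$ to elements of $\PGL_{3}(\k)$, hence fix the hyperplane class, and by construction they merely permute $E_{1}\leftrightarrow E_{2}$ (resp.\ $E_{3}\leftrightarrow E_{4}$) while fixing the remaining two $E_{i}$, so they act trivially on $\NS(X)$; thus $\rk\NS(X)^{\Aut_{\k}(X)}=3$. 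Moreover every $\varphi\in\Aut_{\k}(X)$ preserves the unique $\Gal(\kb/\k)$-stable structure $M_{5}$, and through $\Psi(\varphi)\in\langle(12),(34)\rangle$ also the two sub-orbits $\lbrace E_{1},E_{2}\rbrace$ and $\lbrace E_{3},E_{4}\rbrace$; hence $\varepsilon\varphi\varepsilon^{-1}\in\PGL_{3}(\k)$ preserves both $r$ and $s$, which combined with the lifts above gives $\varepsilon\Aut_{\k}(X)\varepsilon^{-1}=\Aut_{\k}(\p^{2},\lbrace p_{1},p_{2}\rbrace,\lbrace p_{3},p_{4}\rbrace)$. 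I expect the only genuinely delicate step to be the bookkeeping in~\ref{it:item_(3)_Proposition5_Z2xZ2_1}: pinning down $\Aut(\Pi_{L'',\rho})$ and verifying that the explicit $\alpha,\beta$ induce the asserted permutations of all ten $(-1)$-curves; everything else is a routine adaptation of the two preceding cases once one notices that $M_{5}$ is the distinguished contraction.
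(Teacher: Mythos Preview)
Your argument is correct. For part~\ref{it:item_(1)_Proposition5_Z2xZ2_1} you take a more direct route than the paper: the paper first contracts $D_{34}$ and $E_{1}+E_{2}$ to a degree-$8$ surface, identifies it as $\mathcal{Q}^{L}$ via \cite[Lemma~3.2(1)]{sz21}, extracts $L'$ as the splitting field of the residual degree-$2$ point on $\mathcal{Q}^{L}$, and only then passes to $\p^{2}$ through a further link $\psi:\mathcal{Q}^{L}\dashrightarrow\p^{2}$; you instead observe that $M_{5}=\lbrace E_{1},E_{2},E_{3},E_{4}\rbrace$ is Galois-stable and contract it in one step to $\p^{2}$, reading off $L,L'$ symmetrically as the splitting fields of the two image points. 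The paper's detour keeps the exposition parallel to Propositions~\ref{Prop:Proposition1_Z/2Z_transposition} and~\ref{Prop:Proposition2_Z/2Z_double_transposition} (where the passage through $\mathcal{Q}^{L}$ is genuinely needed), but your approach is the natural one here and shorter. Parts~\ref{it:item_(2)_Proposition5_Z2xZ2_1}--\ref{it:item_(4)_Proposition5_Z2xZ2_1} match the paper in substance, your centraliser description of $\Aut(\Pi_{LL',\rho})$ being a compact reformulation of the paper's $M_{i}$-bookkeeping. One incidental point: your formula $\alpha:[x:y:z]\mapsto[x-(b_{1}+b_{2})z:y:-z]$ is the correct involution swapping $[b_{1}:0:1]\leftrightarrow[b_{2}:0:1]$ while fixing $[a_{i}:1:0]$; the paper's printed $\alpha$ has $y$ in place of $z$ in the first coordinate, which only does the job when $b_{1}+b_{2}=0$.
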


\begin{proof}
(\ref{it:item_(1)_Proposition5_Z2xZ2_1}) The Petersen graph $ \Pi_{\kb,\rho} $ of $X$ presents three disjoint $(-1)$-curves: the $\k$-curve $D_{34}$ and the curve denoted $E$ whose geometric components are $E_{1}, E_{2}$. Their contraction yields a birational morphism $ \eta : X \rightarrow Z $ onto a rational del Pezzo surface $Z$ of degree $8$ such that $\rk \, \NS(Z) = 1$ and hence $ Z \simeq \mathcal{Q}^{L} $ for some quadratic extension $ L/\k $ by \cite[Lemma 3.2(1)]{sz21}.
\begin{figure}[h]
\centering
\begin{tikzpicture}[x=0.6pt,y=0.6pt,yscale=-1,xscale=1, scale=0.8, every node/.style={scale=0.8}]

\draw    (61.6,213.4) -- (61.6,296.6) ;
\draw    (89.6,214) -- (89.6,296.6) ;
\draw    (130.6,213) -- (130.6,297.6) ;
\draw    (179.6,80) -- (179.6,160) ;
\draw    (210.6,61.6) -- (210.6,161) ;
\draw    (140.6,289.6) -- (50.6,289.6) ;
\draw    (259.6,150) -- (169.6,150) ;
\draw    (141.6,259) -- (50.6,259.6) ;
\draw    (140.6,220) -- (50.6,220) ;
\draw    (268.6,120.6) -- (169.6,120.6) ;
\draw    (169,93) -- (229.6,60.6) ;
\draw    (248,161) -- (269.6,90.6) ;
\draw    (219,60) -- (272.6,100.6) ;
\draw    (164.6,165.2) -- (133.07,194.24) ;
\draw [shift={(131.6,195.6)}, rotate = 317.35] [color={rgb, 255:red, 0; green, 0; blue, 0 }  ][line width=0.75]    (10.93,-3.29) .. controls (6.95,-1.4) and (3.31,-0.3) .. (0,0) .. controls (3.31,0.3) and (6.95,1.4) .. (10.93,3.29)   ;
\draw    (260.6,165.2) -- (293.13,195.24) ;
\draw [shift={(294.6,196.6)}, rotate = 222.72] [color={rgb, 255:red, 0; green, 0; blue, 0 }  ][line width=0.75]    (10.93,-3.29) .. controls (6.95,-1.4) and (3.31,-0.3) .. (0,0) .. controls (3.31,0.3) and (6.95,1.4) .. (10.93,3.29)   ;
\draw    (129.6,59.2) -- (156.08,81.9) ;
\draw [shift={(157.6,83.2)}, rotate = 220.6] [color={rgb, 255:red, 0; green, 0; blue, 0 }  ][line width=0.75]    (10.93,-3.29) .. controls (6.95,-1.4) and (3.31,-0.3) .. (0,0) .. controls (3.31,0.3) and (6.95,1.4) .. (10.93,3.29)   ;
\draw    (321.6,209.6) -- (278.6,297.6) ;
\draw    (305.6,210.6) -- (405.6,294.2) ;
\draw    (272.6,289.2) -- (405.6,286.2) ;
\draw    (317.6,208.2) -- (308.6,276.2) ;
\draw    (302.6,270) -- (409.6,289.2) ;
\draw    (100.6,62.2) -- (99.62,193.2) ;
\draw [shift={(99.6,195.2)}, rotate = 270.43] [color={rgb, 255:red, 0; green, 0; blue, 0 }  ][line width=0.75]    (10.93,-3.29) .. controls (6.95,-1.4) and (3.31,-0.3) .. (0,0) .. controls (3.31,0.3) and (6.95,1.4) .. (10.93,3.29)   ;
\draw  [fill={rgb, 255:red, 0; green, 0; blue, 0 }  ,fill opacity=1 ] (58.8,289.1) .. controls (58.8,287.39) and (60.19,286) .. (61.9,286) .. controls (63.61,286) and (65,287.39) .. (65,289.1) .. controls (65,290.81) and (63.61,292.2) .. (61.9,292.2) .. controls (60.19,292.2) and (58.8,290.81) .. (58.8,289.1) -- cycle ;
\draw  [fill={rgb, 255:red, 0; green, 0; blue, 0 }  ,fill opacity=1 ] (86.8,259.9) .. controls (86.8,258.19) and (88.19,256.8) .. (89.9,256.8) .. controls (91.61,256.8) and (93,258.19) .. (93,259.9) .. controls (93,261.61) and (91.61,263) .. (89.9,263) .. controls (88.19,263) and (86.8,261.61) .. (86.8,259.9) -- cycle ;
\draw  [fill={rgb, 255:red, 0; green, 0; blue, 0 }  ,fill opacity=1 ] (127.8,219.9) .. controls (127.8,218.19) and (129.19,216.8) .. (130.9,216.8) .. controls (132.61,216.8) and (134,218.19) .. (134,219.9) .. controls (134,221.61) and (132.61,223) .. (130.9,223) .. controls (129.19,223) and (127.8,221.61) .. (127.8,219.9) -- cycle ;
\draw  [fill={rgb, 255:red, 0; green, 0; blue, 0 }  ,fill opacity=1 ] (176.8,149.9) .. controls (176.8,148.19) and (178.19,146.8) .. (179.9,146.8) .. controls (181.61,146.8) and (183,148.19) .. (183,149.9) .. controls (183,151.61) and (181.61,153) .. (179.9,153) .. controls (178.19,153) and (176.8,151.61) .. (176.8,149.9) -- cycle ;
\draw  [fill={rgb, 255:red, 0; green, 0; blue, 0 }  ,fill opacity=1 ] (207.8,119.9) .. controls (207.8,118.19) and (209.19,116.8) .. (210.9,116.8) .. controls (212.61,116.8) and (214,118.19) .. (214,119.9) .. controls (214,121.61) and (212.61,123) .. (210.9,123) .. controls (209.19,123) and (207.8,121.61) .. (207.8,119.9) -- cycle ;
\draw  [fill={rgb, 255:red, 0; green, 0; blue, 0 }  ,fill opacity=1 ] (279.8,288.9) .. controls (279.8,287.19) and (281.19,285.8) .. (282.9,285.8) .. controls (284.61,285.8) and (286,287.19) .. (286,288.9) .. controls (286,290.61) and (284.61,292) .. (282.9,292) .. controls (281.19,292) and (279.8,290.61) .. (279.8,288.9) -- cycle ;
\draw  [fill={rgb, 255:red, 0; green, 0; blue, 0 }  ,fill opacity=1 ] (306.8,270.9) .. controls (306.8,269.19) and (308.19,267.8) .. (309.9,267.8) .. controls (311.61,267.8) and (313,269.19) .. (313,270.9) .. controls (313,272.61) and (311.61,274) .. (309.9,274) .. controls (308.19,274) and (306.8,272.61) .. (306.8,270.9) -- cycle ;
\draw  [fill={rgb, 255:red, 0; green, 0; blue, 0 }  ,fill opacity=1 ] (312.8,219.9) .. controls (312.8,218.19) and (314.19,216.8) .. (315.9,216.8) .. controls (317.61,216.8) and (319,218.19) .. (319,219.9) .. controls (319,221.61) and (317.61,223) .. (315.9,223) .. controls (314.19,223) and (312.8,221.61) .. (312.8,219.9) -- cycle ;
\draw  [fill={rgb, 255:red, 0; green, 0; blue, 0 }  ,fill opacity=1 ] (392.8,286.9) .. controls (392.8,285.19) and (394.19,283.8) .. (395.9,283.8) .. controls (397.61,283.8) and (399,285.19) .. (399,286.9) .. controls (399,288.61) and (397.61,290) .. (395.9,290) .. controls (394.19,290) and (392.8,288.61) .. (392.8,286.9) -- cycle ;
\draw  [dash pattern={on 4.5pt off 4.5pt}]  (155,260) -- (274.6,261.18) ;
\draw [shift={(276.6,261.2)}, rotate = 180.57] [color={rgb, 255:red, 0; green, 0; blue, 0 }  ][line width=0.75]    (10.93,-3.29) .. controls (6.95,-1.4) and (3.31,-0.3) .. (0,0) .. controls (3.31,0.3) and (6.95,1.4) .. (10.93,3.29)   ;
\draw    (274,294) -- (357.6,239.2) ;
\draw    (170.6,159.2) -- (254.6,77.2) ;
\draw [color={rgb, 255:red, 208; green, 2; blue, 27 }  ,draw opacity=1 ]   (63.53,253.12) .. controls (87.35,243.25) and (101.33,272.11) .. (98.09,287.72) ;
\draw [shift={(97.6,289.6)}, rotate = 287.99] [color={rgb, 255:red, 208; green, 2; blue, 27 }  ,draw opacity=1 ][line width=0.75]    (10.93,-4.9) .. controls (6.95,-2.3) and (3.31,-0.67) .. (0,0) .. controls (3.31,0.67) and (6.95,2.3) .. (10.93,4.9)   ;
\draw [shift={(61.6,254)}, rotate = 333.79] [color={rgb, 255:red, 208; green, 2; blue, 27 }  ,draw opacity=1 ][line width=0.75]    (10.93,-4.9) .. controls (6.95,-2.3) and (3.31,-0.67) .. (0,0) .. controls (3.31,0.67) and (6.95,2.3) .. (10.93,4.9)   ;
\draw [color={rgb, 255:red, 208; green, 2; blue, 27 }  ,draw opacity=1 ]   (106.39,222.32) .. controls (105.38,236.48) and (112.86,244.58) .. (128.83,244.27) ;
\draw [shift={(130.6,244.2)}, rotate = 176.82] [color={rgb, 255:red, 208; green, 2; blue, 27 }  ,draw opacity=1 ][line width=0.75]    (10.93,-4.9) .. controls (6.95,-2.3) and (3.31,-0.67) .. (0,0) .. controls (3.31,0.67) and (6.95,2.3) .. (10.93,4.9)   ;
\draw [shift={(106.6,220.2)}, rotate = 97.13] [color={rgb, 255:red, 208; green, 2; blue, 27 }  ,draw opacity=1 ][line width=0.75]    (10.93,-4.9) .. controls (6.95,-2.3) and (3.31,-0.67) .. (0,0) .. controls (3.31,0.67) and (6.95,2.3) .. (10.93,4.9)   ;
\draw [color={rgb, 255:red, 208; green, 2; blue, 27 }  ,draw opacity=1 ]   (190.59,84.45) .. controls (191.22,120.03) and (229.58,133.49) .. (257.13,126.27) ;
\draw [shift={(258.8,125.8)}, rotate = 163.41] [color={rgb, 255:red, 208; green, 2; blue, 27 }  ,draw opacity=1 ][line width=0.75]    (10.93,-4.9) .. controls (6.95,-2.3) and (3.31,-0.67) .. (0,0) .. controls (3.31,0.67) and (6.95,2.3) .. (10.93,4.9)   ;
\draw [shift={(190.6,82.2)}, rotate = 91.51] [color={rgb, 255:red, 208; green, 2; blue, 27 }  ,draw opacity=1 ][line width=0.75]    (10.93,-4.9) .. controls (6.95,-2.3) and (3.31,-0.67) .. (0,0) .. controls (3.31,0.67) and (6.95,2.3) .. (10.93,4.9)   ;
\draw [color={rgb, 255:red, 208; green, 2; blue, 27 }  ,draw opacity=1 ]   (321.4,219.82) .. controls (371.94,218.7) and (397.96,260.77) .. (396.15,281.9) ;
\draw [shift={(395.9,283.8)}, rotate = 280.13] [color={rgb, 255:red, 208; green, 2; blue, 27 }  ,draw opacity=1 ][line width=0.75]    (10.93,-4.9) .. controls (6.95,-2.3) and (3.31,-0.67) .. (0,0) .. controls (3.31,0.67) and (6.95,2.3) .. (10.93,4.9)   ;
\draw [shift={(319,219.9)}, rotate = 357.23] [color={rgb, 255:red, 208; green, 2; blue, 27 }  ,draw opacity=1 ][line width=0.75]    (10.93,-4.9) .. controls (6.95,-2.3) and (3.31,-0.67) .. (0,0) .. controls (3.31,0.67) and (6.95,2.3) .. (10.93,4.9)   ;
\draw [color={rgb, 255:red, 208; green, 2; blue, 27 }  ,draw opacity=1 ]   (276.9,285.8) .. controls (259.06,281.31) and (282.38,259.34) .. (282.91,283.82) ;
\draw [shift={(282.9,285.8)}, rotate = 271.45] [color={rgb, 255:red, 208; green, 2; blue, 27 }  ,draw opacity=1 ][line width=0.75]    (10.93,-4.9) .. controls (6.95,-2.3) and (3.31,-0.67) .. (0,0) .. controls (3.31,0.67) and (6.95,2.3) .. (10.93,4.9)   ;
\draw [color={rgb, 255:red, 208; green, 2; blue, 27 }  ,draw opacity=1 ]   (304.9,268) .. controls (287.06,263.51) and (310.38,241.54) .. (310.91,266.02) ;
\draw [shift={(310.9,268)}, rotate = 271.45] [color={rgb, 255:red, 208; green, 2; blue, 27 }  ,draw opacity=1 ][line width=0.75]    (10.93,-4.9) .. controls (6.95,-2.3) and (3.31,-0.67) .. (0,0) .. controls (3.31,0.67) and (6.95,2.3) .. (10.93,4.9)   ;
\draw    (135.6,214.2) -- (56,295) ;
\draw [color={rgb, 255:red, 24; green, 24; blue, 210 }  ,draw opacity=1 ]   (63,236.01) -- (86.6,236.19) ;
\draw [shift={(88.6,236.2)}, rotate = 180.42] [color={rgb, 255:red, 24; green, 24; blue, 210 }  ,draw opacity=1 ][line width=0.75]    (10.93,-4.9) .. controls (6.95,-2.3) and (3.31,-0.67) .. (0,0) .. controls (3.31,0.67) and (6.95,2.3) .. (10.93,4.9)   ;
\draw [shift={(61,236)}, rotate = 0.42] [color={rgb, 255:red, 24; green, 24; blue, 210 }  ,draw opacity=1 ][line width=0.75]    (10.93,-4.9) .. controls (6.95,-2.3) and (3.31,-0.67) .. (0,0) .. controls (3.31,0.67) and (6.95,2.3) .. (10.93,4.9)   ;
\draw [color={rgb, 255:red, 24; green, 24; blue, 210 }  ,draw opacity=1 ]   (112.66,261.2) -- (113.54,288.2) ;
\draw [shift={(113.6,290.2)}, rotate = 268.15] [color={rgb, 255:red, 24; green, 24; blue, 210 }  ,draw opacity=1 ][line width=0.75]    (10.93,-4.9) .. controls (6.95,-2.3) and (3.31,-0.67) .. (0,0) .. controls (3.31,0.67) and (6.95,2.3) .. (10.93,4.9)   ;
\draw [shift={(112.6,259.2)}, rotate = 88.15] [color={rgb, 255:red, 24; green, 24; blue, 210 }  ,draw opacity=1 ][line width=0.75]    (10.93,-4.9) .. controls (6.95,-2.3) and (3.31,-0.67) .. (0,0) .. controls (3.31,0.67) and (6.95,2.3) .. (10.93,4.9)   ;
\draw [color={rgb, 255:red, 24; green, 24; blue, 210 }  ,draw opacity=1 ]   (63.15,283.91) .. controls (65.02,269.92) and (70.19,262.55) .. (88.19,260.11) ;
\draw [shift={(89.9,259.9)}, rotate = 173.54] [color={rgb, 255:red, 24; green, 24; blue, 210 }  ,draw opacity=1 ][line width=0.75]    (10.93,-4.9) .. controls (6.95,-2.3) and (3.31,-0.67) .. (0,0) .. controls (3.31,0.67) and (6.95,2.3) .. (10.93,4.9)   ;
\draw [shift={(62.9,286)}, rotate = 276.14] [color={rgb, 255:red, 24; green, 24; blue, 210 }  ,draw opacity=1 ][line width=0.75]    (10.93,-4.9) .. controls (6.95,-2.3) and (3.31,-0.67) .. (0,0) .. controls (3.31,0.67) and (6.95,2.3) .. (10.93,4.9)   ;
\draw [color={rgb, 255:red, 24; green, 24; blue, 210 }  ,draw opacity=1 ]   (287.76,289.95) .. controls (301.49,297.65) and (315.61,293.49) .. (310.34,272.54) ;
\draw [shift={(309.9,270.9)}, rotate = 73.96] [color={rgb, 255:red, 24; green, 24; blue, 210 }  ,draw opacity=1 ][line width=0.75]    (10.93,-4.9) .. controls (6.95,-2.3) and (3.31,-0.67) .. (0,0) .. controls (3.31,0.67) and (6.95,2.3) .. (10.93,4.9)   ;
\draw [shift={(286,288.9)}, rotate = 32.5] [color={rgb, 255:red, 24; green, 24; blue, 210 }  ,draw opacity=1 ][line width=0.75]    (10.93,-4.9) .. controls (6.95,-2.3) and (3.31,-0.67) .. (0,0) .. controls (3.31,0.67) and (6.95,2.3) .. (10.93,4.9)   ;
\draw [color={rgb, 255:red, 24; green, 24; blue, 210 }  ,draw opacity=1 ]   (391.6,283.3) .. controls (373.96,270.56) and (370.73,304.88) .. (393.47,289.03) ;
\draw [shift={(394.9,288)}, rotate = 143.02] [color={rgb, 255:red, 24; green, 24; blue, 210 }  ,draw opacity=1 ][line width=0.75]    (10.93,-4.9) .. controls (6.95,-2.3) and (3.31,-0.67) .. (0,0) .. controls (3.31,0.67) and (6.95,2.3) .. (10.93,4.9)   ;
\draw [color={rgb, 255:red, 24; green, 24; blue, 210 }  ,draw opacity=1 ]   (310.6,216.2) .. controls (292.96,203.46) and (290.69,235.86) .. (313.47,219.94) ;
\draw [shift={(314.9,218.9)}, rotate = 143.02] [color={rgb, 255:red, 24; green, 24; blue, 210 }  ,draw opacity=1 ][line width=0.75]    (10.93,-4.9) .. controls (6.95,-2.3) and (3.31,-0.67) .. (0,0) .. controls (3.31,0.67) and (6.95,2.3) .. (10.93,4.9)   ;
\draw [color={rgb, 255:red, 24; green, 24; blue, 210 }  ,draw opacity=1 ]   (193.3,76.8) .. controls (175.46,72.31) and (198.78,50.34) .. (199.31,74.82) ;
\draw [shift={(199.3,76.8)}, rotate = 271.45] [color={rgb, 255:red, 24; green, 24; blue, 210 }  ,draw opacity=1 ][line width=0.75]    (10.93,-4.9) .. controls (6.95,-2.3) and (3.31,-0.67) .. (0,0) .. controls (3.31,0.67) and (6.95,2.3) .. (10.93,4.9)   ;
\draw [color={rgb, 255:red, 24; green, 24; blue, 210 }  ,draw opacity=1 ]   (258.6,130.8) .. controls (273.22,122.03) and (278.34,147.47) .. (257.27,137.63) ;
\draw [shift={(255.6,136.8)}, rotate = 27.55] [color={rgb, 255:red, 24; green, 24; blue, 210 }  ,draw opacity=1 ][line width=0.75]    (10.93,-4.9) .. controls (6.95,-2.3) and (3.31,-0.67) .. (0,0) .. controls (3.31,0.67) and (6.95,2.3) .. (10.93,4.9)   ;

\draw (92,28.4) node [anchor=north west][inner sep=0.75pt]  [font=\Large]  {$X$};
\draw (11,191.4) node [anchor=north west][inner sep=0.75pt]  [font=\normalsize]  {$Z\simeq Q^{L}$};
\draw (382,193.4) node [anchor=north west][inner sep=0.75pt]    {$\mathbb{P}^{2}$};
\draw (205,236.4) node [anchor=north west][inner sep=0.75pt]    {$\psi $};
\draw (82,108.4) node [anchor=north west][inner sep=0.75pt]    {$\eta $};
\end{tikzpicture}
\caption{Blow-up model for a del Pezzo surface as in Proposition \ref{Prop:Proposition5_Z2xZ2_1}.}
\label{Fig:Figure_blow-up_model_Z/2ZxZ/2Z_1}
\end{figure}
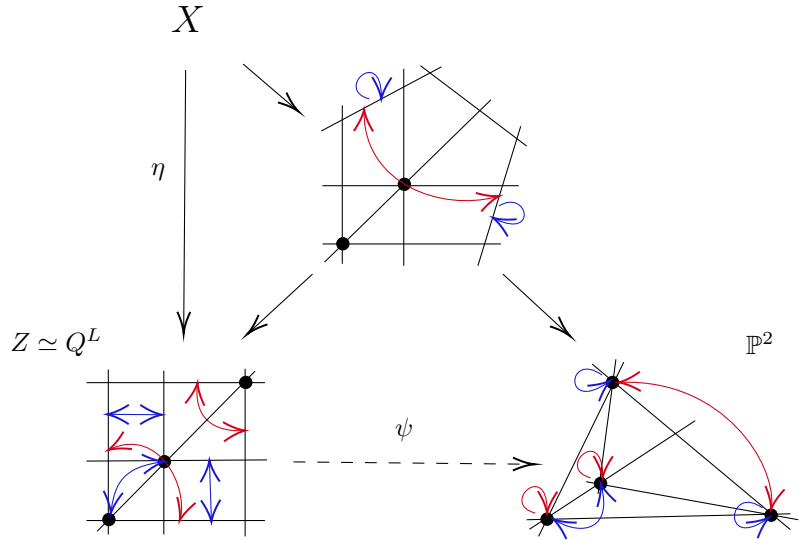
The Figure \ref{Fig:Figure_blow-up_model_Z/2ZxZ/2Z_1} shows the induced $\Gal(\kb/\k)$-action on the image by $\eta$ of $\Pi_{\kb}$. Then the image of the generator $g$ of $\Gal(L/\k)$ in $\Sym_{5}$ is either $(34)$ or $(12)(34)$. It follows that the splitting field of $ \eta(E)=\lbrace q_{1},q_{2} \rbrace=:q $ in $\mathcal{Q}^{L}$ is a quadratic extension $L'$ not $\k$-isomorphic to $L$, such that the generator $g'$ of $\Gal(L'/\k)$ induces the action of $(12)$ on $\Pi_{\kb}$. Let $a_{1} \in L, b_{1} \in L'$ such that $L=\k(a_{1})$ and $L'=\k(b_{1})$.
As in Propositions \ref{Prop:Proposition1_Z/2Z_transposition} and \ref{Prop:Proposition2_Z/2Z_double_transposition}, there is a birational map $ \psi : \mathcal{Q}^{L} \dashrightarrow \p^{2} $ that is the composition of the blow-up of $p:=\eta(D_{34})$ with the contraction of the curve $F$ whose geometric components are $E_{3},E_{4}$ onto a point $r=\lbrace r_{1},r_{2} \rbrace$ of degree $2$ in $\p^{2}$ whose splitting field is $L$ (see Figure \ref{Fig:Figure_blow-up_model_Z/2ZxZ/2Z_1}). The surface $X$ is therefore isomorphic to the blow-up $\varepsilon : X \rightarrow \p^{2}$ of $\p^{2}$ in the two points $s:=\psi(q)$ and $r$ of degree $2$ whose splitting fields are respectively $L'$ and $L$, and we can assume that $ s=\lbrace [b_{1}:0:1],[b_{2}:0:1] \rbrace $ and $ r=\lbrace [a_{1}:1:0],[a_{2}:1:0] \rbrace $
by Lemma \ref{lem:Lemme_6.11_&_6.15_article_Julia}.

(\ref{it:item_(2)_Proposition5_Z2xZ2_1}) Applying Lemma \ref{lem:Lemme_6.11_&_6.15_article_Julia},(\ref{it:item_(b)_Lemme_6.15_Julia}) to the sets $ \lbrace [a_{1}:1:0],[a_{2}:1:0],[b_{1}:0:1],[b_{2}:0:1] \rbrace $ and $ \lbrace w_{1},w_{2},w_{3},w_{4} \rbrace $ for some $ w_{1},w_{2},w_{3},w_{4} \in \mathbb{P}^{2}(LL') $, no three collinear, we see that we can send any two points $ t=\lbrace w_{1},w_{2}\rbrace $, $u=\lbrace w_{3},w_{4}\rbrace $ of degree $2$ in $\mathbb{P}^{2} $ whose splitting fields are $L, L'$ onto $ r=\lbrace [a_{1}:1:0],[a_{2}:1:0] \rbrace $, $ s=\lbrace [b_{1}:0:~1],[b_{2}:0:1] \rbrace $ by an element from $ \text{PGL}_{3}(\k) $. This gives the assertion.

(\ref{it:item_(3)_Proposition5_Z2xZ2_1}) The action of $ \Aut_{\k}(X) $ on the set of $(-1)$-curves of $X$ yields an isomorphism $\Psi : \Aut_{\k}(X) \rightarrow \Aut(\Pi_{LL',\rho})$ by Lemma \ref{lem:Lemma_faithful_action_Aut_k(X)_on_Pi_L_rho}. Let us determine $ \Aut(\Pi_{LL',\rho}) $. As before, since we have $\Aut(\Pi_{LL',\rho}) \simeq \lbrace \sigma \in \Sym_{5} \, \vert \, (12) \circ \sigma = \sigma \circ (12) \,\, \text{and} \,\, (34) \circ \sigma = \sigma \circ (34) \rbrace = C_{\Sym_{5}}(\langle (12),(34) \rangle)$, we get directly $\Aut(\Pi_{LL',\rho}) \simeq \langle (12),(34) \rangle \simeq \Z/2\Z \times \Z/2\Z$.
We then construct the geometric actions corresponding to the induced actions of the two generators of $\Gal(LL'/\k)$. 
Set
\[
\alpha \colon [x:y:z] \longmapsto [x-(b_{1}+b_{2})y:y:-z],\quad 
\beta \colon [x:y:z] \longmapsto [x-(a_{1}+a_{2})y:-y:z].
\]
Then $\alpha$ and $\beta$ are involutions of $\p^{2}$ defined over $\k$ and they commute. Their lifts $\widehat{\alpha}$ and $\widehat{\beta}$ are automorphisms of $X$ that commute and that respectively act as $(12)$ and $(34)$ on $\Pi_{LL'}$. In conclusion, $\Aut_{\k}(X) = \langle\widehat{\alpha}\rangle \times \langle\widehat{\beta}\rangle \simeq \Z/2\Z \times \Z/2\Z $.

(\ref{it:item_(4)_Proposition5_Z2xZ2_1}) The sets $\{E_{1},E_{2}\}$ and $\{E_{3},E_{4}\}$ form two $\Aut_{\k}(X)$-orbits and hence can be equivariantly contracted onto $\p^{2}$, giving $\rk \, \NS(X)^{\Aut_{\k}(X)}=3$. Note that on the other hand we can also equivariantly contract the fixed curve $D_{12}$ (resp. $D_{34}$) and the orbit $\lbrace E_{3},E_{4} \rbrace$ (resp. $\lbrace E_{1},E_{2} \rbrace$) onto $\Ql^{L}$.

\end{proof}

\subsection{Del Pezzo surfaces in Figures (\ref{fig:figure(0)_option_Gal(kbarre/k)-action_on_Pikbarre}),  (\ref{fig:figure(d)_option_Gal(kbarre/k)-action_on_Pikbarre}), (\ref{fig:figure(e)_option_Gal(kbarre/k)-action_on_Pikbarre}), (\ref{fig:figure(f)_option_Gal(kbarre/k)-action_on_Pikbarre}), (\ref{fig:figure(g)_option_Gal(kbarre/k)-action_on_Pikbarre}), (\ref{fig:figure(h)_option_Gal(kbarre/k)-action_on_Pikbarre}), (\ref{fig:figure(i)_option_Gal(kbarre/k)-action_on_Pikbarre}), (\ref{fig:figure(j)_option_Gal(kbarre/k)-action_on_Pikbarre})}
\label{subsec:subsection_2}
In this section we treat the cases where the Galois action on the Petersen diagram is as in (a), (e), (f), (g), (h), (i), (j), (k) in Figure \ref{Fig:Figure_options_for_rho(Gal(kbarre/k))_actions_on_Pikbarre}. The corresponding del Pezzo surfaces are obtained by blowing up $\p^{2}$.

\begin{proposition}
Let $X$ be a del Pezzo surface of degree $5$ such that $\rho(\Gal(\kb/\k)) = \lbrace \id \rbrace $ in $ \Sym_{5} $ as indicated in Figure \ref{fig:figure(0)_option_Gal(kbarre/k)-action_on_Pikbarre}. Then
\begin{enumerate}
\item there is one isomorphism class of such del Pezzo surfaces, obtained by blowing up $\p^{2}$ in four $\k$-rational points $p_{1}, p_{2}, p_{3}, p_{4}$ in general position.
\label{it:item_(1)_Proposition_0_Id}
\item $\Aut_{\k}(X) = \langle G , \widehat{\sigma} \rangle \simeq \Sym_{5} $, where $G$ is the lift of the subgroup $\Aut_{\k}(\p^{2},\lbrace p_{1},p_{2},p_{3},p_{4} \rbrace) $ and $\widehat{\sigma}$ is the lift of the standard quadratic involution $\sigma : [x:y:z] \dashmapsto [yz:xz:xy]$.
\label{it:item_(2)_Proposition_0_Id}
\item $ \rk \, \NS(X)^{\Aut_{\k}(X)} = 1 $ and $ X \longrightarrow \ast $ is an $ \Aut_{\k}(X) $-Mori fibre space.
\label{it_item_(3)_Proposition_0_Id} 
\end{enumerate}
\label{Prop:Proposition_0_Id}
\end{proposition}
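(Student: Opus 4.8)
The plan is to exploit the single structural fact behind this case: the triviality of $\rho(\Gal(\kb/\k))$ means that \emph{all ten} $(-1)$-curves of $X_{\kb}$ are individually Galois-invariant, hence defined over $\k$.

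For \ref{it:item_(1)_Proposition_0_Id}, I would start from the four exceptional curves $E_1,E_2,E_3,E_4$: these are pairwise disjoint $(-1)$-curves defined over $\k$, so their contraction is a birational morphism $\pi\colon X\to Y$ defined over $\k$ with $Y_{\kb}\simeq\p^2_{\kb}$. By Lemma 2.1, $Y$ is a del Pezzo surface (of degree $9$); since $X$ is rational by Proposition \ref{Prop:Proposition_rationality_of_dP_5_over_a_field}, it has a $\k$-point, hence so does $Y$, and Châtelet's theorem gives $Y\simeq\p^2$. The images $p_1,\dots,p_4$ of the $E_i$ are then four $\k$-rational points in general position (general position being a geometric condition that is preserved). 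For uniqueness I would take a second such surface with point configuration $p_1',\dots,p_4'$ and apply Lemma \ref{lem:Lemme_6.11_&_6.15_article_Julia} over the trivial extension $L=\k$ to obtain $A\in\PGL_3(\k)$ with $Ap_i=p_i'$; $A$ lifts to an isomorphism between the two blow-ups, so there is a unique isomorphism class.

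For \ref{it:item_(2)_Proposition_0_Id}, since every $(-1)$-curve is defined over $\k$ we may take $L=\k$ in Lemma \ref{lem:Lemma_faithful_action_Aut_k(X)_on_Pi_L_rho}, which gives an injection $\Psi\colon\Aut_\k(X)\hookrightarrow\Aut(\Pi_{\k,\rho})=\Aut(\Pi_{\kb})\simeq\Sym_5$. The task is surjectivity, equivalently: every $\phi\in\Aut(X_{\kb})\simeq\Sym_5$ is already defined over $\k$. The key point I would argue is this: $\phi$ permutes the ten $(-1)$-curves preserving intersection numbers, so it sends $\{E_1,\dots,E_4\}$ to another set of four pairwise disjoint $\k$-rational $(-1)$-curves; contracting the latter yields a second birational morphism $\pi'\colon X\to\p^2$ over $\k$ (again $\p^2$ by Châtelet). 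As $\pi$ and $\pi'\circ\phi$ contract exactly $E_1,\dots,E_4$, the map $A:=(\pi'\circ\phi)\circ\pi^{-1}$ lies in $\PGL_3(\kb)$; it carries the $\k$-points $p_i=\pi(E_i)$ to the $\k$-points $\pi'(\phi(E_i))$, and being the unique element of $\PGL_3(\kb)$ doing so (four points in general position), it must coincide with the element of $\PGL_3(\k)$ provided by Lemma \ref{lem:Lemme_6.11_&_6.15_article_Julia}; hence $A\in\PGL_3(\k)$ and $\phi=(\pi')^{-1}\circ A\circ\pi$ is defined over $\k$. Therefore $\Aut_\k(X)=\Aut(X_{\kb})\simeq\Sym_5$. (Alternatively one can display generators: permutations of $p_1,\dots,p_4$ realize a copy of $\Sym_4$ inside $\PGL_3(\k)$, and the standard quadratic involution $[x:y:z]\mapsto[yz:xz:xy]$ based at three of the four points, which is regular on $X$, supplies the missing generator of $\Sym_5$.)

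For \ref{it_item_(3)_Proposition_0_Id}, triviality of the Galois action on $\NS(X_{\kb})$ gives $\NS(X)=\NS(X_{\kb})\cong\Z^5$ with $\Aut_\k(X)\simeq\Sym_5$ acting. The ten $(-1)$-curves span $\NS(X)\otimes\Q$ and form a single $\Aut_\k(X)$-orbit, so the $\Sym_5$-equivariant surjection from the transitive permutation module $\Q^{10}$ onto $\NS(X)\otimes\Q$, together with semisimplicity of $\Q[\Sym_5]$, forces the invariant subspace of the target to be at most one-dimensional; since $\sum_C C=-2K_X$ is a nonzero invariant, it is exactly one-dimensional, i.e.\ $\rk\NS(X)^{\Aut_\k(X)}=1$. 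By definition this is precisely the statement that $X\to\ast$ is an $\Aut_\k(X)$-Mori fibre space. The only step that really needs care is the descent argument in \ref{it:item_(2)_Proposition_0_Id} (checking that a geometric automorphism is defined over $\k$), and even that is painless here exactly because the four blown-up points, and hence all ten $(-1)$-curves, are $\k$-rational.
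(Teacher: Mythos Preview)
Your proposal is correct and is in the same spirit as the paper's proof, which simply observes that the triviality of $\rho(\Gal(\kb/\k))$ forces all ten $(-1)$-curves to be defined over $\k$ and then defers the rest to the classical argument over algebraically closed fields (citing \cite[Proposition 6.3.7]{bla06}). You have spelled out that classical argument in full: the descent argument in \ref{it:item_(2)_Proposition_0_Id} (showing that every $\phi\in\Aut(X_{\kb})$ factors as $(\pi')^{-1}\circ A\circ\pi$ with $A\in\PGL_3(\k)$ by uniqueness on four $\k$-rational points in general position) is a clean way to avoid exhibiting explicit generators, and the permutation-module argument in \ref{it_item_(3)_Proposition_0_Id} is a valid and efficient alternative to a direct basis computation.
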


\begin{proof}
Since $\rho(\Gal(\kb/\k))=\lbrace\id\rbrace$, all the $(-1)$-curves on the Petersen diagram of $X_{\kb}$ are defined over $\k$. Then the statement is proven analogously to the classical statement over algebraically closed fields and can be found in \cite[Proposition 6.3.7]{bla06} or in \cite[§8.5.4]{dol12} for instance.
\end{proof}

\begin{proposition}
Let $X$ be a del Pezzo surface of degree $5$ such that $\rho(\Gal(\kb/\k)) = \langle (123) \rangle \simeq \Z/3\Z $ in $ \Sym_{5} $ as indicated in Figure \ref{fig:figure(e)_option_Gal(kbarre/k)-action_on_Pikbarre}. Then the following holds:
\begin{enumerate}
\item there exist a point $q=\lbrace q_{1},q_{2},q_{3} \rbrace$
in $\p^{2}$ of degree $3$ with splitting field $L$ such that $ \Gal(L/\k) \simeq \Z/3\Z $ and a $\k$-rational point $r \in \p^{2}(\k) $, such that $X$ is isomorphic to the blow-up of $\p^{2}$ in $r$ and $q$.
\label{it:item_(1)_Proposition_3_Z3}
\item Any two such surfaces are isomorphic if and only if the corresponding field extensions are $\k$-isomorphic.\label{it:item_(2)_Proposition_3_Z3}
\item $\Aut_{\k}(X)=\langle \widehat{\alpha} \rangle \times \langle \widehat{\beta} \rangle \simeq \Z/6\Z $, where $\widehat{\alpha}$ is the lift of an automorphism of $\p^{2}$ of order three and $\widehat{\beta}$ is the lift of a birational quadratic involution of $\p^{2}$ with base-point $q$. 
\label{it:item_(3)_Proposition_3_Z3}
\item $ \rk \, \NS(X)^{\Aut_{\k}(X)} = 2 $, so in particular $ X \longrightarrow \ast $ is not an $ \Aut_{\k}(X) $-Mori fibre space, and the $\Aut_{\k}(X)$-minimal models of $X$ are the del Pezzo surface $Y$ of degree $6$ obtained by blowing up $\p^{2}$ in $q$ and $\mathbb{F}_{0} \simeq \p^{1} \times \p^{1}$.
\label{it:item_(4)_Proposition_3_Z3}
\end{enumerate}
\label{Prop:Proposition_3_Z/3Z}
\end{proposition}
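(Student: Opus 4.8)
The plan is to follow the template of Propositions \ref{Prop:Proposition1_Z/2Z_transposition}--\ref{Prop:Proposition5_Z2xZ2_1}, the new feature being that the $\Z/3\Z$-action recorded in Figure \ref{fig:figure(e)_option_Gal(kbarre/k)-action_on_Pikbarre} fixes \emph{two} of the five blow-down structures of $X_{\kb}$, namely $M_{5}=\lbrace E_{1},E_{2},E_{3},E_{4}\rbrace$ and $M_{4}=\lbrace D_{12},D_{13},D_{23},E_{4}\rbrace$: the orbits of $\rho(\Gal(\kb/\k))=\langle(123)\rangle$ on the ten $(-1)$-curves are $\lbrace E_{1},E_{2},E_{3}\rbrace$, $\lbrace D_{12},D_{13},D_{23}\rbrace$, $\lbrace D_{14},D_{24},D_{34}\rbrace$ and the $\k$-curve $E_{4}$. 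For (1), I would contract $M_{5}$: being a $\Gal(\kb/\k)$-stable set of four pairwise disjoint $(-1)$-curves, its contraction is $\Gal(\kb/\k)$-equivariant, hence descends to a birational morphism $\varepsilon:X\to Z$ with $Z_{\kb}\simeq\p^{2}_{\kb}$; since $X$ has a $\k$-point by Proposition \ref{Prop:Proposition_rationality_of_dP_5_over_a_field} so does $Z$, and $Z\simeq\p^{2}$ by Châtelet's theorem. The orbit $E_{1},E_{2},E_{3}$ contracts to a point $q=\lbrace q_{1},q_{2},q_{3}\rbrace$ of degree $3$ and $E_{4}$ to a point $r\in\p^{2}(\k)$, the four points being in general position because $X$ is del Pezzo of degree $5$. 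As in Subsection \ref{subsec:subsection_0}, an element of $\Gal(\kb/\k)$ fixing $E_{1},\dots,E_{4}$ already lies in the kernel of $\rho$, so the splitting field $L$ of $q$ is the minimal field over which all $(-1)$-curves are defined and $\Gal(L/\k)\simeq\Z/3\Z$.

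For (2), Lemma \ref{lem:Lemme_6.11_&_6.15_article_Julia} sends any configuration consisting of a degree-$3$ point with splitting field $\k$-isomorphic to $L$ and a $\k$-point, all four in general position, onto any other such configuration by an element of $\text{PGL}_{3}(\k)$ (fix an isomorphism of splitting fields and a compatible labelling of the triples); this gives the ``if'' direction. For ``only if'', a $\k$-isomorphism $X\simeq X'$ induces a $\Gal(\kb/\k)$-equivariant isomorphism of Petersen diagrams carrying $\lbrace M_{4},M_{5}\rbrace$ to $\lbrace M'_{4},M'_{5}\rbrace$; contracting $M_{5}$ on $X$ and its image on $X'$ yields an element of $\text{PGL}_{3}(\k)$ sending $q$ to a degree-$3$ point whose splitting field is $L'$ — note here that contracting $M'_{4}$ also produces a degree-$3$ point with splitting field $L'$, since $D_{12},D_{13},D_{23}$ are defined over $L'$ and cyclically permuted — whence $L\simeq L'$ over $\k$.

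For (3), Lemma \ref{lem:Lemma_faithful_action_Aut_k(X)_on_Pi_L_rho} gives an injection $\Psi:\Aut_{\k}(X)\hookrightarrow\Aut(\Pi_{L,\rho})$, and $\Aut(\Pi_{L,\rho})$ is the centralizer of $\langle(123)\rangle$ in $\Aut(\Pi_{\kb})\simeq\Sym_{5}$, which is $\langle(123)\rangle\times\langle(45)\rangle\simeq\Z/6\Z$. For surjectivity I would exhibit the two generators geometrically. The element $(123)\in\Sym_{5}\simeq\Aut(X_{\kb})$ commutes with $\rho(\Gal(\kb/\k))$, hence descends to $\widehat{\alpha}\in\Aut_{\k}(X)$; it preserves $M_{5}$, so $\varepsilon\widehat{\alpha}\varepsilon^{-1}$ is an automorphism of $\p^{2}$ of order $3$ in $\text{PGL}_{3}(\k)$ cyclically permuting $q_{1},q_{2},q_{3}$ and fixing $r$, and $\widehat{\alpha}$ is its lift. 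The element $(45)\in\Sym_{5}$, being a disjoint cycle, also commutes with $(123)$, so it descends to $\widehat{\beta}\in\Aut_{\k}(X)$ of order $2$; it exchanges $M_{5}$ and $M_{4}$, sending $E_{i}\mapsto D_{jk}$ for $\lbrace i,j,k\rbrace=\lbrace1,2,3\rbrace$ and fixing $E_{4}$, so $\varepsilon\widehat{\beta}\varepsilon^{-1}$ is the birational map of $\p^{2}$ exchanging the exceptional divisors over $q_{1},q_{2},q_{3}$ with the strict transforms of the three lines joining them and fixing $r$, i.e. the standard quadratic Cremona involution with base-point $q$ (one checks $\widehat{\beta}$ sends $H$ to $2H-E_{1}-E_{2}-E_{3}$ using $\widehat{\beta}(-K_{X})=-K_{X}$); its lift is $\widehat{\beta}$. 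Explicit formulas for $\alpha$ and $\beta$ can be written after normalizing coordinates by Lemma \ref{lem:Lemme_6.11_&_6.15_article_Julia}, as in the earlier propositions. Since $\widehat{\alpha},\widehat{\beta}$ commute and $\langle\widehat{\alpha},\widehat{\beta}\rangle\simeq\Z/6\Z$ surjects onto $\Aut(\Pi_{L,\rho})$, we get $\Aut_{\k}(X)=\langle\widehat{\alpha}\rangle\times\langle\widehat{\beta}\rangle\simeq\Z/6\Z$. Part (4) then follows: $\NS(X)=\NS(X_{\kb})^{\Gal(\kb/\k)}$ has rank $3$, spanned by $H$, $E_{4}$ and $S:=E_{1}+E_{2}+E_{3}$; $\widehat{\alpha}$ acts trivially on it, being the image of a Galois generator, while $\widehat{\beta}$ acts by $H\mapsto 2H-S$, $S\mapsto 3H-2S$, $E_{4}\mapsto E_{4}$, whose fixed sublattice $\langle -K_{X},E_{4}\rangle$ has rank $2$; hence $\rk\,\NS(X)^{\Aut_{\k}(X)}=2$ and $X\to\ast$ is not an $\Aut_{\k}(X)$-Mori fibre space.

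I expect step (3) to be the main obstacle: one must check attentively that $(45)$ really commutes with the Galois action (so that $\widehat{\beta}$ is defined over $\k$) and that the induced birational self-map of $\p^{2}$ is genuinely the quadratic involution based at the \emph{degree-$3$} point $q$ rather than at three separate $\k$-points. A secondary subtlety, in (2), is exactly that $X$ carries two Galois-invariant blow-down structures, so an isomorphism may interchange them, and one has to verify that both contractions recover the same splitting field.
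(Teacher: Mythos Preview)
Your proposal is correct and reaches the same conclusions, but the route differs from the paper's in two places worth noting.

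For (\ref{it:item_(1)_Proposition_3_Z3}) you contract the whole Galois-stable quadruple $M_{5}=\{E_{1},E_{2},E_{3},E_{4}\}$ in one step and invoke Ch\^atelet to land on $\p^{2}$. The paper instead contracts only the $\k$-curve $E_{4}$ first, obtaining a rational del Pezzo surface $Y$ of degree~$6$ that it identifies via \cite[Lemma~4.2]{sz21}, and then uses the description of $Y$ as the blow-up of $\p^{2}$ in a degree-$3$ point. Your shortcut is cleaner here; the paper's detour through $Y$ pays off later, because the hexagon of $Y$ is reused in Propositions~\ref{Prop:Proposition_7_Z/6Z}--\ref{Prop:Proposition9_Sym3_2} to build the same quadratic involution over larger Galois images.

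For (\ref{it:item_(3)_Proposition_3_Z3}) you identify $\Aut(\Pi_{L,\rho})$ directly as the centralizer $C_{\Sym_{5}}(\langle(123)\rangle)=\langle(123)\rangle\times\langle(45)\rangle$, and for surjectivity you argue that since $\Theta_{\kb}:\Aut(X_{\kb})\to\Sym_{5}$ is an isomorphism, any $\phi\in\Aut(X_{\kb})$ with $\Theta_{\kb}(\phi)$ centralizing $\rho(\Gal(\kb/\k))$ satisfies $\Theta_{\kb}({}^{g}\phi)=\rho(g)\Theta_{\kb}(\phi)\rho(g)^{-1}=\Theta_{\kb}(\phi)$ and hence ${}^{g}\phi=\phi$ by injectivity. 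This is valid and elegant; it would be worth stating the one-line justification explicitly rather than writing only ``hence descends''. The paper takes the more laborious path of bounding $\Aut(\Pi_{L,\rho})$ via the action on $\{M_{1},\dots,M_{5}\}$ and then \emph{constructing} $\widehat{\alpha},\widehat{\beta}$ from scratch: $\phi_{q}:=\pi_{2}\pi_{1}^{-1}\in\Bir_{\k}(\p^{2})$ is built from the two contractions of the hexagon of $Y$, and $\alpha'\in\PGL_{3}(\k)$ is produced by Lemma~\ref{lem:Lemme_6.11_&_6.15_article_Julia}, with the relations $\alpha'^{3}=\id$ and $[\alpha',\phi_{q}]=\id$ checked by the four-points-fixed trick. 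Your approach is shorter and more conceptual; the paper's buys explicit generators on $\p^{2}$ and, again, a description of $\phi_{q}$ that is recycled verbatim in the $\Z/6\Z$, $\Sym_{3}\times\Z/2\Z$ and second $\Sym_{3}$ cases.
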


\begin{proof}
(\ref{it:item_(1)_Proposition_3_Z3}) The Petersen diagram $\Pi_{\kb,\rho}$ of $X$ contains only one $(-1)$-curve defined over $\k$ namely $E_{4}$. Its contraction yields a birational morphism $\eta : X \rightarrow Y $ to a rational del Pezzo surface $Y$ of degree $6$ as in \cite[Lemma 4.2]{sz21}, where the set of $(-1)$-curves of $Y$ is the union of two curves $C_{1}$ and $C_{2}$ whose geometric components are respectively $\lbrace D_{13},D_{12},D_{23} \rbrace$ and $\lbrace E_{1},E_{2},E_{3} \rbrace$, on which $\Gal(\kb/\k)$ acts cyclically (see also Figure \ref{Fig:Figure_blow-up_model_Z/3Z_case}). Then there exists a point $q=\lbrace q_{1},q_{2},q_{3} \rbrace$ in $\p^{2}$ of degree $3$ with splitting field $L$ such that $\Gal(L/\k)\simeq\Z/3\Z$ and such that $Y$ is isomorphic to the blow-up of $\p^{2}$ in $q$. Therefore $X$ is isomorphic to the blow-up of $\p^{2}$ in a point $q$ of degree $3$ and a $\k$-rational point $r\in\p^{2}(\k)$ (see Figure \ref{Fig:Figure_blow-up_model_Z/3Z_case}). 
\begin{figure}[h]
\centering
\begin{tikzpicture}[x=0.6pt,y=0.6pt,yscale=-1,xscale=1, scale=0.8, every node/.style={scale=0.8}]

\draw [color={rgb, 255:red, 11; green, 173; blue, 35 }  ,draw opacity=1 ][line width=0.75]    (27.6,221.2) -- (138.6,221.2) ;
\draw [color={rgb, 255:red, 11; green, 173; blue, 35 }  ,draw opacity=1 ][line width=0.75]    (30.6,211.2) -- (85.6,297.2) ;
\draw [color={rgb, 255:red, 11; green, 173; blue, 35 }  ,draw opacity=1 ][line width=0.75]    (133.6,212.2) -- (74.6,298.2) ;
\draw [color={rgb, 255:red, 3; green, 19; blue, 249 }  ,draw opacity=1 ][line width=0.75]    (289.6,290.2) -- (400.6,290.2) ;
\draw [color={rgb, 255:red, 3; green, 19; blue, 249 }  ,draw opacity=1 ][line width=0.75]    (350.6,213) -- (291.6,299) ;
\draw [color={rgb, 255:red, 3; green, 19; blue, 249 }  ,draw opacity=1 ][line width=0.75]    (340.6,213) -- (395.6,299) ;
\draw  [color={rgb, 255:red, 3; green, 19; blue, 249 }  ,draw opacity=1 ][fill={rgb, 255:red, 3; green, 19; blue, 249 }  ,fill opacity=1 ] (33.8,221.6) .. controls (33.8,219.61) and (35.41,218) .. (37.4,218) .. controls (39.39,218) and (41,219.61) .. (41,221.6) .. controls (41,223.59) and (39.39,225.2) .. (37.4,225.2) .. controls (35.41,225.2) and (33.8,223.59) .. (33.8,221.6) -- cycle ;
\draw  [color={rgb, 255:red, 3; green, 19; blue, 249 }  ,draw opacity=1 ][fill={rgb, 255:red, 3; green, 19; blue, 249 }  ,fill opacity=1 ] (123.8,221.6) .. controls (123.8,219.61) and (125.41,218) .. (127.4,218) .. controls (129.39,218) and (131,219.61) .. (131,221.6) .. controls (131,223.59) and (129.39,225.2) .. (127.4,225.2) .. controls (125.41,225.2) and (123.8,223.59) .. (123.8,221.6) -- cycle ;
\draw  [color={rgb, 255:red, 3; green, 19; blue, 249 }  ,draw opacity=1 ][fill={rgb, 255:red, 3; green, 19; blue, 249 }  ,fill opacity=1 ] (76.8,288.6) .. controls (76.8,286.61) and (78.41,285) .. (80.4,285) .. controls (82.39,285) and (84,286.61) .. (84,288.6) .. controls (84,290.59) and (82.39,292.2) .. (80.4,292.2) .. controls (78.41,292.2) and (76.8,290.59) .. (76.8,288.6) -- cycle ;
\draw  [color={rgb, 255:red, 11; green, 173; blue, 35 }  ,draw opacity=1 ][fill={rgb, 255:red, 11; green, 173; blue, 35 }  ,fill opacity=1 ] (341.8,220.6) .. controls (341.8,218.61) and (343.41,217) .. (345.4,217) .. controls (347.39,217) and (349,218.61) .. (349,220.6) .. controls (349,222.59) and (347.39,224.2) .. (345.4,224.2) .. controls (343.41,224.2) and (341.8,222.59) .. (341.8,220.6) -- cycle ;
\draw  [color={rgb, 255:red, 11; green, 173; blue, 35 }  ,draw opacity=1 ][fill={rgb, 255:red, 11; green, 173; blue, 35 }  ,fill opacity=1 ] (385.8,289.6) .. controls (385.8,287.61) and (387.41,286) .. (389.4,286) .. controls (391.39,286) and (393,287.61) .. (393,289.6) .. controls (393,291.59) and (391.39,293.2) .. (389.4,293.2) .. controls (387.41,293.2) and (385.8,291.59) .. (385.8,289.6) -- cycle ;
\draw  [color={rgb, 255:red, 11; green, 173; blue, 35 }  ,draw opacity=1 ][fill={rgb, 255:red, 11; green, 173; blue, 35 }  ,fill opacity=1 ] (295.8,289.6) .. controls (295.8,287.61) and (297.41,286) .. (299.4,286) .. controls (301.39,286) and (303,287.61) .. (303,289.6) .. controls (303,291.59) and (301.39,293.2) .. (299.4,293.2) .. controls (297.41,293.2) and (295.8,291.59) .. (295.8,289.6) -- cycle ;
\draw   (273,127.1) -- (247.95,170.49) -- (197.85,170.49) -- (172.8,127.1) -- (197.85,83.71) -- (247.95,83.71) -- cycle ;
\draw [color={rgb, 255:red, 3; green, 19; blue, 249 }  ,draw opacity=1 ][line width=0.75]    (202.6,75) -- (167.6,136) ;
\draw [color={rgb, 255:red, 3; green, 19; blue, 249 }  ,draw opacity=1 ][line width=0.75]    (259.6,170) -- (187.6,170) ;
\draw [color={rgb, 255:red, 3; green, 19; blue, 249 }  ,draw opacity=1 ][line width=0.75]    (242.6,75) -- (279.6,137) ;
\draw [color={rgb, 255:red, 11; green, 173; blue, 35 }  ,draw opacity=1 ][line width=0.75]    (167.6,118) -- (202.6,179) ;
\draw [color={rgb, 255:red, 11; green, 173; blue, 35 }  ,draw opacity=1 ][line width=0.75]    (185.6,84) -- (258.6,83) ;
\draw [color={rgb, 255:red, 11; green, 173; blue, 35 }  ,draw opacity=1 ][line width=0.75]    (277.6,119) -- (242.6,180) ;
\draw  [color={rgb, 255:red, 249; green, 3; blue, 3 }  ,draw opacity=1 ][fill={rgb, 255:red, 249; green, 3; blue, 3 }  ,fill opacity=1 ] (77.8,246.6) .. controls (77.8,244.61) and (79.41,243) .. (81.4,243) .. controls (83.39,243) and (85,244.61) .. (85,246.6) .. controls (85,248.59) and (83.39,250.2) .. (81.4,250.2) .. controls (79.41,250.2) and (77.8,248.59) .. (77.8,246.6) -- cycle ;
\draw  [color={rgb, 255:red, 249; green, 3; blue, 3 }  ,draw opacity=1 ][fill={rgb, 255:red, 249; green, 3; blue, 3 }  ,fill opacity=1 ] (340.8,265.6) .. controls (340.8,263.61) and (342.41,262) .. (344.4,262) .. controls (346.39,262) and (348,263.61) .. (348,265.6) .. controls (348,267.59) and (346.39,269.2) .. (344.4,269.2) .. controls (342.41,269.2) and (340.8,267.59) .. (340.8,265.6) -- cycle ;
\draw  [color={rgb, 255:red, 249; green, 3; blue, 3 }  ,draw opacity=1 ][fill={rgb, 255:red, 249; green, 3; blue, 3 }  ,fill opacity=1 ] (219.3,128.1) .. controls (219.3,126.11) and (220.91,124.5) .. (222.9,124.5) .. controls (224.89,124.5) and (226.5,126.11) .. (226.5,128.1) .. controls (226.5,130.09) and (224.89,131.7) .. (222.9,131.7) .. controls (220.91,131.7) and (219.3,130.09) .. (219.3,128.1) -- cycle ;
\draw    (166.6,166.8) -- (138.97,196.34) ;
\draw [shift={(137.6,197.8)}, rotate = 313.09] [color={rgb, 255:red, 0; green, 0; blue, 0 }  ][line width=0.75]    (10.93,-3.29) .. controls (6.95,-1.4) and (3.31,-0.3) .. (0,0) .. controls (3.31,0.3) and (6.95,1.4) .. (10.93,3.29)   ;
\draw    (274.6,167.8) -- (304.16,196.41) ;
\draw [shift={(305.6,197.8)}, rotate = 224.06] [color={rgb, 255:red, 0; green, 0; blue, 0 }  ][line width=0.75]    (10.93,-3.29) .. controls (6.95,-1.4) and (3.31,-0.3) .. (0,0) .. controls (3.31,0.3) and (6.95,1.4) .. (10.93,3.29)   ;
\draw    (138.6,64.8) -- (168.16,93.41) ;
\draw [shift={(169.6,94.8)}, rotate = 224.06] [color={rgb, 255:red, 0; green, 0; blue, 0 }  ][line width=0.75]    (10.93,-3.29) .. controls (6.95,-1.4) and (3.31,-0.3) .. (0,0) .. controls (3.31,0.3) and (6.95,1.4) .. (10.93,3.29)   ;
\draw    (93,67) -- (93.59,193) ;
\draw [shift={(93.6,195)}, rotate = 269.73] [color={rgb, 255:red, 0; green, 0; blue, 0 }  ][line width=0.75]    (10.93,-3.29) .. controls (6.95,-1.4) and (3.31,-0.3) .. (0,0) .. controls (3.31,0.3) and (6.95,1.4) .. (10.93,3.29)   ;
\draw  [dash pattern={on 4.5pt off 4.5pt}]  (285.6,250.6) -- (157.6,250.6) ;
\draw [shift={(155.6,250.6)}, rotate = 360] [color={rgb, 255:red, 0; green, 0; blue, 0 }  ][line width=0.75]    (10.93,-3.29) .. controls (6.95,-1.4) and (3.31,-0.3) .. (0,0) .. controls (3.31,0.3) and (6.95,1.4) .. (10.93,3.29)   ;
\draw [color={rgb, 255:red, 208; green, 2; blue, 27 }  ,draw opacity=1 ]   (207.6,169.6) .. controls (203.68,149.02) and (198.8,138.04) .. (177.9,122.55) ;
\draw [shift={(176.6,121.6)}, rotate = 36.03] [color={rgb, 255:red, 208; green, 2; blue, 27 }  ,draw opacity=1 ][line width=0.75]    (10.93,-4.9) .. controls (6.95,-2.3) and (3.31,-0.67) .. (0,0) .. controls (3.31,0.67) and (6.95,2.3) .. (10.93,4.9)   ;
\draw [color={rgb, 255:red, 208; green, 2; blue, 27 }  ,draw opacity=1 ]   (194.6,90.6) .. controls (208.04,98.28) and (235.3,99.51) .. (248.97,91.63) ;
\draw [shift={(250.6,90.6)}, rotate = 145.3] [color={rgb, 255:red, 208; green, 2; blue, 27 }  ,draw opacity=1 ][line width=0.75]    (10.93,-4.9) .. controls (6.95,-2.3) and (3.31,-0.67) .. (0,0) .. controls (3.31,0.67) and (6.95,2.3) .. (10.93,4.9)   ;
\draw [color={rgb, 255:red, 208; green, 2; blue, 27 }  ,draw opacity=1 ]   (269.6,120.6) .. controls (250.4,133.08) and (241.34,152.93) .. (241.53,167.77) ;
\draw [shift={(241.6,169.6)}, rotate = 266.19] [color={rgb, 255:red, 208; green, 2; blue, 27 }  ,draw opacity=1 ][line width=0.75]    (10.93,-4.9) .. controls (6.95,-2.3) and (3.31,-0.67) .. (0,0) .. controls (3.31,0.67) and (6.95,2.3) .. (10.93,4.9)   ;
\draw [color={rgb, 255:red, 208; green, 2; blue, 27 }  ,draw opacity=1 ]   (178.6,136.6) .. controls (185.49,133.65) and (202.09,120.02) .. (209.28,86.16) ;
\draw [shift={(209.6,84.6)}, rotate = 101.31] [color={rgb, 255:red, 208; green, 2; blue, 27 }  ,draw opacity=1 ][line width=0.75]    (10.93,-4.9) .. controls (6.95,-2.3) and (3.31,-0.67) .. (0,0) .. controls (3.31,0.67) and (6.95,2.3) .. (10.93,4.9)   ;
\draw [color={rgb, 255:red, 208; green, 2; blue, 27 }  ,draw opacity=1 ]   (237.6,84.6) .. controls (237.6,105.36) and (248.89,122.92) .. (266.91,133.63) ;
\draw [shift={(268.6,134.6)}, rotate = 209.16] [color={rgb, 255:red, 208; green, 2; blue, 27 }  ,draw opacity=1 ][line width=0.75]    (10.93,-4.9) .. controls (6.95,-2.3) and (3.31,-0.67) .. (0,0) .. controls (3.31,0.67) and (6.95,2.3) .. (10.93,4.9)   ;
\draw [color={rgb, 255:red, 208; green, 2; blue, 27 }  ,draw opacity=1 ]   (251.6,163.6) .. controls (236.24,153.04) and (209.82,155.38) .. (195.35,162.66) ;
\draw [shift={(193.6,163.6)}, rotate = 330.26] [color={rgb, 255:red, 208; green, 2; blue, 27 }  ,draw opacity=1 ][line width=0.75]    (10.93,-4.9) .. controls (6.95,-2.3) and (3.31,-0.67) .. (0,0) .. controls (3.31,0.67) and (6.95,2.3) .. (10.93,4.9)   ;
\draw [color={rgb, 255:red, 208; green, 2; blue, 27 }  ,draw opacity=1 ]   (299.4,286) .. controls (300.58,271.22) and (307.39,237.05) .. (340.28,221.31) ;
\draw [shift={(341.8,220.6)}, rotate = 155.76] [color={rgb, 255:red, 208; green, 2; blue, 27 }  ,draw opacity=1 ][line width=0.75]    (10.93,-4.9) .. controls (6.95,-2.3) and (3.31,-0.67) .. (0,0) .. controls (3.31,0.67) and (6.95,2.3) .. (10.93,4.9)   ;
\draw [color={rgb, 255:red, 208; green, 2; blue, 27 }  ,draw opacity=1 ]   (349,220.6) .. controls (368.11,228.79) and (390.45,262.27) .. (389.52,284.33) ;
\draw [shift={(389.4,286)}, rotate = 275.71] [color={rgb, 255:red, 208; green, 2; blue, 27 }  ,draw opacity=1 ][line width=0.75]    (10.93,-4.9) .. controls (6.95,-2.3) and (3.31,-0.67) .. (0,0) .. controls (3.31,0.67) and (6.95,2.3) .. (10.93,4.9)   ;
\draw [color={rgb, 255:red, 208; green, 2; blue, 27 }  ,draw opacity=1 ]   (389.4,293.2) .. controls (366.08,307.7) and (325.27,307.99) .. (300.87,294.07) ;
\draw [shift={(299.4,293.2)}, rotate = 31.45] [color={rgb, 255:red, 208; green, 2; blue, 27 }  ,draw opacity=1 ][line width=0.75]    (10.93,-4.9) .. controls (6.95,-2.3) and (3.31,-0.67) .. (0,0) .. controls (3.31,0.67) and (6.95,2.3) .. (10.93,4.9)   ;
\draw [color={rgb, 255:red, 208; green, 2; blue, 27 }  ,draw opacity=1 ]   (76.8,288.6) .. controls (54.06,280.17) and (37.28,255.03) .. (37.37,226.92) ;
\draw [shift={(37.4,225.2)}, rotate = 91.59] [color={rgb, 255:red, 208; green, 2; blue, 27 }  ,draw opacity=1 ][line width=0.75]    (10.93,-4.9) .. controls (6.95,-2.3) and (3.31,-0.67) .. (0,0) .. controls (3.31,0.67) and (6.95,2.3) .. (10.93,4.9)   ;
\draw [color={rgb, 255:red, 208; green, 2; blue, 27 }  ,draw opacity=1 ]   (37.4,218) .. controls (57.2,207.22) and (96.97,205.08) .. (125.66,217.24) ;
\draw [shift={(127.4,218)}, rotate = 204.29] [color={rgb, 255:red, 208; green, 2; blue, 27 }  ,draw opacity=1 ][line width=0.75]    (10.93,-4.9) .. controls (6.95,-2.3) and (3.31,-0.67) .. (0,0) .. controls (3.31,0.67) and (6.95,2.3) .. (10.93,4.9)   ;
\draw [color={rgb, 255:red, 208; green, 2; blue, 27 }  ,draw opacity=1 ]   (127.4,225.2) .. controls (129.52,253.96) and (103.52,282.9) .. (85.88,288.13) ;
\draw [shift={(84,288.6)}, rotate = 348.44] [color={rgb, 255:red, 208; green, 2; blue, 27 }  ,draw opacity=1 ][line width=0.75]    (10.93,-4.9) .. controls (6.95,-2.3) and (3.31,-0.67) .. (0,0) .. controls (3.31,0.67) and (6.95,2.3) .. (10.93,4.9)   ;

\draw (379,195.4) node [anchor=north west][inner sep=0.75pt]    {$\mathbb{P}^{2}$};
\draw (270,63.4) node [anchor=north west][inner sep=0.75pt]    {$Y$};
\draw (87,29.4) node [anchor=north west][inner sep=0.75pt]  [font=\large]  {$X$};
\draw (204,60.4) node [anchor=north west][inner sep=0.75pt]  [color={rgb, 255:red, 11; green, 173; blue, 35 }  ,opacity=1 ]  {$C_{1}$};
\draw (267,96.4) node [anchor=north west][inner sep=0.75pt]  [color={rgb, 255:red, 3; green, 19; blue, 249 }  ,opacity=1 ]  {$C_{2}$};
\draw (152,56.4) node [anchor=north west][inner sep=0.75pt]    {$\eta $};
\draw (290,162.4) node [anchor=north west][inner sep=0.75pt]    {$\pi _{1}$};
\draw (130,160.4) node [anchor=north west][inner sep=0.75pt]    {$\pi _{2}$};
\draw (215,223.4) node [anchor=north west][inner sep=0.75pt]    {$\phi _{q}$};

\end{tikzpicture}
\caption{Blow-up model for a del Pezzo surface as in Proposition \ref{Prop:Proposition_3_Z/3Z}.}
\label{Fig:Figure_blow-up_model_Z/3Z_case}
\end{figure}

(\ref{it:item_(2)_Proposition_3_Z3}) The fact that any two del Pezzo surfaces satisfying our hypothesis are isomorphic if and only if the respective splitting fields are $\k$-isomorphic follows from Lemma \ref{lem:Lemme_6.11_&_6.15_article_Julia}.

(\ref{it:item_(3)_Proposition_3_Z3}) The action of $\Aut_{\k}(X)$ on the set of $(-1)$-curves of $X$ gives an isomorphism $\Psi : \Aut_{\k}(X) \overset{\sim}{\longrightarrow} \Aut(\Pi_{L,\rho}) $ by Lemma \ref{lem:Lemma_faithful_action_Aut_k(X)_on_Pi_L_rho}. In this case, since $\Aut(\Pi_{L,\rho}) \simeq \lbrace \sigma \in \Sym_{5} \, \vert \, (123) \circ \sigma = \sigma \circ (123) \rbrace = C_{\Sym_{5}}(\langle (123) \rangle)$, we directly get $\Aut(\Pi_{L,\rho}) \simeq \langle (123),(45) \rangle \simeq \Z/3\Z \times \Z/2\Z$.
We now construct the corresponding geometric actions on $X$. Let us define the map $\phi_{q}:=\pi_{2}\circ\pi_{1}^{-1} \in \Bir_{\k}(\p^{2})$, where $\pi_{1},\pi_{2} : Y \rightarrow \p^{2} $ are respectively the contractions of the curves $C_{1}, C_{2}$ (see Figure \ref{Fig:Figure_blow-up_model_Z/3Z_case}). This is a birational map of degree $2$. By Lemma \ref{lem:Lemme_6.11_&_6.15_article_Julia} we can assume that $\phi_{q}$ fixes the rational point $r$ and that it contracts the line through $q_{i},q_{j}$ onto $q_{k}$, where $\lbrace i,j,k \rbrace = \lbrace 1,2,3 \rbrace$. These conditions imply that $\phi_{q}$ is an involution and that it lifts to an automorphism $\widehat{\beta}$ of X defined over $\k$ inducing a rotation of order two on the hexagon of $Y$ and acting as $(45)$ on the $(-1)$-curves of $X$. Since $\langle \sigma=(123) \rangle = \Z/3\Z \simeq \Gal(L/\k) $, Lemma \ref{lem:Lemme_6.11_&_6.15_article_Julia} guarantees that there exists $\alpha' \in \Aut_{\k}(\p^{2})$ such that $\alpha'(q_{i})=q_{\sigma(i)}$, $i=1,2,3$, and $\alpha'(r)=r$. Then $\alpha'^{3}$ and $\alpha'\circ\phi_{q}\circ\alpha'^{-1}\circ\phi_{q}$ are linear and fix $ r,q_{1},q_{2},q_{3} $, and hence $\alpha'$ is of order $3$ and $\alpha'$ and $\phi_{q}$ commute. The lift $\widehat{\alpha}$ of $\alpha'$ is an automorphism of $X$ commuting with $\widehat{\beta}$; it induces a rotation of order $3$ on the hexagon of $Y$ and it thus acts as $(123)$ on $\Pi_{L,\rho}$. In conclusion, $\Aut_{\k}(X)=\langle\widehat{\alpha}\rangle \times \langle\widehat{\beta}\rangle \simeq \Z/3\Z \times \Z/2\Z $.

(\ref{it:item_(4)_Proposition_3_Z3}) The curve $E_{4}$ is stabilized by $\Aut_{\k}(X)$ (see point \ref{it:item_(3)_Proposition_3_Z3}) and can then be contracted equivariantly. The contraction of $E_{4}$ gives a birational morphism to the rational del Pezzo surface $Y$ of degree $6$ whose $\k$-automorphism group acts transitively on the hexagon, implying that $Y \rightarrow \ast$ is an $\Aut_{\k}(X)$-Mori fibre space.
It follows that $\rk \, \NS(X)^{\Aut_{\k}(X)}=2$. Note that we can also equivariantly contract the $\Aut_{\k}(X)$-orbit $\lbrace D_{24},D_{34},D_{14} \rbrace$ onto the surface $\mathbb{F}_{0} \simeq \p^{1} \times \p^{1}$, which finally yields the claim.
\end{proof}

\begin{example}
We construct a del Pezzo surface as in Proposition \ref{Prop:Proposition_3_Z/3Z}. From \cite[Example 4.4]{sz21}, let $\k = \mathbb{F}_{2}$ and $L/\k$ be the splitting field of $P(X)=X^{3}+X+1$, i.e.\ $\vert L \vert = 8$. Then  $ 3=\vert L:\k \vert=\vert \Gal(L/\k) \vert$ and $ \sigma : a \mapsto a^{2} $ generates $\Gal(L/\k)$ (see for instance \cite[Theorem 6.5]{mor96}). If $\zeta$ is a root of $P(X)$, then $\sigma(\zeta^{4})=\zeta$ and hence the point $\lbrace [1:\zeta:\zeta^{4}],[1:\zeta^{2}:\zeta],[1:\zeta^{4}:\zeta^{2}] \rbrace$ is of degree $3$, its geometric components are not collinear and they are cyclically permuted by $\sigma$. We take $ r=[1:1:1] \in \p^{2}(\k) $ as a $\k$-rational point.
\label{ex:example_Z/3Z}
\end{example}

\begin{proposition}
Let $X$ be a del Pezzo surface of degree $5$ such that $ \rho(\Gal(\kb/\k)) = \langle (123),(12) \rangle \simeq \Sym_{3} $ in $ \Sym_{5} $ as indicated in Figure \ref{fig:figure(j)_option_Gal(kbarre/k)-action_on_Pikbarre}. Then the following holds: 
\begin{enumerate}
\item there exist a point $ q = \lbrace q_{1},q_{2},q_{3} \rbrace $ in $ \mathbb{P}^{2} $ of degree $3$ with splitting field $L$ such that $ \Gal(L/\k) \simeq \Sym_{3} $ and a rational point $ r \in \mathbb{P}^{2}(\k) $ such that $X$ is isomorphic to the blow-up of $ \mathbb{P}^{2} $ in $q$ and $r$.\label{it:item_(1)_Proposition_8_Sym3_1}
\item Any two such surfaces are isomorphic if and only if the corresponding field extensions are $\k$-isomorphic.\label{it:item_(2)_Proposition_8_Sym3_1}
\item $ \Aut_{\k}(X)=\langle \widehat{\alpha} \rangle \simeq \mathbb{Z}/2\mathbb{Z} $, where $\widehat{\alpha}$ is the lift of a birational quadratic involution $\phi_{q} $ with base-point $q$.\label{it:item_(3)_Proposition_8_Sym3_1}
\item $ \rk \, \NS(X)^{\Aut_{\k}(X)} = 2 $, so in particular $ X \longrightarrow \ast $ is not an $ \Aut_{\k}(X) $-Mori fibre space, and the $\Aut_{\k}(X)$-minimal models of $X$ are the rational del Pezzo surface $Y$ of degree $6$ obtained by blowing up $\mathbb{P}^{2}$ in $q$ and $\mathbb{F}_{0} \simeq \mathbb{P}^{1} \times \mathbb{P}^{1}$.\label{it:item_(4)_Proposition_8_Sym3_1}
\end{enumerate}
\label{Prop:Proposition_8_Sym3_case_1}
\end{proposition}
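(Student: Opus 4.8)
The strategy is to run the same argument as in the $\Z/3\Z$-case, Proposition~\ref{Prop:Proposition_3_Z/3Z}: the surface $X$ will again be a blow-up of $\p^2$ in a point of degree $3$ together with a $\k$-rational point, the only new feature being that the larger Galois action on the degree-$3$ point removes the order-three automorphisms. For~\ref{it:item_(1)_Proposition_8_Sym3_1} I would read off from Figure~\ref{fig:figure(j)_option_Gal(kbarre/k)-action_on_Pikbarre} that $E_4$ is the only $(-1)$-curve of $X$ defined over $\k$, while the remaining nine curves form three $\Gal(\kb/\k)$-orbits of size $3$, namely $\{E_1,E_2,E_3\}$, $\{D_{14},D_{24},D_{34}\}$ and $\{D_{12},D_{13},D_{23}\}$. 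Since $E_1+E_2+E_3+E_4$ is then $\Gal(\kb/\k)$-invariant and the $E_i$ are pairwise disjoint, their simultaneous contraction is a birational morphism $\nu\colon X\to\p^2$ defined over $\k$ presenting $X$ as the blow-up of $\p^2$ in the point $q=\{q_1,q_2,q_3\}$ of degree $3$, $q_i=\nu(E_i)$, and the $\k$-rational point $r=\nu(E_4)$, the four points being in general position; moreover the splitting field $L$ of $q$ is the fixed field $\kb^{\ker\rho}$, so $\Gal(L/\k)\simeq\Sym_3$ acting on $\{q_1,q_2,q_3\}$ as the full symmetric group. I would also record the factorization $\nu=\pi_1\circ\eta$, where $\eta\colon X\to Y$ contracts only $E_4$ --- as in \cite[Lemma~4.2]{sz21}, $Y$ is a rational del Pezzo surface of degree $6$ whose six $(-1)$-curves form the hexagon $E_1 - D_{12} - E_2 - D_{23} - E_3 - D_{13} - E_1$, on which $\Gal(\kb/\k)$ acts through $\Sym_3$ preserving the two triangles $\{E_1,E_2,E_3\}$ and $\{D_{12},D_{13},D_{23}\}$ --- and $\pi_1\colon Y\to\p^2$ is the ($\Gal$-equivariant) contraction of the first triangle. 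Statement~\ref{it:item_(2)_Proposition_8_Sym3_1} follows as in Proposition~\ref{Prop:Proposition_3_Z/3Z},(\ref{it:item_(2)_Proposition_3_Z3}): $L$ is intrinsic to $X$ (the smallest field over which all $(-1)$-curves of $X_{\kb}$ are defined), and conversely a $\k$-isomorphism of the splitting fields lets one match up the Galois actions on $\{q_1,q_2,q_3,r\}$ and, via Lemma~\ref{lem:Lemme_6.11_&_6.15_article_Julia}, produce an element of $\PGL_3(\k)$ conjugating the two configurations.

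The core is~\ref{it:item_(3)_Proposition_8_Sym3_1}. By Lemma~\ref{lem:Lemma_faithful_action_Aut_k(X)_on_Pi_L_rho} there is an injection $\Psi\colon\Aut_\k(X)\hookrightarrow\Aut(\Pi_{L,\rho})$, and I would compute the target via its action on the five sets of four pairwise disjoint $(-1)$-curves
\[M_1=\{E_1,D_{23},D_{24},D_{34}\},\quad M_2=\{E_2,D_{13},D_{14},D_{34}\},\quad M_3=\{E_3,D_{12},D_{14},D_{24}\},\]
\[M_4=\{E_4,D_{12},D_{13},D_{23}\},\quad M_5=\{E_1,E_2,E_3,E_4\}.\]
The Galois group permutes $M_1,M_2,M_3$ as $\Sym_3$ and fixes $M_4$ and $M_5$, so an element of $\Aut(\Pi_{L,\rho})$, commuting with it, must preserve $\{M_1,M_2,M_3\}$ and $\{M_4,M_5\}$; a permutation of $\{M_1,M_2,M_3\}$ commuting with the transitive $\Sym_3$ is trivial (as $\Sym_3$ has trivial centre), so $\Aut(\Pi_{L,\rho})$ has order at most $2$. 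On the other hand, the transposition $\alpha$ exchanging $M_4\leftrightarrow M_5$ and fixing $M_1,M_2,M_3$ --- acting on the $(-1)$-curves by $E_1\leftrightarrow D_{23}$, $E_2\leftrightarrow D_{13}$, $E_3\leftrightarrow D_{12}$ and $E_4^{\circlearrowleft}, D_{14}^{\circlearrowleft}, D_{24}^{\circlearrowleft}, D_{34}^{\circlearrowleft}$ --- is a graph automorphism commuting with the Galois action, so $\Aut(\Pi_{L,\rho})=\langle\alpha\rangle\simeq\Z/2\Z$. To see $\Psi$ is onto I would realise $\alpha$ geometrically: under $\eta$ it descends to the order-two rotation of the hexagon of $Y$ exchanging the two triangles, and hence $\phi_q:=\pi_2\circ\pi_1^{-1}\in\Bir_\k(\p^2)$, with $\pi_2\colon Y\to\p^2$ the contraction of $\{D_{12},D_{13},D_{23}\}$, is a quadratic transformation defined over $\k$ with base-point $q$; by Lemma~\ref{lem:Lemme_6.11_&_6.15_article_Julia} I may normalise $\phi_q$ so that it fixes $r$ and contracts the line through $q_i,q_j$ onto $q_k$, whence $\phi_q$ is an involution and lifts to an automorphism $\widehat{\alpha}\in\Aut_\k(X)$ inducing $\alpha$ on $\Pi_{L,\rho}$, exactly as in Proposition~\ref{Prop:Proposition_3_Z/3Z},(\ref{it:item_(3)_Proposition_3_Z3}). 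Thus $\Psi$ is an isomorphism and $\Aut_\k(X)=\langle\widehat{\alpha}\rangle\simeq\Z/2\Z$.

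Finally, for~\ref{it:item_(4)_Proposition_8_Sym3_1} I would compute $\NS(X_{\kb})=\Z[\nu^{*}H]\oplus\Z[E_{q_1}]\oplus\Z[E_{q_2}]\oplus\Z[E_{q_3}]\oplus\Z[E_r]$ with $\Gal(\kb/\k)$ permuting the $E_{q_i}$, so $\NS(X)=\NS(X_{\kb})^{\Gal(\kb/\k)}=\Z[\nu^{*}H]\oplus\Z[E_{q_1}+E_{q_2}+E_{q_3}]\oplus\Z[E_r]$ has rank $3$; since $\widehat{\alpha}$ fixes $K_X$ and $E_r$ (the point $r$ being a fixed point, not a base-point, of $\phi_q$) and acts nontrivially on $\NS(X)$, the invariant sublattice $\NS(X)^{\Aut_\k(X)}$ is generated over $\Q$ by $K_X$ and $E_r$, so $\rk\NS(X)^{\Aut_\k(X)}=2$ and $X\to\ast$ is not an $\Aut_\k(X)$-Mori fibre space. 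The main obstacle is the computation of $\Aut(\Pi_{L,\rho})$ in~\ref{it:item_(3)_Proposition_8_Sym3_1}: one has to check that the transposition now present in $\rho(\Gal(\kb/\k))$ genuinely rigidifies away the order-three symmetry --- equivalently, that the centraliser of $\Sym_3$ inside $\Sym_5\simeq\Aut(\Pi_{\kb})$ is only $\Z/2\Z$ --- and then that the surviving involution is realised by the same quadratic map $\phi_q$ as in the $\Z/3\Z$-case; the remaining steps are routine adaptations of Proposition~\ref{Prop:Proposition_3_Z/3Z}.
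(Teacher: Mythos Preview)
Your proof is correct and follows essentially the same approach as the paper's: contract $E_4$ (or equivalently all four $E_i$ at once) to realise $X$ as a blow-up of $\p^2$ in a degree-$3$ point plus a $\k$-point, use the five sets $M_i$ and the centraliser of $\Sym_3$ to bound $\Aut(\Pi_{L,\rho})$ by $\Z/2\Z$, and realise the involution via the quadratic map $\phi_q$ from the $\Z/3\Z$ case. One small slip: the del Pezzo surface $Y$ of degree $6$ here is the one of \cite[Lemma~4.3]{sz21} (the $\Sym_3$ case), not \cite[Lemma~4.2]{sz21} (the $\Z/3\Z$ case); your description of $Y$ is nonetheless correct.
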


\begin{proof}
(\ref{it:item_(1)_Proposition_8_Sym3_1}) The incidence diagram $ \Pi_{\kb,\rho} $ of $X$ contains only one $(-1)$-curve that is defined over $ \k $ namely $E_{4}$. Its contraction yields a birational morphism $ \eta : X \rightarrow Y $ onto a rational del Pezzo surface $Y$ of degree $6$ as in \cite[Lemma 4.3]{sz21}. Then there exists a point $ q=\lbrace q_{1},q_{2},q_{3} \rbrace $ in $ \mathbb{P}^{2} $ of degree $3$ with splitting field $L$ such that $ \Gal(L/\k) \simeq \Sym_{3} $ and whose geometric components are in general position in $\p^{2}_{L}$, such that $Y$ is isomorphic to the blow-up of $ \mathbb{P}^{2} $ in $q$. Finally, $X$ is isomorphic to the blow-up of $\p^{2}$ in a point $q$ of degree $3$ and in a $\k$-rational point $r$.

(\ref{it:item_(2)_Proposition_8_Sym3_1}) The fact that any such two del Pezzo surfaces are isomorphic if and only if the respective splitting fields are $\k$-isomorphic follows from Lemma \ref{lem:Lemme_6.11_&_6.15_article_Julia}.

(\ref{it:item_(3)_Proposition_8_Sym3_1}) By Lemma \ref{lem:Lemma_faithful_action_Aut_k(X)_on_Pi_L_rho}, the action of $ \Aut_{\k}(X) $ on the set of $(-1)$-curves of $X$ yields an isomorphism $ \Psi : \Aut_{\k}(X) \overset{\sim}{\longrightarrow} \Aut(\Pi_{L,\rho}) $. Since $\Aut(\Pi_{L,\rho}) \simeq \lbrace \sigma \in \Sym_{5} \, \vert \, (123) \circ \sigma = \sigma \circ (123) \, \text{and} \, (12) \circ \sigma = \sigma \circ (12) \rbrace = C_{\Sym_{5}}(\langle (123),(12) \rangle)$, we get $\Aut(\Pi_{L,\rho}) \simeq \langle (45) \rangle \simeq \Z/2\Z$.
To finish, we construct the corresponding geometric action. From Proposition \ref{Prop:Proposition_3_Z/3Z}(\ref{it:item_(3)_Proposition_3_Z3}) there exists a birational quadratic involution $\phi_{q} \in \Bir_{\k}(\p^{2})$ that lifts to an automorphism $\widehat{\alpha}$ of $X$ inducing the same action as $(45)$ on the $(-1)$-curves of $X$. In conclusion, $ \Aut_{\k}(X) = \langle \widehat{\alpha} \rangle \simeq \mathbb{Z}/2\mathbb{Z} $.

(\ref{it:item_(4)_Proposition_8_Sym3_1}) The proof is analogous to the one of  Proposition \ref{Prop:Proposition_3_Z/3Z}(\ref{it:item_(4)_Proposition_3_Z3}).

\end{proof}

\begin{example}
We construct a del Pezzo surface as in Proposition \ref{Prop:Proposition_8_Sym3_case_1}. Indeed, from \cite[Example 4.5]{sz21}, let $ \k := \mathbb{Q} $, let $ \zeta := \sqrt[3]{2} $ and $ \omega := e^{\frac{2 \pi i}{3}} $. Then $ L := \k(\zeta,\omega) $ is a Galois extension of $\k$ of degree $6$ and $ \Gal(L/\k) \simeq \Sym_{3} $ is the group of $ \k $-isomorphisms $ [ (\zeta,\omega) \overset{\sigma_{1}}{\longmapsto} (\zeta,\omega), (\zeta,\omega) \overset{\sigma_{2}}{\longmapsto} (\omega \zeta,\omega), (\zeta,\omega) \overset{\sigma_{3}}{\longmapsto} (\zeta,\omega^{2}), (\zeta,\omega) \overset{\sigma_{4}}{\longmapsto} (\omega \zeta,\omega^{2}), (\zeta,\omega) \overset{\sigma_{5}}{\longmapsto} (\omega^{2} \zeta,\omega), (\zeta,\omega) \overset{\sigma_{6}}{\longmapsto} (\omega^{2} \zeta,\omega^{2}) ] $ (see \cite[Example 2.21]{mor96}). The point $ q = \displaystyle\lbrace [\zeta:\zeta^{2}:1] , [\omega\zeta:\omega^{2}\zeta^{2}:1] , [\omega^{2}\zeta:\omega\zeta^{2}:1] \rbrace $ is of degree $3$, its geometric components are not collinear and any non-trivial element of $ \Gal(L/\k) $ permutes them non-trivially. We take $ r=[1:1:1] \in \p^{2}(\k) $ as a $\k$-rational point.
\label{ex:example_Sym3_case_1}
\end{example}

\begin{remark}
We note that in Proposition \ref{Prop:Proposition_3_Z/3Z} (\ref{it:item_(1)_Proposition_3_Z3}) (resp. \ref{Prop:Proposition_8_Sym3_case_1} (\ref{it:item_(1)_Proposition_8_Sym3_1})), we could also have contracted the curve of $X$ whose geometric components are $\lbrace D_{14},D_{24},D_{34} \rbrace$ onto a rational del Pezzo surface $Z$ of degree $8$ with $\rk \, \NS(Z) = 2$, namely $Z \simeq \mathbb{F}_{0} \simeq \p^{1} \times \p^{1}$ by \cite[Lemma 3.2]{sz21}; that is $X$ is also obtained by blowing up $\mathbb{F}_{0}$ in a point of degree $3$ whose splitting field $L$ is such that $\Gal(L/\k) \simeq \Z/3\Z$ (resp. $\Gal(L/\k) \simeq \Sym_{3}$).
\end{remark}

\begin{proposition}
Let $X$ be a del Pezzo surface of degree $5$ such that $\rho(\Gal(\kb/\k))$ is one of the subgroups of $\Sym_{5}$ occuring in Table \ref{Tab:Table1}, and corresponding to Figures \ref{fig:figure(d)_option_Gal(kbarre/k)-action_on_Pikbarre}, \ref{fig:figure(f)_option_Gal(kbarre/k)-action_on_Pikbarre}, \ref{fig:figure(h)_option_Gal(kbarre/k)-action_on_Pikbarre}, \ref{fig:figure(g)_option_Gal(kbarre/k)-action_on_Pikbarre}, \ref{fig:figure(i)_option_Gal(kbarre/k)-action_on_Pikbarre}, respectively.\\

Then the following holds:
\begin{enumerate}
\item there exists a point $p = \lbrace p_{1},p_{2},p_{3},p_{4} \rbrace$ in $\p^{2}$ of degree $4$ with splitting field $L$ such that $\Gal(L/\k) \simeq \Gamma$, where $\Gamma$ is given in Table \ref{Tab:Table1}, and such that $X$ is isomorphic to the blow-up of $\p^{2}$ in $p$.\label{it:item_(1)_Proposition_regroupement}
\item Any two such surfaces are isomorphic if and only if the corresponding residue fields are $\k$-isomorphic, which is equivalent to having $\k$-isomorphic splitting fields except when $\Gal(L/\k) \simeq \D_{4}$.\label{it:item_(2)_Proposition_regroupement}
\item $\Aut_{\k}(X) \simeq G$, where $G$ is described in Table \ref{Tab:Table1}, and it is generated by the lifts of some automorphisms of $\p^{2}$.\label{it:item_(3)_Proposition_regroupement}
\item $\rk \, \NS(X)^{\Aut_{\k}(X)} = 2$, so in particular $X \rightarrow \ast$ is not an $\Aut_{\k}(X)$-Mori fibre space, but $X$ carries an $\Aut_{\k}(X)$-Mori fibre space structure $X \rightarrow \p^{1}$ given by the linear system of conics passing through $p$.\label{it:item_(4)_Proposition_regroupement}
\end{enumerate}
\begin{table}[!h]
\begin{center}
\renewcommand*{\arraystretch}{1.9}
\begin{tabular}{|>{\centering\arraybackslash}m{1cm}|>{\centering\arraybackslash}m{3.5cm}|>{\centering\arraybackslash}m{3.2cm}|>{\centering\arraybackslash}m{4.2cm}|} 
\hline  
 & $\rho(\Gal(\kb/\k)) \subset \Sym_{5}$ & $\Gamma \simeq \Gal(L/\k)$ & $G=\Aut_{\k}(X)$ \\ 
\hline 
\ref{fig:figure(d)_option_Gal(kbarre/k)-action_on_Pikbarre} & $\langle (12)(34) \rangle \times \langle (13)(24) \rangle$ & $\Z/2\Z \times \Z/2\Z$ & $\langle \widehat{\alpha} \rangle \times \langle \widehat{\beta} \rangle \simeq \Z/2\Z \times \Z/2\Z$, where $\widehat{\alpha}$ and $\widehat{\beta}$ are the lifts of involutions of $\p^{2}$ \\  
\hline 
\ref{fig:figure(f)_option_Gal(kbarre/k)-action_on_Pikbarre} & $\langle (1234) \rangle$ & $\Z/4\Z$ & $\langle \widehat{\alpha} \rangle \simeq \Z/4\Z$, where $\widehat{\alpha}$ is the lift of an automorphism of $\p^{2}$ of order four \\ 
\hline
\ref{fig:figure(h)_option_Gal(kbarre/k)-action_on_Pikbarre} & $\langle (1234),(13) \rangle$ & $\D_{4}$ & $\langle \widehat{\alpha} \rangle \simeq \Z/2\Z$, where $\widehat{\alpha}$ is the lift of an involution of $\p^{2}$ \\
\hline 
\ref{fig:figure(g)_option_Gal(kbarre/k)-action_on_Pikbarre} & $\langle (12)(34),(123) \rangle$ & $\mathcal{A}_{4}$ & $\lbrace \id \rbrace$ \\
\hline 
\ref{fig:figure(i)_option_Gal(kbarre/k)-action_on_Pikbarre} & $\langle (1234),(12) \rangle$ & $\Sym_{4}$ & $\lbrace \id \rbrace$ \\
\hline
\end{tabular}
\end{center}
\caption{Summary of the results of Proposition \ref{Prop:Proposition_essai_regroupement}.}
\label{Tab:Table1}
\end{table} 
\label{Prop:Proposition_essai_regroupement}
\end{proposition}

\begin{proof}
Let us give the proof for a del Pezzo surface $X$ as in Figure \ref{fig:figure(f)_option_Gal(kbarre/k)-action_on_Pikbarre}.\\
(\ref{it:item_(1)_Proposition_regroupement}) The union $E=E_{1} \cup E_{2} \cup E_{3} \cup E_{4}$ makes up a $\Gal(\kb/\k)$-orbit and hence defines an irreducible $\k$-curve. Its contraction yields a birational morphism $\eta : X \longrightarrow \p^{2}$ onto a point $p=\lbrace p_{1},p_{2},p_{3},p_{4} \rbrace$ of degree $4$ whose geometric components are in general position in $\p^2_{\kb}$ (see Figure \ref{Fig:Figure_blow-up_model_Z/4Z_case}).
\begin{figure}[]
\centering
\begin{tikzpicture}[x=0.6pt,y=0.6pt,yscale=-1,xscale=1, scale=0.9, every node/.style={scale=0.8}]

\draw    (269.6,148.8) -- (269.6,233.8) ;
\draw    (352.6,148.8) -- (351.6,233.8) ;
\draw    (258.6,157.8) -- (363.6,157.8) ;
\draw    (258.6,222.8) -- (360.6,222.8) ;
\draw  [fill={rgb, 255:red, 0; green, 0; blue, 0 }  ,fill opacity=1 ] (265.8,158.6) .. controls (265.8,156.61) and (267.41,155) .. (269.4,155) .. controls (271.39,155) and (273,156.61) .. (273,158.6) .. controls (273,160.59) and (271.39,162.2) .. (269.4,162.2) .. controls (267.41,162.2) and (265.8,160.59) .. (265.8,158.6) -- cycle ;
\draw  [fill={rgb, 255:red, 0; green, 0; blue, 0 }  ,fill opacity=1 ] (348.8,157.6) .. controls (348.8,155.61) and (350.41,154) .. (352.4,154) .. controls (354.39,154) and (356,155.61) .. (356,157.6) .. controls (356,159.59) and (354.39,161.2) .. (352.4,161.2) .. controls (350.41,161.2) and (348.8,159.59) .. (348.8,157.6) -- cycle ;
\draw  [fill={rgb, 255:red, 0; green, 0; blue, 0 }  ,fill opacity=1 ] (265.8,223.6) .. controls (265.8,221.61) and (267.41,220) .. (269.4,220) .. controls (271.39,220) and (273,221.61) .. (273,223.6) .. controls (273,225.59) and (271.39,227.2) .. (269.4,227.2) .. controls (267.41,227.2) and (265.8,225.59) .. (265.8,223.6) -- cycle ;
\draw  [fill={rgb, 255:red, 0; green, 0; blue, 0 }  ,fill opacity=1 ] (347.8,223.6) .. controls (347.8,221.61) and (349.41,220) .. (351.4,220) .. controls (353.39,220) and (355,221.61) .. (355,223.6) .. controls (355,225.59) and (353.39,227.2) .. (351.4,227.2) .. controls (349.41,227.2) and (347.8,225.59) .. (347.8,223.6) -- cycle ;
\draw    (261.6,151.8) -- (359.35,229.12) ;
\draw    (360.6,150.8) -- (353.77,156.75) -- (260.84,229.83) ;
\draw    (204.6,99.8) -- (243.49,133.69) ;
\draw [shift={(245,135)}, rotate = 221.07] [color={rgb, 255:red, 0; green, 0; blue, 0 }  ][line width=0.75]    (10.93,-3.29) .. controls (6.95,-1.4) and (3.31,-0.3) .. (0,0) .. controls (3.31,0.3) and (6.95,1.4) .. (10.93,3.29)   ;
\draw [color={rgb, 255:red, 208; green, 2; blue, 27 }  ,draw opacity=1 ]   (267.4,222) .. controls (258.78,210.04) and (251.88,184.83) .. (264.97,160.11) ;
\draw [shift={(265.8,158.6)}, rotate = 119.4] [color={rgb, 255:red, 208; green, 2; blue, 27 }  ,draw opacity=1 ][line width=0.75]    (10.93,-4.9) .. controls (6.95,-2.3) and (3.31,-0.67) .. (0,0) .. controls (3.31,0.67) and (6.95,2.3) .. (10.93,4.9)   ;
\draw [color={rgb, 255:red, 208; green, 2; blue, 27 }  ,draw opacity=1 ]   (271.4,155) .. controls (297.78,137.35) and (332.82,143.97) .. (348.41,155.51) ;
\draw [shift={(349.8,156.6)}, rotate = 219.5] [color={rgb, 255:red, 208; green, 2; blue, 27 }  ,draw opacity=1 ][line width=0.75]    (10.93,-4.9) .. controls (6.95,-2.3) and (3.31,-0.67) .. (0,0) .. controls (3.31,0.67) and (6.95,2.3) .. (10.93,4.9)   ;
\draw [color={rgb, 255:red, 208; green, 2; blue, 27 }  ,draw opacity=1 ]   (354.6,158.8) .. controls (363.42,172.52) and (369.36,195.84) .. (354.91,220.11) ;
\draw [shift={(354,221.6)}, rotate = 302.17] [color={rgb, 255:red, 208; green, 2; blue, 27 }  ,draw opacity=1 ][line width=0.75]    (10.93,-4.9) .. controls (6.95,-2.3) and (3.31,-0.67) .. (0,0) .. controls (3.31,0.67) and (6.95,2.3) .. (10.93,4.9)   ;
\draw [color={rgb, 255:red, 208; green, 2; blue, 27 }  ,draw opacity=1 ]   (348.8,224.6) .. controls (331.16,238.32) and (299.88,240.71) .. (273.6,226.49) ;
\draw [shift={(272,225.6)}, rotate = 29.74] [color={rgb, 255:red, 208; green, 2; blue, 27 }  ,draw opacity=1 ][line width=0.75]    (10.93,-4.9) .. controls (6.95,-2.3) and (3.31,-0.67) .. (0,0) .. controls (3.31,0.67) and (6.95,2.3) .. (10.93,4.9)   ;

\draw (378,134.4) node [anchor=north west][inner sep=0.75pt]    {$\mathbb{P}^{2}$};
\draw (170,65.4) node [anchor=north west][inner sep=0.75pt]  [font=\Large]  {$X$};
\draw (224,95.4) node [anchor=north west][inner sep=0.75pt]    {$\eta $};

\end{tikzpicture}
\caption{Blow-up model for a del Pezzo surface as in Proposition \ref{Prop:Proposition_essai_regroupement},(\ref{fig:figure(f)_option_Gal(kbarre/k)-action_on_Pikbarre}).}
\label{Fig:Figure_blow-up_model_Z/4Z_case}
\end{figure}
Let $L$ be any splitting field of $p$. Since $\rho(\Gal(\kb/\k))=\langle(1234)\rangle =\Z/4\Z $, the action of $\Gal(L/\k)$ on $\lbrace p_{1},p_{2},p_{3},p_{4} \rbrace $ induces an exact sequence $ 1 \longrightarrow H \longrightarrow \Gal(L/\k) \longrightarrow \Z/4\Z \longrightarrow 1 $. The field $ L':=\lbrace a \in L \mid h(a)=a \,\, \forall \, h \in H\rbrace $ is an intermediate field between $L$ and $\k$, over which $p_{1},p_{2},p_{3},p_{4} $ are rational. The minimality of $L$ implies that $L'=L$ and hence $H=\Gal(L/L')=\lbrace \text{id} \rbrace$ \cite[Corollary 2.10]{mor96}, and so $\Gal(L/\k) \simeq \Z/4\Z $.

(\ref{it:item_(2)_Proposition_regroupement}) The claim follows from Lemma \ref{lem:Lemme_6.11_&_6.15_article_Julia},(\ref{it:item_(a)_Lemme_6.15_Julia}) and we can even assume that the blown up point $p$ is of the form $\lbrace [1:a_{1}:a_{1}^{2}],[1:a_{2}:a_{2}^{2}],[1:a_{3}:a_{3}^{2}],[1:a_{4}:a_{4}^{2}] \rbrace$, with $a_{i} \in L$ forming an orbit $\lbrace a_{1},a_{2},a_{3},a_{4} \rbrace \subset L $ of size four under the Galois action.

(\ref{it:item_(3)_Proposition_regroupement}) By Lemma \ref{lem:Lemma_faithful_action_Aut_k(X)_on_Pi_L_rho} the action of $\Aut_{\k}(X)$ on the set of $(-1)$-curves  of $X$ yields an isomorphism $\Psi : \Aut_{\k}(X) \overset{\sim}{\longrightarrow} \Aut(\Pi_{L,\rho})$. Since $\Aut(\Pi_{L,\rho}) \simeq \lbrace \sigma \in \Sym_{5} \, \vert \, (1234) \circ \sigma = \sigma \circ (1234) \rbrace = C_{\Sym_{5}}(\langle (1234) \rangle)$, we then get $\Aut(\Pi_{L,\rho}) \simeq \langle (1234) \rangle \simeq \Z/4\Z$. To finish we construct explicitly the corresponding geometric action: if $\langle\sigma=(1234)\rangle=\Z/4\Z \simeq \Gal(L/\k)$, Lemma \ref{lem:Lemme_6.11_&_6.15_article_Julia} guarantees that there exists $\alpha \in \Aut_{\k}(\p^{2})$ such that $\alpha(p_{i})=p_{\sigma(i)}$, for $i=1,2,3,4$. Then $\alpha^{4}$ is linear and fixes $p_{1},p_{2},p_{3},p_{4}$, and hence $\alpha$ is an automorphism of order four. The lift $\widehat{\alpha}$ of $\alpha$ is an automorphism of $X$ defined over $\k$ that acts as $(1234)$ on $\Pi_{L}$. In conclusion, $\Aut_{\k}(X) = \langle\widehat{\alpha}\rangle \simeq \Z/4\Z $.

(\ref{it:item_(4)_Proposition_regroupement}) The set $\{ E_{1},E_{2},E_{3},E_{4} \}$ forms one $\Aut_{\k}(X)$-orbit and hence can be equivariantly contracted onto $\p^{2}$, giving $\rk \,  \NS(X)^{\Aut_{\k}(X)} = 2$. Moreover, the conic bundle induced by $2H-E_{1}-E_{2}-E_{3}-E_{4}$ on $X$ (see Figure \ref{fig:fig(d)_conic_bundle_point_of_view}), where $H$ denotes the pullback of a general line of $\p^{2}$, is preserved by the action of $\Aut_{\k}(X)$. Since $\rk \NS(X)^{\Aut_{\k}(X)} = 2$, the corresponding conic bundle structure $X \rightarrow \p^{1}$ given by the linear system of conics through $p$ on $\p^{2}$ is an $\Aut_{\k}(X)$-Mori fibre space.

\end{proof}

\begin{examples}
\begin{enumerate}
\item We construct a del Pezzo surface as in Proposition \ref{Prop:Proposition_essai_regroupement},(\ref{fig:figure(d)_option_Gal(kbarre/k)-action_on_Pikbarre}): let $P(X)=X^{4}+aX^{2}+b$ be an irreducible polynomial over $\Q$. We denote $\pm \alpha$ and $\pm \beta$ its roots in a splitting field $L$. Then  $\Gal(L/\Q)$ is isomorphic to $\Z/4\Z$ or to $\Z/2\Z \times \Z/2\Z$ or to $\D_{4}$. Note that $ \alpha^{2}-\beta^{2} \notin \Q$ otherwise $P$ would be reducible, and we have the following characterization: $\Gal(L/\Q)=\langle \tau \circ \sigma \rangle \times \langle \sigma^{2} \rangle \simeq \Z/2\Z \times \Z/2\Z \Longleftrightarrow \alpha \beta \in \Q $, where $ \tau : \alpha \mapsto -\alpha, \beta \mapsto \beta $ and $ \sigma : \alpha \mapsto \beta, \beta \mapsto -\alpha $. In the latter case,$$p=\lbrace [1:\alpha:\alpha^{2}],[1:-\alpha:\alpha^{2}],[1:\beta:\beta^{2}],[1:-\beta:\beta^{2}] \rbrace$$is a point of degree $4$ in $\p^{2}$ whose geometric components are defined over $L$ and form an orbit of size four under the $\Gal(L/\Q)$-action. One can consider $P(X)=X^{4}-6X^{2}+4$ over $\Q$ with roots $ \pm \alpha = \pm \sqrt{3+\sqrt{5}} $ and $ \pm \beta = \pm \sqrt{3-\sqrt{5}} $ for a specific example.

\item We construct a del Pezzo surface as in Proposition \ref{Prop:Proposition_essai_regroupement},(\ref{fig:figure(f)_option_Gal(kbarre/k)-action_on_Pikbarre}). Indeed, let $\k = \mathbb{F}_{2}$ and $L/\k$ be the splitting field of $P(X)=X^{4}+X+1$, i.e.\ $\vert L \vert = 16$. Then $4=\vert L:\k \vert=\vert \Gal(L/\k) \vert$ and $\sigma : s \mapsto s^{2} $ generates $\Gal(L/\k)$. If $\zeta$ is a root of $P(X)$, then $\sigma(\zeta^{8})=\zeta$ and hence the point $\lbrace [1:\zeta:\zeta^{2}],[1:\zeta^{2}:\zeta^{4}],[1:\zeta^{4}:\zeta^{8}],[1:\zeta^{8}:~\zeta^{16}] \rbrace$ is of degree $4$, its geometric components are not collinear and they are cyclically permuted by $\sigma$.

\item We construct a del Pezzo surface as in Proposition \ref{Prop:Proposition_essai_regroupement},(\ref{fig:figure(h)_option_Gal(kbarre/k)-action_on_Pikbarre}): let $\k=\Q$ and let $L$ be a splitting field of the polynomial $X^{4}-6 \in \k[X]$. Then $L=\k(\sqrt[4]{6},i)$ and $ \vert L:\k \vert=8$. The Galois group $\Gamma:=\Gal(L/\k)$ contains the $\Q$-isomorphisms $r$ and $s$ being such that $r(\sqrt[4]{6})=i\sqrt[4]{6}$, $r(i)=i$, $s(\sqrt[4]{6})= \sqrt[4]{6}$ and $s(i)=-i$, with $\vert \Gal(L/\k) \vert = \vert L:\k \vert = 8$ because $L/\k$ is Galois. Thus, $\Gal(L/\k) = \langle r,s \, \vert \, r^{4}=s^{2}=\id \, , \, s \circ r \circ s=r^{-1} \rangle \simeq \D_{4} $ and $ p=\lbrace [1:\sqrt[4]{6}:\sqrt{6}],[1:-\sqrt[4]{6}:\sqrt{6}],[1:i\sqrt[4]{6}:-\sqrt{6}],[1:-i\sqrt[4]{6}:-\sqrt{6}] \rbrace$ is a point of degree $4$ in $\p^{2}$ whose geometric components are non-trivially permuted by any non-trivial element in $\Gamma$.

\item We construct a del Pezzo surface as in Proposition \ref{Prop:Proposition_essai_regroupement},(\ref{fig:figure(g)_option_Gal(kbarre/k)-action_on_Pikbarre}). Indeed, let $\k=\Q$, let $P(X)=X^{4}+8X+12 \in \k[X]$ and $L$ be a splitting field of $P$ over $\k$. Then $P$ is separable and $\text{disc}(P)=576^{2} $ so $\Gamma:=\Gal(L/\k)$ is a subgroup of $\mathcal{A}_{4}$, and $P$ is irreducible so $\Gamma$ acts transitively on the set of roots of $P$. We have $P(X) \equiv X^{4}+3X+7=(X+1)(X^{3}+4X^{2}+X+2) \modulo \, 5$, where $X^{3}+4X^{2}+X+2$ is irreducible over $\F_{5}$ with Galois group $\Gal(\F_{5^{3}}/\F_{5}) \simeq \Z/3\Z $ generated by $x \mapsto x^{5}$. It follows that $\Gamma$ contains $\Z/3\Z$ as a subgroup. Moreover, $P(X) \equiv (X^{2}+4X+7)(X^{2}+13X+9) \modulo \, 17$, where $X^{2}+4X+7$ is irreducible over $\F_{17}$ and its Galois group is $\Gal(\F_{17^{2}}/\F_{17}) \simeq \Z/2\Z $ generated by $x \mapsto x^{17} $. Then $\Gamma$ contains $\Z/2\Z$ as a subgroup as well. In conclusion, $\Gamma$ is a subgroup of $\mathcal{A}_{4}$ acting transitively on the roots of $P$, and both $2$ and $3$ divide $\vert \Gamma \vert$. This implies $\Gamma \simeq \mathcal{A}_{4}$. Let us denote by $a_{1},a_{2},a_{3},a_{4}$ the roots of $P$ in $L$. Then $p=\lbrace [1:a_{1}:a_{1}^{2}],[1:a_{2}:a_{2}^{2}],[1:a_{3}:a_{3}^{2}],[1:a_{4}:a_{4}^{2}] \rbrace$ defines a point of degree $4$ whose geometric components are non-trivially permuted by any non-trivial element of $\Gamma$.

\end{enumerate}

\end{examples}

\begin{remark}
Note that if $p, p'$ are two points of degree $4$ in general position on $\p^{2}$, then $p$ and $p'$ are projectively equivalent, i.e. there exists $\alpha \in \PGL_{3}(\k)$ with $\alpha(p) = p'$, exactly if $p$ and $p'$ have $\k$-isomorphic residue fields thanks to Lemma \ref{lem:Lemme_6.11_&_6.15_article_Julia}. This is equivalent to having $\k$-isomorphic splitting fields, say $L/\k$, $L'/\k$, except when $\Gal(L/\k) \simeq \text{D}_{4}$ (see Proposition \ref{Prop:Proposition_essai_regroupement},\ref{fig:figure(h)_option_Gal(kbarre/k)-action_on_Pikbarre},(\ref{it:item_(2)_Proposition_regroupement})).
\end{remark}

\subsection{Del Pezzo surfaces in Figures (\ref{fig:figure(k)_option_Gal(kbarre/k)-action_on_Pikbarre}), (\ref{fig:figure(l)_option_Gal(kbarre/k)-action_on_Pikbarre}), (\ref{fig:figure(m)_option_Gal(kbarre/k)-action_on_Pikbarre})}
\label{subsec:subsection_3}

In this section we treat the cases where the Galois action on the Petersen diagram is as in (l), (m), (n) in Figure \ref{Fig:Figure_options_for_rho(Gal(kbarre/k))_actions_on_Pikbarre}. The corresponding del Pezzo surfaces of degree $5$ are obtained by blowing up the quadric $\Ql^{L}$ or a rational del Pezzo surface of degree $6$.

\begin{proposition}
Let $X$ be a del Pezzo surface of degree $5$ such that $ \rho(\Gal(\kb/\k)) = \langle (123)(45) \rangle \simeq \mathbb{Z}/6\mathbb{Z} $ in $ \text{Sym}_{5} $ as indicated in Figure \ref{fig:figure(k)_option_Gal(kbarre/k)-action_on_Pikbarre}. Then the following holds.
\begin{enumerate}
\item There exist a quadratic extension $ L/\k $ and a birational morphism $ \eta : X \longrightarrow \mathcal{Q}^{L} $ contracting an irreducible curve onto a point $ p=\lbrace p_{1},p_{2},p_{3} \rbrace $ of degree $3$ whose geometric components are contained in pairwise distinct rulings of $ \mathcal{Q}^{L}_{L} $, and with splitting field $ F $ such that $ \Gal(F/\k) \simeq \Gal(FL/L) \simeq \mathbb{Z}/3\mathbb{Z} $, where $FL$ is a splitting field of $p$ over $L$.\label{it:item_(1)_Proposition_7_Z/6Z}
\item Any two such del Pezzo surfaces are isomorphic if and only if the corresponding field extensions of degree two and three are $\k$-isomorphic.\label{it:item_(2)_Proposition_7_Z/6Z}
\item $ \Aut_{\k}(X)=\langle \widehat{\alpha} \rangle \times \langle \widehat{\Phi_{q}} \rangle \simeq \mathbb{Z}/3\mathbb{Z} \times \mathbb{Z}/2\mathbb{Z} $, where $\widehat{\alpha}$ is the lift of an automorphism of $\p^{2}_{L}$ of order three and $\widehat{\Phi_{q}}$ is the lift of a quadratic birational involution of $\p^{2}_{L}$ with base-point $q$ of degree $3$.\label{it:item_(3)_Proposition_7_Z/6Z}
\item $ \rk \, \NS(X)^{\Aut_{\k}(X)} = 2 $, so in particular $ X \rightarrow \ast $ is not an $ \Aut_{\k}(X) $-Mori fibre space, and the $\Aut_{\k}(X)$-minimal models of $X$ are the quadric $\Ql^{L}$ and the rational del Pezzo surface $Y$ of degree $6$ such that $Y_{L}$ is isomorphic to the blow-up of $\p^{2}_{L}$ in a point $q$ of degree $3$ with splitting field $E$ such that $\Gal(E/L) \simeq \Z/3\Z$.\label{it:item_(4)_Proposition_7_Z/6Z}
\end{enumerate}
\label{Prop:Proposition_7_Z/6Z}
\end{proposition}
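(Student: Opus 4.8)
The plan is to follow the pattern of Propositions~\ref{Prop:Proposition1_Z/2Z_transposition} and~\ref{Prop:Proposition_3_Z/3Z}: first produce a $\k$-birational contraction of $X$ onto a degree-$8$ del Pezzo surface of the form $\Ql^{L}$, read off the geometric data of the blown-up point, deduce the classification, and finally pin down $\Aut_{\k}(X)$ by combining the combinatorial bound of Lemma~\ref{lem:Lemma_faithful_action_Aut_k(X)_on_Pi_L_rho} with explicit geometric automorphisms, exploiting that over $L$ the surface is exactly the one of Proposition~\ref{Prop:Proposition_3_Z/3Z}. For \ref{it:item_(1)_Proposition_7_Z/6Z} I would read off from Figure~\ref{fig:figure(k)_option_Gal(kbarre/k)-action_on_Pikbarre} that the $\Gal(\kb/\k)$-orbits of $(-1)$-curves are $\lbrace E_{4}\rbrace$, an orbit of three pairwise disjoint curves (an irreducible $\k$-curve $E$), and one orbit of size $6$; contracting $E$ gives $\eta\colon X\to Z$ onto a rational del Pezzo surface of degree $8$ with $\rk\NS(Z)=\rk\NS(X)-1=1$, the surviving orbit of size $6$ forcing the Galois action to swap the two rulings of $Z_{\kb}$, so $Z\simeq\Ql^{L}$ for some quadratic $L/\k$ by \cite[Lemma 3.2(1)]{sz21}. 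Then $p:=\eta(E)=\lbrace p_{1},p_{2},p_{3}\rbrace$ is a point of degree $3$ with components in pairwise distinct rulings of $\Ql^{L}_{L}$ and in general position (else $X_{\kb}$ would not be a del Pezzo surface of degree $5$), and since $\Gal(\kb/\k)$ acts on $\lbrace p_{1},p_{2},p_{3}\rbrace$ through the order-three part of $\rho(\Gal(\kb/\k))\simeq\Z/6\Z$, its splitting field $F$ has $\Gal(F/\k)\simeq\Z/3\Z$; as $[F:\k]$ and $[L:\k]$ are coprime, $F\cap L=\k$ and $\Gal(FL/L)\simeq\Z/3\Z$, with $FL$ the field of definition of all ten $(-1)$-curves.

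For \ref{it:item_(2)_Proposition_7_Z/6Z} I would argue as in Proposition~\ref{Prop:Proposition1_Z/2Z_transposition}: by Lemma~\ref{lem:Lemma_transitive_action_Aut_k(Q^L)_points_of_degree_3} two degree-$3$ points of $\Ql^{L}$ in general position carrying the same Galois action are exchanged by an element of $\Aut_{\k}(\Ql^{L})$, while $\Ql^{L}\simeq\Ql^{L'}$ over $\k$ iff $L\simeq_{\k}L'$ by \cite[Lemma 3.2(3)]{sz21}; since $\eta$ contracts the (canonically determined) maximal set of pairwise disjoint $\k$-curves, an isomorphism descends to one $\Ql^{L}\simeq\Ql^{\tilde L}$ carrying $p$ to $\tilde p$, and conversely such data reassemble into an isomorphism, so $X\simeq\tilde X$ iff $L\simeq_{\k}\tilde L$ and $F\simeq_{\k}\tilde F$. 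For \ref{it:item_(4)_Proposition_7_Z/6Z} I would note $\NS(X)=\NS(\Ql^{L})\oplus\Z[E]$ with $\NS(\Ql^{L})$ of rank $1$, so $\rk\NS(X)=2$, and conclude from \ref{it:item_(3)_Proposition_7_Z/6Z} that $\Aut_{\k}(X)$ preserves each $\Gal(\kb/\k)$-orbit of $(-1)$-curves, hence acts trivially on $\NS(X)$, giving $\rk\NS(X)^{\Aut_{\k}(X)}=2$ — not a Mori fibre space over a point — exactly as in Proposition~\ref{Prop:Proposition_3_Z/3Z}\,(\ref{it:item_(4)_Proposition_3_Z3}).

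For \ref{it:item_(3)_Proposition_7_Z/6Z}, Lemma~\ref{lem:Lemma_faithful_action_Aut_k(X)_on_Pi_L_rho} gives an injection $\Psi\colon\Aut_{\k}(X)\hookrightarrow\Aut(\Pi_{FL,\rho})$, and $\Aut(\Pi_{FL,\rho})$ is the centralizer in $\Aut(\Pi_{\kb})\simeq\Sym_{5}$ of $\rho(\Gal(FL/\k))=\langle(123)(45)\rangle$, namely $\langle(123)\rangle\times\langle(45)\rangle\simeq\Z/3\Z\times\Z/2\Z$ (this also falls out of the bookkeeping with the five $4$-tuples of pairwise disjoint $(-1)$-curves used in the previous proofs). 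For surjectivity I would base-change to $L$: $X_{L}$ is the blow-up of $\p^{1}_{L}\times\p^{1}_{L}$ at $p$, which after a birational identification $\p^{1}_{L}\times\p^{1}_{L}\dashrightarrow\p^{2}_{L}$ becomes the blow-up of $\p^{2}_{L}$ at a point $q$ of degree $3$ with $\Gal(FL/L)\simeq\Z/3\Z$ and an $L$-point — i.e.\ exactly the surface of Proposition~\ref{Prop:Proposition_3_Z/3Z} over the base field $L$ — so $\Aut_{L}(X_{L})=\langle\widehat{\alpha}\rangle\times\langle\widehat{\Phi_{q}}\rangle\simeq\Z/3\Z\times\Z/2\Z$, with $\widehat{\alpha}$ lifting an order-three automorphism of $\p^{2}_{L}$ and $\widehat{\Phi_{q}}$ the lift of the quadratic birational involution of $\p^{2}_{L}$ with base-point $q$. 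It then remains to check that these descend to $\k$: $\widehat{\alpha}$ can be taken to come from an element of $\Aut_{\k}(\Ql^{L})$ cyclically permuting $p_{1},p_{2},p_{3}$ (such an element exists by Lemma~\ref{lem:Lemma_transitive_action_Aut_k(Q^L)_points_of_degree_3}, and its cube fixes three points in general position, hence is trivial), and $\widehat{\Phi_{q}}$ is identified with the inversion involution attached to the open torus of the degree-$6$ del Pezzo surface obtained by contracting $E_{4}$, equivalently one checks on explicit formulas that $\Phi_{q}$ commutes with the generator of $\Gal(L/\k)$. The main obstacle is precisely this last descent step: over $\k$ the hexagon of the underlying degree-$6$ surface is a single $\Gal$-orbit (not two orbits of size $3$, as in Proposition~\ref{Prop:Proposition_3_Z/3Z}), so neither of its contractions to $\p^{2}$ is $\k$-rational, and one must nonetheless show that the quadratic involution descends; once $\widehat{\alpha}$ and $\widehat{\Phi_{q}}$ lie in $\Aut_{\k}(X)$ and act on $\Pi_{FL,\rho}$ as generators of the two factors, $\Psi$ is onto and $\Aut_{\k}(X)=\langle\widehat{\alpha}\rangle\times\langle\widehat{\Phi_{q}}\rangle\simeq\Z/3\Z\times\Z/2\Z$.
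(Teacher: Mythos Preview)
Your outline is correct and parallels the paper through~(\ref{it:item_(1)_Proposition_7_Z/6Z}), (\ref{it:item_(2)_Proposition_7_Z/6Z}), (\ref{it:item_(4)_Proposition_7_Z/6Z}) and the combinatorial half of~(\ref{it:item_(3)_Proposition_7_Z/6Z}); your identification of $\Aut(\Pi_{\kb,\rho})$ as the centralizer of $\langle(123)(45)\rangle$ in $\Sym_5$ is in fact tidier than the paper's bookkeeping with the $M_i$. Your descent of $\widehat{\alpha}$ via an element of $\Aut_{\k}(\Ql^{L})$ permuting $p_1,p_2,p_3$ cyclically (Lemma~\ref{lem:Lemma_transitive_action_Aut_k(Q^L)_points_of_degree_3}) is a legitimate variant of the paper's argument.

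The one place where you stop short is exactly where you flag it: the descent of $\widehat{\Phi_{q}}$. The torus-inversion idea can be made to work, but as written it is a pointer rather than a proof. The paper closes this gap with a clean fixed-point trick that avoids any explicit formulas: contract $E_4$ to obtain the rational del Pezzo surface $Y$ of degree~$6$ (as in \cite[Lemma~4.6]{sz21}); since $Y$ is rational it has a $\k$-point $r$, and over $L$ the blow-down $\pi\colon Y_L\to\p^2_L$ of Proposition~\ref{Prop:Proposition_3_Z/3Z} lets you choose the quadratic involution $\phi_q\in\Bir_L(\p^2)$ so that it fixes $\pi(r)$. Writing $\Phi_q=\pi^{-1}\phi_q\pi\in\Aut_L(Y)$ and $g$ for the generator of $\Gal(L/\k)$, the composite $\Phi_q\cdot(g\Phi_q g)$ preserves every edge of the hexagon of $Y_L$ \emph{and} fixes $r$, hence descends via $\pi$ to an element of $\Aut_L(\p^2,q_1,q_2,q_3)$ fixing $\pi(r)$, i.e.\ fixing four points in general position, so it is the identity. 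Thus $\Phi_q\in\Aut_{\k}(Y)$, and its lift $\widehat{\Phi_q}$ lies in $\Aut_{\k}(X)$. The paper runs the identical trick for $\widehat{\alpha}$, so both generators are handled uniformly on the degree-$6$ model; your $\Ql^{L}$-route for $\widehat{\alpha}$ works just as well, but you should supply this (or an equivalent) argument for $\widehat{\Phi_q}$.
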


\begin{proof}
(\ref{it:item_(1)_Proposition_7_Z/6Z}) There is only one $(-1)$-curve defined over $ \k $ on $ X_{\kb} $ namely $ E_{4} $. Its contraction yields a birational morphism $ \varepsilon : X \rightarrow Y $ onto a rational del Pezzo surface $Y$ of degree $6$ as in \cite[Lemma 4.6 and Remark B.3]{sz21}, where the $\Gal(\kb/\k)$-action on the hexagon of $Y_{\kb}$ is indicated in Figure \ref{Fig:Figures_Gal(k_barre/k)-action_on_dP6_Y_in_Z/6Z_case}. On the other hand, $ \Pi_{\kb} $ contains an orbit of three disjoint $(-1)$-curves $ D_{34},D_{24},D_{14} $ (see Figure \ref{fig:figure(k)_option_Gal(kbarre/k)-action_on_Pikbarre}). The contraction of the irreducible curve $D$ with geometric components $D_{34},D_{24},D_{14}$ yields a birational morphism $ \eta : X \rightarrow Z $ onto a rational del Pezzo surface of degree $8$. It contracts $D$ onto a point $p=\lbrace p_{1},p_{2},p_{3} \rbrace$ of degree $3$ and it conjugates the $ \Gal(\kb/\k) $-action on $X_{\kb}$ to an action that exchanges the fibrations of $Z_{\kb}$, so $ \rk \, \NS(Z) = 1 $. Hence, $ Z \simeq \mathcal{Q}^{L} $ for some quadratic extension $ L/\k $ \cite[Lemma 3.2(1)]{sz21}. Figure \ref{Fig:Figure_Blow-up_model_Z/6Z_case} shows the action of $\rho(\Gal(\kb/\k))=\langle (123)(45) \rangle$ on the image by $ \eta $ of the incidence diagram $ \Pi_{\kb} $. Then $ \eta $ conjugates the $ \Gal(\kb/L) $-action on $ \mathcal{Q}^{L}_{\kb} $ to an action on $ \Pi_{\kb} $  with $ \rho(\Gal(\kb/L)) = \mathbb{Z}/3\mathbb{Z} $. 
Let $ F/\k $ be any splitting field of $ p \in \mathcal{Q}^{L} $ and let us show that $\Gal(F/\k)\simeq\Z/3\Z$. Let $ M/\k $ be a minimal normal extension containing $F$ such that $ \Gal(\kb/M) $ acts trivially on $ \Pi_{\kb} $. The $ \Gal(\kb/\k) $-action on $ \Pi_{\kb} $ induces a short exact sequence $ 1 \rightarrow \Gal(\kb/M) \rightarrow \Gal(\kb/\k) \rightarrow \mathbb{Z}/6\mathbb{Z} \rightarrow 1 $, which implies $ \Gal(M/\k) \simeq \Gal(\kb/\k) / \Gal(\kb/M) \simeq \mathbb{Z}/6\mathbb{Z} $. The extension $F/\k$ is normal, so $\Gal(F/\k) \subseteq \Gal(M/\k)$. Then $ \mathbb{Z}/3\mathbb{Z} \subseteq \Gal(F/\k) $ implies $ \Gal(F/\k) \simeq \mathbb{Z}/3\mathbb{Z} $ since $ F/\k $ is a splitting field of $ p=\lbrace p_{1},p_{2},p_{3} \rbrace $.
Then $ FL/L $ is a splitting field of $p$ over $L$, $ FL/L $ is Galois and $ \Gal(FL/L) \simeq \Gal(F/F \cap L) = \Gal(F/\k) \simeq \mathbb{Z}/3\mathbb{Z} $ (see \cite[Theorem 5.5]{mor96}).
\begin{figure}[h]
\centering
\begin{subfigure}[b]{0.2\textwidth}
\begin{tikzpicture}[x=0.6pt,y=0.6pt,yscale=-1,xscale=1, scale=0.8, every node/.style={scale=0.8}]

\draw   (171,160.1) -- (142.45,209.55) -- (85.35,209.55) -- (56.8,160.1) -- (85.35,110.65) -- (142.45,110.65) -- cycle ;
\draw    (51.6,169.2) -- (89.6,102.2) ;
\draw    (88.6,216.2) -- (52.6,153.2) ;
\draw    (152.6,209.2) -- (74.6,209.2) ;
\draw    (174.6,153.2) -- (137.6,217.2) ;
\draw    (137.6,101.2) -- (175.6,167.2) ;
\draw    (152.6,110.2) -- (74.6,110.2) ;
\draw    (102.6,209.4) .. controls (101.65,188.5) and (87.16,187.45) .. (76.28,191.69) ;
\draw [shift={(74.6,192.4)}, rotate = 335.56] [color={rgb, 255:red, 0; green, 0; blue, 0 }  ][line width=0.75]    (10.93,-4.9) .. controls (6.95,-2.3) and (3.31,-0.67) .. (0,0) .. controls (3.31,0.67) and (6.95,2.3) .. (10.93,4.9)   ;
\draw    (67.6,179.4) .. controls (82.88,166.03) and (76.27,154.48) .. (65.19,148.25) ;
\draw [shift={(63.6,147.4)}, rotate = 26.57] [color={rgb, 255:red, 0; green, 0; blue, 0 }  ][line width=0.75]    (10.93,-4.9) .. controls (6.95,-2.3) and (3.31,-0.67) .. (0,0) .. controls (3.31,0.67) and (6.95,2.3) .. (10.93,4.9)   ;
\draw    (75.6,129.4) .. controls (95.65,139.9) and (102.94,125.79) .. (104.43,112.3) ;
\draw [shift={(104.6,110.4)}, rotate = 94.09] [color={rgb, 255:red, 0; green, 0; blue, 0 }  ][line width=0.75]    (10.93,-4.9) .. controls (6.95,-2.3) and (3.31,-0.67) .. (0,0) .. controls (3.31,0.67) and (6.95,2.3) .. (10.93,4.9)   ;
\draw    (121.6,110.4) .. controls (119.67,124.87) and (135.43,135.63) .. (151.82,130.07) ;
\draw [shift={(153.6,129.4)}, rotate = 157.62] [color={rgb, 255:red, 0; green, 0; blue, 0 }  ][line width=0.75]    (10.93,-4.9) .. controls (6.95,-2.3) and (3.31,-0.67) .. (0,0) .. controls (3.31,0.67) and (6.95,2.3) .. (10.93,4.9)   ;
\draw    (161.6,145.4) .. controls (145.11,148.31) and (141.79,169.1) .. (157.12,179.47) ;
\draw [shift={(158.6,180.4)}, rotate = 210.47] [color={rgb, 255:red, 0; green, 0; blue, 0 }  ][line width=0.75]    (10.93,-4.9) .. controls (6.95,-2.3) and (3.31,-0.67) .. (0,0) .. controls (3.31,0.67) and (6.95,2.3) .. (10.93,4.9)   ;
\draw    (152.6,193.4) .. controls (134.45,184.8) and (121.78,194.45) .. (119.81,207.53) ;
\draw [shift={(119.6,209.4)}, rotate = 274.09] [color={rgb, 255:red, 0; green, 0; blue, 0 }  ][line width=0.75]    (10.93,-4.9) .. controls (6.95,-2.3) and (3.31,-0.67) .. (0,0) .. controls (3.31,0.67) and (6.95,2.3) .. (10.93,4.9)   ;
\draw  [color={rgb, 255:red, 248; green, 7; blue, 7 }  ,draw opacity=1 ][fill={rgb, 255:red, 248; green, 7; blue, 7 }  ,fill opacity=1 ] (112.9,158.75) .. controls (112.9,157.45) and (113.95,156.4) .. (115.25,156.4) .. controls (116.55,156.4) and (117.6,157.45) .. (117.6,158.75) .. controls (117.6,160.05) and (116.55,161.1) .. (115.25,161.1) .. controls (113.95,161.1) and (112.9,160.05) .. (112.9,158.75) -- cycle ;

\draw (42,68.4) node [anchor=north west][inner sep=0.75pt]    {$Y$};
\end{tikzpicture}
\caption{}
\end{subfigure} $\quad\quad\quad$ \begin{subfigure}[b]{0.2\textwidth}
\begin{tikzpicture}[x=0.6pt,y=0.6pt,yscale=-1,xscale=1, scale=0.8, every node/.style={scale=0.8}]

\draw   (171,160.1) -- (142.45,209.55) -- (85.35,209.55) -- (56.8,160.1) -- (85.35,110.65) -- (142.45,110.65) -- cycle ;
\draw    (51.6,169.2) -- (89.6,102.2) ;
\draw    (88.6,216.2) -- (52.6,153.2) ;
\draw    (152.6,209.2) -- (74.6,209.2) ;
\draw    (174.6,153.2) -- (137.6,217.2) ;
\draw    (137.6,101.2) -- (175.6,167.2) ;
\draw    (152.6,110.2) -- (74.6,110.2) ;
\draw  [color={rgb, 255:red, 248; green, 7; blue, 7 }  ,draw opacity=1 ][fill={rgb, 255:red, 248; green, 7; blue, 7 }  ,fill opacity=1 ] (110.9,159.75) .. controls (110.9,158.45) and (111.95,157.4) .. (113.25,157.4) .. controls (114.55,157.4) and (115.6,158.45) .. (115.6,159.75) .. controls (115.6,161.05) and (114.55,162.1) .. (113.25,162.1) .. controls (111.95,162.1) and (110.9,161.05) .. (110.9,159.75) -- cycle ;
\draw [color={rgb, 255:red, 3; green, 19; blue, 249 }  ,draw opacity=1 ]   (95.6,210.2) .. controls (91.68,189.62) and (83.92,168.08) .. (62.9,152.17) ;
\draw [shift={(61.6,151.2)}, rotate = 36.03] [color={rgb, 255:red, 3; green, 19; blue, 249 }  ,draw opacity=1 ][line width=0.75]    (10.93,-4.9) .. controls (6.95,-2.3) and (3.31,-0.67) .. (0,0) .. controls (3.31,0.67) and (6.95,2.3) .. (10.93,4.9)   ;
\draw [color={rgb, 255:red, 3; green, 19; blue, 249 }  ,draw opacity=1 ]   (62.6,170.8) .. controls (69.49,167.85) and (89.97,146.46) .. (97.28,112.37) ;
\draw [shift={(97.6,110.8)}, rotate = 101.31] [color={rgb, 255:red, 3; green, 19; blue, 249 }  ,draw opacity=1 ][line width=0.75]    (10.93,-4.9) .. controls (6.95,-2.3) and (3.31,-0.67) .. (0,0) .. controls (3.31,0.67) and (6.95,2.3) .. (10.93,4.9)   ;
\draw [color={rgb, 255:red, 3; green, 19; blue, 249 }  ,draw opacity=1 ]   (80.6,119.2) .. controls (94.04,126.88) and (130.52,127.19) .. (145.8,121.89) ;
\draw [shift={(147.6,121.2)}, rotate = 156.8] [color={rgb, 255:red, 3; green, 19; blue, 249 }  ,draw opacity=1 ][line width=0.75]    (10.93,-4.9) .. controls (6.95,-2.3) and (3.31,-0.67) .. (0,0) .. controls (3.31,0.67) and (6.95,2.3) .. (10.93,4.9)   ;
\draw [color={rgb, 255:red, 3; green, 19; blue, 249 }  ,draw opacity=1 ]   (130.6,110.4) .. controls (130.6,131.16) and (145.65,156.24) .. (163.9,167.4) ;
\draw [shift={(165.6,168.4)}, rotate = 209.16] [color={rgb, 255:red, 3; green, 19; blue, 249 }  ,draw opacity=1 ][line width=0.75]    (10.93,-4.9) .. controls (6.95,-2.3) and (3.31,-0.67) .. (0,0) .. controls (3.31,0.67) and (6.95,2.3) .. (10.93,4.9)   ;
\draw [color={rgb, 255:red, 3; green, 19; blue, 249 }  ,draw opacity=1 ]   (165.6,149) .. controls (146.4,161.48) and (132.73,191.47) .. (132.55,207.12) ;
\draw [shift={(132.6,209)}, rotate = 266.19] [color={rgb, 255:red, 3; green, 19; blue, 249 }  ,draw opacity=1 ][line width=0.75]    (10.93,-4.9) .. controls (6.95,-2.3) and (3.31,-0.67) .. (0,0) .. controls (3.31,0.67) and (6.95,2.3) .. (10.93,4.9)   ;
\draw [color={rgb, 255:red, 3; green, 19; blue, 249 }  ,draw opacity=1 ]   (147.6,200.4) .. controls (126.26,191.67) and (98.33,192.35) .. (80.25,198.79) ;
\draw [shift={(78.6,199.4)}, rotate = 338.75] [color={rgb, 255:red, 3; green, 19; blue, 249 }  ,draw opacity=1 ][line width=0.75]    (10.93,-4.9) .. controls (6.95,-2.3) and (3.31,-0.67) .. (0,0) .. controls (3.31,0.67) and (6.95,2.3) .. (10.93,4.9)   ;
\draw [color={rgb, 255:red, 208; green, 2; blue, 27 }  ,draw opacity=1 ]   (113.6,112.2) -- (113.6,158.2) -- (113.6,207.2) ;
\draw [shift={(113.6,209.2)}, rotate = 270] [color={rgb, 255:red, 208; green, 2; blue, 27 }  ,draw opacity=1 ][line width=0.75]    (10.93,-4.9) .. controls (6.95,-2.3) and (3.31,-0.67) .. (0,0) .. controls (3.31,0.67) and (6.95,2.3) .. (10.93,4.9)   ;
\draw [shift={(113.6,110.2)}, rotate = 90] [color={rgb, 255:red, 208; green, 2; blue, 27 }  ,draw opacity=1 ][line width=0.75]    (10.93,-4.9) .. controls (6.95,-2.3) and (3.31,-0.67) .. (0,0) .. controls (3.31,0.67) and (6.95,2.3) .. (10.93,4.9)   ;
\draw [color={rgb, 255:red, 208; green, 2; blue, 27 }  ,draw opacity=1 ]   (72.32,183.69) -- (116.2,157.92) -- (154.88,135.21) ;
\draw [shift={(156.6,134.2)}, rotate = 149.58] [color={rgb, 255:red, 208; green, 2; blue, 27 }  ,draw opacity=1 ][line width=0.75]    (10.93,-4.9) .. controls (6.95,-2.3) and (3.31,-0.67) .. (0,0) .. controls (3.31,0.67) and (6.95,2.3) .. (10.93,4.9)   ;
\draw [shift={(70.6,184.7)}, rotate = 329.58] [color={rgb, 255:red, 208; green, 2; blue, 27 }  ,draw opacity=1 ][line width=0.75]    (10.93,-4.9) .. controls (6.95,-2.3) and (3.31,-0.67) .. (0,0) .. controls (3.31,0.67) and (6.95,2.3) .. (10.93,4.9)   ;
\draw [color={rgb, 255:red, 208; green, 2; blue, 27 }  ,draw opacity=1 ]   (72.33,136.7) -- (113,160.24) -- (154.37,184.2) ;
\draw [shift={(156.1,185.2)}, rotate = 210.07] [color={rgb, 255:red, 208; green, 2; blue, 27 }  ,draw opacity=1 ][line width=0.75]    (10.93,-4.9) .. controls (6.95,-2.3) and (3.31,-0.67) .. (0,0) .. controls (3.31,0.67) and (6.95,2.3) .. (10.93,4.9)   ;
\draw [shift={(70.6,135.7)}, rotate = 30.07] [color={rgb, 255:red, 208; green, 2; blue, 27 }  ,draw opacity=1 ][line width=0.75]    (10.93,-4.9) .. controls (6.95,-2.3) and (3.31,-0.67) .. (0,0) .. controls (3.31,0.67) and (6.95,2.3) .. (10.93,4.9)   ;
\end{tikzpicture}
\caption{}
\end{subfigure}
\caption{The $ \Gal(\kb/\k) $-action on $Y_{\kb}$.}
\label{Fig:Figures_Gal(k_barre/k)-action_on_dP6_Y_in_Z/6Z_case}
\end{figure}
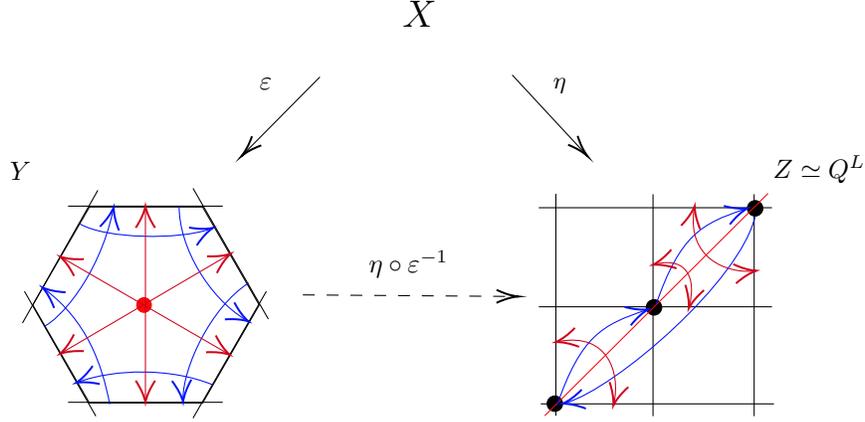
\begin{figure}[h]
\centering
\begin{tikzpicture}[x=0.6pt,y=0.6pt,yscale=-1,xscale=1,scale=0.8, every node/.style={scale=0.8}]

\draw   (171,170.1) -- (142.45,219.55) -- (85.35,219.55) -- (56.8,170.1) -- (85.35,120.65) -- (142.45,120.65) -- cycle ;
\draw    (51.6,179.2) -- (89.6,112.2) ;
\draw    (88.6,226.2) -- (52.6,163.2) ;
\draw    (152.6,219.2) -- (74.6,219.2) ;
\draw    (174.6,163.2) -- (137.6,227.2) ;
\draw    (137.6,111.2) -- (175.6,177.2) ;
\draw    (152.6,120.2) -- (74.6,120.2) ;
\draw  [color={rgb, 255:red, 248; green, 7; blue, 7 }  ,draw opacity=1 ][fill={rgb, 255:red, 248; green, 7; blue, 7 }  ,fill opacity=1 ] (109.2,170.1) .. controls (109.2,168.06) and (110.86,166.4) .. (112.9,166.4) .. controls (114.94,166.4) and (116.6,168.06) .. (116.6,170.1) .. controls (116.6,172.14) and (114.94,173.8) .. (112.9,173.8) .. controls (110.86,173.8) and (109.2,172.14) .. (109.2,170.1) -- cycle ;
\draw [color={rgb, 255:red, 3; green, 19; blue, 249 }  ,draw opacity=1 ]   (95.6,220.2) .. controls (91.68,199.62) and (83.92,178.08) .. (62.9,162.17) ;
\draw [shift={(61.6,161.2)}, rotate = 36.03] [color={rgb, 255:red, 3; green, 19; blue, 249 }  ,draw opacity=1 ][line width=0.75]    (10.93,-4.9) .. controls (6.95,-2.3) and (3.31,-0.67) .. (0,0) .. controls (3.31,0.67) and (6.95,2.3) .. (10.93,4.9)   ;
\draw [color={rgb, 255:red, 3; green, 19; blue, 249 }  ,draw opacity=1 ]   (62.6,180.8) .. controls (69.49,177.85) and (89.97,156.46) .. (97.28,122.37) ;
\draw [shift={(97.6,120.8)}, rotate = 101.31] [color={rgb, 255:red, 3; green, 19; blue, 249 }  ,draw opacity=1 ][line width=0.75]    (10.93,-4.9) .. controls (6.95,-2.3) and (3.31,-0.67) .. (0,0) .. controls (3.31,0.67) and (6.95,2.3) .. (10.93,4.9)   ;
\draw [color={rgb, 255:red, 3; green, 19; blue, 249 }  ,draw opacity=1 ]   (80.6,129.2) .. controls (94.04,136.88) and (130.52,137.19) .. (145.8,131.89) ;
\draw [shift={(147.6,131.2)}, rotate = 156.8] [color={rgb, 255:red, 3; green, 19; blue, 249 }  ,draw opacity=1 ][line width=0.75]    (10.93,-4.9) .. controls (6.95,-2.3) and (3.31,-0.67) .. (0,0) .. controls (3.31,0.67) and (6.95,2.3) .. (10.93,4.9)   ;
\draw [color={rgb, 255:red, 3; green, 19; blue, 249 }  ,draw opacity=1 ]   (130.6,120.4) .. controls (130.6,141.16) and (145.65,166.24) .. (163.9,177.4) ;
\draw [shift={(165.6,178.4)}, rotate = 209.16] [color={rgb, 255:red, 3; green, 19; blue, 249 }  ,draw opacity=1 ][line width=0.75]    (10.93,-4.9) .. controls (6.95,-2.3) and (3.31,-0.67) .. (0,0) .. controls (3.31,0.67) and (6.95,2.3) .. (10.93,4.9)   ;
\draw [color={rgb, 255:red, 3; green, 19; blue, 249 }  ,draw opacity=1 ]   (165.6,159) .. controls (146.4,171.48) and (132.73,201.47) .. (132.55,217.12) ;
\draw [shift={(132.6,219)}, rotate = 266.19] [color={rgb, 255:red, 3; green, 19; blue, 249 }  ,draw opacity=1 ][line width=0.75]    (10.93,-4.9) .. controls (6.95,-2.3) and (3.31,-0.67) .. (0,0) .. controls (3.31,0.67) and (6.95,2.3) .. (10.93,4.9)   ;
\draw [color={rgb, 255:red, 3; green, 19; blue, 249 }  ,draw opacity=1 ]   (147.6,210.4) .. controls (126.26,201.67) and (98.33,202.35) .. (80.25,208.79) ;
\draw [shift={(78.6,209.4)}, rotate = 338.75] [color={rgb, 255:red, 3; green, 19; blue, 249 }  ,draw opacity=1 ][line width=0.75]    (10.93,-4.9) .. controls (6.95,-2.3) and (3.31,-0.67) .. (0,0) .. controls (3.31,0.67) and (6.95,2.3) .. (10.93,4.9)   ;
\draw [color={rgb, 255:red, 208; green, 2; blue, 27 }  ,draw opacity=1 ]   (113.6,122.2) -- (113.6,168.2) -- (113.6,217.2) ;
\draw [shift={(113.6,219.2)}, rotate = 270] [color={rgb, 255:red, 208; green, 2; blue, 27 }  ,draw opacity=1 ][line width=0.75]    (10.93,-4.9) .. controls (6.95,-2.3) and (3.31,-0.67) .. (0,0) .. controls (3.31,0.67) and (6.95,2.3) .. (10.93,4.9)   ;
\draw [shift={(113.6,120.2)}, rotate = 90] [color={rgb, 255:red, 208; green, 2; blue, 27 }  ,draw opacity=1 ][line width=0.75]    (10.93,-4.9) .. controls (6.95,-2.3) and (3.31,-0.67) .. (0,0) .. controls (3.31,0.67) and (6.95,2.3) .. (10.93,4.9)   ;
\draw [color={rgb, 255:red, 208; green, 2; blue, 27 }  ,draw opacity=1 ]   (72.32,193.69) -- (116.2,167.92) -- (154.88,145.21) ;
\draw [shift={(156.6,144.2)}, rotate = 149.58] [color={rgb, 255:red, 208; green, 2; blue, 27 }  ,draw opacity=1 ][line width=0.75]    (10.93,-4.9) .. controls (6.95,-2.3) and (3.31,-0.67) .. (0,0) .. controls (3.31,0.67) and (6.95,2.3) .. (10.93,4.9)   ;
\draw [shift={(70.6,194.7)}, rotate = 329.58] [color={rgb, 255:red, 208; green, 2; blue, 27 }  ,draw opacity=1 ][line width=0.75]    (10.93,-4.9) .. controls (6.95,-2.3) and (3.31,-0.67) .. (0,0) .. controls (3.31,0.67) and (6.95,2.3) .. (10.93,4.9)   ;
\draw [color={rgb, 255:red, 208; green, 2; blue, 27 }  ,draw opacity=1 ]   (72.33,146.7) -- (113,170.24) -- (154.37,194.2) ;
\draw [shift={(156.1,195.2)}, rotate = 210.07] [color={rgb, 255:red, 208; green, 2; blue, 27 }  ,draw opacity=1 ][line width=0.75]    (10.93,-4.9) .. controls (6.95,-2.3) and (3.31,-0.67) .. (0,0) .. controls (3.31,0.67) and (6.95,2.3) .. (10.93,4.9)   ;
\draw [shift={(70.6,145.7)}, rotate = 30.07] [color={rgb, 255:red, 208; green, 2; blue, 27 }  ,draw opacity=1 ][line width=0.75]    (10.93,-4.9) .. controls (6.95,-2.3) and (3.31,-0.67) .. (0,0) .. controls (3.31,0.67) and (6.95,2.3) .. (10.93,4.9)   ;
\draw    (201.6,55) -- (163.41,93.58) ;
\draw [shift={(162,95)}, rotate = 314.71] [color={rgb, 255:red, 0; green, 0; blue, 0 }  ][line width=0.75]    (10.93,-3.29) .. controls (6.95,-1.4) and (3.31,-0.3) .. (0,0) .. controls (3.31,0.3) and (6.95,1.4) .. (10.93,3.29)   ;
\draw    (298.6,54) -- (335.24,93.53) ;
\draw [shift={(336.6,95)}, rotate = 227.17] [color={rgb, 255:red, 0; green, 0; blue, 0 }  ][line width=0.75]    (10.93,-3.29) .. controls (6.95,-1.4) and (3.31,-0.3) .. (0,0) .. controls (3.31,0.3) and (6.95,1.4) .. (10.93,3.29)   ;
\draw    (320.6,114) -- (320.6,226) ;
\draw    (369.6,114.2) -- (369.6,226.2) ;
\draw    (420.6,114.2) -- (420.6,226.2) ;
\draw    (312,121) -- (429.6,121) ;
\draw    (312,171) -- (429.6,171) ;
\draw    (313,220) -- (430.6,220) ;
\draw  [fill={rgb, 255:red, 0; green, 0; blue, 0 }  ,fill opacity=1 ] (316.2,219.9) .. controls (316.2,217.75) and (317.95,216) .. (320.1,216) .. controls (322.25,216) and (324,217.75) .. (324,219.9) .. controls (324,222.05) and (322.25,223.8) .. (320.1,223.8) .. controls (317.95,223.8) and (316.2,222.05) .. (316.2,219.9) -- cycle ;
\draw  [fill={rgb, 255:red, 0; green, 0; blue, 0 }  ,fill opacity=1 ] (366.2,171.3) .. controls (366.2,169.15) and (367.95,167.4) .. (370.1,167.4) .. controls (372.25,167.4) and (374,169.15) .. (374,171.3) .. controls (374,173.45) and (372.25,175.2) .. (370.1,175.2) .. controls (367.95,175.2) and (366.2,173.45) .. (366.2,171.3) -- cycle ;
\draw  [fill={rgb, 255:red, 0; green, 0; blue, 0 }  ,fill opacity=1 ] (417.2,121.3) .. controls (417.2,119.15) and (418.95,117.4) .. (421.1,117.4) .. controls (423.25,117.4) and (425,119.15) .. (425,121.3) .. controls (425,123.45) and (423.25,125.2) .. (421.1,125.2) .. controls (418.95,125.2) and (417.2,123.45) .. (417.2,121.3) -- cycle ;
\draw [color={rgb, 255:red, 248; green, 7; blue, 7 }  ,draw opacity=1 ]   (427.6,113.8) -- (315,226) ;
\draw [color={rgb, 255:red, 3; green, 19; blue, 249 }  ,draw opacity=1 ]   (321.1,220.9) .. controls (333.29,185.7) and (344.05,180.07) .. (365.53,172.86) ;
\draw [shift={(367.2,172.3)}, rotate = 161.64] [color={rgb, 255:red, 3; green, 19; blue, 249 }  ,draw opacity=1 ][line width=0.75]    (10.93,-4.9) .. controls (6.95,-2.3) and (3.31,-0.67) .. (0,0) .. controls (3.31,0.67) and (6.95,2.3) .. (10.93,4.9)   ;
\draw [color={rgb, 255:red, 3; green, 19; blue, 249 }  ,draw opacity=1 ]   (371.1,169.9) .. controls (383.29,134.7) and (394.05,129.07) .. (415.53,121.86) ;
\draw [shift={(417.2,121.3)}, rotate = 161.64] [color={rgb, 255:red, 3; green, 19; blue, 249 }  ,draw opacity=1 ][line width=0.75]    (10.93,-4.9) .. controls (6.95,-2.3) and (3.31,-0.67) .. (0,0) .. controls (3.31,0.67) and (6.95,2.3) .. (10.93,4.9)   ;
\draw [color={rgb, 255:red, 3; green, 19; blue, 249 }  ,draw opacity=1 ]   (421.1,125.2) .. controls (413.79,157.96) and (350.86,212.01) .. (325.85,219.42) ;
\draw [shift={(324,219.9)}, rotate = 347.81] [color={rgb, 255:red, 3; green, 19; blue, 249 }  ,draw opacity=1 ][line width=0.75]    (10.93,-4.9) .. controls (6.95,-2.3) and (3.31,-0.67) .. (0,0) .. controls (3.31,0.67) and (6.95,2.3) .. (10.93,4.9)   ;
\draw [color={rgb, 255:red, 208; green, 2; blue, 27 }  ,draw opacity=1 ]   (322.59,188.4) .. controls (346.88,184.12) and (353.02,204.44) .. (350.02,219.01) ;
\draw [shift={(349.6,220.8)}, rotate = 284.93] [color={rgb, 255:red, 208; green, 2; blue, 27 }  ,draw opacity=1 ][line width=0.75]    (10.93,-4.9) .. controls (6.95,-2.3) and (3.31,-0.67) .. (0,0) .. controls (3.31,0.67) and (6.95,2.3) .. (10.93,4.9)   ;
\draw [shift={(320.6,188.8)}, rotate = 347.47] [color={rgb, 255:red, 208; green, 2; blue, 27 }  ,draw opacity=1 ][line width=0.75]    (10.93,-4.9) .. controls (6.95,-2.3) and (3.31,-0.67) .. (0,0) .. controls (3.31,0.67) and (6.95,2.3) .. (10.93,4.9)   ;
\draw [color={rgb, 255:red, 208; green, 2; blue, 27 }  ,draw opacity=1 ]   (371.69,149.29) .. controls (387.18,146) and (391.1,157.27) .. (388.07,169.99) ;
\draw [shift={(387.6,171.8)}, rotate = 285.95] [color={rgb, 255:red, 208; green, 2; blue, 27 }  ,draw opacity=1 ][line width=0.75]    (10.93,-4.9) .. controls (6.95,-2.3) and (3.31,-0.67) .. (0,0) .. controls (3.31,0.67) and (6.95,2.3) .. (10.93,4.9)   ;
\draw [shift={(369.6,149.8)}, rotate = 344.48] [color={rgb, 255:red, 208; green, 2; blue, 27 }  ,draw opacity=1 ][line width=0.75]    (10.93,-4.9) .. controls (6.95,-2.3) and (3.31,-0.67) .. (0,0) .. controls (3.31,0.67) and (6.95,2.3) .. (10.93,4.9)   ;
\draw [color={rgb, 255:red, 208; green, 2; blue, 27 }  ,draw opacity=1 ]   (390.22,122.54) .. controls (387.06,146.16) and (406.17,154.03) .. (419.72,153) ;
\draw [shift={(421.6,152.8)}, rotate = 171.87] [color={rgb, 255:red, 208; green, 2; blue, 27 }  ,draw opacity=1 ][line width=0.75]    (10.93,-4.9) .. controls (6.95,-2.3) and (3.31,-0.67) .. (0,0) .. controls (3.31,0.67) and (6.95,2.3) .. (10.93,4.9)   ;
\draw [shift={(390.6,120.2)}, rotate = 100.65] [color={rgb, 255:red, 208; green, 2; blue, 27 }  ,draw opacity=1 ][line width=0.75]    (10.93,-4.9) .. controls (6.95,-2.3) and (3.31,-0.67) .. (0,0) .. controls (3.31,0.67) and (6.95,2.3) .. (10.93,4.9)   ;
\draw  [dash pattern={on 4.5pt off 4.5pt}]  (193,165) -- (299.6,165.79) ;
\draw [shift={(301.6,165.8)}, rotate = 180.42] [color={rgb, 255:red, 0; green, 0; blue, 0 }  ][line width=0.75]    (10.93,-3.29) .. controls (6.95,-1.4) and (3.31,-0.3) .. (0,0) .. controls (3.31,0.3) and (6.95,1.4) .. (10.93,3.29)   ;

\draw (44,96.4) node [anchor=north west][inner sep=0.75pt]    {$Y$};
\draw (242,15.4) node [anchor=north west][inner sep=0.75pt]  [font=\Large]  {$X$};
\draw (429,93.4) node [anchor=north west][inner sep=0.75pt]    {$Z\simeq Q^{L}$};
\draw (170,54.4) node [anchor=north west][inner sep=0.75pt]    {$\varepsilon $};
\draw (318,54.4) node [anchor=north west][inner sep=0.75pt]  [font=\small]  {$\eta $};
\draw (225,141.4) node [anchor=north west][inner sep=0.75pt]    {$\eta \circ \varepsilon ^{-1}$};
\end{tikzpicture}
\caption{Blow-up model for a del Pezzo surface as in Proposition \ref{Prop:Proposition_7_Z/6Z}.}
\label{Fig:Figure_Blow-up_model_Z/6Z_case}
\end{figure}

(\ref{it:item_(2)_Proposition_7_Z/6Z}) Two del Pezzo surfaces $ \mathcal{Q}^{L} $ and $ \mathcal{Q}^{L'} $ are isomorphic if and only if the corresponding quadratic extensions $L$ and $L'$ are $\k$-isomorphic \cite[Lemma 3.2(3)]{sz21}. By Lemma \ref{lem:Lemma_transitive_action_Aut_k(Q^L)_points_of_degree_3}, $ \Aut_{\k}(\mathcal{Q}^{L}) $ acts transitively on the set of points of degree $3$ in $ \mathcal{Q}^{L} $ with $\k$-isomorphic splitting fields and whose geometric components are in general position. This yields the claim.

(\ref{it:item_(3)_Proposition_7_Z/6Z}\&\ref{it:item_(4)_Proposition_7_Z/6Z}) The action of $ \Aut_{\k}(X) $ on the set of $ (-1) $-curves of $X$ yields an isomorphism $ \Psi : \Aut_{\k}(X) \overset{\sim}{\longrightarrow} \Aut(\Pi_{\kb,\rho}) $ by Lemma \ref{lem:Lemma_faithful_action_Aut_k(X)_on_Pi_L_rho}. Since $\Aut(\Pi_{\kb,\rho}) \simeq \lbrace \sigma \in \Sym_{5} \, \vert \, (123)(45) \circ \sigma = \sigma \circ (123)(45) \rbrace = C_{\Sym_{5}}(\langle (123)(45) \rangle)$, we get $\Aut(\Pi_{\kb,\rho}) \simeq \langle (123) \rangle \times \langle (45) \rangle \simeq \Z/3\Z \times \Z/2\Z$.
We now construct the corresponding geometric actions. From part (\ref{it:item_(1)_Proposition_7_Z/6Z}) we know that $Y_{L}$ is isomorphic to the del Pezzo surface of degree $6$ from \cite[Lemma 4.2]{sz21}, which is the blow-up $ \pi : Y_{L} \rightarrow \p^{2}_{L} $ of a point $q=\lbrace q_{1},q_{2},q_{3} \rbrace$ of degree $3$, and so that $X_{L}$ is isomorphic to the del Pezzo surface from Proposition \ref{Prop:Proposition_3_Z/3Z}. Since $ Y $ is rational, $ \Gal(L/\k) = \langle g \rangle $ has a fixed point $ r \in Y(\k) $. Let $ \phi_{q} \in \Bir_{L}(\mathbb{P}^{2}) $ be the quadratic involution from Proposition \ref{Prop:Proposition_3_Z/3Z}(\ref{it:item_(3)_Proposition_3_Z3}), such that $ \phi_{q} $ fixes $ \pi(r) \in \mathbb{P}^{2}_{L}(L) \simeq \mathbb{P}^{2}(L) $ and $ \Phi_{q} := \pi^{-1} \phi_{q} \pi \in \Aut_{L}(Y) $ induces a rotation of order two on the hexagon of $ Y_{L} $. Then $ \Phi_{q}(g\Phi_{q}g) \in \Aut_{L}(Y) $ preserves the edges of the hexagon and fixes $r$. It therefore descends to an element of $ \Aut_{L}(\mathbb{P}^{2},q_{1},q_{2},q_{3}) $ fixing $ \pi(r) $ and is hence equal to the identity. It follows that $ \Phi_{q} \in \Aut_{\k}(Y) $ and it induces a rotation of order two on the hexagon of $ Y $. Thus, it lifts by $ \varepsilon $ to an automorphism $ \widehat{\Phi_{q}} $ of $X$ defined over $ \k $ that acts as $(45)$ on $ \Pi_{\kb} $. Now let $ \tilde{\alpha} \in \Aut_{L}(\mathbb{P}^{2},\lbrace q_{1},q_{2},q_{3} \rbrace) $ be the automorphism of order three from Proposition \ref{Prop:Proposition_3_Z/3Z}(\ref{it:item_(3)_Proposition_3_Z3}) such that $ \tilde{\alpha} $ fixes $ \pi(r) \in \mathbb{P}^{2}(L) $, $\tilde{\alpha} : q_{1} \mapsto q_{2} \mapsto q_{3}$ and $ \alpha := \pi^{-1} \tilde{\alpha} \pi \in \text{Aut}_{L}(Y) $ induces a rotation of order three on the hexagon of $ Y_{L} $. We can argue as above that $ \alpha \in \Aut_{\k}(Y) $ and thus it lifts to an automorphism $ \widehat{\alpha} $ of $X$ defined over $\k$ that acts as $(123)$ on $ \Pi_{\kb} $. In conclusion, $ \Aut_{\k}(X)=\langle \widehat{\alpha} \rangle  \times \langle \widehat{\Phi_{q}} \rangle \simeq \mathbb{Z}/3\mathbb{Z} \times \mathbb{Z}/2\mathbb{Z} $. For the last assertion, we have a contraction $\eta : X \rightarrow \Ql^{L}$ onto a point of degree $3$ from (\ref{it:item_(1)_Proposition_7_Z/6Z}), so thanks to Lemma \ref{lem:Lemma_transitive_action_Aut_k(Q^L)_points_of_degree_3}, we get $ \rk \, \NS(X)^{\Aut_{\k}(X)} = 2 $. Now, note that the curve $E_{4}$ is stabilized by $\Aut_{\k}(X)$ and can then be contracted equivariantly. It gives an equivariant birational morphism to the rational del Pezzo surface $Y$ with $\rk \, \NS(Y) = 1$, which yields the claim.  
\end{proof}

\begin{example} We construct a del Pezzo surface as in Proposition \ref{Prop:Proposition_7_Z/6Z}. Let $\k = \mathbb{F}_{2}$, let $L=\k(a_{1})$ be a quadratic extension of $\k$ and $F/\k$ be the splitting field of $P(X)=X^{3}+X+1$ as in Example \ref{ex:example_Z/3Z}. Then $3=\vert F:\k \vert=\vert \Gal(F/\k) \vert$ and $ \sigma : s \mapsto s^{2} $ generates $\Gal(F/\k)$. If $\zeta$ is a root of $P(X)$, $\zeta \neq a_{1}$, then the point$$p=\lbrace ([1-\zeta:\zeta^{4}],[1-\zeta:\zeta^{4}]);([1-\zeta^{2}:\zeta],[1-\zeta^{2}:\zeta]);([1-\zeta^{4}:\zeta^{2}],[1-\zeta^{4}:\zeta^{2}]) \rbrace$$is of degree $3$, its geometric components are in general position in $\mathcal{Q}^{L}_{\kb}$ and any non-trivial element of $\Gal(FL/L) \simeq \Gal(F/F \cap L) = \Gal(F/\k) \simeq \Z/3\Z$ permutes them non-trivially.
\end{example}

\begin{proposition}
Let $X$ be a del Pezzo surface of degree $5$ such that $ \rho(\Gal(\kb/\k)) = \langle (123),(12),(45) \rangle \simeq \Sym_{3} \times \Z/2\Z $ in $ \text{Sym}_{5} $ as indicated in Figure \ref{fig:figure(l)_option_Gal(kbarre/k)-action_on_Pikbarre}. Then the following holds.
\begin{enumerate}
\item There exist a quadratic extension $ L/\k $ and a birational morphism $ \eta : X \longrightarrow \mathcal{Q}^{L} $ contracting an irreducible curve onto a point $ p=\lbrace p_{1},p_{2},p_{3} \rbrace $ of degree $3$ whose geometric components are contained in pairwise distinct rulings of $ \mathcal{Q}^{L}_{L} $, and with splitting field $ F $ such that $ \Gal(F/\k) \simeq \Gal(FL/L) \simeq \Sym_{3} $, where $FL$ is a splitting field of $p$ over $L$.
\label{it:item_(1)_Proposition_?_S3xZ2}
\item Any two such del Pezzo surfaces are isomorphic if and only if the corresponding field extensions of degree two and six are $\k$-isomorphic.\label{it:item_(2)_Proposition_?_S3xZ2}
\item $ \Aut_{\k}(X)=\langle \widehat{\Phi_{q}} \rangle \simeq \mathbb{Z}/2\mathbb{Z} $, where $\widehat{\Phi_{q}}$ is the lift of a quadratic birational involution of $\p^{2}_{L}$ with base-point $q$ of degree $3$.\label{it:item_(3)_Proposition_?_S3xZ2}
\item $ \rk \, \NS(X)^{\Aut_{\k}(X)} = 2 $, so in particular $ X \rightarrow \ast $ is not an $ \Aut_{\k}(X) $-Mori fibre space, and the $\Aut_{\k}(X)$-minimal models of $X$ are the quadric $\Ql^{L}$ and the rational del Pezzo surface $Y$ of degree $6$ such that $Y_{L}$ is isomorphic to the blow-up of $\p^{2}_{L}$ in a point $q$ of degree $3$ with splitting field $E$ such that $\Gal(E/L) \simeq \Sym_{3}$.\label{it:item_(4)_Proposition_?_S3xZ2}
\end{enumerate}
\label{Prop:Proposition_?_S3xZ2}
\end{proposition}

\begin{proof}
The proof of points (\ref{it:item_(1)_Proposition_?_S3xZ2}), (\ref{it:item_(2)_Proposition_?_S3xZ2}), (\ref{it:item_(4)_Proposition_?_S3xZ2}) is analogous to that of Proposition \ref{Prop:Proposition_7_Z/6Z}.\\ (\ref{it:item_(3)_Proposition_?_S3xZ2}) In this case, we have $\Aut(\Pi_{\kb,\rho}) \simeq C_{\Sym_{5}}(\langle (123),(12),(45) \rangle) = \langle (45) \rangle $, and from Proposition \ref{Prop:Proposition_7_Z/6Z}(\ref{it:item_(3)_Proposition_7_Z/6Z}) there exists a birational quadratic involution $\phi_{q} \in \text{Bir}_{L}(\p^{2})$ that lifts to an automorphism $\widehat{\Phi_{q}}$ of $X$ defined over $\k$ and that acts as $(45)$ on the $(-1)$-curves, so $\Aut_{\k}(X) = \langle \widehat{\Phi_{q}} \rangle \simeq \Z/2\Z$.
\end{proof}

\begin{example} We construct a del Pezzo surface as in Proposition \ref{Prop:Proposition_?_S3xZ2}. Let $\k=\Q$, $\zeta=\sqrt[3]{2}$ and $\omega=e^{\frac{i2\pi}{3}} $. Let $L=\k(a_{1})$, $a_{1} \neq \zeta, \omega$, be a quadratic extension and let $F=\k(\zeta,\omega)$ be the Galois extension of degree $6$ in Example \ref{ex:example_Sym3_case_1}, with $\Gal(F/\k) \simeq \Sym_{3} $. Then the point$$p=\lbrace ([\zeta-\zeta^{2}:1],[\zeta-\zeta^{2}:1]);([\omega\zeta-\omega^{2}\zeta^{2}:1],[\omega\zeta-\omega^{2}\zeta^{2}:1]);([\omega^{2}\zeta-\omega\zeta^{2}:1],[\omega^{2}\zeta-\omega\zeta^{2}:1]) \rbrace$$is of degree $3$, its geometric components are in general position in $\mathcal{Q}^{L}_{\kb}$ and any non-trivial element of $\Gal(FL/L) \simeq \Gal(F/F \cap L) = \Gal(F/\k) \simeq \Sym_{3} $ permutes them non-trivially.
\end{example}

\begin{proposition}
Let $X$ be a del Pezzo surface of degree $5$ such that $ \rho(\Gal(\kb/\k)) = \langle (123),(12)(45) \rangle \simeq \Sym_{3} $ in $ \Sym_{5} $ as indicated in Figure \ref{fig:figure(m)_option_Gal(kbarre/k)-action_on_Pikbarre}. Then the following holds.
\begin{enumerate}
\item There exist a quadratic extension $ L/\k $ and a birational morphism $ \eta : X \longrightarrow \mathcal{Q}^{L} $ contracting an irreducible curve onto a point $ p=\lbrace p_{1},p_{2},p_{3} \rbrace $ of degree $3$ whose geometric components are contained in pairwise distinct rulings of $ \mathcal{Q}^{L}_{L} $, and with splitting field $F$ such that $ \Gal(F/\k) \simeq \Sym_{3} $ and $ \Gal(FL/L) \simeq \mathbb{Z}/3\mathbb{Z} $, where $FL$ is a splitting field of $p$ over $L$.\label{it:item_(1):Proposition_9_Sym3_2}
\item Any two such del Pezzo surfaces $ X,X' $ are isomorphic if and only if the corresponding field extensions $ L,L' $ and $ F,F' $ are $\k$-isomorphic.\label{it:item_(2):Proposition_9_Sym3_2}
\item $ \Aut_{\k}(X)=\langle \widehat{\Phi_{q}} \rangle \simeq \mathbb{Z}/2\mathbb{Z} $, where $\widehat{\Phi_{q}}$ is the lift of a quadratic birational involution of $\p^{2}_{L}$ with base-point $q$ of degree $3$.\label{it:item_(3):Proposition_9_Sym3_2}
\item $ \rk \, \NS(X)^{\Aut_{\k}(X)} = 2 $, so in particular $ X \rightarrow \ast $ is not an $ \Aut_{\k}(X) $-Mori fibre space, and the $\Aut_{\k}(X)$-minimal models of $X$ are the quadric $\Ql^{L}$ and the rational del Pezzo surface $Y$ of degree $6$ such that $Y_{L}$ is isomorphic to the blow-up of $\p^{2}_{L}$ in a point $q$ of degree $3$ with splitting field $E$ such that $\Gal(E/L) \simeq \Z/3\Z$.
\label{it:item_(4):Proposition_9_Sym3_2}
\end{enumerate}
\label{Prop:Proposition9_Sym3_2}
\end{proposition}

\begin{proof}
(\ref{it:item_(1):Proposition_9_Sym3_2}) There is only one $(-1)$-curve defined over $\k$ on $X_{\kb}$ namely $ E_{4} $. Its contraction yields a birational morphism $ \varepsilon : X \rightarrow Y $ onto a rational del Pezzo surface of degree $6$ as in Figure \ref{fig:Figure_Gal(k_barre/k)-action_on_Y_and_Q^L_in_Sym_3_2_case}, where the $\Gal(\kb/\k)$-action on the hexagon of $Y_{\kb}$ is indicated by arrows. On the other hand, the contraction of the irreducible curve denoted $D$ with geometric components $D_{34},D_{24},D_{14}$ yields a birational morphism $ \eta : X \rightarrow Z $ onto a rational del Pezzo surface of degree $8$. It contracts $D$ onto a point $p = \lbrace p_{1},p_{2},p_{3} \rbrace$ of degree $3$ and it conjugates the $ \Gal(\kb/\k) $-action on $X_{\kb}$ to an action that exchanges the fibrations of $ Z_{\kb} $ (see Figure \ref{fig:Figure_Gal(k_barre/k)-action_on_Y_and_Q^L_in_Sym_3_2_case}). Thus, $ Z \simeq \mathcal{Q}^{L} $ for some quadratic extension $ L/\k $ \cite[Lemma 3.2(1)]{sz21}. Then $ \eta $ conjugates the $ \Gal(\kb/L) $-action on $ \mathcal{Q}^{L}_{\kb} $ to an action on $ \Pi_{\kb} $ with $ \rho(\Gal(\kb/L)) = \mathbb{Z}/3\mathbb{Z} $. Let $ F/\k $ be any splitting field of $ p \in \mathcal{Q}^{L} $. Let us show that $\Gal(F/\k) \simeq \Sym_{3}$. Let $ M/\k $ be a minimal normal extension containing $F$ such that $ \Gal(\kb/M) $ acts trivially on $ \Pi_{\kb} $. The $ \Gal(\kb/\k) $-action on $ \Pi_{\kb} $ induces a short exact sequence $ 1 \rightarrow \Gal(\kb/M) \rightarrow \Gal(\kb/\k) \rightarrow \Sym_{3} \rightarrow 1 $, which implies that $ \Gal(M/\k) \simeq \Gal(\kb/\k) / \Gal(\kb/M) \simeq \text{Sym}_{3} $. Then $F$ is an intermediate field between $M$ and $\k$, and since $ F/\k $ is normal, we get $ \Sym_{3} \subseteq \Gal(F/\k) \subseteq \Gal(M/\k) \simeq \Sym_{3} $. This implies $ \Gal(F/\k) \simeq \Sym_{3} $. 
Then $ FL/L $ is a splitting field of $p$ over $L$ as well, $ FL/L $ is Galois and $ \Gal(FL/L) \simeq \Gal(F/F \cap L) $ (see \cite[Theorem 5.5]{mor96}). Since $ \rho(\Gal(\kb/L)) = \langle(123)\rangle = \mathbb{Z}/3\mathbb{Z} $, the action of $ \Gal(FL/L) $ on $ p = \lbrace p_{1},p_{2},p_{3} \rbrace  $ induces an exact sequence $ 1 \rightarrow H \rightarrow \Gal(FL/L) \rightarrow \mathbb{Z}/3\mathbb{Z} \rightarrow 1 $, and $H=\lbrace \id \rbrace $ using the same argument as in Proposition \ref{Prop:Proposition_essai_regroupement}(\ref{it:item_(1)_Proposition_regroupement}) (see \cite[Corollary 2.10]{mor96}). Then we get $\Gal(FL/L) \simeq \Z/3\Z$.
\begin{figure}[h]
\centering
\begin{tikzpicture}[x=0.6pt,y=0.6pt,yscale=-1,xscale=1, scale=0.8, every node/.style={scale=0.8}]

\draw   (171,170.1) -- (142.45,219.55) -- (85.35,219.55) -- (56.8,170.1) -- (85.35,120.65) -- (142.45,120.65) -- cycle ;
\draw    (51.6,179.2) -- (89.6,112.2) ;
\draw    (88.6,226.2) -- (52.6,163.2) ;
\draw    (152.6,219.2) -- (74.6,219.2) ;
\draw    (174.6,163.2) -- (137.6,227.2) ;
\draw    (137.6,111.2) -- (175.6,177.2) ;
\draw    (152.6,120.2) -- (74.6,120.2) ;
\draw  [color={rgb, 255:red, 248; green, 7; blue, 7 }  ,draw opacity=1 ][fill={rgb, 255:red, 248; green, 7; blue, 7 }  ,fill opacity=1 ] (109.2,170.1) .. controls (109.2,168.06) and (110.86,166.4) .. (112.9,166.4) .. controls (114.94,166.4) and (116.6,168.06) .. (116.6,170.1) .. controls (116.6,172.14) and (114.94,173.8) .. (112.9,173.8) .. controls (110.86,173.8) and (109.2,172.14) .. (109.2,170.1) -- cycle ;
\draw [color={rgb, 255:red, 3; green, 19; blue, 249 }  ,draw opacity=1 ]   (95.6,220.2) .. controls (91.68,199.62) and (83.92,178.08) .. (62.9,162.17) ;
\draw [shift={(61.6,161.2)}, rotate = 36.03] [color={rgb, 255:red, 3; green, 19; blue, 249 }  ,draw opacity=1 ][line width=0.75]    (10.93,-4.9) .. controls (6.95,-2.3) and (3.31,-0.67) .. (0,0) .. controls (3.31,0.67) and (6.95,2.3) .. (10.93,4.9)   ;
\draw [color={rgb, 255:red, 3; green, 19; blue, 249 }  ,draw opacity=1 ]   (62.6,180.8) .. controls (69.49,177.85) and (89.97,156.46) .. (97.28,122.37) ;
\draw [shift={(97.6,120.8)}, rotate = 101.31] [color={rgb, 255:red, 3; green, 19; blue, 249 }  ,draw opacity=1 ][line width=0.75]    (10.93,-4.9) .. controls (6.95,-2.3) and (3.31,-0.67) .. (0,0) .. controls (3.31,0.67) and (6.95,2.3) .. (10.93,4.9)   ;
\draw [color={rgb, 255:red, 3; green, 19; blue, 249 }  ,draw opacity=1 ]   (80.6,129.2) .. controls (94.04,136.88) and (130.52,137.19) .. (145.8,131.89) ;
\draw [shift={(147.6,131.2)}, rotate = 156.8] [color={rgb, 255:red, 3; green, 19; blue, 249 }  ,draw opacity=1 ][line width=0.75]    (10.93,-4.9) .. controls (6.95,-2.3) and (3.31,-0.67) .. (0,0) .. controls (3.31,0.67) and (6.95,2.3) .. (10.93,4.9)   ;
\draw [color={rgb, 255:red, 3; green, 19; blue, 249 }  ,draw opacity=1 ]   (130.6,120.4) .. controls (130.6,141.16) and (145.65,166.24) .. (163.9,177.4) ;
\draw [shift={(165.6,178.4)}, rotate = 209.16] [color={rgb, 255:red, 3; green, 19; blue, 249 }  ,draw opacity=1 ][line width=0.75]    (10.93,-4.9) .. controls (6.95,-2.3) and (3.31,-0.67) .. (0,0) .. controls (3.31,0.67) and (6.95,2.3) .. (10.93,4.9)   ;
\draw [color={rgb, 255:red, 3; green, 19; blue, 249 }  ,draw opacity=1 ]   (165.6,159) .. controls (146.4,171.48) and (132.73,201.47) .. (132.55,217.12) ;
\draw [shift={(132.6,219)}, rotate = 266.19] [color={rgb, 255:red, 3; green, 19; blue, 249 }  ,draw opacity=1 ][line width=0.75]    (10.93,-4.9) .. controls (6.95,-2.3) and (3.31,-0.67) .. (0,0) .. controls (3.31,0.67) and (6.95,2.3) .. (10.93,4.9)   ;
\draw [color={rgb, 255:red, 3; green, 19; blue, 249 }  ,draw opacity=1 ]   (147.6,210.4) .. controls (126.26,201.67) and (98.33,202.35) .. (80.25,208.79) ;
\draw [shift={(78.6,209.4)}, rotate = 338.75] [color={rgb, 255:red, 3; green, 19; blue, 249 }  ,draw opacity=1 ][line width=0.75]    (10.93,-4.9) .. controls (6.95,-2.3) and (3.31,-0.67) .. (0,0) .. controls (3.31,0.67) and (6.95,2.3) .. (10.93,4.9)   ;
\draw [color={rgb, 255:red, 208; green, 2; blue, 27 }  ,draw opacity=1 ]   (112.6,122.2) -- (112.6,168.2) -- (112.6,217.2) ;
\draw [shift={(112.6,219.2)}, rotate = 270] [color={rgb, 255:red, 208; green, 2; blue, 27 }  ,draw opacity=1 ][line width=0.75]    (10.93,-4.9) .. controls (6.95,-2.3) and (3.31,-0.67) .. (0,0) .. controls (3.31,0.67) and (6.95,2.3) .. (10.93,4.9)   ;
\draw [shift={(112.6,120.2)}, rotate = 90] [color={rgb, 255:red, 208; green, 2; blue, 27 }  ,draw opacity=1 ][line width=0.75]    (10.93,-4.9) .. controls (6.95,-2.3) and (3.31,-0.67) .. (0,0) .. controls (3.31,0.67) and (6.95,2.3) .. (10.93,4.9)   ;
\draw    (203.6,53) -- (165.41,91.58) ;
\draw [shift={(164,93)}, rotate = 314.71] [color={rgb, 255:red, 0; green, 0; blue, 0 }  ][line width=0.75]    (10.93,-3.29) .. controls (6.95,-1.4) and (3.31,-0.3) .. (0,0) .. controls (3.31,0.3) and (6.95,1.4) .. (10.93,3.29)   ;
\draw    (296.6,52) -- (333.24,91.53) ;
\draw [shift={(334.6,93)}, rotate = 227.17] [color={rgb, 255:red, 0; green, 0; blue, 0 }  ][line width=0.75]    (10.93,-3.29) .. controls (6.95,-1.4) and (3.31,-0.3) .. (0,0) .. controls (3.31,0.3) and (6.95,1.4) .. (10.93,3.29)   ;
\draw    (320.6,114) -- (320.6,226) ;
\draw    (369.6,114.2) -- (369.6,226.2) ;
\draw    (420.6,114.2) -- (420.6,226.2) ;
\draw    (312,121) -- (429.6,121) ;
\draw    (312,171) -- (429.6,171) ;
\draw    (313,220) -- (430.6,220) ;
\draw  [fill={rgb, 255:red, 0; green, 0; blue, 0 }  ,fill opacity=1 ] (316.2,219.9) .. controls (316.2,217.75) and (317.95,216) .. (320.1,216) .. controls (322.25,216) and (324,217.75) .. (324,219.9) .. controls (324,222.05) and (322.25,223.8) .. (320.1,223.8) .. controls (317.95,223.8) and (316.2,222.05) .. (316.2,219.9) -- cycle ;
\draw  [fill={rgb, 255:red, 0; green, 0; blue, 0 }  ,fill opacity=1 ] (366.2,171.3) .. controls (366.2,169.15) and (367.95,167.4) .. (370.1,167.4) .. controls (372.25,167.4) and (374,169.15) .. (374,171.3) .. controls (374,173.45) and (372.25,175.2) .. (370.1,175.2) .. controls (367.95,175.2) and (366.2,173.45) .. (366.2,171.3) -- cycle ;
\draw  [fill={rgb, 255:red, 0; green, 0; blue, 0 }  ,fill opacity=1 ] (417.2,121.3) .. controls (417.2,119.15) and (418.95,117.4) .. (421.1,117.4) .. controls (423.25,117.4) and (425,119.15) .. (425,121.3) .. controls (425,123.45) and (423.25,125.2) .. (421.1,125.2) .. controls (418.95,125.2) and (417.2,123.45) .. (417.2,121.3) -- cycle ;
\draw [color={rgb, 255:red, 248; green, 7; blue, 7 }  ,draw opacity=1 ]   (427.6,113.8) -- (315,226) ;
\draw [color={rgb, 255:red, 3; green, 19; blue, 249 }  ,draw opacity=1 ]   (321.1,220.9) .. controls (333.29,185.7) and (344.05,180.07) .. (365.53,172.86) ;
\draw [shift={(367.2,172.3)}, rotate = 161.64] [color={rgb, 255:red, 3; green, 19; blue, 249 }  ,draw opacity=1 ][line width=0.75]    (10.93,-4.9) .. controls (6.95,-2.3) and (3.31,-0.67) .. (0,0) .. controls (3.31,0.67) and (6.95,2.3) .. (10.93,4.9)   ;
\draw [color={rgb, 255:red, 3; green, 19; blue, 249 }  ,draw opacity=1 ]   (371.1,169.9) .. controls (383.29,134.7) and (394.05,129.07) .. (415.53,121.86) ;
\draw [shift={(417.2,121.3)}, rotate = 161.64] [color={rgb, 255:red, 3; green, 19; blue, 249 }  ,draw opacity=1 ][line width=0.75]    (10.93,-4.9) .. controls (6.95,-2.3) and (3.31,-0.67) .. (0,0) .. controls (3.31,0.67) and (6.95,2.3) .. (10.93,4.9)   ;
\draw [color={rgb, 255:red, 3; green, 19; blue, 249 }  ,draw opacity=1 ]   (421.1,125.2) .. controls (413.79,157.96) and (350.86,212.01) .. (325.85,219.42) ;
\draw [shift={(324,219.9)}, rotate = 347.81] [color={rgb, 255:red, 3; green, 19; blue, 249 }  ,draw opacity=1 ][line width=0.75]    (10.93,-4.9) .. controls (6.95,-2.3) and (3.31,-0.67) .. (0,0) .. controls (3.31,0.67) and (6.95,2.3) .. (10.93,4.9)   ;
\draw [color={rgb, 255:red, 208; green, 2; blue, 27 }  ,draw opacity=1 ]   (322.59,188.4) .. controls (346.88,184.12) and (353.02,204.44) .. (350.02,219.01) ;
\draw [shift={(349.6,220.8)}, rotate = 284.93] [color={rgb, 255:red, 208; green, 2; blue, 27 }  ,draw opacity=1 ][line width=0.75]    (10.93,-4.9) .. controls (6.95,-2.3) and (3.31,-0.67) .. (0,0) .. controls (3.31,0.67) and (6.95,2.3) .. (10.93,4.9)   ;
\draw [shift={(320.6,188.8)}, rotate = 347.47] [color={rgb, 255:red, 208; green, 2; blue, 27 }  ,draw opacity=1 ][line width=0.75]    (10.93,-4.9) .. controls (6.95,-2.3) and (3.31,-0.67) .. (0,0) .. controls (3.31,0.67) and (6.95,2.3) .. (10.93,4.9)   ;
\draw [color={rgb, 255:red, 208; green, 2; blue, 27 }  ,draw opacity=1 ]   (348.17,123) .. controls (345.28,138.08) and (351.83,145.35) .. (367.82,143.08) ;
\draw [shift={(369.6,142.8)}, rotate = 169.92] [color={rgb, 255:red, 208; green, 2; blue, 27 }  ,draw opacity=1 ][line width=0.75]    (10.93,-4.9) .. controls (6.95,-2.3) and (3.31,-0.67) .. (0,0) .. controls (3.31,0.67) and (6.95,2.3) .. (10.93,4.9)   ;
\draw [shift={(348.6,121)}, rotate = 103.24] [color={rgb, 255:red, 208; green, 2; blue, 27 }  ,draw opacity=1 ][line width=0.75]    (10.93,-4.9) .. controls (6.95,-2.3) and (3.31,-0.67) .. (0,0) .. controls (3.31,0.67) and (6.95,2.3) .. (10.93,4.9)   ;
\draw  [dash pattern={on 4.5pt off 4.5pt}]  (193,165) -- (299.6,165.79) ;
\draw [shift={(301.6,165.8)}, rotate = 180.42] [color={rgb, 255:red, 0; green, 0; blue, 0 }  ][line width=0.75]    (10.93,-3.29) .. controls (6.95,-1.4) and (3.31,-0.3) .. (0,0) .. controls (3.31,0.3) and (6.95,1.4) .. (10.93,3.29)   ;
\draw [color={rgb, 255:red, 208; green, 2; blue, 27 }  ,draw opacity=1 ]   (72.34,147.22) .. controls (93.34,166.35) and (84.99,185.93) .. (75.62,199.54) ;
\draw [shift={(74.6,201)}, rotate = 305.54] [color={rgb, 255:red, 208; green, 2; blue, 27 }  ,draw opacity=1 ][line width=0.75]    (10.93,-4.9) .. controls (6.95,-2.3) and (3.31,-0.67) .. (0,0) .. controls (3.31,0.67) and (6.95,2.3) .. (10.93,4.9)   ;
\draw [shift={(70.6,145.7)}, rotate = 40.23] [color={rgb, 255:red, 208; green, 2; blue, 27 }  ,draw opacity=1 ][line width=0.75]    (10.93,-4.9) .. controls (6.95,-2.3) and (3.31,-0.67) .. (0,0) .. controls (3.31,0.67) and (6.95,2.3) .. (10.93,4.9)   ;
\draw [color={rgb, 255:red, 208; green, 2; blue, 27 }  ,draw opacity=1 ]   (151.31,140.92) .. controls (138.46,160.83) and (144.03,180.21) .. (154.9,193.75) ;
\draw [shift={(156.1,195.2)}, rotate = 229.4] [color={rgb, 255:red, 208; green, 2; blue, 27 }  ,draw opacity=1 ][line width=0.75]    (10.93,-4.9) .. controls (6.95,-2.3) and (3.31,-0.67) .. (0,0) .. controls (3.31,0.67) and (6.95,2.3) .. (10.93,4.9)   ;
\draw [shift={(152.6,139)}, rotate = 125.15] [color={rgb, 255:red, 208; green, 2; blue, 27 }  ,draw opacity=1 ][line width=0.75]    (10.93,-4.9) .. controls (6.95,-2.3) and (3.31,-0.67) .. (0,0) .. controls (3.31,0.67) and (6.95,2.3) .. (10.93,4.9)   ;
\draw [color={rgb, 255:red, 208; green, 2; blue, 27 }  ,draw opacity=1 ]   (374.34,174.66) .. controls (414.49,184.4) and (431.73,151.76) .. (424.48,123.05) ;
\draw [shift={(424,121.3)}, rotate = 73.85] [color={rgb, 255:red, 208; green, 2; blue, 27 }  ,draw opacity=1 ][line width=0.75]    (10.93,-4.9) .. controls (6.95,-2.3) and (3.31,-0.67) .. (0,0) .. controls (3.31,0.67) and (6.95,2.3) .. (10.93,4.9)   ;
\draw [shift={(371.8,174)}, rotate = 15.66] [color={rgb, 255:red, 208; green, 2; blue, 27 }  ,draw opacity=1 ][line width=0.75]    (10.93,-4.9) .. controls (6.95,-2.3) and (3.31,-0.67) .. (0,0) .. controls (3.31,0.67) and (6.95,2.3) .. (10.93,4.9)   ;
\draw [color={rgb, 255:red, 208; green, 2; blue, 27 }  ,draw opacity=1 ]   (398.17,173) .. controls (395.28,188.08) and (401.83,195.35) .. (417.82,193.08) ;
\draw [shift={(419.6,192.8)}, rotate = 169.92] [color={rgb, 255:red, 208; green, 2; blue, 27 }  ,draw opacity=1 ][line width=0.75]    (10.93,-4.9) .. controls (6.95,-2.3) and (3.31,-0.67) .. (0,0) .. controls (3.31,0.67) and (6.95,2.3) .. (10.93,4.9)   ;
\draw [shift={(398.6,171)}, rotate = 103.24] [color={rgb, 255:red, 208; green, 2; blue, 27 }  ,draw opacity=1 ][line width=0.75]    (10.93,-4.9) .. controls (6.95,-2.3) and (3.31,-0.67) .. (0,0) .. controls (3.31,0.67) and (6.95,2.3) .. (10.93,4.9)   ;

\draw (44,96.4) node [anchor=north west][inner sep=0.75pt]    {$Y$};
\draw (242,15.4) node [anchor=north west][inner sep=0.75pt]  [font=\Large]  {$X$};
\draw (429,93.4) node [anchor=north west][inner sep=0.75pt]    {$Z\simeq Q^{L}$};
\draw (172,52.4) node [anchor=north west][inner sep=0.75pt]    {$\varepsilon $};
\draw (316,52.4) node [anchor=north west][inner sep=0.75pt]  [font=\small]  {$\eta $};
\draw (225,141.4) node [anchor=north west][inner sep=0.75pt]    {$\eta \circ \varepsilon ^{-1}$};
\end{tikzpicture}
\caption{The $ \Gal(\kb/\k) $-action on $ Y_{\kb} $ and $ Z_{\kb} \simeq \mathcal{Q}^{L}_{\kb} $.}
\label{fig:Figure_Gal(k_barre/k)-action_on_Y_and_Q^L_in_Sym_3_2_case}
\end{figure}

(\ref{it:item_(2):Proposition_9_Sym3_2}) Two del Pezzo surfaces $ \mathcal{Q}^{L} $ and $ \mathcal{Q}^{L'} $ are isomorphic if and only if the corresponding quadratic extensions $L$ and $L'$ are $\k$-isomorphic \cite[Lemma 3.2(3)]{sz21}. Moreover, $ \Aut_{\k}(\mathcal{Q}^{L}) $ acts transitively on the set of points of degree $3$ with $\k$-isomorphic splitting fields and whose geometric components are in general position in $\Ql^{L}_{\kb}$ by Lemma \ref{lem:Lemma_transitive_action_Aut_k(Q^L)_points_of_degree_3}. This yields the claim.

(\ref{it:item_(3):Proposition_9_Sym3_2}\&\ref{it:item_(4):Proposition_9_Sym3_2}) By Lemma \ref{lem:Lemma_faithful_action_Aut_k(X)_on_Pi_L_rho}, the action of $ \Aut_{\k}(X) $ on the set of $ (-1) $-curves of $X$ gives an isomorphism $ \Psi : \Aut_{\k}(X) \overset{\sim}{\longrightarrow} \Aut(\Pi_{\kb,\rho}) $. As before, we have $\Aut(\Pi_{\kb,\rho}) \simeq C_{\Sym_{5}}(\langle (123),(12)(45) \rangle) = \langle (45) \rangle$, and we now construct the corresponding geometric action.
From part (\ref{it:item_(1):Proposition_9_Sym3_2}) we know that $Y_{L}$ is isomorphic to the blow-up $ \pi : Y_{L} \rightarrow \p^{2}_{L} $ of a point $q=\lbrace q_{1},q_{2},q_{3} \rbrace$ of degree $3$, so that $X_{L}$ is isomorphic to the del Pezzo surface of degree $5$ from Proposition \ref{Prop:Proposition_3_Z/3Z}. From the proof of Proposition \ref{Prop:Proposition_7_Z/6Z}(\ref{it:item_(3)_Proposition_7_Z/6Z}) there exists $ \Phi_{q} \in \Aut_{\k}(Y) $ that induces a rotation of order two on the hexagon of $ Y $ and that lifts by $ \varepsilon $ to an automorphism $ \widehat{\Phi_{q}} $ of $X$ defined over $ \k $ that acts exactly as $(45)$ on $ \Pi_{\kb} $. In conclusion, $ \Aut_{\k}(X)=\langle \widehat{\Phi_{q}} \rangle \simeq \mathbb{Z}/2\mathbb{Z} $. To finish, note that from (\ref{it:item_(1):Proposition_9_Sym3_2}) we have a contraction $\eta : X \rightarrow \Ql^{L}$ onto a point of degree $3$, so thanks to Lemma \ref{lem:Lemma_transitive_action_Aut_k(Q^L)_points_of_degree_3}, we get $\rk \, \NS(X)^{\Aut_{\k}(X)} = 2$. Now, note that the curve $E_{4}$ is stabilized by $\Aut_{\k}(X)$. It can then be contracted equivariantly onto the rational del Pezzo surface $Y$ of degree $6$ in Figure \ref{fig:Figure_Gal(k_barre/k)-action_on_Y_and_Q^L_in_Sym_3_2_case} with $\rk \, \NS(Y) = 1$, which gives the last assertion.
\end{proof}

\begin{example}
We construct a del Pezzo surface of degree $5$ as in Proposition \ref{Prop:Proposition9_Sym3_2}: let $ \k := \mathbb{Q} $, let $ \zeta := \sqrt[3]{2} $ and $ \omega := e^{\frac{2 \pi i}{3}} $ as in Example \ref{ex:example_Sym3_case_1}. Let $ L := \k(\omega) $. It is a Galois extension of $ \k $ of degree $2$ and $ \Gal(L/\k) = \langle \tau : \omega \mapsto \omega^{2} \rangle \simeq \mathbb{Z}/2\mathbb{Z} $. Then $ F := \k(\zeta,\omega) $ is a Galois extension of $\k$ of degree $6$ and $ \Gal(F/\k) \simeq \Sym_{3} $ is the group of $ \k $-isomorphisms of $F$ composed with$$ (\zeta,\omega) \overset{\sigma_{1}}{\mapsto} (\zeta,\omega), (\zeta,\omega) \overset{\sigma_{2}}{\mapsto} (\omega \zeta,\omega), (\zeta,\omega) \overset{\sigma_{3}}{\mapsto} (\zeta,\omega^{2}), (\zeta,\omega) \overset{\sigma_{4}}{\mapsto} (\omega \zeta,\omega^{2}), (\zeta,\omega) \overset{\sigma_{5}}{\mapsto} (\omega^{2} \zeta,\omega), (\zeta,\omega) \overset{\sigma_{6}}{\mapsto} (\omega^{2} \zeta,\omega^{2}). $$The point$$ p = \displaystyle\lbrace \big([\zeta-\omega\zeta^{2}:1],[\zeta-\omega^{2}\zeta^{2}:1]\big) ;\big([\omega\zeta-\zeta^{2}:1],[\omega\zeta-\omega\zeta^{2}:1]\big) ; \big([\omega^{2}\zeta-\omega^{2}\zeta^{2}:1],[\omega^{2}\zeta-\zeta^{2}:1]\big) \rbrace $$is of degree $3$, its geometric components are contained in pairwise distinct rulings of $ \mathcal{Q}^{L}_{L} $ and any non-trivial element of $ \Gal(F/\k) $ permutes them non-trivially. Note that $ F := \k(\zeta,\omega) $ is the splitting field of $ X^3-2 $ over $ L := \k(\omega) $. Then $ FL/L = F/L $ is a Galois extension of degree $3$, $ \Gal(FL/L) = \Gal(F/L) \simeq \mathbb{Z}/3\mathbb{Z} $ and it permutes cyclically the geometric components of $p$. 
\end{example}

\subsection{Del Pezzo surfaces in Figures (\ref{fig:figure(n)_option_Gal(kbarre/k)-action_on_Pikbarre}), (\ref{fig:figure(o)_option_Gal(kbarre/k)-action_on_Pikbarre}), (\ref{fig:figure(p)_option_Gal(kbarre/k)-action_on_Pikbarre}), (\ref{fig:figure(q)_option_Gal(kbarre/k)-action_on_Pikbarre}), (\ref{fig:figure(r)_option_Gal(kbarre/k)-action_on_Pikbarre_S5})}
\label{subsec:subsection_4}

In this section, we let $X$ be one of the following del Pezzo surfaces of degree $5$:
\begin{enumerate}
\item $\rho(\Gal(\kb/\k))=\langle(12345)\rangle \simeq \Z/5\Z$ in $\Sym_{5}$ as indicated in Figure \ref{fig:figure(n)_option_Gal(kbarre/k)-action_on_Pikbarre},\label{it:item_(a)_cases_minimal_dP5}
\item $\rho(\Gal(\kb/\k))=\langle(12345),(25)(34)\rangle \simeq \text{D}_{5}$ in $\Sym_{5}$ as indicated in Figure \ref{fig:figure(o)_option_Gal(kbarre/k)-action_on_Pikbarre},\label{it:item_(b)_cases_minimal_dP5}
\item $\rho(\Gal(\kb/\k))=\langle(12345),(2354)\rangle \simeq \text{GA}(1,5)$ in $\Sym_{5}$ as indicated in Figure \ref{fig:figure(p)_option_Gal(kbarre/k)-action_on_Pikbarre},\label{it:item_(c)_cases_minimal_dP5}
\item $\rho(\Gal(\kb/\k))=\langle(12345),(123)\rangle \simeq \mathcal{A}_{5}$ in $\Sym_{5}$ as indicated in Figure \ref{fig:figure(q)_option_Gal(kbarre/k)-action_on_Pikbarre},\label{it:item_(d)_cases_minimal_dP5}
\item $\rho(\Gal(\kb/\k))=\langle(12345),(12)\rangle \simeq \Sym_{5}$ as indicated in Figure \ref{fig:figure(r)_option_Gal(kbarre/k)-action_on_Pikbarre_S5}.\label{it:item_(e)_cases_minimal_dP5}
\end{enumerate}

In each case, either the Petersen diagram of $X_{\kb}$ contains two orbits of $(-1)$-curves of size five in which at least two of them intersect (see Figures \ref{fig:figure(n)_option_Gal(kbarre/k)-action_on_Pikbarre}, \ref{fig:figure(o)_option_Gal(kbarre/k)-action_on_Pikbarre}), or it is made up of a single orbit of $(-1)$-curves (see Figures \ref{fig:figure(p)_option_Gal(kbarre/k)-action_on_Pikbarre}, \ref{fig:figure(q)_option_Gal(kbarre/k)-action_on_Pikbarre}, \ref{fig:figure(r)_option_Gal(kbarre/k)-action_on_Pikbarre_S5}). It means that no curve can be birationally contracted over $\k$. On the other hand, there are five conic bundle structures on $X_{\kb}$ that have exactly three singular fibres, meaning that there is no conic bundle structure on $X$.
Thus $X \longrightarrow \ast$ is a Mori fibre space. Since $X$ is of degree $5$, recall that $X$ contains at least one $\k$-rational point $r \in X(\k)$ and is in particular rational (see Proposition \ref{Prop:Proposition_rationality_of_dP_5_over_a_field}). The configuration of $(-1)$-curves on $\Pi_{\kb}$ in each case implies that $r$ does not lie on any of them. Let $\eta_{1} : Y \longrightarrow X $ be the blow-up of $r$ and $E_{r}$ its associated exceptional divisor. Then $Y$ is a rational del Pezzo surface of degree $4$ and $Y_{\kb}$ contains an orbit of five disjoint $(-1)$-curves denoted $\lbrace D_{1r},D_{2r},D_{3r},D_{4r},C \rbrace$. More precisely, denoting $\pi : X_{\kb} \rightarrow \p^{2}_{\kb}$ the blow-up of the points $p_{1},p_{2},p_{3},p_{4}$ described in §\ref{subsec:subsection_0}, $D_{ir}$ is the strict transform of the line in $\p^{2}_{\kb}$ through $\pi(r)$ and $p_{i}$, and $C$ is the strict transform of the conic through $p_{1},\dots,p_{4},\pi(r)$. The contraction of the curve $\mathcal{C} := D_{1r} \cup D_{2r} \cup D_{3r} \cup D_{4r}\cup C$ yields a birational morphism $\eta_{2} : Y \longrightarrow \p^{2} $ onto a point $ q=\lbrace q_{1},q_{2},q_{3},q_{4},q_{5}\rbrace $ of degree $5$ whose geometric components are in general position in $\p^{2}_{\kb}$, and the image of $E_{r}$ by $\eta_{2}$ is a conic through $q$.
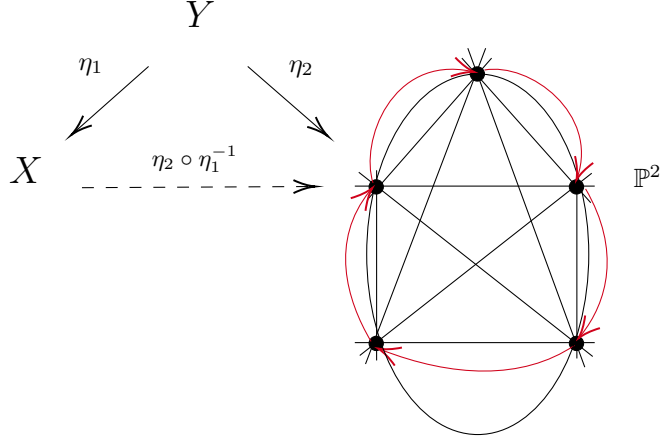
\begin{figure}[h]
\centering
\begin{tikzpicture}[x=0.6pt,y=0.6pt,yscale=-1,xscale=1, scale=0.8, every node/.style={scale=0.8}]

\draw    (260.6,143.2) -- (260.6,239.2) ;
\draw    (361.6,144.2) -- (361.6,238.2) ;
\draw    (249.6,151.2) -- (370.6,151.2) ;
\draw    (249.6,230.2) -- (370.6,230.2) ;
\draw  [fill={rgb, 255:red, 0; green, 0; blue, 0 }  ,fill opacity=1 ] (256.8,151.6) .. controls (256.8,149.61) and (258.41,148) .. (260.4,148) .. controls (262.39,148) and (264,149.61) .. (264,151.6) .. controls (264,153.59) and (262.39,155.2) .. (260.4,155.2) .. controls (258.41,155.2) and (256.8,153.59) .. (256.8,151.6) -- cycle ;
\draw  [fill={rgb, 255:red, 0; green, 0; blue, 0 }  ,fill opacity=1 ] (357.8,151.6) .. controls (357.8,149.61) and (359.41,148) .. (361.4,148) .. controls (363.39,148) and (365,149.61) .. (365,151.6) .. controls (365,153.59) and (363.39,155.2) .. (361.4,155.2) .. controls (359.41,155.2) and (357.8,153.59) .. (357.8,151.6) -- cycle ;
\draw  [fill={rgb, 255:red, 0; green, 0; blue, 0 }  ,fill opacity=1 ] (256.8,230.6) .. controls (256.8,228.61) and (258.41,227) .. (260.4,227) .. controls (262.39,227) and (264,228.61) .. (264,230.6) .. controls (264,232.59) and (262.39,234.2) .. (260.4,234.2) .. controls (258.41,234.2) and (256.8,232.59) .. (256.8,230.6) -- cycle ;
\draw  [fill={rgb, 255:red, 0; green, 0; blue, 0 }  ,fill opacity=1 ] (357.8,230.6) .. controls (357.8,228.61) and (359.41,227) .. (361.4,227) .. controls (363.39,227) and (365,228.61) .. (365,230.6) .. controls (365,232.59) and (363.39,234.2) .. (361.4,234.2) .. controls (359.41,234.2) and (357.8,232.59) .. (357.8,230.6) -- cycle ;
\draw    (251.6,144.2) -- (369.6,237.2) ;
\draw    (369.1,144.7) -- (252.1,236.7) ;
\draw    (195.6,90.8) -- (234.49,124.69) ;
\draw [shift={(236,126)}, rotate = 221.07] [color={rgb, 255:red, 0; green, 0; blue, 0 }  ][line width=0.75]    (10.93,-3.29) .. controls (6.95,-1.4) and (3.31,-0.3) .. (0,0) .. controls (3.31,0.3) and (6.95,1.4) .. (10.93,3.29)   ;
\draw [color={rgb, 255:red, 208; green, 2; blue, 27 }  ,draw opacity=1 ]   (366,152.6) .. controls (384.04,183.64) and (377.06,209.79) .. (363.66,226.48) ;
\draw [shift={(362.4,228)}, rotate = 310.54] [color={rgb, 255:red, 208; green, 2; blue, 27 }  ,draw opacity=1 ][line width=0.75]    (10.93,-4.9) .. controls (6.95,-2.3) and (3.31,-0.67) .. (0,0) .. controls (3.31,0.67) and (6.95,2.3) .. (10.93,4.9)   ;
\draw [color={rgb, 255:red, 208; green, 2; blue, 27 }  ,draw opacity=1 ]   (360.4,231.6) .. controls (334,249.33) and (298.68,247.66) .. (263.03,233.84) ;
\draw [shift={(261.4,233.2)}, rotate = 21.69] [color={rgb, 255:red, 208; green, 2; blue, 27 }  ,draw opacity=1 ][line width=0.75]    (10.93,-4.9) .. controls (6.95,-2.3) and (3.31,-0.67) .. (0,0) .. controls (3.31,0.67) and (6.95,2.3) .. (10.93,4.9)   ;
\draw  [fill={rgb, 255:red, 0; green, 0; blue, 0 }  ,fill opacity=1 ] (307.8,94.6) .. controls (307.8,92.61) and (309.41,91) .. (311.4,91) .. controls (313.39,91) and (315,92.61) .. (315,94.6) .. controls (315,96.59) and (313.39,98.2) .. (311.4,98.2) .. controls (309.41,98.2) and (307.8,96.59) .. (307.8,94.6) -- cycle ;
\draw    (256,158) -- (318.6,86.6) ;
\draw    (303.6,86.6) -- (368.6,159.6) ;
\draw    (315.6,81.6) -- (255,241) ;
\draw    (306.6,81.6) -- (365.6,242.6) ;
\draw  [color={rgb, 255:red, 0; green, 0; blue, 0 }  ,draw opacity=1 ] (255.6,185.6) .. controls (255.6,135.34) and (280.67,94.6) .. (311.6,94.6) .. controls (342.53,94.6) and (367.6,135.34) .. (367.6,185.6) .. controls (367.6,235.86) and (342.53,276.6) .. (311.6,276.6) .. controls (280.67,276.6) and (255.6,235.86) .. (255.6,185.6) -- cycle ;
\draw [color={rgb, 255:red, 208; green, 2; blue, 27 }  ,draw opacity=1 ]   (258.8,149.6) .. controls (252.1,128.72) and (268.11,85.13) .. (307.96,93.2) ;
\draw [shift={(309.8,93.6)}, rotate = 193.38] [color={rgb, 255:red, 208; green, 2; blue, 27 }  ,draw opacity=1 ][line width=0.75]    (10.93,-4.9) .. controls (6.95,-2.3) and (3.31,-0.67) .. (0,0) .. controls (3.31,0.67) and (6.95,2.3) .. (10.93,4.9)   ;
\draw    (145.6,90.8) -- (108.1,124.07) ;
\draw [shift={(106.6,125.4)}, rotate = 318.42] [color={rgb, 255:red, 0; green, 0; blue, 0 }  ][line width=0.75]    (10.93,-3.29) .. controls (6.95,-1.4) and (3.31,-0.3) .. (0,0) .. controls (3.31,0.3) and (6.95,1.4) .. (10.93,3.29)   ;
\draw  [dash pattern={on 4.5pt off 4.5pt}]  (112,152) -- (227.6,151.41) ;
\draw [shift={(229.6,151.4)}, rotate = 179.71] [color={rgb, 255:red, 0; green, 0; blue, 0 }  ][line width=0.75]    (10.93,-3.29) .. controls (6.95,-1.4) and (3.31,-0.3) .. (0,0) .. controls (3.31,0.3) and (6.95,1.4) .. (10.93,3.29)   ;
\draw [color={rgb, 255:red, 208; green, 2; blue, 27 }  ,draw opacity=1 ]   (315,92.6) .. controls (338.25,88.66) and (372.55,114.8) .. (361.92,146.55) ;
\draw [shift={(361.4,148)}, rotate = 290.63] [color={rgb, 255:red, 208; green, 2; blue, 27 }  ,draw opacity=1 ][line width=0.75]    (10.93,-4.9) .. controls (6.95,-2.3) and (3.31,-0.67) .. (0,0) .. controls (3.31,0.67) and (6.95,2.3) .. (10.93,4.9)   ;
\draw [color={rgb, 255:red, 208; green, 2; blue, 27 }  ,draw opacity=1 ]   (257.8,229.6) .. controls (245.26,208.43) and (236.94,178.81) .. (257.13,153.73) ;
\draw [shift={(258.4,152.2)}, rotate = 130.64] [color={rgb, 255:red, 208; green, 2; blue, 27 }  ,draw opacity=1 ][line width=0.75]    (10.93,-4.9) .. controls (6.95,-2.3) and (3.31,-0.67) .. (0,0) .. controls (3.31,0.67) and (6.95,2.3) .. (10.93,4.9)   ;

\draw (389,139.4) node [anchor=north west][inner sep=0.75pt]  [font=\normalsize]  {$\mathbb{P}^{2}$};
\draw (163,56.4) node [anchor=north west][inner sep=0.75pt]  [font=\Large]  {$Y$};
\draw (215,86.4) node [anchor=north west][inner sep=0.75pt]    {$\eta _{2}$};
\draw (74,135.4) node [anchor=north west][inner sep=0.75pt]  [font=\Large]  {$X$};
\draw (109,85.4) node [anchor=north west][inner sep=0.75pt]    {$\eta _{1}$};
\draw (146,129.4) node [anchor=north west][inner sep=0.75pt]  [font=\small]  {$\eta _{2} \circ \eta _{1}^{-1}$};
\end{tikzpicture}
\caption[]{Blow-up model for a del Pezzo surface as in case \ref{it:item_(a)_cases_minimal_dP5}, Figure (\ref{fig:figure(n)_option_Gal(kbarre/k)-action_on_Pikbarre}).}
\label{Fig:Figure_Blow-up_model_Z/5Z_case}
\end{figure}
Figure \ref{Fig:Figure_Blow-up_model_Z/5Z_case} shows the induced action of $\rho(\Gal(\kb/\k))$ on the image by $\eta_{2}\circ\eta_{1}^{-1}$ of $\Pi_{\kb}$ in case \ref{it:item_(a)_cases_minimal_dP5}. Let $F/\k$ be any splitting field of $q$. The action of $\Gal(\kb/\k)$ factors through the subgroups described respectively in \ref{it:item_(a)_cases_minimal_dP5}, \ref{it:item_(b)_cases_minimal_dP5}, \ref{it:item_(c)_cases_minimal_dP5}, \ref{it:item_(d)_cases_minimal_dP5}, \ref{it:item_(e)_cases_minimal_dP5}, and one can show from the induced action of $\Gal(F/\k)$ on $\lbrace q_{1},q_{2},q_{3},q_{4},q_{5} \rbrace$ that the Galois group $\Gal(F/\k)$ is the one given below in \ref{it:item_(i)_Gal(F/k)_cases_minimal_dP5}, \ref{it:item_(ii)_Gal(F/k)_cases_minimal_dP5}, \ref{it:item_(iii)_Gal(F/k)_cases_minimal_dP5}, \ref{it:item_(iv)_Gal(F/k)_cases_minimal_dP5}, \ref{it:item_(v)_Gal(F/k)_cases_minimal_dP5}, respectively.
\begin{enumerate}
\item\label{it:item_(i)_Gal(F/k)_cases_minimal_dP5} $\Gal(F/\k) \simeq \Z/5Z$,
\item $\Gal(F/\k) \simeq \D_{5}$,\label{it:item_(ii)_Gal(F/k)_cases_minimal_dP5}
\item $\Gal(F/\k) \simeq \text{GA}(1,5)$,\label{it:item_(iii)_Gal(F/k)_cases_minimal_dP5}
\item $\Gal(F/\k) \simeq \mathcal{A}_{5}$,\label{it:item_(iv)_Gal(F/k)_cases_minimal_dP5}
\item $\Gal(F/\k) \simeq \Sym_{5}$.\label{it:item_(v)_Gal(F/k)_cases_minimal_dP5}
\end{enumerate} 
Conversely, blowing up $\p^{2}$ in a point $q=\lbrace q_{1},q_{2},q_{3},q_{4},q_{5} \rbrace$ of degree $5$ with splitting field $F$ such that $\Gal(F/\k)$ is isomorphic to one of the groups described in \ref{it:item_(i)_Gal(F/k)_cases_minimal_dP5}, \ref{it:item_(ii)_Gal(F/k)_cases_minimal_dP5}, \ref{it:item_(iii)_Gal(F/k)_cases_minimal_dP5}, \ref{it:item_(iv)_Gal(F/k)_cases_minimal_dP5}, \ref{it:item_(v)_Gal(F/k)_cases_minimal_dP5}, and then blowing down the strict transform of the conic passing through $q$ gives a del Pezzo surface of degree $5$ as in \ref{it:item_(a)_cases_minimal_dP5}, \ref{it:item_(b)_cases_minimal_dP5}, \ref{it:item_(c)_cases_minimal_dP5}, \ref{it:item_(d)_cases_minimal_dP5}, \ref{it:item_(e)_cases_minimal_dP5}, respectively.\\

For each given $\Gal(\kb/\k)$-action on the Petersen graph $\Pi_{\kb}$, we will describe the automorphism group of $X$ in Proposition \ref{Prop:Proposition_final_cases} below. To prove point (\ref{it:item_(1)_Proposition_final_cases}) in \ref{Prop:Proposition_final_cases}, we will need the following lemma.

\begin{lemma}
Let $X$ be a del Pezzo surface of degree $5$. Suppose that $\Aut_{\k}(X)=\langle \alpha \rangle \simeq \Z/5\Z $. Then over $\kb$, $\alpha$ is the lift of a birational quadratic transformation of $\p^{2}_{\kb}$ of order $5$, and up to conjugation, it is the lift of the order five Cremona transformation defined by $ [x:y:z] \dashmapsto [x(z-y):z(x-y):xz] $.
\label{lem:Lemma_Autk(X)_Z/5Z_case}
\end{lemma}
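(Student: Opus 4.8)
The plan is to work over $\kb$, where $\alpha$ becomes an order-$5$ element of $\Aut(X_{\kb}) \simeq \Sym_5$, and then to exhibit one explicit model on which $\alpha$ is the advertised Cremona transformation; the conjugacy of all $5$-cycles in $\Sym_5$ will promote that single example to the general statement. First, since $\Aut_{\k}(X) = \langle\alpha\rangle \simeq \Z/5\Z$ we view $\alpha$ as an automorphism of $X_{\kb}$ of order $5$. Recall that $\Theta_{\kb} : \Aut(X_{\kb}) \hookrightarrow \Aut(\Pi_{\kb}) \simeq \Sym_5$ is injective and surjective, so $\Theta_{\kb}(\alpha)$ is an order-$5$ element of $\Sym_5$, i.e.\ a $5$-cycle; in particular $\alpha$ permutes the ten $(-1)$-curves of $X_{\kb}$ in two orbits of length $5$, which is the situation of Figure~(\ref{fig:figure(n)_option_Gal(kbarre/k)-action_on_Pikbarre}).

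Next I would set up the explicit model. Let $a_1 = [1:0:0]$, $a_2 = [0:1:0]$, $a_3 = [0:0:1]$, $a_4 = [1:1:1]$, four points of $\p^2_{\kb}$ in general position, and let $\pi' : X_{\kb}' \to \p^2_{\kb}$ be their blow-up, a del Pezzo surface of degree $5$. A direct computation should show that $\phi_0 : [x:y:z] \dashmapsto [x(z-y):z(x-y):xz]$ has base locus exactly $\{a_1,a_2,a_3\}$ (three proper points in general position), contracts the three coordinate lines, and satisfies $\phi_0(a_4) = a_3$; consequently $\phi_0$ extends to an isomorphism $\mathrm{Bl}_{a_1,\dots,a_4}\p^2_{\kb} \to \mathrm{Bl}_{a_1,\dots,a_4}\p^2_{\kb}$, i.e.\ to an automorphism $\alpha_0$ of $X_{\kb}'$. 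Tracking the images of the ten $(-1)$-curves under $\alpha_0$ one checks that $\alpha_0$ acts on them as a $5$-cycle, so $\alpha_0$ has order $5$ and $\phi_0^5 = \id$. Thus $\phi_0$ is a quadratic birational transformation of $\p^2_{\kb}$ of order $5$ (a ``standard'' quadratic transformation, with three proper base points), and $\alpha_0 = \widehat{\phi_0}$.

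For the conjugacy statement, fix any isomorphism $g_0 : X_{\kb} \xrightarrow{\sim} X_{\kb}'$ (degree-$5$ del Pezzo surfaces over $\kb$ are all isomorphic). Then $g_0 \alpha g_0^{-1}$ is an order-$5$ element of $\Aut(X_{\kb}') \simeq \Sym_5$, hence conjugate to $\alpha_0$ since all $5$-cycles are conjugate in $\Sym_5$; replacing $g_0$ by $h g_0$ for a suitable $h \in \Aut(X_{\kb}')$ we may assume $g \alpha g^{-1} = \alpha_0$ with $g := h g_0$. Then $\pi := \pi' \circ g : X_{\kb} \to \p^2_{\kb}$ is a birational morphism contracting four disjoint $(-1)$-curves, and $\pi \alpha \pi^{-1} = \pi' \alpha_0 \pi'^{-1} = \phi_0$. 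Hence over $\kb$ the automorphism $\alpha$ is the lift of $\phi_0$, and for any other such blow-down $\tilde\pi$ the map $\tilde\pi \alpha \tilde\pi^{-1}$ is conjugate to $\phi_0$ in $\Bir_{\kb}(\p^2_{\kb})$, which is the ``up to conjugation'' assertion. One may alternatively re-prove the ``quadratic of order $5$'' part without the explicit model: $\alpha^*$ permutes the five classes of birational morphisms $X_{\kb} \to \p^2_{\kb}$, any two of which have intersection $2$ in $\Pic(X_{\kb})$, and it fixes none of them (otherwise $\alpha$ would descend to a linear automorphism of $\p^2$ fixing four points in general position), so for every blow-down $\pi$ the class identity $2\pi^*H = \alpha^*(\pi^*H) + \sum m_i E_i$ forces $\deg(\pi\alpha\pi^{-1}) = 2$ with multiplicities $(1,1,1,0)$.

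The only genuinely computational step is the middle one: verifying that the stated formula for $\phi_0$ does extend to an automorphism of the blow-up of those four specific points and that it acts on the $(-1)$-curves as a $5$-cycle — equivalently that $\phi_0^5 = \id$ with the claimed base-point configuration. I expect this to be the main obstacle, but it is a finite check; once this concrete realisation is available, the rest is formal, the key leverage being that a single example of an order-$5$ automorphism determines all of them up to conjugacy in $\Aut(X_{\kb}) \simeq \Sym_5$.
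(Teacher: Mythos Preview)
Your proof is correct and takes a genuinely different route from the paper's. The paper fixes an arbitrary blow-down $\pi:X_{\kb}\to\p^2_{\kb}$ first, then computes the degree of $\phi=\pi\alpha\pi^{-1}$ directly: it shows each multiplicity $m_{p_i}(\phi)$ lies in $\{0,1\}$ by tracking whether $\alpha(E_i)$ is an $E_j$ or a $D_{jk}$, feeds this into Noether's relations to get $d_\phi\in\{1,2\}$, excludes $d_\phi=1$ exactly as you do (no order-$5$ element in $\Sym_4$), and then \emph{cites} the literature (\cite{bebl04}, \cite{dfe04}, \cite{yas16}) for the fact that a quadratic Cremona map of order $5$ is unique up to conjugacy and equal to the stated formula. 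Your argument runs in the opposite direction: you first verify that the explicit formula lifts to an order-$5$ automorphism $\alpha_0$ of the standard model, and then use the conjugacy of all $5$-cycles in $\Aut(X_{\kb})\simeq\Sym_5$ to conclude that $\alpha$ is conjugate to $\alpha_0$, hence to $\phi_0$ after pushing down. This makes your proof self-contained (no need for the external classification of quadratic order-$5$ maps), at the cost of the explicit verification you flag. Your alternative paragraph, via the five line-classes with pairwise intersection $2$, is essentially a cleaner rephrasing of the paper's multiplicity computation and would also stand on its own. One small remark: your claim $\phi_0(a_4)=a_3$ is correct for your labeling $a_3=[0:0:1]$; the paper uses a different numbering of the same four points in the subsequent proposition, so there is no contradiction.
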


\begin{proof}
Note that since $\alpha$ in an automorphism of $X_{\kb}$ of order $5$, it has two orbits of $(-1)$-curves of size five. Over $\kb$, one can contract the four disjoint $(-1)$-curves $E_{1},E_{2},E_{3},E_{4}$ onto four points $p_{1},p_{2},p_{3},p_{4}$ in general position in $\p^{2}_{\kb}$ and get a birational morphism $ \pi : X_{\kb} \longrightarrow \p^{2}_{\kb} $, so that $\phi := \pi \circ \alpha \circ \pi^{-1} : \p^{2}_{\kb} \dashrightarrow \p^{2}_{\kb} $ is a birational transformation (see Figure \ref{Fig:Figure_Lemma_Autk(X)_Z/5Z_case}).
\begin{figure}[h]
\centering \begin{tikzcd}[sep=huge]
X_{\kb} \arrow[r, "\alpha","\sim"'] \arrow[d, "\pi"']
& X_{\kb} \arrow[d, "\pi" ] \\
\mathbb{P}^{2}_{\overline{\mathbf{k}}} \arrow[r, dashrightarrow, "\phi" ]
&  \mathbb{P}^{2}_{\overline{\mathbf{k}}}
\end{tikzcd}
\caption{Construction of the birational map $\phi$.}
\label{Fig:Figure_Lemma_Autk(X)_Z/5Z_case}
\end{figure} Let us determine its degree $d_{\phi}$. We let $L \subseteq \p^{2}_{\kb} $ be a general line, we denote $ \phi^{-1}(L) =: C $ its preimage by $\phi$, and we compute the multiplicities $ m_{p_{i}}(\phi) := m_{p_{i}}(\phi^{-1}(L)) = m_{p_{i}}(C) $ for $ i=1,\dots,4 $. If $\alpha(E_{i}) = E_{j} $ for some $j$, then $ m_{p_{i}}(C) = \tilde{C} \cdot E_{i} = \alpha(\tilde{C}) \cdot \alpha(E_{i}) = \tilde{L} \cdot E_{j} = 0 $, and if $ \alpha(E_{i}) \neq E_{j} $ for all $j$, then $\alpha(E_{i})=D_{jk} $ and $ \tilde{C} \cdot E_{i} = \alpha(\tilde{C}) \cdot \alpha(E_{i}) = \tilde{L} \cdot D_{jk} = 1 $ because $L$ is general. So $ m_{p_{i}}(\phi^{-1}(L)) \in \lbrace 0,1 \rbrace $. Applying now Noether's relations (see for instance \cite[Proposition 2.34]{lam20}) to $d_{\phi}$ and to the $m_{p_{i}}(\phi)$'s, we get $d_{\phi} \in \lbrace 1,2 \rbrace$. If $d_{\phi}=1$, then such an automorphism of $\p^{2}_{\kb}$ would permute the four points $p_{1},\dots,p_{4}$ via an element of $\Sym_{4}$, which does not contain any element of order $5$. It follows that $\phi$ is a quadratic birational transformation of $\p^{2}_{\kb}$ of order $5$ with $3$ base-points among $\lbrace p_{1},p_{2},p_{3},p_{4} \rbrace$ (see \cite[Corollary 2.36]{lam20}), and up to conjugation, there is only one such birational quadratic transformation which is defined by $ \phi : [x:y:z] \dashmapsto [x(z-y):z(x-y):xz] $ (\hspace{1sp}\cite{bebl04}, see also \cite[Proposition 4.6.1]{dfe04} and \cite[Example 4.19]{yas16}). It has base-points $p_{1}=[0:0:1], p_{2}=[1:0:0], p_{3}=[0:1:0]$ and it sends $p_{4}=[1:1:1]$ onto $p_{1}$.
\end{proof}

\begin{proposition}
Let $X$ be a del Pezzo surface of degree $5$ with $\rk \, \NS(X) = 1$. Then the following holds.
\begin{enumerate}
\item If $ \rho(\Gal(\kb/\k))=\langle(12345)\rangle \simeq \Z/5\Z $, then $\Aut_{\k}(X)=\langle \widehat{\phi} \rangle \simeq \Z/5Z$, where $\widehat{\phi}$ is the lift of a quadratic birational map of $\p^{2}_{\kb}$ of order $5$.
\label{it:item_(1)_Proposition_final_cases}
\item If $\rho(\Gal(\kb/\k))=\langle(12345),(25)(34)\rangle \simeq \D_{5}$, then $\Aut_{\k}(X)=\lbrace \id \rbrace$.\label{it:item_(2)_Proposition_final_cases}
\item If $\rho(\Gal(\kb/\k))=\langle(12345),(2354)\rangle \simeq \GA(1,5)$, then $\Aut_{\k}(X) = \lbrace \id \rbrace$.\label{it:item_(3)_Proposition_final_cases}
\item If $\rho(\Gal(\kb/\k))=\langle(12345),(123)\rangle \simeq \mathcal{A}_{5}$, then $\Aut_{\k}(X) = \lbrace \id \rbrace$.\label{it:item_(4)_Proposition_final_cases}
\item If $\rho(\Gal(\kb/\k))=\langle(12345),(12)\rangle \simeq \Sym_{5}$, then $\Aut_{\k}(X)=\lbrace \id \rbrace$.\label{it:item_(5)_Proposition_final_cases}
\end{enumerate}
\label{Prop:Proposition_final_cases}
\end{proposition}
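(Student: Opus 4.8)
The plan is to reduce all five parts to a single centralizer computation in $\Sym_5$ via Lemma \ref{lem:Lemma_faithful_action_Aut_k(X)_on_Pi_L_rho}, and then to quote Lemma \ref{lem:Lemma_Autk(X)_Z/5Z_case} for the geometric description in case (1). First I would set up the combinatorics: by Lemma \ref{lem:Lemma_faithful_action_Aut_k(X)_on_Pi_L_rho} the action on the $(-1)$-curves gives an injection $\Psi : \Aut_\k(X) \hookrightarrow \Aut(\Pi_{L,\rho})$, where $L$ is the minimal normal extension over which all $(-1)$-curves are defined. Since the $\Gal(L/\k)$-action on the set of $(-1)$-curves factors through $\rho(\Gal(\kb/\k)) \subseteq \Aut(\Pi_{\kb}) \simeq \Sym_5$, an automorphism of $\Pi_{\kb}$ respects this action precisely when the corresponding element of $\Sym_5$ commutes with every $\rho(g)$; hence, under the identification $\Theta_{\kb} : \Aut(X_{\kb}) \xrightarrow{\ \sim\ } \Aut(\Pi_{\kb}) \simeq \Sym_5$ recalled in \S\ref{subsec:subsection_0} (injective by the argument there, hence bijective by comparing orders), one has $\Aut(\Pi_{L,\rho}) = C_{\Sym_5}\big(\rho(\Gal(\kb/\k))\big)$.

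Next I would prove that $\Psi$ is surjective. Given $w$ in this centralizer, set $f := \Theta_{\kb}^{-1}(w) \in \Aut(X_{\kb})$. For every $g \in \Gal(\kb/\k)$ the $\kb$-automorphism $g^{-1}\circ f\circ g$ induces on $\Pi_{\kb}$ the conjugate $\rho(g)^{-1}\,w\,\rho(g)$, which equals $w$ because $w$ commutes with $\rho(g)$; by injectivity of $\Theta_{\kb}$ this forces $g^{-1}fg = f$. Thus $f$ is $\Gal(\kb/\k)$-equivariant, i.e.\ $f \in \Aut_\k(X)$ with $\Psi(f) = w$, so $\Psi$ is onto and $\Aut_\k(X) \simeq C_{\Sym_5}\big(\rho(\Gal(\kb/\k))\big)$ in all five cases.

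It remains to compute this centralizer. Any element commuting with the $5$-cycle $(12345)$ lies in $\langle(12345)\rangle \simeq \Z/5\Z$ (the full centralizer of a $5$-cycle in $\Sym_5$). In case (1) this is all of the centralizer, so $\Aut_\k(X) \simeq \Z/5\Z$; and by Lemma \ref{lem:Lemma_Autk(X)_Z/5Z_case} a generator $\widehat{\phi}$ is, over $\kb$, the lift of a quadratic birational transformation of $\p^2_{\kb}$ of order $5$ — up to conjugation, $[x:y:z]\dashmapsto[x(z-y):z(x-y):xz]$. In cases (2)--(5) a direct check shows that no non-trivial power of $(12345)$ commutes with the second generator of $\rho(\Gal(\kb/\k))$, namely $(25)(34)$, $(2354)$, $(123)$ and $(12)$ respectively, so the centralizer is trivial and $\Aut_\k(X) = \{\id\}$.

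The step I expect to be the main obstacle is the surjectivity of $\Psi$: one has to be sure that an element of $\Sym_5$ commuting with the Galois image lifts to a genuine $\k$-automorphism of $X$, rather than merely to a $\Gal(\kb/\k)$-equivariant isometry of $\Pic(X_{\kb})$. This is exactly where the injectivity of $\Theta_{\kb}$ — equivalently, the fact that a $\kb$-automorphism of a degree $5$ del Pezzo surface is determined by its permutation of the ten $(-1)$-curves — is indispensable; the remainder is routine centralizer bookkeeping in $\Sym_5$.
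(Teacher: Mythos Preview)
Your argument is correct. The reduction to $\Aut_\k(X)\simeq C_{\Sym_5}(\rho(\Gal(\kb/\k)))$ via the isomorphism $\Theta_{\kb}:\Aut(X_{\kb})\xrightarrow{\sim}\Sym_5$ is sound, and your descent step---that $g^{-1}fg$ and $f$ induce the same permutation of the ten $(-1)$-curves, hence coincide by injectivity of $\Theta_{\kb}$---is exactly what is needed. The centralizer computations in $\Sym_5$ are straightforward and correct.

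The paper follows the same overall strategy but in a more hands-on fashion. Rather than invoking the centralizer description, it computes $\Aut(\Pi_{\kb,\rho})$ case by case by analyzing how a graph automorphism must permute the five sets $M_1,\dots,M_5$ of four pairwise disjoint $(-1)$-curves, arriving at $\langle\gamma\rangle\simeq\Z/5\Z$ in case~(1) and $\{\id\}$ in the others. For surjectivity in case~(1) the paper does not argue abstractly: it writes down the explicit quadratic map $\phi:[x:y:z]\dashmapsto[x(z-y):z(x-y):xz]$, checks that its lift $\widehat{\phi}$ acts as $\gamma$ on $\Pi_{\kb}$, and then shows $\widehat{\phi}$ is defined over $\k$ by observing that $(g\widehat{\phi}g^{-1})\widehat{\phi}^{-1}$ preserves $E_1,\dots,E_4$ and therefore descends to an element of $\PGL_3(\kb)$ fixing four points in general position, hence is the identity. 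This is precisely your injectivity-of-$\Theta_{\kb}$ argument applied to one specific element rather than stated uniformly. Your formulation is cleaner and dispatches all five cases at once; the paper's version has the virtue of exhibiting the generator concretely without appealing to Lemma~\ref{lem:Lemma_Autk(X)_Z/5Z_case} after the fact.
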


\begin{proof}
(\ref{it:item_(1)_Proposition_final_cases}) In this case the $\Gal(\kb/\k)$-action on the $(-1)$-curves of $X_{\kb}$ is given by $ E_{4} \mapsto E_{1} \mapsto D_{34} \mapsto D_{14} \mapsto D_{12} \mapsto E_{4} $ and $ D_{13} \mapsto E_{3} \mapsto D_{23} \mapsto E_{2} \mapsto D_{24} \mapsto D_{13} $, which gives the two orbits of size five on $\Pi_{\kb}$ represented in Figure \ref{fig:figure(n)_option_Gal(kbarre/k)-action_on_Pikbarre}. The action of $\Aut_{\k}(X)$ on the set of $(-1)$ curves of $X$ yields an isomorphism $ \Psi : \Aut_{\k}(X) \overset{\sim}{\longrightarrow} \Aut(\Pi_{\kb,\rho}) $ by Lemma \ref{lem:Lemma_faithful_action_Aut_k(X)_on_Pi_L_rho}, where $\Aut(\Pi_{\kb,\rho}) \simeq \lbrace \sigma \in \Sym_{5} \, \vert \, (12345) \circ \sigma = \sigma \circ (12345) \rbrace = \langle (12345) \rangle$.
Now consider over $\kb$ the order five birational quadratic map $ \phi : \p^{2}_{\kb} \dashrightarrow \p^{2}_{\kb}, \, [x:y:z] \dashmapsto [x(z-y):z(x-y):xz] $. Up to change of coordinates in $\p^{2}_{\kb}$ we can choose the four points $p_{1}=[0:0:1], p_{2}=[1:0:0], p_{3}=[0:1:0]$ and $p_{4}=[1:1:1]$. Then $\phi$ has base-points $p_{1}, p_{2}, p_{3}$, it sends $p_{4}$ onto $p_{1}$, it sends the lines $L_{34}$ onto $L_{14}$, $L_{14}$ onto $L_{12}$, $L_{24}$ onto $L_{13}$, and it contracts the lines $L_{12}$ onto $p_{4}$, $L_{23}$ onto $p_{2}$, $L_{13}$ onto $p_{3}$. Its birational inverse $ \phi^{-1} : [x:y:z] \dashmapsto [z(z-x):(z-y)(z-x):z(z-y)] $ has base-points $p_{2}, p_{3}, p_{4}$, it sends $p_{1}$ onto $p_{4}$, and it contracts the lines $L_{34}$ onto $p_{1}$, $L_{23}$ onto $p_{3}$, $L_{24}$ onto $p_{2}$. Thus, by blowing up the points $p_{1},p_{2},p_{3},p_{4}$, the map $\phi$ lifts to an automorphism $\widehat{\phi}$ of $X_{\kb}$ that acts exactly as $(12345)$ on $\Pi_{\kb}$. If $g \in \Gal(\kb/\k)$, then $(g \widehat{\phi} g^{-1}) \widehat{\phi}^{-1} \in \Aut(X_{\kb})$ preserves the curves $E_{1},E_{2},E_{3},E_{4}$. It therefore descends to an element of $\Aut(\p^{2}_{\kb})$ fixing four points in general position and is hence equal to the identity. It follows that $\widehat{\phi}$ is defined over $\k$ and so $\Aut_{\k}(X) = \langle \widehat{\phi} \rangle \simeq \Z/5\Z$.

(\ref{it:item_(2)_Proposition_final_cases}) In this case the action of the Galois group $\Gal(\kb/\k)$ on the set of $(-1)$ curves of $X_{\kb}$ is given by $ E_{4} \mapsto E_{1} \mapsto D_{34} \mapsto D_{14} \mapsto D_{12} \mapsto E_{4} $, $ D_{13} \mapsto E_{3} \mapsto D_{23} \mapsto E_{2} \mapsto D_{24} \mapsto D_{13} $ and $ E_{1} \leftrightarrow D_{34}, D_{23} \leftrightarrow D_{24}, E_{3} \leftrightarrow D_{13}, E_{4} \leftrightarrow D_{14}, D_{12}^{\circlearrowleft}, E_{2}^{\circlearrowleft} $, which still gives two orbits of $(-1)$-curves of size five as indicated in Figure \ref{fig:figure(o)_option_Gal(kbarre/k)-action_on_Pikbarre}. The action of $\Aut_{\k}(X)$ on the set of $(-1)$-curves of $X$ yields an isomorphism $\Psi : \Aut_{\k}(X) \overset{\sim}{\longrightarrow} \Aut(\Pi_{\kb,\rho})$ by Lemma \ref{lem:Lemma_faithful_action_Aut_k(X)_on_Pi_L_rho}, where $\Aut(\Pi_{\kb,\rho})=\lbrace \id \rbrace $ since there is no non-trivial element of $\Sym_{5}$ that commutes with both $(12345)$ and $(25)(34)$.

The points (\ref{it:item_(3)_Proposition_final_cases}), (\ref{it:item_(4)_Proposition_final_cases}) and (\ref{it:item_(5)_Proposition_final_cases}) are proven analogously to point  (\ref{it:item_(2)_Proposition_final_cases}).
\end{proof}

\begin{proof}[Proof of Theorem \ref{thm:Theorem_Main}]
Case \ref{cas:Case1_Main_Theorem} comes from the description given in §\ref{subsec:subsection_4}, as well as Lemma \ref{lem:Lemma_Autk(X)_Z/5Z_case} and Proposition \ref{Prop:Proposition_final_cases}. Case \ref{cas:Case2_Main_Theorem} follows from Proposition \ref{Prop:Proposition_0_Id}. Cases \ref{s-cas:Case(a)_Case3_Main_Theorem}, \ref{s-cas:Case(b)_Case3_Main_Theorem}, \ref{s-cas:Case(c)_Case3_Main_Theorem}, \ref{s-cas:Case(d)_Case3_Main_Theorem}, \ref{s-cas:Case(j)_Case3_Main_Theorem}, \ref{s-cas:Case(k)_Case3_Main_Theorem}, \ref{s-cas:Case(l)_Case3_Main_Theorem} are a consequence of Propositions \ref{Prop:Proposition1_Z/2Z_transposition}, \ref{Prop:Proposition2_Z/2Z_double_transposition}, \ref{Prop:Proposition_3_Z/3Z}, \ref{Prop:Proposition_8_Sym3_case_1}, \ref{Prop:Proposition_7_Z/6Z}, \ref{Prop:Proposition_?_S3xZ2}, \ref{Prop:Proposition9_Sym3_2}, respectively, and the cases \{\ref{s-cas:Case(e)_Case3_Main_Theorem}, \ref{s-cas:Case(f)_Case3_Main_Theorem}, \ref{s-cas:Case(g)_Case3_Main_Theorem}, \ref{s-cas:Case(h)_Case3_Main_Theorem}, \ref{s-cas:Case(i)_Case3_Main_Theorem}\} come from Proposition \ref{Prop:Proposition_essai_regroupement}. Finally, Case \ref{cas:Case4_Main_Theorem} comes from Proposition \ref{Prop:Proposition5_Z2xZ2_1}.  
\end{proof}


\addcontentsline{toc}{section}{References}

\Addresses

\end{document}